\tikzset{negated/.style={
decoration={markings,
mark= at position 0.5 with {
\node[transform shape] (tempnode) {$\times$};
}
},
postaction={decorate}
}
}
\newtheorem{definition}{Definition}[section]
\newtheorem{lemma}[definition]{Lemma}
\newtheorem{theorem}[definition]{Theorem}
\newtheorem{remark}[definition]{Remark}
\newtheorem{example}[definition]{Example}
\newcommand{\restr}{\mathord\downarrow} 
\newcommand{\ind}{\mathord\uparrow} 
\newcommand{\Irr}{\textnormal{Irr}}
\newcommand{\cd}{\textnormal{cd}}
\newcommand{\nl}{\textnormal{nl}}
\newcommand{\lin}{\textnormal{lin}}
\newcommand{\Core}{\textnormal{Core}}
\newcommand{\inertiagroup}{\textnormal{I}}
\renewcommand{\SS}{P}
\newcommand{\TT}{Q}
\newcommand{\NAB}{H}
\newcommand{\AB}{K}
\newcommand{\SG}{R}
\newcommand{\SGP}{S}
\newcommand{\SGM}{T}
\newcommand{\MTG}{U}
\title[Faithful permutation representations]
{Minimal degrees  for faithful permutation representations \\ 
of groups of order $p^6$ where $p$ is an odd prime} 
\author{E.A.\ O'Brien}
\address{Department of Mathematics, University of Auckland, Auckland, New Zealand.}
\email{e.obrien@auckland.ac.nz}
\author{Sunil Kumar Prajapati$^*$}
\address{Indian Institute of Technology Bhubaneswar, Arugul Campus, Jatni, 
Khurda-752050, India.}
\email{skprajapati@iitbbs.ac.in}
\author{Ayush Udeep}
\address{Indian Institute of Technology Bhubaneswar, Arugul Campus, 
	Jatni, Khurda-752050, India.}
\email{udeepayush@gmail.com}
\thanks{$^{\textbf{*}}$Corresponding author.
}
\subjclass[2010]{Primary 20D15; secondary 20C15, 20B05}
\keywords{$p$-groups, groups of order $p^6$, minimal degree
permutation representations, quasi-permutation representations}
\begin{document}
\maketitle

\begin{abstract}
We determine the minimal degree of a faithful permutation 
representation for each group of order $p^6$ where $p$ is an odd prime. 
{We also record how to obtain such a representation.}
\end{abstract}
	
\section{Introduction}
The minimal faithful permutation degree $\mu(G)$ of a finite group
$G$ is the least positive integer $n$ such that $G$ is isomorphic to 
a subgroup of the symmetric group $S_n$.
Of course, $\mu(G)$ is the minimal degree 
of a faithful representation of $G$ by permutation matrices. 
A {\it quasi-permutation matrix} is a square matrix over the complex field 
$\mathbb{C}$ with non-negative integral trace. 
Wong \cite{W} defined a
{\it quasi-permutation representation} of $G$ as  
one consisting of quasi-permutation matrices.
Let $c(G)$ denote the minimal degree of 
a faithful quasi-permutation representation of $G$. 
It is easy to deduce that $c(G)\leq \mu(G)$, so $c(G)$ is often
important in computing $\mu(G)$.

Both $c(G)$ and $\mu(G)$ have been 
studied extensively: see, for example, 
\cite{AB, HB, HB1997, BGHS, Britnell2017, EasdownSaunders2016, 
Elias2010, GA, 
SaundersThesis, 
DW}.

As is well known (see, for example, \cite[Theorem 2]{DLJ}), 
if $G \cong C_{p^{n_1}} \times \ldots \times C_{p^{n_k}}$, then  
$\mu(G) = \sum_{i=1}^k p^{n_i}$.
Behravesh and Ghaffarzadeh \cite{BGgroup64} computed 
$\mu(G)$ for the groups of order dividing $2^6$ using {\sf GAP}.
In \cite{BD, S}, the values of $\mu(G)$ are determined for the 
groups of order dividing $p^{5}$ where $p$ is an odd prime.

We compute $\mu(G)$ for 
the non-abelian groups of order $p^{6}$ where $p$ is an odd prime. 
Newman, O'Brien and Vaughan-Lee 
\cite{NO'bV2004} establish that there are  
\[ 3p^2 + 39 p + 344 + 24(p-1,3) + 11 (p-1,4) + 2 (p-1,5) \]
groups of order $p^{6}$ where $p \geq 5$; there are
504 groups of order $3^6$.  They provide a 
database of parameterized presentations for the groups of order 
$p^6$; it is available publicly in {\sf GAP} \cite{GAP} and 
{\sc Magma} \cite{MAGMA}. 
We reference extensively the list of presentations 
and other data available at \cite{Arxiv}, and use its group identifiers.

Our main result is the following.
\begin{theorem}\label{main-theorem}
Let $G$ be a non-abelian group of order $p^{6}$ where $p\geq 5$ is prime. 
\begin{enumerate}
\item If $|Z(G)|= p^4$, then $c(G) = p^4+p, p^3+p^2+p, p^3+p^2, p^3+2p, 
3p^2, 2p^2+2p, 2p^2+p, 3p^2+p, p^2+3p, p^5, p^4+p^2, 2p^3$, or  $p^3+2p^2$.
\item If $|Z(G)| = p^3$, then $c(G) = p^5$, $p^4 + p^3$, $p^4+p^2$, 
$p^4+p$, $p^4$, $2p^3+p^2$, $2p^3+p$, $2p^3$, $p^3+2p^2$, $p^3+p^2+p$, 
$p^3 +p^2$, $p^3+2p$, $p^3+p$, $3p^2$, $2p^2+p$, $2p^2$, or $p^2+2p$.
\item If $|Z(G)| = p^2$, then $c(G) =p^4$, $2p^3$, $p^3 +p^2$, $p^3+p$, 
$p^3$, $2p^2$, or $p^2+p$.
\item If $|Z(G)| = p$, then $c(G) =p^4$, $p^3$, or $p^2$.
\end{enumerate}
\end{theorem}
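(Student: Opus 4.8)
The plan is to translate the computation of $c(G)$ into character theory and then to carry out a systematic case analysis over the parameterized families in the database of \cite{Arxiv}, stratified by the order of the centre. Following Wong \cite{W}, to each Galois orbit of an irreducible character $\chi$ of $G$ one attaches the class function
\[
\Theta_\chi \;=\; \sum_{\sigma}\chi^{\sigma} \;+\; m(\chi)\cdot 1_G,
\]
where $m(\chi)$ is the least non-negative integer for which $\Theta_\chi$ has non-negative values; then $\Theta_\chi$ is a character with non-negative rational-integer values, i.e.\ a quasi-permutation character, and $c(G)$ is the minimum of $\Theta(1)$ over faithful quasi-permutation characters $\Theta$. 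Since a quasi-permutation character is integer-valued, hence rational-valued, hence Galois-stable, it is a non-negative integer combination of full Galois-orbit sums; a short degree argument then shows it suffices to take $\Theta = \sum_{i}\bigl(\sum_{\sigma}\chi_i^{\sigma}\bigr) + m\cdot 1_G$ for a finite set of irreducible characters $\chi_i$ with $\bigcap_i\ker\chi_i = 1$. For a $p$-group this becomes tractable: every non-trivial normal subgroup of $G$ meets $\Omega_1(Z(G))$ — the subgroup of $Z(G)$ generated by its elements of order dividing $p$, which is elementary abelian of some rank $k$ — so $\{\chi_i\}$ is faithful iff $\bigcap_i(\ker\chi_i\cap\Omega_1(Z(G))) = 1$; and as $\ker\chi_i\cap\Omega_1(Z(G))$ is the kernel of the restriction to $\Omega_1(Z(G))$ of the linear central character of $\chi_i$, it has codimension $0$ or $1$ in $\Omega_1(Z(G))\cong C_p^{\,k}$. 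Hence at least $k$ irreducible constituents are needed, and — all subgroups of $\Omega_1(Z(G))$ being normal in $G$ — typically exactly $k$ suffice, with central characters cutting out $k$ hyperplanes of $C_p^{\,k}$ in general position.

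I would then split into the four cases $|Z(G)| = p,\,p^2,\,p^3,\,p^4$, which are the only possibilities since $|Z(G)|\ge p^5$ would force $G/Z(G)$ cyclic and $G$ abelian; this is exactly the stratification of Theorem~\ref{main-theorem}. From $\chi(1)^2\le[G:Z(G)]$ one gets immediately that for $|Z(G)|\in\{p^3,p^4\}$ every irreducible character of $G$ has degree $1$ or $p$, which greatly shortens those two strata. For each stratum I would run through the relevant isomorphism types in \cite{Arxiv}, and for each group, or each one-parameter family of groups, extract from the presentation: the isomorphism type and rank $k$ of $\Omega_1(Z(G))$; the degrees, central characters and (using that $p$-groups are monomial) the explicit values of the irreducible characters; the Galois-orbit sizes and the resulting $m(\chi)$; and finally the minimum, over admissible faithful $k$-subsets and with a single global shift $m\cdot 1_G$ allowed, of the total degree. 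The bound $c(G)\le\mu(G)$ together with $\mu$ of convenient quotients — and, for abelian sections, the classical formula $\mu(C_{p^{n_1}}\times\cdots\times C_{p^{n_k}})=\sum_i p^{n_i}$ of \cite{DLJ} — keeps the list of candidate values short and provides independent checks. Because the presentations in \cite{Arxiv} are parameterized by $p$ with congruence side-conditions involving $(p-1,3)$, $(p-1,4)$ and $(p-1,5)$, I would carry out the character computation and the optimization \emph{generically} in $p$ for the main families, treat separately the finitely many families that exist only for special residues of $p$, and use {\sf GAP} \cite{GAP} and {\sc Magma} \cite{MAGMA} at small primes (say $p = 5,7,11,13$) both to discover the patterns and to guard against slips; the explicit quasi-permutation characters realizing $c(G)$, promised in the abstract, come out of the same computation.

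The main obstacle is volume combined with the optimization step for groups whose centre is non-cyclic. When $k=1$ the answer is simply $c$ of a single cheapest faithful irreducible character, which is mechanical; but when $\Omega_1(Z(G))$ has rank $2$ or $3$ one must genuinely search for the cheapest faithful family of characters — whether two constituents of moderate degree beat one of larger degree can depend on $p$, the central characters must be arranged in general position, and the optimum is occasionally realized by a family rather than by any single Galois orbit. Performing this uniformly in $p$ across all $\sim 3p^2+39p+344+24(p-1,3)+11(p-1,4)+2(p-1,5)$ groups, and reconciling the generic analysis with the finite machine computations, is where essentially all the work lies; the list asserted in Theorem~\ref{main-theorem} is then the union, over the four strata, of the values obtained.
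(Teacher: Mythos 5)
Your outline matches the paper's character-theoretic half fairly closely: the reduction of $c(G)$ to minimal faithful families of Galois-orbit sums is \cite[Lemma 2.2]{BG} (Lemma \ref{lemma:c(G)Algorithm} here), your count of constituents is \cite[Theorem 2.3]{GA} (Lemma \ref{L1}, which in fact gives $|X_G|=d(Z(G))$ exactly, together with a closed formula $m(\xi_{X_G})=\frac{1}{p-1}\sum_{\chi}\sum_\sigma\chi^\sigma(1)$ that you would need and do not state --- it is what turns the ``single global shift'' into something computable), and the stratification by $|Z(G)|$ with $\cd(G)\subseteq\{1,p\}$ for $|Z(G)|\ge p^3$ is exactly how Sections \ref{SS3}--\ref{SS5} are organized. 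Two concrete problems, though. First, you propose to use ``$\mu$ of convenient quotients'' to bound $c(G)$ from below; this fails, because $\mu$ is not monotonic under quotients even for $p$-groups --- the exceptional groups of \cite{Britnell2017}, cited in this paper, are precisely $p$-groups with $\mu(G/N)>\mu(G)$. Only subgroups give valid lower bounds ($\mu(S)\le\mu(G)$), and that is all the paper uses. Second, you invoke only the trivial inequality $c(G)\le\mu(G)$, whereas the paper leans heavily on the \emph{equality} $c(G)=\mu(G)$ for odd-order $p$-groups (\cite[Theorem 3.2]{BG}, Lemma \ref{thm:mu(G)=c(G)}): for a large fraction of the groups (Tables \ref{t:2}--\ref{t:3}) the value is obtained by exhibiting a core-free family of subgroups (upper bound for $\mu$) and a subgroup $S$ with known $\mu(S)$ (lower bound for $\mu$), with no character computation at all. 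Without the equality, every one of those lower bounds has to be re-derived on the character side, which is substantially more work than your sketch suggests.

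Beyond that, the proposal is a plan rather than a proof: the entire content of the theorem is the case analysis --- determining, family by family, which $d(Z(G))$-subsets of $\Irr(G)$ can have trivial kernel intersection and what $|\Gamma(\chi)|\cdot\chi(1)$ is for each admissible choice. The hard cases are exactly the ones you flag (non-cyclic centre, rank $2$ or $3$), and there the paper needs genuine structural input you have not supplied: the parametrization of $\Irr(G)$ for metabelian $G$ via linear characters of a maximal abelian subgroup containing $G'$ (\cite[Theorem 2]{BKP}, Lemma \ref{lemma:bakshi}), arguments excluding linear characters from $X_G$ when $d(Z(G)\cap G')=d(Z(G))$ (Lemma \ref{lemmalinearcharacter}), and the divisibility constraint $p^2\mid c(G)$ for groups that are not direct products with an abelian factor (Lemma \ref{remark:newrange}), which is what pins the answers down to the short lists in the statement. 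Your generic-in-$p$ computation with machine checks at small primes is a reasonable discovery strategy --- it is essentially how the paper validates its tables --- but it does not by itself constitute the uniform-in-$p$ argument the theorem requires.
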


Behravesh and Ghaffarzadeh 
\cite[Theorem 3.2]{BG} proved that 
$c(G) = \mu(G)$ for a $p$-group $G$ of odd order, {so the 
theorem also establishes $\mu(G)$.}

{We not only determine $\mu(G)$ for each non-abelian
group $G$ of order $p^6$, but we 
also record how to realize a minimal degree faithful permutation 
representation for $G$.  Our results -- and related code -- are available 
publicly in {\sc Magma} via a GitHub repository \cite{github}, 
and can be used to construct explicitly this representation of $G$ for 
a given prime $p$.} 

The groups of order $p^6$ are classified into 43 isoclinism
families labelled $\Phi_{i}$ for $1 \leq i \leq 43$. All abelian
groups are in $\Phi_1$.
The classifications of \cite{Easterfield}, 
\cite{RJ}, and \cite{NO'bV2004} exploit
(and agree on the details of) this partition. 
Invariants for these families are tabulated in \cite{RJ}; 
those used in our study include the structures of the derived group
and the central quotient, and the character degrees. 

In Section \ref{section:BR}, we summarize notation and 
background results. Prajapati and Udeep \cite[Theorem 26]{SA}
determine $c(G)$, and so $\mu(G)$, 
for every group of odd order $p^6$ having center of order $p^4$.
We complete this task for the remaining non-abelian groups of order $p^6$. 
To prove Theorem \ref{main-theorem}, we exploit extensively the 
equality of $\mu(G)$ and $c(G)$. For some groups
we determine $\mu(G)$ directly; for the remainder
we determine $c(G)$. Our choice is dictated by 
the ease of the associated computations.
In Section \ref{Slessthanp4} we calculate 
$\mu(G)$ for some groups of order $p^6$ and center of
order dividing $p^3$. 
In Sections \ref{SS3}--\ref{SS5} we obtain all possible 
values of $c(G)$ for each isoclinism family
containing groups with center of order dividing $p^{3}$. 
By refining these values, we then compute $c(G)$ 
for all outstanding groups in Section \ref{subsection:tables}. 
We summarize our results in Tables \ref{t:VZ}--\ref{t:7}.
For completeness, we compute $\mu(G)$ for the groups of order 
$3^{6}$ using {\sf GAP}; we summarize the results in Table \ref{t:729}.
We discuss the GitHub repository in Section \ref{section:access}.
	
\section{Notation and background}\label{section:BR}

\subsection{Notation}
For a finite group $G$, we use the following notation throughout the paper. 

\begin{tabular}{cl}
	$d(G)$ & the minimal number of generators of $G$\\
	$o(x)$ & the order of $x\in G$\\
	$G'$ & the commutator subgroup of $G$\\
	$\exp(G)$ & the exponent of $G$\\
	$\Core_{G}(H)$ & the core in $G$ of $H \leq G$\\
	$\Irr(G)$ & the set of irreducible complex characters of $G$\\
	$\lin(G)$ & the set of linear characters of $G$\\
	$\nl(G)$ & the set of non-linear irreducible characters of $G$\\
	$\cd(G)$ & $\{ \chi(1) ~|~ \chi \in \Irr(G) \}$\\
	$\inertiagroup_{G}(\chi)$ & the inertia group of $\chi \in \Irr(G)$ 
in $G$ \\
	$\mathbb{Z}_{\geq 0}$ & the set of all non-negative integers\\
	$C_{p}^{k}$ & 
the elementary abelian $p$-group of rank $k \geq 3$\\
	$G_{(r,s)}$ & 
the group with number $s$ in isoclinism family $\Phi_{r}$ \\
	$\omega$ & the smallest positive integer which is a primitive root modulo $p$ \\
	$\nu$ & the smallest positive integer which is a quadratic non-residue modulo $p$\\
	$\phi(n)$ & the Euler phi function\\
	$(a, b)$ & the greatest common divisor of the positive integers $a$ and $b$
\end{tabular}

\subsection{Background}
We summarize some results regarding $\mu(G)$ and $c(G)$, which 
we use throughout. In this section, unless otherwise 
indicated, $p$ is an arbitrary prime.

It is well known that every permutation representation of $G$ is a direct sum
of transitive permutation representations, and every transitive
representation is equivalent to a representation
of the set of cosets of some subgroup of $G$.
Hence we identify a faithful representation of $G$ with 
the set of 
subgroups $\{H_1, \ldots, H_m\}$ determining 
its {\it transitive constituents}; 
the degree is $\sum_{i=1}^m |G:H_i|$, and $\cap_{i=1}^m \Core_G (H_i) = 1.$
Now $\mu(G)$ is the minimum degree over
all such sets of subgroups. 

\begin{lemma} \textnormal{\cite[Theorem 3]{DLJ}} \label{L2} 
Let $G$ be a $p$-group whose center is minimally generated by $d$ elements, 
and let $\{H_{1},\ldots,H_{m} \}$ determine a minimal 
degree faithful permutation representation of $G$. 
If $p$ is odd, then $m=d$; if $p=2$, then $d/2\leq m\leq d$, 
and the upper bound is achieved.
\end{lemma}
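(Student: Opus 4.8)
\bigskip

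The plan is to run everything through the combinatorial description recorded above: a minimal faithful permutation representation of $G$ is a family $\{H_1,\dots,H_m\}$ of subgroups with $\bigcap_{i=1}^{m}\Core_G(H_i)=1$ that minimises $\sum_{i=1}^{m}|G:H_i|$. Write $N_i=\Core_G(H_i)$ and $Z=Z(G)$. The first observation is that minimality forces irredundancy: if $A_i:=\bigcap_{j\ne i}N_j$ were trivial for some $i$, then $\{H_j: j\ne i\}$ would already be faithful and of smaller degree, so $A_i\ne 1$ for every $i$. I would then prove the two inequalities separately: the upper bound $m\le d$ holds for every $p$, while the lower bound is where the parity of $p$ enters.

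For $m\le d$: from the definitions $A_j\le N_i$ whenever $j\ne i$, hence $\prod_{j\ne i}A_j\le N_i$ and therefore $A_i\cap\prod_{j\ne i}A_j\le A_i\cap N_i=1$. Since distinct $A_i$ are normal with trivial pairwise intersection they commute, so $A_1\cdots A_m$ is an internal direct product; intersecting with $Z$ gives a subgroup $(A_1\cap Z)\times\cdots\times(A_m\cap Z)\le Z$ all of whose factors are nontrivial, because a nontrivial normal subgroup of a $p$-group meets the centre. As $d(\cdot)$ is monotone under passage to subgroups of a finite abelian $p$-group, $m\le\sum_i d(A_i\cap Z)=d\big(\prod_i(A_i\cap Z)\big)\le d(Z)=d$.

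For the lower bound the key step is to analyse each constituent through the centre. The group $Z$ acts on the coset space $G/H_i$ with kernel $Z\cap N_i=Z\cap H_i$, and since $Z$ is central \emph{every} point stabiliser of this action equals $Z\cap H_i$; hence $B_i:=Z/(Z\cap H_i)$ acts freely, so $|B_i|$ divides $|G:H_i|$ and $|G:H_i|=|B_i|\cdot(\text{number of }B_i\text{-orbits})$. I would then show minimality forces $B_i$ to be cyclic when $p$ is odd, and cyclic or isomorphic to $C_p\times C_p$ when $p=2$. Suppose $B_i\cong C_{p^{e_1}}\times\cdots\times C_{p^{e_r}}$ with $r\ge 2$, and replace $H_i$ by the $r$ subgroups $H_i^{(j)}=H_iS_j$, where $S_j\le Z$ is the preimage of a direct complement in $B_i$ of the $j$-th cyclic factor. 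Using that each $S_j$ contains $Z\cap H_i$ one checks $\bigcap_j H_i^{(j)}=H_i$, so, taking cores and using $\Core_G(X\cap Y)=\Core_G(X)\cap\Core_G(Y)$, the total intersection of cores (hence faithfulness) is unchanged, while the degree contributed by this block changes from $|G:H_i|=(|G:H_i|/|B_i|)\prod_j p^{e_j}$ to $(|G:H_i|/|B_i|)\sum_j p^{e_j}$. This strictly decreases the degree unless
\[
\sum_{j=1}^{r} p^{e_j}=\prod_{j=1}^{r} p^{e_j},
\]
whose only solution with $r\ge 2$ and all $e_j\ge 1$ is $p=2$, $r=2$, $e_1=e_2=1$. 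I expect this replacement argument to be the main obstacle: constructing the $H_i^{(j)}$, verifying $\bigcap_j H_i^{(j)}=H_i$, and the index bookkeeping must be done with care, and it is exactly the single exceptional solution above that produces the odd/even dichotomy.

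Granting this, $Z$ embeds diagonally into $\prod_i B_i$ since $\bigcap_i(Z\cap H_i)=Z\cap\bigcap_i N_i=1$. A subgroup of a product of $m$ cyclic groups is $m$-generated, so for $p$ odd $d=d(Z)\le m$, which with the upper bound gives $m=d$. For $p=2$, if $k$ of the $B_i$ are isomorphic to $C_2\times C_2$ and the rest cyclic, then $\prod_i B_i$ is $(m+k)$-generated, so $d\le m+k\le 2m$, i.e.\ $m\ge d/2$. Finally the value $m=d$ is attained when $p=2$ already by $C_2\times C_2$, whose representation by its two subgroups of index $2$ is faithful of degree $4=\mu(C_2\times C_2)$ with $m=2=d$; so the inequality $m\le d$ is sharp.
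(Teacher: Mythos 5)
The paper offers no proof of this lemma: it is quoted verbatim as Theorem~3 of Johnson's paper \cite{DLJ}, so there is no internal argument to compare yours against. What you have written is essentially Johnson's original argument, and it is correct. The upper bound $m\le d$ via the subgroups $A_i=\bigcap_{j\ne i}N_j$ is sound: irredundancy gives $A_i\ne 1$, normality plus $A_i\cap N_i=\bigcap_j N_j=1$ gives the internal direct product, and each nontrivial normal subgroup of a $p$-group meets $Z$, so $Z$ contains a direct product of $m$ nontrivial factors. The lower bound is also sound: $Z\cap N_i=Z\cap H_i$ because $Z$ is central, the free action of $B_i=Z/(Z\cap H_i)$ on $G/H_i$ gives $|B_i|\mid |G:H_i|$, the subgroups $H_i^{(j)}=H_iS_j$ satisfy $\bigcap_j H_i^{(j)}=H_i$ by the correspondence theorem applied inside $H_iZ$ (where $H_i$ is normal since $Z$ centralises it), cores are preserved because $\Core_G$ commutes with intersections, and the inequality $\sum_j p^{e_j}\le\prod_j p^{e_j}$ is strict except for $p=2$, $r=2$, $e_1=e_2=1$. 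The diagonal embedding $Z\hookrightarrow\prod_i B_i$ then forces $d\le m$ for odd $p$ and $d\le 2m$ for $p=2$.

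One caveat concerns the final clause for $p=2$. ``The upper bound is achieved'' in Johnson's theorem means that \emph{every} $2$-group admits a minimal faithful representation with exactly $d$ constituents; exhibiting $C_2\times C_2$ only shows the bound $m\le d$ is sharp for one group. The intended statement follows from your own machinery: in the exceptional case $B_i\cong C_2\times C_2$ the replacement $H_i\rightsquigarrow\{H_i^{(1)},H_i^{(2)}\}$ preserves the total degree (since $2+2=2\cdot 2$) and faithfulness, so iterating it converts any minimal representation into a minimal one in which every $B_i$ is cyclic, whence $m\ge d$ and so $m=d$ for that representation. Adding that one sentence closes the only gap. Since the present paper only ever invokes the odd-$p$ case, this does not affect anything downstream.
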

	
\begin{lemma}
\textnormal{\cite[Corollary 2.2]{DW}}\label{thm:nilpotent} 
If $H$ and $K$ are non-trivial nilpotent groups, then 
$\mu(H \times K)= \mu(H)+ \mu(K)$.
\end{lemma}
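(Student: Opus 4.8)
The plan is to prove the two inequalities separately. The bound $\mu(H\times K)\le\mu(H)+\mu(K)$ is routine: if $\{A_1,\dots,A_r\}$ and $\{B_1,\dots,B_s\}$ determine minimal degree faithful permutation representations of $H$ and of $K$, then $\{A_i\times K\}_i\cup\{H\times B_j\}_j$ is faithful for $H\times K$, since $\bigcap_i\Core_{H\times K}(A_i\times K)\cap\bigcap_j\Core_{H\times K}(H\times B_j)=\bigl(\bigcap_i\Core_H(A_i)\bigr)\times\bigl(\bigcap_j\Core_K(B_j)\bigr)=1$, and its degree is $\sum_i|H:A_i|+\sum_j|K:B_j|=\mu(H)+\mu(K)$. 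All the content is in the reverse inequality, which uses nilpotency essentially (the identity fails for general finite groups).

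For the reverse inequality I would first reduce to $p$-groups. A nilpotent group is the direct product of its non-trivial Sylow subgroups, so $H=\prod_pH_p$, $K=\prod_pK_p$ and $H\times K=\prod_p(H_p\times K_p)$, the factors having pairwise coprime order; hence it suffices to prove (i) $\mu$ is additive on direct products of groups of pairwise coprime order, and (ii) $\mu(P\times Q)=\mu(P)+\mu(Q)$ for $p$-groups $P,Q$. Claim (i) is elementary: when $\gcd(|G_1|,|G_2|)=1$ every $L\le G_1\times G_2$ splits as $L=(L\cap G_1)\times(L\cap G_2)$ and its core splits correspondingly; writing a faithful family as $\{A_i\times B_i\}$, one separates the indices $i$ according to whether $A_i=G_1$, $B_i=G_2$, or neither, notes that the constituents with $A_i\ne G_1$ already form a faithful family for $G_1$ (and dually for $G_2$), and applies $ab\ge a+b$ for integers $a,b\ge 2$ to the mixed constituents to conclude the degree is at least $\mu(G_1)+\mu(G_2)$.

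For (ii) put $R=P\times Q$. Since every minimal normal subgroup of a $p$-group is central of order $p$, a family $\{L_1,\dots,L_m\}$ is faithful for $R$ exactly when $\bigcap_i(L_i\cap V)=1$, where $V=\Omega_1(Z(R))=\Omega_1(Z(P))\oplus\Omega_1(Z(Q))$. Take $\{L_1,\dots,L_m\}$ realising $\mu(R)$ and set $C_i=L_i\cap(P\times1)$, $D_i=L_i\cap(1\times Q)$. Then $\{C_i\}$ is faithful for $P$: otherwise $\bigcap_i\Core_P(C_i)$ contains a minimal normal subgroup $M\trianglelefteq P$, so $M\times1\trianglelefteq R$ lies in $C_i\le L_i$ for every $i$, hence in $\bigcap_i\Core_R(L_i)=1$, a contradiction; likewise $\{D_i\}$ is faithful for $Q$. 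Therefore $\sum_i|P:C_i|\ge\mu(P)$ and $\sum_i|Q:D_i|\ge\mu(Q)$.

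The remaining step---bounding $\sum_i|R:L_i|$ below by $\mu(P)+\mu(Q)$---is where the real difficulty lies, and is the main obstacle. By Goursat's lemma $|R:L_i|=|P:C_i|\cdot|Q:\pi_Q(L_i)|=|P:\pi_P(L_i)|\cdot|Q:D_i|$, so the degree is a sum of \emph{products}, not the sum $\sum_i(|P:C_i|+|Q:D_i|)$ one would like, and any crude estimate loses too much because the ``diagonal'' subgroups of $R$---graphs of isomorphisms between non-trivial sections of $P$ and of $Q$---can cover the socle $V$ very efficiently. The plan is to show that the minimum total degree is always attained at a \emph{separated} family, in which each constituent contains $P\times1$ or $1\times Q$; for such a family the computation of the first paragraph shows the degree equals exactly $\sum_k|P:C'_k|+\sum_l|Q:D'_l|\ge\mu(P)+\mu(Q)$. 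To prove this I would induct on $|R|$, using Lemma \ref{L2} to pin down the number of constituents ($m=d(Z(P))+d(Z(Q))$ when $p$ is odd), and then run an exchange argument on the Goursat data: trade each diagonal constituent for product constituents of no larger total index while keeping every line of $V$ missed by some stabiliser, or pass to $R/M$ for a suitable minimal normal $M$. The case $p=2$ is slightly more delicate, since Lemma \ref{L2} then gives only $d/2\le m\le d$, so the bookkeeping must be tightened accordingly. Once $\mu(P\times Q)=\mu(P)+\mu(Q)$ is established, reassembling the Sylow factors via (i) completes the proof.
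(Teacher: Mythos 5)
The paper offers no proof of this lemma: it is quoted verbatim from Wright \cite{DW}, so there is no in-paper argument to compare against. Judged on its own terms, your proposal is incomplete. The upper bound, the reduction to Sylow subgroups, the additivity of $\mu$ over factors of coprime order, and the observation that the families $\{C_i\}$ and $\{D_i\}$ of intersections are faithful for $P$ and $Q$ respectively are all correct and carefully argued. But the entire difficulty of Wright's theorem is concentrated in the one step you defer: showing that $\sum_i |R:L_i| \geq \mu(P)+\mu(Q)$ for two $p$-groups, equivalently that the minimum is attained at a separated family. You state this as "the plan" and describe an "exchange argument on the Goursat data" that would "trade each diagonal constituent for product constituents of no larger total index," but you never exhibit such a trade or prove it is always possible. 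As you yourself note, $|R:L_i| = |P:C_i|\cdot|Q:\pi_Q(L_i)|$ only dominates $\max(|P:C_i|,|Q:D_i|)$, not their sum, and a diagonal constituent can kill a line of the socle $V$ at index far below $|P:C_i|+|Q:D_i|$; ruling out that a clever collection of such diagonals beats the separated construction globally is exactly the theorem, not a routine bookkeeping step. An induction on $|R|$ passing to $R/M$ is also delicate, since $\mu$ is not monotone under quotients and $\mu(R/M)$ can exceed $\mu(R)$.

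So the proposal is a correct framing with a genuine gap at its core: the $p$-group case is asserted as a strategy rather than proved. To close it you would need either to carry out the exchange argument in full (specifying, for an arbitrary Goursat subgroup $L_i$, replacement subgroups containing $P\times 1$ or $1\times Q$ whose indices sum to at most $|R:L_i|$ while still separating the relevant socle elements — and verifying this survives the interaction between different constituents), or to follow Wright's actual route through his class of groups closed under the additivity property. Citing \cite{DW}, as the paper does, remains the honest option.
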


Suppose $F$ is a subfield of a splitting field $E$ for $G$. 
Let $\chi, ~\psi \in \Irr_E(G)=\Irr(G)$. 
Let $F(\chi)$ be the field obtained by adjoining to $F$ the 
values $\chi(g)$ for all $g \in G$, and 
let $\Gamma(F(\chi)/F)$ denote the associated Galois group. 
Now $\chi$ and $\psi$ are Galois conjugate 
over $F$ if $F(\chi)=F(\psi)$ and there 
exists $\sigma \in \Gamma(F(\chi)/F)$ such that $\chi^{\sigma}= \psi$.
Clearly, Galois conjugacy defines an 
equivalence relation on $\Irr(G)$; if $\mathcal{C}$ is 
the equivalence class of $\chi$ with respect to Galois conjugacy over $F$, 
then $|\mathcal{C}|=| F(\chi) : F |$ (see \cite[Lemma 9.17]{I}). 

\begin{lemma}\textnormal{\cite[Theorem 1]{FORD}} \label{thm:ford}
Let $G$ be a $p$-group and let $\chi$ be an irreducible complex character 
of $G$.  One of the following holds:
\begin{enumerate}
\item [\rmfamily(i)]
There exists a linear character $\lambda$ on a subgroup 
$H$ of $G$ which induces $\chi$ and generates the same field 
as $\chi$: namely, 
$\lambda\ind_{H}^{G} = \chi$ and $\mathbb{Q}(\lambda) = \mathbb{Q}(\chi)$.
\item [\rmfamily(ii)] $p=2$ and there exist subgroups $H < K$ of $G$ with 
$|K/H| = 2$ and a linear character $\lambda$ of $H$ such that 
$\lambda\ind_{H}^{K} = \eta \in \Irr(K)$, 
$[\mathbb{Q}(\lambda): \mathbb{Q}(\eta)] = 2$, 
$\eta\ind_{K}^{G} = \chi$, and $\mathbb{Q}(\eta) = \mathbb{Q}(\chi)$.
\end{enumerate}
\end{lemma}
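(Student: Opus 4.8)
The plan is to induct on $|G|$, carrying fields of values along at every step. If $\chi$ is linear there is nothing to prove: take $H=G$ and $\lambda=\chi$. If $\chi$ is nonlinear but $\ker\chi\neq 1$, pass to $\overline G=G/\ker\chi$ and the corresponding $\overline\chi\in\Irr(\overline G)$; since $|\overline G|<|G|$ and $\mathbb{Q}(\overline\chi)=\mathbb{Q}(\chi)$, the inductive hypothesis applies to $\overline\chi$, and lifting the resulting subgroups and linear characters back through the quotient (induction of characters being compatible with inflation) gives the conclusion for $\chi$ in the same alternative. So we may assume $\chi$ is faithful and nonlinear. Then $Z(G)=Z(\chi)$ is cyclic, since $\chi$ restricted to it is a multiple of a faithful linear character.

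The heart of the proof is to exhibit a normal subgroup $M$ of index $p$ and a character $\xi\in\Irr(M)$ with $\xi\ind_{M}^{G}=\chi$ \emph{and} $\mathbb{Q}(\xi)=\mathbb{Q}(\chi)$; then the inductive hypothesis applied to $(M,\xi)$, together with transitivity of induction, completes the argument, the resulting alternative being discussed below. To find a candidate $M$, pick a maximal abelian normal subgroup $A$ of $G$, so $C_G(A)=A$ and hence $A\supsetneq Z(G)$ as $G$ is nonabelian. If $\chi\restr_{A}$ were homogeneous with (linear) constituent $\theta$, then $|\chi(a)|=\chi(1)$ for every $a\in A$, forcing $A\subseteq Z(\chi)=Z(G)$, a contradiction; so $\chi\restr_{A}$ is inhomogeneous. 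By Clifford theory $\inertiagroup_G(\theta)<G$ and $\chi=\psi\ind_{\inertiagroup_G(\theta)}^{G}$ for the Clifford correspondent $\psi$ of $\theta$; choosing a maximal (hence normal, of index $p$) subgroup $M\geq\inertiagroup_G(\theta)$ and setting $\xi:=\psi\ind_{\inertiagroup_G(\theta)}^{M}\in\Irr(M)$, we obtain $\xi\ind_{M}^{G}=\chi$. (The ``ramified'' Clifford configuration does not intervene, because a $G$-invariant character of $M$ extends to $G$ when $G/M$ is cyclic.)

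What remains is the delicate point: arranging $\mathbb{Q}(\xi)=\mathbb{Q}(\chi)$. Since each value $\chi(g)$ is a sum of values of $\xi$, one automatically has $\mathbb{Q}(\chi)\subseteq\mathbb{Q}(\xi)$, so the risk is that the field strictly grows at some descent. The remedy is a Galois argument: for $\sigma\in\Gamma(\mathbb{Q}(\xi)/\mathbb{Q}(\chi))$ we have $\chi^{\sigma}=\chi$, hence $\theta^{\sigma}$ is a constituent of $\chi\restr_{A}$ and so is $G$-conjugate to $\theta$, with $\inertiagroup_G(\theta^{\sigma})=\inertiagroup_G(\theta)$; tracing this through the Clifford correspondence shows $\xi^{\sigma}$ is $G$-conjugate to $\xi$. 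A suitable choice of $\theta$ within its $G$-orbit (and correspondingly of $M$ and $\xi$), combined with an analysis of the action of $\Gamma(\mathbb{Q}(\xi)/\mathbb{Q}(\chi))$, shows that for odd $p$ one can always force $\Gamma(\mathbb{Q}(\xi)/\mathbb{Q}(\chi))$ to fix $\xi$, i.e.\ $\mathbb{Q}(\xi)=\mathbb{Q}(\chi)$; the recursion then terminates in a linear character without ever enlarging the field, yielding alternative~(i). For $p=2$ this adjustment can genuinely fail, and a failed step has a very specific shape --- the quaternion phenomenon, already visible for $Q_8$, whose degree-$2$ character $\eta$ satisfies $\mathbb{Q}(\eta)=\mathbb{Q}$ yet is induced from a faithful linear character $\lambda$ of a $C_4$ with $[\mathbb{Q}(\lambda):\mathbb{Q}(\eta)]=2$ --- and such a step is exactly what produces alternative~(ii), with $H<K$ the relevant index-$2$ pair. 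The main obstacle, then, is not monomiality of $p$-groups (which is classical) but this field bookkeeping: pinning down when the inducing linear character can be kept inside $\mathbb{Q}(\chi)$, and recognising that the only obstruction, occurring solely for $p=2$, is the one recorded in~(ii).
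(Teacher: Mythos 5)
First, note that the paper offers no proof of this lemma: it is quoted verbatim from Ford \cite{FORD}, so there is no internal argument to compare yours against. Judged on its own terms, your proposal reproduces the standard proof that $p$-groups are monomial --- induct on $|G|$, reduce to $\chi$ faithful and nonlinear, take a maximal abelian normal subgroup $A$ with $C_G(A)=A$, observe that $\chi\restr_A$ is inhomogeneous, and descend via the Clifford correspondent to an irreducible $\xi$ on a normal subgroup $M$ of index $p$ with $\xi\ind_M^G=\chi$. All of that is correct, but it is not where the theorem lives: the entire content of Ford's result is the field condition, and that is exactly the step you do not prove.

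Concretely, the gap is the sentence asserting that ``a suitable choice of $\theta$ within its $G$-orbit (and correspondingly of $M$ and $\xi$), combined with an analysis of the action of $\Gamma(\mathbb{Q}(\xi)/\mathbb{Q}(\chi))$, shows that for odd $p$ one can always force'' $\mathbb{Q}(\xi)=\mathbb{Q}(\chi)$. The remedy as stated cannot work: replacing $\theta$ by a $G$-conjugate $\theta^g$ replaces $\xi$ by a $G$-conjugate character, and conjugate characters have identical fields of values, so this choice changes nothing. The genuine degrees of freedom (which maximal subgroup $M\supseteq \inertiagroup_G(\theta)$ to route through, or which abelian normal subgroup to start from) are never analysed. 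What your Galois observation actually yields is only this: $\Gamma(\mathbb{Q}(\xi)/\mathbb{Q}(\chi))$ acts freely on the $p$ conjugates of $\xi$ (a $\sigma$ fixing $\xi$ fixes $\mathbb{Q}(\xi)$ pointwise), so $[\mathbb{Q}(\xi):\mathbb{Q}(\chi)]$ is $1$ or $p$; for odd $p$ the bad value $p$ is not excluded by any argument you give, since $(\mathbb{Z}/p^m\mathbb{Z})^{\times}$ has elements of order $p$ once $m\geq 2$. Ruling it out (for odd $p$), and showing for $p=2$ that at most one field-enlarging step occurs, that it enlarges the field by degree exactly $2$, and that it can be arranged to happen only at the final, linear stage as in alternative (ii), is the actual theorem; Ford's proof handles it by a minimal-counterexample analysis of the pair $(H,\psi)$ with $\psi\ind_H^G=\chi$ and $\mathbb{Q}(\psi)=\mathbb{Q}(\chi)$ and $|H|$ minimal, pinning down the structure of a hypothetical nonlinear minimal $\psi$ --- none of which is present here. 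As written, the proposal establishes monomiality with the correct bookkeeping framework but leaves the field-control claim, i.e.\ the statement being proved, as an assertion.
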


\begin{definition} \label{D1}
Let $G$ be a finite group and let $\chi \in \Irr(G)$.
Define $\Gamma(\chi)$ to be the Galois group of $\mathbb{Q}(\chi)$ 
over $\mathbb{Q}$ and $d(\chi) := |\Gamma(\chi)| \cdot \chi(1)$.
\end{definition}

\begin{lemma} \textnormal{\cite[Lemma 2.2]{BG}} \label{lemma:c(G)Algorithm}
Let $G$ be a finite group.  Let $X \subset 
\Irr(G)$ be such that $\cap_{\chi \in X} \ker (\chi)= 1$ 
and $\cap_{\chi \in Y} \ker (\chi) \neq 1$ 
for every proper subset $Y$ of $X$. 
Let $\xi_X = \sum_{\chi \in X} \left[ \sum_{\sigma \in \Gamma(\chi)}
\chi^{\sigma}  \right]$ and let $m(\xi_X)$ be the absolute value of 
the minimum value that $\xi_X$ takes over $G$.
Then $$c(G) = \min \{\xi_X(1) + m(\xi_X) \; | \; X \subset \Irr(G) 
\text{\ satisfying\ the\ above\ property} \}.$$
\end{lemma}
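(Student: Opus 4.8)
The plan is to recast $c(G)$ as an extremal problem about ordinary characters and then optimize. The first step is the standard observation (essentially due to Wong \cite{W}) that a complex representation of $G$ is a quasi-permutation representation precisely when its character is $\mathbb{Z}_{\geq 0}$-valued, and that conversely every genuine character $\theta$ (that is, a non-negative $\mathbb{Z}$-combination of irreducibles) with $\theta(g)\in\mathbb{Z}_{\geq 0}$ for all $g$ is afforded by some complex representation, which is then a quasi-permutation one; since a representation and its character have the same kernel, this gives
\[
c(G)=\min\bigl\{\theta(1):\theta\text{ a genuine character of }G,\ \ker\theta=1,\ \theta(g)\geq 0\text{ for all }g\in G\bigr\}.
\]
Such a $\theta$ is $\mathbb{Q}$-valued, hence fixed by $\mathrm{Gal}(\overline{\mathbb{Q}}/\mathbb{Q})$, so in $\theta=\sum_{\chi\in\Irr(G)}a_\chi\chi$ the multiplicities $a_\chi\in\mathbb{Z}_{\geq 0}$ are constant on Galois orbits. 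Writing $\widehat{\mathcal{O}}=\sum_{\chi\in\mathcal{O}}\chi$ for a Galois orbit $\mathcal{O}$ on $\Irr(G)$, and recalling from \cite[Lemma 9.17]{I} that the $\chi^\sigma$, $\sigma\in\Gamma(\chi)$, run exactly over the members of the orbit of $\chi$, we have $\widehat{\mathcal{O}}=\sum_{\sigma\in\Gamma(\chi)}\chi^\sigma$ for any $\chi\in\mathcal{O}$, and thus $\theta=\sum_{\mathcal{O}}a_{\mathcal{O}}\widehat{\mathcal{O}}$.

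To prove $c(G)\leq\xi_X(1)+m(\xi_X)$ for every $X\subset\Irr(G)$ with the property in the statement: distinct members of $X$ lie in distinct Galois orbits, since two Galois-conjugate characters have equal kernels and would contradict the minimality of $X$; hence $\xi_X$ is the sum of the corresponding $|X|$ distinct orbit sums, each with multiplicity one, so $\xi_X+m(\xi_X)\cdot 1_G$ is again a genuine character. It is non-negative on $G$ by the definition of $m(\xi_X)$, and its kernel is $\bigcap_{\chi\in X}\bigcap_{\sigma\in\Gamma(\chi)}\ker\chi^\sigma=\bigcap_{\chi\in X}\ker\chi=1$, because $\ker\chi^\sigma=\ker\chi$. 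Thus it is a faithful quasi-permutation character of degree $\xi_X(1)+m(\xi_X)$, and the reformulation above bounds $c(G)$ by this number; minimizing over $X$ gives one inequality.

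For the reverse inequality, choose $\theta$ attaining the minimum in the reformulation and write $\theta=\sum_{\mathcal{O}}a_{\mathcal{O}}\widehat{\mathcal{O}}$. Since $\ker\theta=\bigcap_{a_\chi>0}\ker\chi$ and all characters in a common orbit share a kernel, picking one irreducible constituent from each non-trivial orbit $\mathcal{O}$ with $a_{\mathcal{O}}\geq 1$ produces a set $Y\subset\Irr(G)$ with $\bigcap_{\chi\in Y}\ker\chi=1$; deleting redundant members turns $Y$ into a subset $X$ minimal with this property, that is, one satisfying the hypothesis of the statement. Then $\xi_X=\sum_{\mathcal{O}\in\mathcal{O}(X)}\widehat{\mathcal{O}}$ for a set $\mathcal{O}(X)$ of non-trivial orbits each with $a_{\mathcal{O}}\geq 1$, so $\psi:=\theta-\xi_X=\sum_{\mathcal{O}\in\mathcal{O}(X)}(a_{\mathcal{O}}-1)\widehat{\mathcal{O}}+\sum_{\mathcal{O}\notin\mathcal{O}(X)}a_{\mathcal{O}}\widehat{\mathcal{O}}$ is a genuine character (or $0$). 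Pick $g_0\in G$ with $\xi_X(g_0)=-m(\xi_X)$. Since $\psi$ is a character we have $\psi(1)\geq\psi(g_0)$, and since $\theta$ is a quasi-permutation character we have $\theta(g_0)\geq 0$; therefore
\[
c(G)=\theta(1)=\xi_X(1)+\psi(1)\geq\xi_X(1)+\psi(g_0)=\xi_X(1)+\theta(g_0)+m(\xi_X)\geq\xi_X(1)+m(\xi_X),
\]
which is at least the minimum in the statement. Combining the two inequalities proves the formula.

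The step requiring genuine care is the final display: one is tempted to argue that the optimal $\theta$ dominates $\xi_X$ pointwise on $G$ and conclude $\theta(1)\geq\xi_X(1)+m(\xi_X)$ directly, but this fails because $\psi=\theta-\xi_X$, while a genuine character, can take negative values on $G$. The remedy — evaluating everything at the element $g_0$ where $\xi_X$ is least, and playing the degree bound $\psi(1)\geq\psi(g_0)$ against the non-negativity $\theta(g_0)\geq 0$ forced by the quasi-permutation hypothesis — is the whole content of the argument; the reformulation of $c(G)$ and the Galois-orbit bookkeeping are routine.
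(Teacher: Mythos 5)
Your proof is correct. Note that the paper does not prove this lemma at all --- it is imported verbatim from Behravesh--Ghaffarzadeh \cite[Lemma 2.2]{BG} --- so the only comparison available is with that source, and your argument is essentially the standard one: reformulate $c(G)$ as minimizing the degree of a faithful, $\mathbb{Z}_{\geq 0}$-valued genuine character, observe that rationality forces constant multiplicities on Galois orbits, and then obtain the two inequalities, the nontrivial one coming from evaluating at the element $g_0$ minimizing $\xi_X$ and playing $\psi(1)\geq\psi(g_0)$ against $\theta(g_0)\geq 0$. Two small points worth tightening: in your displayed reformulation the condition should read $\theta(g)\in\mathbb{Z}_{\geq 0}$ rather than $\theta(g)\geq 0$ (otherwise the subsequent claim that $\theta$ is $\mathbb{Q}$-valued does not follow); and your choice of $g_0$ with $\xi_X(g_0)=-m(\xi_X)$ tacitly assumes the minimum of $\xi_X$ is negative, which holds because every $\chi\in X$ is nontrivial (else minimality fails for $G\neq 1$), so $\langle\xi_X,1_G\rangle=0$ forces $\sum_{g}\xi_X(g)=0$ and hence a negative value somewhere. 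Neither point is a genuine gap.
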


\noindent 
We identify $X_G\subset \Irr(G)$ with a minimal degree 
faithful quasi-permutation representation of $G$ if 
\begin{equation}\label{eq:X_G} 
\bigcap_{\chi \in X_G} \ker (\chi) = 1 \text{ and } 	
\bigcap_{\chi \in Y} \ker (\chi) \neq 1 
\text{ for every } Y \subset X_G
\end{equation}
and $c(G) = \xi_{X_G}(1) + m(\xi_{X_G})$.

\begin{lemma}\textnormal{\cite[Theorem 2.3]{GA}} \label{L1}
Let $G$ be a $p$-group whose center is minimally
generated by $d$ elements and let $X_{G}$ be a minimal degree faithful
quasi-permutation representation of $G$ as defined in Equation 
$(\ref{eq:X_G})$.
\begin{enumerate}
\item [\rmfamily(i)] $|X_G| = d$.
\item [\rmfamily(ii)] $m(\xi_{X_G}) = \frac{1}{p-1} \sum_{\chi \in X_{G}}
 \left[ \sum_{\sigma \in \Gamma(\chi)} 
\chi^{\sigma}(1)  \right].$
\end{enumerate}
\end{lemma}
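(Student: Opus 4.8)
The plan is to move the two defining conditions on $X_G$ onto the socle $V:=\Omega_1(Z(G))$ of the centre, which for $p$ odd is an $\mathbb{F}_p$-vector space of dimension $d$. Since a nontrivial normal subgroup of a $p$-group meets the centre, hence meets $V$, nontrivially, for every subset $Y\subseteq X_G$ one has $\bigcap_{\chi\in Y}\ker\chi=1$ if and only if $\bigcap_{\chi\in Y}W_\chi=0$, where $W_\chi:=\ker\chi\cap V$. The first thing to establish is that each $W_\chi$ is a \emph{hyperplane} of $V$. Writing $\chi\restr Z(G)=\chi(1)\lambda_\chi$ with $\lambda_\chi$ linear, the group $Z(G)/\ker\lambda_\chi$ is cyclic and $\ker\lambda_\chi=\ker\chi\cap Z(G)$, so $V/W_\chi$ embeds in a cyclic $p$-group while being generated by elements of order at most $p$; thus $|V/W_\chi|\le p$. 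Moreover $|V/W_\chi|=p$: otherwise $V\subseteq\ker\chi$, so $W_\chi=V$ and deleting $\chi$ from $X_G$ would not change $\bigcap_{\chi'\in X_G}W_{\chi'}$, hence not affect faithfulness, contradicting minimality.

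Granting this, $X_G$ is a minimal family of hyperplanes of $V\cong\mathbb{F}_p^d$ with trivial intersection. For any ordering $\chi_1,\dots,\chi_m$ of $X_G$ each partial intersection strictly shrinks (if $W_{\chi_1}\cap\cdots\cap W_{\chi_k}\subseteq W_{\chi_{k+1}}$ then $\chi_{k+1}$ would be deletable, contradicting minimality), so $\dim\bigcap_i W_{\chi_i}\le d-m$; since this dimension is $0$ we get $m\le d$. Conversely $d=\operatorname{codim}\bigcap_i W_{\chi_i}\le\sum_i\operatorname{codim}W_{\chi_i}=m$. Hence $m=|X_G|=d$, which is part (i). Equality of codimensions also forces the $d$ hyperplanes to be in general position, so the map $V\to\bigoplus_i V/W_{\chi_i}$ is an isomorphism and in a suitable basis $e_1,\dots,e_d$ of $V$ we have $W_{\chi_i}=\langle e_j:j\neq i\rangle$.

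For part (ii) I would first exhibit a single element of $G$ that is extremal for every summand of $\xi_{X_G}$ at once. General position gives $\bigcup_i W_{\chi_i}\neq V$; concretely $g:=e_1+\cdots+e_d$ lies in no $W_{\chi_i}$, so $g\in Z(G)$ has order $p$ and $g\notin\ker\chi$ for all $\chi\in X_G$. Because $p$ is odd, $\mathbb{Q}(\chi)$ is a cyclotomic field $\mathbb{Q}(\zeta_{p^l})$ with $l\ge1$, and evaluating the Galois sum of the linear character $\lambda_\chi$ at a central element of order $p$ outside $\ker\chi$ collapses, via $\sum_{(j,p)=1}\zeta_p^{\,j}=-1$, to $\sum_{\sigma\in\Gamma(\chi)}\chi^\sigma(g)=-p^{l-1}\chi(1)=-\tfrac1{p-1}\sum_{\sigma\in\Gamma(\chi)}\chi^\sigma(1)$. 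Summing over $\chi\in X_G$ yields $\xi_{X_G}(g)=-\tfrac1{p-1}\sum_{\chi\in X_G}\sum_{\sigma\in\Gamma(\chi)}\chi^\sigma(1)$, so $m(\xi_{X_G})$ is at least the asserted value. For the reverse inequality I would apply Ford's theorem (Lemma~\ref{thm:ford}(i), available since $p$ is odd): write $\chi=\lambda\ind_H^G$ with $\mathbb{Q}(\lambda)=\mathbb{Q}(\chi)$, so $\sum_{\sigma\in\Gamma(\chi)}\chi^\sigma=\theta\ind_H^G$ with $\theta=\sum_{(j,p)=1}\lambda^j$, whose values are $\phi(p^l)$, $-p^{l-1}$, or $0$; then $\theta+p^{l-1}1_H$ is an effective character of $H$, hence $\sum_\sigma\chi^\sigma+p^{l-1}(1_H\ind_H^G)$ is a character of $G$ and is nonnegative. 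Since the permutation character $1_H\ind_H^G$ never exceeds its degree $|G:H|=\chi(1)$, this gives $\sum_\sigma\chi^\sigma(g)\ge-p^{l-1}\chi(1)$ for all $g\in G$; adding over $X_G$ yields $\xi_{X_G}(g)\ge-\tfrac1{p-1}\sum_{\chi}\sum_\sigma\chi^\sigma(1)$, the matching bound, which completes (ii).

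The step I expect to be the main obstacle is the transition from ``the kernels of $X_G$ intersect trivially'' to ``the corresponding hyperplanes of $V$ sit in general position'', together with the hyperplane claim that precedes it: this is what makes a single group element simultaneously minimize every $\sum_\sigma\chi^\sigma$, and it is precisely where the oddness of $p$ is indispensable --- for $p=2$, Lemma~\ref{L2} already shows $|X_G|$ can drop below $d$, and Lemma~\ref{thm:ford} acquires its second alternative. The cyclotomic bookkeeping in the last paragraph is then routine, once one knows that $\mathbb{Q}(\chi)$ has $p$-power conductor.
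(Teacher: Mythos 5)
Your proof is correct. Note that the paper itself offers no proof of this statement -- it is quoted verbatim from \cite[Theorem 2.3]{GA} -- so there is nothing internal to compare against; your argument is, however, a complete and self-contained derivation along the standard lines: reduce both conditions in Equation (\ref{eq:X_G}) to the socle $\Omega_1(Z(G))$, show each $\ker\chi$ cuts out a hyperplane there, and get $|X_G|=d$ by the two codimension counts; then for (ii) evaluate $\xi_{X_G}$ at a central element of order $p$ lying outside every kernel (which exists precisely because the hyperplanes are in general position), and match this with the lower bound coming from writing each Galois sum as $\theta\ind_H^G$ with $\theta\geq -p^{l-1}$ pointwise. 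Two small presentational points, neither a gap: the fact $\mathbb{Q}(\chi)=\mathbb{Q}(\zeta_{p^l})$ is already needed for the first half of (ii), so Ford's theorem (Lemma \ref{thm:ford}(i)) should be invoked there rather than only for the reverse inequality; and the phrase ``is a character and is nonnegative'' conflates two things -- what you actually use is that inducing a pointwise-nonnegative class function yields a pointwise-nonnegative class function, together with $0\leq 1_H\ind_H^G\leq |G:H|$, and that is exactly right.
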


\begin{lemma} \textnormal{\cite[Theorem 2.11]{HB1997}} \label{L3}
Let $G \cong \prod_{i = 1}^k C_{m_{i}}$ 
where $C_{m_i}$ is cyclic of prime power order $m_i$.
If $n$ is maximal such that $G$ has a direct factor isomorphic to 
$C_{6}^{n}$, then $c(G)=T(G)-n$ where $T(G)=\sum_{i=1}^k m_{i}.$
\end{lemma}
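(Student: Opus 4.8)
The plan is to prove $c(G)=T(G)-n$ by establishing the two inequalities separately, using throughout the description of $c(G)$ in Lemma \ref{lemma:c(G)Algorithm}. First I would fix the decomposition into cyclic groups of prime power order, $G\cong\prod_q\prod_{j=1}^{k_q}C_{q^{a_{q,j}}}$, so that $T(G)=\sum_q\sum_j q^{a_{q,j}}$. Each Sylow subgroup $G_q=\prod_j C_{q^{a_{q,j}}}$ is a $q$-group, so the identity recalled in the introduction gives $\mu(G_q)=\sum_j q^{a_{q,j}}$, and Lemma \ref{thm:nilpotent} gives $\mu(G)=\sum_q\mu(G_q)=T(G)$; thus the claim is equivalent to $c(G)=\mu(G)-n$. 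I would also note that $C_6^n\cong C_2^n\times C_3^n$ is a direct factor of $G$ exactly when its prime-power decomposition contains at least $n$ factors $C_2$ and at least $n$ factors $C_3$, so $n=\min(s_2,s_3)$ where $s_q:=\#\{j:a_{q,j}=1\}$.

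For the upper bound, relabel so that $s_2\le s_3$ (hence $n=s_2$), pair $n$ of the factors $C_2$ with $n$ of the factors $C_3$ into direct factors isomorphic to $C_6$, and let $X$ consist of a faithful linear character of order $6$ on each such $C_6$ (available since $C_6$ is cyclic) together with a faithful linear character of order $q^{a_{q,j}}$ on each remaining primary cyclic factor. Each of these characters has kernel equal to the product of the other chosen factors, so the joint kernel is trivial while omitting any one character leaves a nontrivial kernel; thus $X$ satisfies the hypotheses of Lemma \ref{lemma:c(G)Algorithm}. As the chosen characters are supported on pairwise disjoint coordinates, $\xi_X$ is a sum of class functions in independent variables, so $\xi_X(1)$ is the sum of the individual degrees and $m(\xi_X)$ the sum of the absolute values of the individual minima. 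A short Ramanujan-sum computation shows that a faithful character of order $q^e$ contributes $\phi(q^e)+q^{e-1}=q^e$ to $\xi_X(1)+m(\xi_X)$, while a faithful character of order $6$ contributes $\phi(6)+2=4$ (its Galois sum $\chi+\bar\chi$ runs through $2,1,-1,-2,-1,1$, with minimum $-2$). Summing, $\xi_X(1)+m(\xi_X)=4n+\bigl(T(G)-5n\bigr)=T(G)-n$, so $c(G)\le T(G)-n$.

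For the lower bound I would take an arbitrary $X$ meeting the hypotheses of Lemma \ref{lemma:c(G)Algorithm} and show $\xi_X(1)+m(\xi_X)\ge T(G)-n$. Writing each $\chi\in X$ as the product $\prod_q\chi_q$ of its $q$-primary parts, one has $\sum_{\sigma\in\Gamma(\chi)}\chi^\sigma=\prod_q\bigl(\sum_\tau\chi_q^\tau\bigr)$, so $\xi_X$ is a sum of products of the per-prime orbit sums; in particular $\xi_X(1)=\sum_{\chi\in X}\phi(o(\chi))$, and for each prime $q$ the $q$-parts $Y_q:=\{\chi_q:\chi\in X,\ \chi_q\ne 1\}$ generate $\widehat{G_q}$. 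The heart of the argument is a lower bound on $m(\xi_X)$ obtained by exhibiting an element at which $\xi_X$ is sufficiently negative. For this one shows that the contribution attributable to the prime $q$ is at least $\mu(G_q)$ --- using that $c(G_q)=\mu(G_q)$ for abelian $q$-groups, together with the elementary fact that the sorted sequence of orders of any minimal generating set of an abelian $q$-group dominates, term by term, the sorted sequence of its elementary divisors --- with one extra unit of slack whenever $Y_q$ has more characters of order $q$ than $G_q$ has cyclic factors of order $q$. The only data shared between two distinct primes are the order-$2$ and order-$3$ parts of a character of order divisible by $6$; each such character forces exactly one unit of economy (the value $\phi(6)+2=4$ against $2+3$), and the slack terms for $q=2$ and $q=3$ together absorb this economy down to at most $\min(s_2,s_3)=n$. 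Assembling these estimates yields $\xi_X(1)+m(\xi_X)\ge\sum_q\mu(G_q)-n=T(G)-n$.

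The step I expect to be the main obstacle is this last one. Because $m(\xi_X)$ is the absolute value of a \emph{minimum of a sum}, it does not break up additively across the primes, so the genuine interaction between the prime-$2$ and prime-$3$ parts of $\xi_X$ must be controlled by hand; and that interaction is exactly the source of the correction $-n$, since no pair of primes other than $\{2,3\}$, and no single prime, yields any economy over the naive value $T(G)$. Away from this interaction everything decouples along the Sylow subgroups and reduces to the known identity $c(G_q)=\mu(G_q)=\sum_j q^{a_{q,j}}$ for abelian $q$-groups.
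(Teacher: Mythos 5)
This lemma is imported by the paper as a citation to \cite[Theorem 2.11]{HB1997}; the paper contains no proof of it, so there is nothing internal to compare your argument against. Judged on its own terms, your upper bound is complete and correct: after regrouping $n$ pairs $C_2\times C_3$ into copies of $C_6$ you take one faithful linear character per direct factor, observe that for characters supported on pairwise disjoint factors both $\xi_X(1)$ and $m(\xi_X)$ are additive, and the Ramanujan-sum values $\phi(q^e)+q^{e-1}=q^e$ and $\phi(6)+2=4$ give exactly $T(G)-n$; the minimality hypothesis of Lemma \ref{lemma:c(G)Algorithm} is plainly satisfied by this $X$.

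The lower bound, however, is a plan rather than a proof, and the gap sits exactly where you say it does. Two steps are asserted without justification. First, to bound $m(\xi_X)$ from below you must exhibit a single element $g$ at which $\xi_X(g)$ is sufficiently negative; since $\xi_X$ is a sum over $\chi\in X$ of products $\prod_q f_{\chi_q}(g_q)$ of per-prime Ramanujan sums, the coordinate $g_q$ that is optimal for one character need not be optimal for another, and ``the contribution attributable to the prime $q$ is at least $\mu(G_q)$'' does not follow from $c(G_q)=\mu(G_q)$ --- that identity concerns a different minimization (over sets of characters of $G_q$ satisfying Equation \eqref{eq:X_G} for $G_q$, not over the $q$-parts of your given $X$, which need not form such a set). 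Second, the claim that ``the slack terms for $q=2$ and $q=3$ together absorb this economy down to at most $\min(s_2,s_3)=n$'' is essentially the whole theorem: a priori $X$ could contain more than $n$ characters whose order is divisible by $6$ with both primary parts of prime order, each claiming a unit of economy, and you need a genuine combinatorial argument (using the minimality of $X$ and the fact that the $q$-parts must generate the dual of $G_q$) to cap the total economy at $n$. Your identification of the $(2,3)$ anomaly as the unique source of economy --- via $q^a\phi(r^b)\geq q^a+r^b$ except when $q^a=2$ and $r^b=3$ --- is the right guiding fact, but until these two points are carried out the inequality $c(G)\geq T(G)-n$ is not established.
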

\begin{lemma} \label{thm:mu(G)=c(G)}
\textnormal{\cite[Theorem 3.2]{BG}}
\label{thm:pgroup}
If $G$ is a finite $p$-group of odd order, then $c(G) = \mu(G)$.
\end{lemma}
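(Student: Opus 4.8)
The plan is to prove the two inequalities $c(G)\le\mu(G)$ and $\mu(G)\le c(G)$ separately. The first holds for every finite group, since a faithful permutation representation of degree $\mu(G)$ consists of permutation matrices whose traces are non-negative integers, hence is already a faithful quasi-permutation representation. So the whole content is the reverse inequality, and this is where the hypotheses that $G$ is a $p$-group with $p$ odd are used.

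To prove $\mu(G)\le c(G)$, I would start from a set $X_G\subseteq\Irr(G)$ realizing a minimal degree faithful quasi-permutation representation, as in Equation~(\ref{eq:X_G}). Since every Galois conjugate $\chi^\sigma$ has the same degree as $\chi$, Lemma~\ref{L1}(ii) gives $m(\xi_{X_G})=\frac{1}{p-1}\sum_{\chi\in X_G}|\Gamma(\chi)|\,\chi(1)=\frac{1}{p-1}\,\xi_{X_G}(1)$, and therefore
\[
c(G)=\xi_{X_G}(1)+m(\xi_{X_G})=\frac{p}{p-1}\sum_{\chi\in X_G}d(\chi),
\]
with $d(\chi)=|\Gamma(\chi)|\,\chi(1)$ as in Definition~\ref{D1}. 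The next step is to apply Lemma~\ref{thm:ford} to each $\chi\in X_G$: because $p$ is odd, alternative~(ii) does not occur, so there exist a subgroup $H_\chi\le G$ and a linear character $\lambda_\chi$ of $H_\chi$ with $\lambda_\chi\ind_{H_\chi}^{G}=\chi$ and $\mathbb{Q}(\lambda_\chi)=\mathbb{Q}(\chi)$. Here $\lambda_\chi$ is non-trivial, since otherwise $\chi=1_G$ and deleting it from $X_G$ would contradict the minimality condition in~(\ref{eq:X_G}); as $H_\chi$ is a $p$-group, $\lambda_\chi$ therefore has image cyclic of order $p^{k_\chi}$ with $k_\chi\ge 1$, so $\mathbb{Q}(\chi)=\mathbb{Q}(\lambda_\chi)$ is the $p^{k_\chi}$-th cyclotomic field and $|\Gamma(\chi)|=\phi(p^{k_\chi})=p^{k_\chi-1}(p-1)$, while $\chi(1)=[G:H_\chi]$. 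Hence
\[
\frac{p}{p-1}\,d(\chi)=p^{k_\chi}\,[G:H_\chi]=[G:\ker\lambda_\chi],
\]
and setting $N_\chi:=\ker\lambda_\chi$ yields $c(G)=\sum_{\chi\in X_G}[G:N_\chi]$.

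Finally, I would check that $\{N_\chi:\chi\in X_G\}$ determines a \emph{faithful} permutation representation: $\Core_G(N_\chi)=\bigcap_{g\in G}N_\chi^{\,g}=\ker(\lambda_\chi\ind_{H_\chi}^{G})=\ker\chi$, so $\bigcap_{\chi\in X_G}\Core_G(N_\chi)=\bigcap_{\chi\in X_G}\ker\chi=1$; its degree is $\sum_{\chi\in X_G}[G:N_\chi]=c(G)$, giving $\mu(G)\le c(G)$ and hence equality. I do not anticipate a genuine obstacle here: once Lemmas~\ref{thm:ford} and~\ref{L1} are available the argument is essentially bookkeeping. The one step requiring care is the use of Ford's theorem in the sharp form $\mathbb{Q}(\lambda_\chi)=\mathbb{Q}(\chi)$, which is exactly what makes $\frac{p}{p-1}d(\chi)$ equal to the index $[G:\ker\lambda_\chi]$, together with the elementary but necessary observation that an optimal $X_G$ contains no trivial character, so that $k_\chi\ge 1$ and the cyclotomic degree formula applies.
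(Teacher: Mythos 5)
Your argument is correct. Note that the paper itself offers no proof of this statement: it is imported verbatim as \cite[Theorem 3.2]{BG}, so there is no internal argument to compare against. What you have written is essentially the standard proof (and, in substance, the one in the cited source): the inequality $c(G)\le\mu(G)$ is formal, and for the converse you correctly combine Lemma \ref{L1}(ii) to get $c(G)=\frac{p}{p-1}\sum_{\chi\in X_G}d(\chi)$ with Ford's theorem (Lemma \ref{thm:ford}, where oddness of $p$ excludes case (ii)) to write each $\chi$ as $\lambda_\chi\!\ind_{H_\chi}^G$ with $\mathbb{Q}(\lambda_\chi)=\mathbb{Q}(\chi)$, whence $\frac{p}{p-1}d(\chi)=[G:\ker\lambda_\chi]$ and $\ker\chi=\Core_G(\ker\lambda_\chi)$. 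The two points you flag as needing care — that an optimal $X_G$ contains no trivial character (so the cyclotomic degree formula applies with $k_\chi\ge 1$) and that the field equality in Ford's theorem is what makes the index computation exact — are indeed exactly the load-bearing steps, and both are handled correctly.
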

Further, if $G$ is an abelian $p$-group of odd order, 
then $c(G)=\mu(G)=T(G)$ (see \cite{BGHS}). 

Lemma \ref{lemma:c(G)Algorithm} shows that 
the character degrees of $G$ play an important role in computing $c(G)$. 
We mention results which assist in finding $\cd(G)$.
\begin{lemma}\textnormal{\cite[Theorem 12.11]{I}} \label{thm:cdG1}
Let $G$ be a non-abelian group and let $p$ be a prime. 
Then $\cd(G) = \{ 1, p \}$ if and only if one of the following holds:
\begin{enumerate}
\item $G$ has an abelian normal subgroup of index $p$; 
\item $|G:Z(G)| = p^3$.
\end{enumerate}
\end{lemma}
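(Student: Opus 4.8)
The plan is to prove the two implications separately; this is a classical theorem of Isaacs, and I would follow the line of argument in \cite[Theorem 12.11]{I}. The reverse implications are routine and rest on Ito's theorem (that $\chi(1)$ divides $|G:N|$ for every abelian normal $N\trianglelefteq G$ and every $\chi\in\Irr(G)$) together with the degree bound $\chi(1)^{2}\le |G:Z(G)|$. If $G$ has an abelian normal subgroup $A$ of index $p$, then Ito gives $\chi(1)\mid p$ for all $\chi\in\Irr(G)$, so $\cd(G)\subseteq\{1,p\}$, with equality since $G$ is non-abelian. If instead $|G:Z(G)|=p^{3}$, then Ito applied to $Z(G)$ gives $\chi(1)\mid p^{3}$ while $\chi(1)^{2}\le p^{3}$ forces $\chi(1)\le p$, so again $\cd(G)=\{1,p\}$.

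For the forward direction, assume $\cd(G)=\{1,p\}$ and that $G$ has no abelian normal subgroup of index $p$; the goal is $|G:Z(G)|=p^{3}$. I would begin from a maximal normal abelian subgroup $A$ of $G$, so that $C_{G}(A)=A$; hence $Z(G)\le A$ and $G/A$ acts faithfully on $A$, and since $A$ is itself abelian and normal the hypothesis forces $|G:A|\ge p^{2}$. The next step is Clifford theory over $\Irr(A)$: for a linear character $\lambda\in\Irr(A)$ with inertia group $\inertiagroup_{G}(\lambda)$, every irreducible constituent of $\lambda^{G}$ has degree a positive multiple of $|G:\inertiagroup_{G}(\lambda)|$ that is at most $p$, so $|G:\inertiagroup_{G}(\lambda)|\in\{1,p\}$; thus every $G$-orbit on $\Irr(A)$ has size $1$ or $p$. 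Because $G$ is non-abelian, $[G,A]\ne 1$, so some $\lambda$ is not $G$-invariant; fix such a $\lambda$ and write $T:=\inertiagroup_{G}(\lambda)$, a normal subgroup of index $p$. Then every $\chi\in\Irr(G)$ lying over $\lambda$ is of the form $\mu^{G}$ with $\mu\in\Irr(T)$ lying over $\lambda$, of degree $\chi(1)/p\le 1$; hence $\lambda$ extends to $T$, and by Gallagher's theorem the characters of $T$ over $\lambda$ are exactly $\hat{\lambda}\beta$ for $\beta\in\Irr(T/A)$. Since these are all linear, $T/A$ is abelian, so $T'\le A$.

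The decisive step is to pin down the indices. Since $T\trianglelefteq G$ with $|G:T|=p$, every $\psi\in\Irr(T)$ has degree dividing $\chi(1)$ for some $\chi\in\Irr(G)$ over it, so $\cd(T)\subseteq\{1,p\}$; and $T$ is non-abelian (otherwise $T$ is an abelian normal subgroup of $G$ of index $p$, excluded), so $\cd(T)=\{1,p\}$ and induction on $|G|$ applies to $T$: either $T$ has an abelian normal subgroup of index $p$, or $|T:Z(T)|=p^{3}$. Confronting this with $Z(T)\le C_{G}(A)=A$, with $T'\le A$ and $|T:A|\ge p$, and with the maximality of $A$ among abelian normal subgroups of $G$, one argues that $|G:A|=p^{2}$ and $|A:Z(G)|=p$, whence $|G:Z(G)|=|G:A|\cdot|A:Z(G)|=p^{3}$.

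The hard part is precisely this last step: the induction produces structural information about subgroups of the index-$p$ subgroup $T$, and turning it into a statement about $G$ requires carefully tracking which subgroups of $T$ are normal in $G$ and using the maximality of $A$ both to exclude $|G:A|\ge p^{3}$ and to force $Z(G)$ into position; a naive bound on the order of the core of a non-normal index-$p$ subgroup of $T$ is not enough, and one must exploit that \emph{every} irreducible character of $G$ — including those over $G$-invariant linear characters of $A$, which come from degree-$p$ projective representations of $G/A$ — has degree at most $p$. An alternative that avoids the induction is to double-count the degree-$p$ irreducible characters of $G$, comparing $|G|=|G:G'|+p^{2}\cdot\#\{\chi\in\Irr(G):\chi(1)=p\}$ with the count coming from the $G$-orbit decomposition of $\Irr(A)$ (each size-$p$ orbit contributing $|G:A|/p$ such characters, the remainder from the projective representations just mentioned) and then playing these identities against $C_{G}(A)=A$; this again yields $|G:A|=p^{2}$ and $|A:Z(G)|=p$, at the price of heavier bookkeeping.
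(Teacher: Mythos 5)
The paper offers no proof of this statement: it is quoted directly from Isaacs \cite[Theorem 12.11]{I}, so the only benchmark is Isaacs' own argument, which your sketch neither reproduces nor replaces. Your reverse direction is fine. The forward direction, however, has two genuine gaps. First, your opening move --- choosing a maximal abelian normal subgroup $A$ and asserting $C_G(A)=A$ --- is unjustified, and the assertion is false for general (even solvable) groups: in $\mathrm{SL}(2,3)$ the unique maximal abelian normal subgroup is the centre of order $2$, whose centraliser is the whole group. Self-centralisation of a maximal abelian normal subgroup is automatic for nilpotent groups, but at this stage you do not yet know $G$ is nilpotent (that is essentially what you are trying to prove), so the hypothesis $\cd(G)=\{1,p\}$ would have to be used to establish it, and you give no such argument. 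The identity $C_G(A)=A$ is load-bearing: you use it to get $[G,A]\neq 1$, to force $Z(T)\le A$, and implicitly in the final index computation. (A smaller instance of the same issue: ``$|G:A|\ge p^{2}$'' presupposes that $|G:A|$ is a power of $p$, which is not known at that point.)

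Second, and more seriously, the step you yourself call ``decisive'' is not carried out. After invoking induction on $T=\inertiagroup_{G}(\lambda)$ you must split into the two inductive cases, and neither is handled: if $T$ has an abelian normal subgroup $B$ of index $p$, then $B$ need not be normal in $G$, and nothing you have written converts its existence into information about $A$ or $Z(G)$; if $|T:Z(T)|=p^{3}$, then $Z(T)\le A$ (granting the first gap) only yields $|G:A|=p\,|T:A|\le p^{4}$, and the reduction to $|G:A|=p^{2}$ and $|A:Z(G)|=p$ is exactly the content of the theorem in this case. Naming two possible strategies (``careful tracking of normality'' or ``double counting'') without executing either leaves the argument incomplete precisely where the difficulty lies. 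As it stands the proposal is a plausible plan, not a proof.
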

\begin{lemma}\textnormal{\cite[Theorem B]{AM}} \label{thm:cdG2}
Let $G$ be a non-abelian $p$-group with minimal degree $d$. Then 
$G'$ has a normal subgroup $N$ of index $p$ and the 
character degrees of $G/N$ are $1$ and $d$.
\end{lemma}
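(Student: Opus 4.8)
My plan is first to reduce to the following assertion: it is enough to find $N\triangleleft G$ with $N\le G'$, $[G':N]=p$, and $G/N$ carrying an irreducible character of degree $d$. Indeed, put $Q:=G/N$; then $|Q'|=p$, so $Q'\le Z(Q)$, the commutator is a bi-additive alternating pairing $Q/Q'\times Q/Q'\to Q'\cong\mathbb Z/p$ with radical $Z(Q)/Q'$, and a non-degenerate alternating $\mathbb Z/p$-valued form on the abelian $p$-group $Q/Z(Q)$ forces it to be elementary abelian of even rank. I would then use the auxiliary fact that a nonabelian $p$-group $R$ with $|R'|=p$ has $\cd(R)=\{1,\sqrt{|R:Z(R)|}\,\}$ (proof by induction on $|R|$: if $Z(R)=R'$ every nonlinear $\eta$ is faithful, since $\ker\eta$ is a normal subgroup meeting the order-$p$ group $Z(R)=R'$ trivially, whence $\eta(1)^2=|R:Z(R)|$; otherwise pick $L\le Z(R)$ of order $p$ with $L\ne R'$, note $|(R/L)'|=p$ and $Z(R/L)=Z(R)/L$, and observe that any nonlinear $\eta\in\Irr(R)$ must kill $L$ — else $\eta$ would be faithful on $\langle L,R'\rangle\cong C_p\times C_p$, impossible — so $\cd(R)=\cd(R/L)$). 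Applying this to $Q$, its unique nonlinear degree is $\ge d$ (it is a quotient of $G$) and $\le d$ (it has a character of that degree), hence $\cd(G/N)=\{1,d\}$.

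To produce such an $N$, I would induct on $|G|$. Fix $\chi\in\Irr(G)$ of degree $d$. If $\ker\chi\ne 1$, choose $L\le\ker\chi\cap Z(G)$ of order $p$ (possible since $\ker\chi$ is a nontrivial normal subgroup of a $p$-group). Then $G/L$ is a smaller nonabelian $p$-group still carrying a degree-$d$ character, so it has minimal degree $d$; by induction it has $\overline N\triangleleft G/L$ with $\overline N\le(G/L)'$, $[(G/L)':\overline N]=p$, and $\cd((G/L)/\overline N)=\{1,d\}$. Pulling $\overline N$ back to $\widehat N\le G$ and setting $N:=\widehat N\cap G'$, a short index computation (treating $L\le G'$ and $L\not\le G'$ separately) gives $N\triangleleft G$, $N\le G'$, $[G':N]=p$, and $G/N\twoheadrightarrow G/\widehat N\cong(G/L)/\overline N$, so $G/N$ carries a degree-$d$ character and the reduction finishes this case.

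The remaining case is that every degree-$d$ character of $G$ is faithful; then $G$ has a faithful irreducible character, so $Z(G)$ is cyclic, and the previous argument does not apply. Here I would use that $G$ is an $M$-group (or Ford's Lemma \ref{thm:ford}): write $\chi=\lambda\ind_H^G$ with $\lambda$ linear on $H$ and $[G:H]=d=p^k$, and choose a maximal subgroup $M\supseteq H$. Since $M\supseteq\Phi(G)\supseteq G'$ we get $G'\le M$; and $\psi:=\lambda\ind_H^M\in\Irr(M)$ is not $G$-invariant, satisfies $\psi\ind_M^G=\chi$, and has $\psi(1)=p^{k-1}$. Minimality of $d$ in $G$ forces $p^{k-1}$ to be the minimal degree of $M$ (any smaller nonlinear degree of $M$ would, upon inducing to $G$ or extending if $G$-invariant, produce a nonlinear character of $G$ of degree $<d$; the recursion bottoms out at $k=1$). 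By induction applied to $M$, there is $N_M\triangleleft M$ with $N_M\le M'$, $[M':N_M]=p$, and $M/N_M$ carrying a degree-$p^{k-1}$ character.

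It then remains to transport this datum from $M$ up to $G$: to enlarge $N_M$ to a subgroup of $G'$ of index $p$, make the choice invariant under all of $G$, and check that $G/N$ still carries a degree-$d$ character (so that the reduction applies). I expect this transport to be the main obstacle. Invariance under $G$ rather than merely under $M$ can be secured by a fixed-point argument, since $G/M$ is cyclic of order $p$ and hence acts through a $p$-group on the finite set of candidate index-$p$ subgroups of $G'$. The genuinely delicate point is guaranteeing that the nonlinear character degree of $G/N$ equals $d$ and not a larger power of $p$: in this case no minimal-degree character of $G$ has a nontrivial kernel, so none can be used directly to certify the degree of a quotient, and one has to follow how the identity $\psi\ind_M^G=\chi$ together with the minimality of $d$ survive the construction. (When $d=p$, one can instead combine the reduction with Lemma \ref{thm:cdG1}.)
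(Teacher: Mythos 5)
The paper does not prove this statement; it quotes it as Mann's Theorem B, so your argument has to stand on its own. Its first half does: the reduction (it suffices to find $N\trianglelefteq G$ with $N\le G'$, $[G':N]=p$, and a degree-$d$ irreducible character of $G/N$) is sound, and your Case A --- some minimal character has a nontrivial kernel --- is handled correctly by passing to $G/L$ and pulling back. But Case B, where every degree-$d$ character is faithful, is the entire substance of the theorem, and you do not prove it: you construct $M$, $\psi$, $N_M$ and then state that transporting $N_M$ to a $G$-normal $N\le G'$ of index $p$ with $G/N$ still carrying a degree-$d$ character is ``the main obstacle'' and ``genuinely delicate'', with no argument supplied. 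Worse, the plan is structurally misdirected: if every degree-$d$ character of $G$ is faithful, then \emph{no} proper quotient $G/N$ with $N\neq 1$ has an irreducible character of degree $d$ (such a character would inflate to a non-faithful degree-$d$ character of $G$). So in Case B the sought-after $N$ can only be trivial, and what actually has to be proved is that faithfulness of all minimal characters forces $|G'|=p$; no step in your outline addresses this. The usual route to this case is Mann's Theorem A (a character of minimal degree is induced from a linear character of a \emph{normal} subgroup), which your descent through a maximal subgroup $M$ does not recover.

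Two further defects. In your proof of the auxiliary fact ($|R'|=p$ implies $\cd(R)=\{1,|R:Z(R)|^{1/2}\}$), the assertion that every nonlinear $\eta$ must kill the chosen central subgroup $L\neq R'$ is false: for $R=E\times C_p$ with $E$ extraspecial and $\eta=\chi\otimes\mu$ with $\mu$ nontrivial, $\eta$ is nonlinear and does not kill the $C_p$ factor. The statement itself is standard and the induction can be repaired by replacing $L$ with the order-$p$ subgroup $\ker\eta\cap(L\times R')$, which varies with $\eta$, but the step as written fails. Finally, the parenthetical fallback for $d=p$ via Lemma \ref{thm:cdG1} does not apply: $d=p$ says only that the minimal nonlinear degree is $p$, not that $\cd(G)=\{1,p\}$, so that lemma's hypothesis is not available.
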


\begin{lemma} \textnormal{\cite[{Theorem 2}]{SAcyclic}} 
\label{lemmalinearcharacter}
Let $G$ be a non-abelian $p$-group. 
If $d(Z(G)  \cap G') = d(Z(G))$, 
then $X_{G} \cap \lin(G) = \emptyset$ and 
$p^{s+1}~|~ c(G)$ where $p^s = \min\{ \chi(1)~|~ \chi \in \nl(G) \}$.
\end{lemma}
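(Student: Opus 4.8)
The plan is to analyse how the characters in $X_{G}$ restrict to $Z(G)$, to reduce the question of linear constituents to a codimension count inside the socle $\Omega_{1}(Z(G))$, and then to extract the divisibility statement from the formula $c(G)=\xi_{X_{G}}(1)+m(\xi_{X_{G}})$ together with Lemma \ref{L1}. I would begin by rephrasing the hypothesis: since $Z(G)$ is a finite abelian $p$-group and $A:=Z(G)\cap G'$ is a subgroup of it, every subgroup $B\le Z(G)$ satisfies $\Omega_{1}(B)=B\cap\Omega_{1}(Z(G))$ and $d(B)=\dim_{\mathbb{F}_{p}}\Omega_{1}(B)$, so $d(Z(G)\cap G')=d(Z(G))$ is equivalent to $\Omega_{1}(Z(G))\subseteq G'$. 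Put $d:=d(Z(G))$ and $V:=\Omega_{1}(Z(G))$, an $\mathbb{F}_{p}$-space of dimension $d$. By Lemma \ref{L1}(i) write $X_{G}=\{\chi_{1},\dots,\chi_{d}\}$; for each $i$ the restriction $\chi_{i}|_{Z(G)}$ equals $\chi_{i}(1)\mu_{i}$ for a linear character $\mu_{i}$ of $Z(G)$, so $B_{i}:=\ker\chi_{i}\cap Z(G)$ has cyclic quotient in $Z(G)$.

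To prove $X_{G}\cap\lin(G)=\emptyset$, note that since $Z(G)/B_{i}$ is cyclic, the image of $V$ in $Z(G)/B_{i}$ lies in the socle of that cyclic group, which has order at most $p$; hence $W_{i}:=B_{i}\cap V$ has codimension at most $1$ in $V$. Intersecting $(\ref{eq:X_G})$ with $Z(G)$ gives $\bigcap_{i}B_{i}=1$, so $\bigcap_{i=1}^{d}W_{i}=0$, and subadditivity of codimension yields
\[
d=\mathrm{codim}_{V}(0)\le\sum_{i=1}^{d}\mathrm{codim}_{V}(W_{i})\le d,
\]
forcing $\mathrm{codim}_{V}(W_{i})=1$ for every $i$. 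But if some $\chi_{i}$ were linear, then $G'\subseteq\ker\chi_{i}$, so $V=\Omega_{1}(Z(G))\subseteq G'\cap Z(G)\subseteq B_{i}$ and $W_{i}=V$ has codimension $0$ --- a contradiction. Hence no $\chi_{i}$ is linear.

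For the divisibility, set $p^{s}=\min\{\chi(1):\chi\in\nl(G)\}$. Every $\chi\in X_{G}$ is non-linear, so $\chi(1)$ is a power of $p$ with $\chi(1)\ge p^{s}$; hence $p^{s}$ divides $\xi_{X_{G}}(1)=\sum_{\chi\in X_{G}}|\Gamma(\chi)|\,\chi(1)$. Galois conjugates of $\chi$ have the same degree, so Lemma \ref{L1}(ii) gives $m(\xi_{X_{G}})=\frac{1}{p-1}\,\xi_{X_{G}}(1)$, and therefore $(p-1)\,c(G)=(p-1)\,\xi_{X_{G}}(1)+\xi_{X_{G}}(1)=p\,\xi_{X_{G}}(1)$. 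Thus $p^{s+1}$ divides $p\,\xi_{X_{G}}(1)=(p-1)\,c(G)$, and since $\gcd(p^{s+1},p-1)=1$ we conclude $p^{s+1}\mid c(G)$.

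I do not expect a serious obstacle here: once the hypothesis is translated into $\Omega_{1}(Z(G))\subseteq G'$, the rest is an elementary counting argument. The steps requiring the most care are the standard facts about finite abelian $p$-groups used in that translation (that $\Omega_{1}$ of a subgroup is its intersection with the ambient $\Omega_{1}$, and that the $p$-rank equals $\dim_{\mathbb{F}_{p}}\Omega_{1}$), and making certain that the chain of inequalities above really forces each $W_{i}$ to be a hyperplane.
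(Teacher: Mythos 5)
Your proof is correct: the translation of the hypothesis into $\Omega_{1}(Z(G))\subseteq G'$, the observation that each $\ker\chi_i\cap Z(G)$ meets the socle in a subspace of codimension at most one, the counting argument forcing each such subspace to be a genuine hyperplane (which a linear character would violate), and the extraction of $p^{s+1}\mid c(G)$ from Lemma \ref{L1}(ii) are all sound. The paper only cites this lemma from \cite{SAcyclic} without reproducing a proof, but your argument is the natural one for this statement and I see no gap.
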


\begin{lemma} 
\textnormal{\cite[{Remark 20}]{SAcyclic}} 
\label{P2}
Let $G$ be a non-abelian group of order $p^{6}$.
Assume $d(Z(G)) = m$ where $1 \leq m \leq 4$, $\exp(G) = p^{b}$ where 
$1 \leq b \leq 5$, and $\max\cd(G) = p^{e}$ where $1 \leq i \leq 2$. 
Then 
\[ \sum_{k=1}^{b}a_{k}p^{k} \leq c(G) \leq \sum_{l=1}^{b+e}c_{l}p^{l}, 
\text{ where } a_{k}, c_{l}\in \mathbb{Z}_{\geq 0} \text{ with } 
a_{b}\neq 0, c_{b+e}\neq 0 \text{ and } 
\sum_{k=1}^{b}a_{k} = \sum_{l=1}^{b+e}c_{l} = m. \]
\end{lemma}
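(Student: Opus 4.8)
The plan is to make $c(G)$ completely explicit as a sum of $m$ powers of $p$, and then read off both inequalities. By $(\ref{eq:X_G})$ and Lemma~\ref{L1}, a minimal degree faithful quasi-permutation representation $X_G$ of $G$ has $|X_G| = m$ and
\[
c(G) = \xi_{X_G}(1) + m(\xi_{X_G}) = \sum_{\chi \in X_G} d(\chi) + \frac{1}{p-1}\sum_{\chi \in X_G} d(\chi) = \frac{p}{p-1}\sum_{\chi \in X_G} d(\chi),
\]
where $d(\chi) = |\Gamma(\chi)|\,\chi(1)$ is as in Definition~\ref{D1} and we used $\sum_{\sigma \in \Gamma(\chi)}\chi^{\sigma}(1) = |\Gamma(\chi)|\,\chi(1)$. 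The trivial character does not lie in $X_G$ (removing it would not change $\bigcap_{\chi}\ker\chi$), so it remains to understand $d(\chi)$ for a nontrivial irreducible character $\chi$ of the $p$-group $G$.

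First I would pin down the field $\mathbb{Q}(\chi)$. Since $p$ is odd, Lemma~\ref{thm:ford}(i) supplies a subgroup $H \leq G$ and a linear character $\lambda$ of $H$ with $\lambda\ind_{H}^{G} = \chi$ and $\mathbb{Q}(\lambda) = \mathbb{Q}(\chi)$. As $\lambda$ is a nontrivial linear character of a $p$-group it factors through a cyclic quotient of order $p^{j}$ for some $j \geq 1$, so $\mathbb{Q}(\chi) = \mathbb{Q}(\zeta_{p^{j}})$ and $|\Gamma(\chi)| = \phi(p^{j}) = p^{j-1}(p-1)$. Because $\exp(G) = p^{b}$, all character values of $G$ lie in $\mathbb{Q}(\zeta_{p^{b}})$, forcing $1 \leq j \leq b$; moreover $\chi(1) = p^{i}$ with $0 \leq i \leq e$ since $\max\cd(G) = p^{e}$. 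Hence $d(\chi) = p^{\,i+j-1}(p-1)$ and $\tfrac{p}{p-1}d(\chi) = p^{\,i+j}$, so, writing $\chi(1) = p^{i_{\chi}}$ and $\mathbb{Q}(\chi) = \mathbb{Q}(\zeta_{p^{j_{\chi}}})$,
\[
c(G) = \sum_{\chi \in X_G} p^{\,i_{\chi}+j_{\chi}},
\]
a sum of $m$ powers of $p$ all of whose exponents lie in $\{1,\dots,b+e\}$. Collecting terms by exponent exhibits $c(G) = \sum_{l=1}^{b+e} n_{l}p^{l}$ with $\sum_{l}n_{l} = m$, and in particular $c(G) \leq m\,p^{b+e}$; this is the claimed upper bound, with $c_{b+e} = m$ and $c_{l} = 0$ for $l < b+e$.

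The substantive part is the lower bound: I would produce a single $\chi_{0} \in X_G$ with $i_{\chi_{0}} + j_{\chi_{0}} \geq b$, whence $c(G) = \sum_{\chi} p^{\,i_{\chi}+j_{\chi}} \geq p^{b} + (m-1)p$, which is $\sum_{k=1}^{b} a_{k}p^{k}$ with $a_{b} = 1$ and $a_{1} = m-1$ (or simply $a_{1} = m$ if $b = 1$). To find $\chi_{0}$, fix $g \in G$ with $o(g) = p^{b}$. Since $\bigcap_{\chi \in X_G}\ker\chi = 1$ and $g^{p^{b-1}} \neq 1$, some $\chi_{0} \in X_G$ has $g^{p^{b-1}} \notin \ker\chi_{0}$; as the subgroups of $\langle g\rangle \cong C_{p^{b}}$ form a chain, this forces $\ker\chi_{0} \cap \langle g\rangle = 1$, so $\chi_{0}\restr_{\langle g\rangle}$ is a faithful character of $C_{p^{b}}$ and therefore has a faithful linear constituent $\psi$ (one with $\ker\psi = 1$). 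Because $\chi_{0}^{\sigma} = \chi_{0}$ for every $\sigma$ in $\Gamma(\mathbb{Q}(\zeta_{p^{b}})/\mathbb{Q}(\chi_{0})) = \Gamma(\mathbb{Q}(\zeta_{p^{b}})/\mathbb{Q}(\zeta_{p^{j_{\chi_{0}}}}))$, such $\sigma$ permute the constituents of $\chi_{0}\restr_{\langle g\rangle}$ preserving multiplicities, and the orbit of $\psi$ consists of exactly $p^{\,b-j_{\chi_{0}}}$ faithful linear characters of $\langle g\rangle$ (the action on primitive $p^{b}$-th roots of unity is free). So $\chi_{0}\restr_{\langle g\rangle}$ has at least $p^{\,b-j_{\chi_{0}}}$ linear constituents counted with multiplicity, giving $p^{i_{\chi_{0}}} = \chi_{0}(1) \geq p^{\,b-j_{\chi_{0}}}$, i.e.\ $i_{\chi_{0}} + j_{\chi_{0}} \geq b$. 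I expect this orbit count to be the one genuinely delicate step; everything else is bookkeeping once the identity $c(G) = \sum_{\chi \in X_G} p^{\,i_{\chi}+j_{\chi}}$ is in hand.
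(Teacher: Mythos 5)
Your argument is correct. Note that the paper itself gives no proof of this lemma: it is imported verbatim from \cite[Remark 20]{SAcyclic}, so there is no in-paper argument to compare against. Your route is the natural one and matches what that reference (and the paper's own Remark \ref{remark:QuasiPermRepAsPermRep}) relies on: Lemma \ref{L1} turns $c(G)$ into $\tfrac{p}{p-1}\sum_{\chi\in X_G}d(\chi)$, Ford's theorem (case (i), valid since $p$ is odd) gives $\mathbb{Q}(\chi)=\mathbb{Q}(\zeta_{p^{j_\chi}})$ with $1\leq j_\chi\leq b$, and hence $c(G)=\sum_{\chi\in X_G}p^{\,i_\chi+j_\chi}$ is a sum of exactly $m$ powers of $p$ with exponents in $[1,b+e]$; this settles the upper bound. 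For the lower bound your identification of a $\chi_0$ with $\langle g\rangle\cap\ker\chi_0=1$ ($g$ of order $p^b$) is exactly right, and your Galois-orbit count on the faithful constituents of $\chi_0\restr_{\langle g\rangle}$ correctly yields $i_{\chi_0}+j_{\chi_0}\geq b$: the stabilizer of a faithful linear character of $C_{p^b}$ in $\Gamma(\mathbb{Q}(\zeta_{p^b})/\mathbb{Q}(\chi_0))$ is trivial, so the orbit has size $p^{\,b-j_{\chi_0}}$ and $\chi_0(1)\geq p^{\,b-j_{\chi_0}}$. The only stylistic difference from the standard treatment is this last step: one more commonly observes that $p^{\,i_\chi+j_\chi}=|G:\ker\lambda_\chi|$ for the inducing linear character $\lambda_\chi$ and then invokes the fact that a faithful permutation representation of $C_{p^b}$ (acting on the cosets of $\ker\lambda_{\chi_0}$, whose core meets $\langle g\rangle$ trivially) has degree at least $p^b$; your character-theoretic orbit count buys the same inequality without passing to permutation actions. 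No gaps.
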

		
\begin{lemma} 
\textnormal{\cite[{Lemma 14}]{SAcyclic}} 
\label{remark:newrange}
Let $G$ be a non-abelian $p$-group. 
If $G$ is not a direct product of an abelian and a non-abelian subgroup, 
then $p^2 ~|~ c(G)$.
\end{lemma}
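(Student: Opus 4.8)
I will argue by contraposition: assuming $p^{2}\nmid c(G)$ (and $p$ odd, the setting of this paper), I will exhibit $G$ as a direct product of a non-abelian group and a copy of $C_{p}$. Fix a minimal degree faithful quasi-permutation representation $X_{G}\subset\Irr(G)$ as in $(\ref{eq:X_G})$. Using the defining relation $c(G)=\xi_{X_{G}}(1)+m(\xi_{X_{G}})$, the identity $\xi_{X_{G}}(1)=\sum_{\chi\in X_{G}}|\Gamma(\chi)|\,\chi(1)=\sum_{\chi\in X_{G}}d(\chi)$ from Definition \ref{D1} (Galois conjugation fixes degrees), and Lemma \ref{L1}(ii), I get
\[
c(G)=\Bigl(1+\tfrac{1}{p-1}\Bigr)\xi_{X_{G}}(1)=\frac{p}{p-1}\sum_{\chi\in X_{G}}d(\chi).
\]
Since $c(G)\in\mathbb{Z}$ and $\gcd(p,p-1)=1$, this forces $(p-1)\mid\sum_{\chi\in X_{G}}d(\chi)$, and then $p^{2}\mid c(G)$ if and only if $p\mid\sum_{\chi\in X_{G}}d(\chi)$. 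So the whole problem reduces to showing $p\mid\sum_{\chi\in X_{G}}d(\chi)$ under the hypothesis on $G$.

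The next step is to read $d(\chi)=|\Gamma(\chi)|\,\chi(1)$ modulo $p$ for each $\chi\in X_{G}$. If $\chi\in\nl(G)$ then $p\mid\chi(1)$, hence $p\mid d(\chi)$. If $\chi\in\lin(G)$ is non-trivial it factors through the $p$-group $G/G'$, so it has order $p^{e}$ with image $\langle\zeta_{p^{e}}\rangle$, whence $d(\chi)=|\Gamma(\chi)|=\phi(p^{e})$, which is $\equiv-1\pmod p$ when $e=1$ and $\equiv0\pmod p$ when $e\ge2$. The trivial character is never in $X_{G}$ since its kernel is $G\ne1$. Therefore
\[
\sum_{\chi\in X_{G}}d(\chi)\ \equiv\ -\bigl|\{\lambda\in X_{G}:\lambda\in\lin(G),\ o(\lambda)=p\}\bigr|\pmod p ,
\]
so if $p^{2}\nmid c(G)$ then $X_{G}$ contains some linear character $\lambda$ of order exactly $p$.

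Now comes the key geometric step, exploiting the irredundancy built into $(\ref{eq:X_G})$. Set $K=\ker\lambda$, a normal subgroup of index $p$, and $N=\bigcap_{\psi\in X_{G}\setminus\{\lambda\}}\ker\psi$, which is normal. Applying $(\ref{eq:X_G})$ to the proper subset $X_{G}\setminus\{\lambda\}$ gives $N\ne1$, while the faithfulness clause gives $N\cap K=\bigcap_{\psi\in X_{G}}\ker\psi=1$. Since $[G:K]=p$ and $N\ne1$, the relation $N\cap K=1$ forces $|N|=p$ and $NK=G$; as $N$ and $K$ are both normal, $G=N\times K$ with $N\cong C_{p}$. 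As $G$ is non-abelian, $K$ must be non-abelian, so $G$ is a direct product of an abelian and a non-abelian subgroup, contradicting the hypothesis. Hence $p^{2}\mid c(G)$.

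The computations above are routine; the one point needing care is the identification of $\mathbb{Q}(\chi)$ with a cyclotomic field $\mathbb{Q}(\zeta_{p^{e}})$ for the characters that contribute a unit mod $p$. For $p$ odd this concerns only linear $\chi$ and is immediate, so that step is easy here; for $p=2$ one would instead have to invoke Lemma \ref{thm:ford}(ii), where $|\Gamma(\chi)|$ can pick up an extra factor of $2$, and redo the residue count. I would also note that one cannot shortcut this by a case split on whether $d(Z(G)\cap G')=d(Z(G))$ and appealing to Lemma \ref{lemmalinearcharacter}: that hypothesis can fail even when $G$ has no abelian direct factor, so the argument via the minimal faithful quasi-permutation representation $X_{G}$ seems to be the natural unified route.
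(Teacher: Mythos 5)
Your argument is correct, and it is worth noting that the paper itself supplies no proof of this statement: it is quoted from \cite[Lemma 14]{SAcyclic}, so there is no in-paper proof to compare against. Your two main steps both check out. First, combining $c(G)=\xi_{X_G}(1)+m(\xi_{X_G})$ with Lemma \ref{L1}(ii) gives $c(G)=\tfrac{p}{p-1}\sum_{\chi\in X_G}d(\chi)$, and the integrality of $c(G)$ together with $\gcd(p,p-1)=1$ correctly reduces $p^2\mid c(G)$ to $p\mid\sum_{\chi\in X_G}d(\chi)$; the residue count mod $p$ (nonlinear $\chi$ and linear $\chi$ of order $\geq p^2$ contribute $0$, linear $\chi$ of order $p$ contribute $-1$, and $1_G\notin X_G$ by the irredundancy clause of \eqref{eq:X_G}) then isolates a linear $\lambda\in X_G$ with $|G:\ker\lambda|=p$. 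Second, the splitting $G=N\times\ker\lambda$ with $N=\bigcap_{\psi\in X_G\setminus\{\lambda\}}\ker\psi\cong C_p$ is the standard consequence of \eqref{eq:X_G}, and the degenerate case $X_G=\{\lambda\}$ is excluded since it would force $G$ cyclic. A mild streamlining, which is presumably closer to the cited source, is to observe (as in Remark \ref{remark:QuasiPermRepAsPermRep}, using Lemma \ref{thm:ford}(i) so that $\mathbb{Q}(\chi)=\mathbb{Q}(\zeta_{p^{e}})$ with $e\geq 1$ for every nontrivial $\chi$) that each $\chi\in X_G$ contributes the $p$-power $\tfrac{p}{p-1}d(\chi)=|G:\ker(\lambda_\chi)|\geq p$ to $c(G)$, so that $p^2\nmid c(G)$ directly forces some summand to equal $p$ and hence some $\chi\in X_G$ to be linear with kernel of index $p$; your mod-$p$ bookkeeping reaches the same conclusion while needing Ford's theorem only implicitly, for the linear characters, where it is trivial. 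Either way the lemma follows, and your final caveat about $p=2$ is appropriate but moot here since the paper only invokes the lemma for $p\geq 5$.
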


\begin{lemma} \label{prop:CpXCp}	
\textnormal{\cite[{Theorem 10}]{SAcyclic}} 
Let $G$ be a non-abelian $p$-group where $p$ is an odd prime, 
and let $G$ have exponent $p^r$ where $r\leq 2$.
Assume $\cd(G) = \{ 1, p, p^2 \}$, $d(Z(G)) = m$ where $m\geq 2$,
and $G$ is not a direct product of an abelian and
a non-abelian subgroup. If $G$ has an 
abelian normal subgroup of index $p^2$, 
then $c(G) = ap^2 + bp^3$, for some $a,b \in \mathbb{Z}_{\geq 0}$
 where $a+b = m$.
\end{lemma}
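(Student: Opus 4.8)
The plan is to combine Lemma \ref{L1} with a Galois analysis of the irreducible characters of $G$, which is feasible precisely because $\exp(G)=p^{r}$ with $r\le 2$. First I would record an arithmetic reformulation: if $X_{G}\subset\Irr(G)$ is a minimal degree faithful quasi-permutation representation, then by Lemma \ref{L1} we have $|X_{G}|=m$ and $m(\xi_{X_{G}})=\frac{1}{p-1}\sum_{\chi\in X_{G}}d(\chi)$, where $d(\chi)=|\Gamma(\chi)|\chi(1)$ as in Definition \ref{D1}; since also $\xi_{X_{G}}(1)=\sum_{\chi\in X_{G}}|\Gamma(\chi)|\chi(1)=\sum_{\chi\in X_{G}}d(\chi)$, we obtain
$$c(G)=\xi_{X_{G}}(1)+m(\xi_{X_{G}})=\frac{p}{p-1}\sum_{\chi\in X_{G}}d(\chi),$$
so it suffices to understand the $m$ individual summands $\tfrac{p}{p-1}d(\chi)$. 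Since $G$ is a $p$-group it is monomial, so by Ford's Lemma \ref{thm:ford}(i) (valid because $p$ is odd) each $\chi\in\Irr(G)$ satisfies $\mathbb{Q}(\chi)=\mathbb{Q}(\lambda)=\mathbb{Q}(\zeta_{o(\lambda)})$ for a linear character $\lambda$ of a subgroup of $G$, with $o(\lambda)\mid\exp(G)=p^{r}$; hence $\mathbb{Q}(\chi)\in\{\mathbb{Q},\mathbb{Q}(\zeta_{p}),\mathbb{Q}(\zeta_{p^{2}})\}$ and $|\Gamma(\chi)|\in\{1,p-1,p(p-1)\}$. Moreover $\mathbb{Q}(\chi)=\mathbb{Q}$ forces $o(\lambda)\in\{1,2\}$, hence $o(\lambda)=1$ as $|G|$ is odd, so $\chi=1_{H}\ind_{H}^{G}$ is a permutation character and thus reducible unless $\chi=1_{G}$; but $1_{G}$ has kernel $G$ and cannot belong to $X_{G}$. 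Therefore each $\chi\in X_{G}$ has $|\Gamma(\chi)|\in\{p-1,p(p-1)\}$, so $\tfrac{p}{p-1}d(\chi)$ equals $p\,\chi(1)$ when $\mathbb{Q}(\chi)=\mathbb{Q}(\zeta_{p})$ and $p^{2}\chi(1)$ when $\mathbb{Q}(\chi)=\mathbb{Q}(\zeta_{p^{2}})$; as $\cd(G)=\{1,p,p^{2}\}$, each summand lies in $\{p,p^{2},p^{3},p^{4}\}$.

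It then remains to eliminate the summands $p$ and $p^{4}$. A summand equal to $p$ requires a linear character $\lambda\in X_{G}$ with $o(\lambda)=p$; this is impossible by Lemma \ref{lemmalinearcharacter} when $d(Z(G)\cap G')=d(Z(G))$, and in the remaining case one uses $p^{2}\mid c(G)$ (Lemma \ref{remark:newrange}, available since $G$ is not a direct product of an abelian and a non-abelian group) together with a bound on the number of such summands to rule them out. A summand equal to $p^{4}$ requires $\chi\in X_{G}$ with $\chi(1)=p^{2}$ and $\mathbb{Q}(\chi)=\mathbb{Q}(\zeta_{p^{2}})$; here the abelian normal subgroup $A$ of index $p^{2}$ enters. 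The subgroup $Z(G)A$ is abelian and contains $A$, and as $G$ is non-abelian with $\cd(G)=\{1,p,p^{2}\}$ it has no abelian normal subgroup of index $\le p$ (Lemma \ref{thm:cdG1}); hence $Z(G)A=A$, i.e.\ $Z(G)\le A$, and likewise $|A:Z(G)|\ge p^{2}$ since $|A:Z(G)|\le p$ would give $|G:Z(G)|\le p^{3}$ and then $\cd(G)=\{1,p\}$. A character $\chi$ of degree $p^{2}$ has the form $\mu\ind_{A}^{G}$ with $\inertiagroup_{G}(\mu)=A$, and in $X_{G}$ its sole role is to kill $\ker\mu\cap Z(G)$; I would then replace $\mu$ by a suitable extension to $A$ of $\mu|_{Z(G)}$ whose inertia group in $G$ has index $p$ rather than $p^{2}$, producing a character of degree $\le p$ with the same restriction to $Z(G)$ and hence summand $\le p^{3}$, so minimality of $X_{G}$ forbids the summand $p^{4}$. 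Consequently $c(G)$ is a sum of $m$ terms each equal to $p^{2}$ or $p^{3}$, that is, $c(G)=ap^{2}+bp^{3}$ with $a+b=m$.

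The main obstacle is this last replacement. An extension of a prescribed central character to $A$ whose $G$-inertia has index exactly $p$, rather than the full $p^{2}$, need not exist for an arbitrary abelian normal subgroup of index $p^{2}$, so the argument must play the $G/A$-action on $\widehat{A}$ off against the constraints $\cd(G)=\{1,p,p^{2}\}$ and the absence of an abelian direct factor; I expect this to require a case analysis according to the position of the relevant generator of $Z(G)$ relative to $A^{p}$.
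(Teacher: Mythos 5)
First, a point of comparison: the paper does not prove this statement at all — it is cited from \cite[Theorem 10]{SAcyclic} — so there is no in-paper argument to measure yours against, and your proposal has to stand on its own. Judged that way, your reduction is correct as far as it goes: the identity $c(G)=\frac{p}{p-1}\sum_{\chi\in X_G}d(\chi)$ does follow from Lemma \ref{L1}, and combining Ford's theorem with $\exp(G)\leq p^2$ and $\cd(G)=\{1,p,p^2\}$ correctly confines each per-character contribution to $\{p,p^2,p^3,p^4\}$, with $p$ occurring only for a linear character of order $p$ and $p^4$ only for a degree-$p^2$ character with field $\mathbb{Q}(\zeta_{p^2})$.

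The argument is nonetheless incomplete at both eliminations, and you concede as much for the second. For the contribution $p$: Lemma \ref{lemmalinearcharacter} covers only the case $d(Z(G)\cap G')=d(Z(G))$, and your fallback --- $p^2\mid c(G)$ forces the number of $p$-summands to be a multiple of $p$, hence zero --- needs $m<p$, which holds for the groups of order $p^6$ with $p\geq 5$ treated in this paper but is not among the hypotheses of the lemma, which is stated for arbitrary non-abelian $p$-groups. For the contribution $p^4$: the replacement of $\mu\in\lin(A)$ by an extension $\mu'$ of $\nu:=\mu\restr_{Z(G)}$ with strictly larger inertia group is exactly the crux, and it is genuinely obstructed in some configurations. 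The extensions of $\nu$ to $A$ form a coset of $\widehat{A/Z(G)}$ in $\widehat{A}$, and $\inertiagroup_G(\mu')\supsetneq A$ requires $[A,g]\subseteq\ker(\mu')$ for some $g\notin A$, which fails whenever $\nu$ is nontrivial on $[A,g]\cap Z(G)$ for every such $g$; moreover, when $o(\nu)=p^2$ every irreducible character lying over $\nu$ has field $\mathbb{Q}(\zeta_{p^2})$, so you would instead need to exhibit a constituent of $\nu\ind_{Z(G)}^{G}$ of degree at most $p$, which you have not done. Until one of these is carried out --- presumably by the case analysis you anticipate, playing the $G/A$-action on $\widehat{A}$ against the hypotheses --- the proposal establishes only that $c(G)$ is a sum of $m$ terms drawn from $\{p,p^2,p^3,p^4\}$, which is strictly weaker than the claim.
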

\begin{remark} \label{remark:CpXCp}
\textnormal{Let $G$ be a non-abelian group of order 
$p^6$ where $p$ is an odd prime, $\exp(G) \leq p^3$, 
$\cd(G) = \{ 1, p \}$, $d(Z(G)) = 2$, and $G$ is not a 
direct product of an abelian and a non-abelian subgroup. 
Suppose $G$ has an abelian normal subgroup $A$ of index $p$ 
and $\exp(A) = p^r$ where $r\leq 2$. Proceeding 
as in the proof of Lemma \ref{prop:CpXCp}, 
we deduce that  $c(G) = 2p^2$, $p^3+p^2$, or $2p^3$. }
\end{remark}

\begin{lemma}  \textnormal{\cite[{Corollary 6}]{SAcyclic}} 
	\label{Z(G)isCyclic}
Let $G$ be a non-abelian $p$-group
with cyclic center $Z(G)$. Suppose $p^{e}
= \max \cd(G)$ and $p^{\alpha} = \min\{ 
\chi(1) ~|~ \chi \in \nl(G) \text{ and } \ker(\chi) = 1 \}$. Then
\[ p^{\alpha} \cdot |Z(G)| \leq c(G) \leq p^{e}\cdot \exp(G). \]
\end{lemma}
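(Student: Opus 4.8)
The plan is to reduce the whole statement to a single irreducible character, exploiting that a cyclic center is minimally generated by one element. First I would invoke Lemma~\ref{L1}(i): since $d(Z(G))=1$, a minimal degree faithful quasi-permutation representation of $G$ has the form $X_{G}=\{\chi\}$, and $\bigcap_{\psi\in X_{G}}\ker\psi=\ker\chi=1$. As $G$ is non-abelian it admits no faithful linear character (a faithful linear character would force $G$ cyclic), so $\chi\in\nl(G)$ with $\ker\chi=1$; in particular $\chi(1)\ge p^{\alpha}$. Feeding this one-element set into Lemma~\ref{lemma:c(G)Algorithm} and using Lemma~\ref{L1}(ii) for $m(\xi_{X_{G}})$ yields the exact value
\[
c(G)=\xi_{X_{G}}(1)+m(\xi_{X_{G}})=|\Gamma(\chi)|\,\chi(1)+\frac{1}{p-1}\,|\Gamma(\chi)|\,\chi(1)=\frac{p}{p-1}\,|\Gamma(\chi)|\,\chi(1).
\]
Since $\mathbb{Q}(\chi)$ sits inside a cyclotomic field it is abelian over $\mathbb{Q}$, so $|\Gamma(\chi)|=[\mathbb{Q}(\chi):\mathbb{Q}]$, and the lemma now amounts to squeezing this degree between $\phi(|Z(G)|)$ and $\phi(\exp(G))$, bearing in mind the arithmetic identity $\frac{p}{p-1}\phi(p^{j})=p^{j}$, valid for every prime $p$ and every $j\ge 1$.

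For the lower bound I would observe that faithfulness of $\chi$ is already visible on the center. By Schur's lemma $\chi\restr_{Z(G)}=\chi(1)\lambda$ for a linear character $\lambda$ of $Z(G)$, and $\ker\lambda=\ker\chi\cap Z(G)=1$; thus $\lambda$ is a faithful, hence order-$|Z(G)|$, character of the cyclic group $Z(G)$, so $\mathbb{Q}(\lambda)=\mathbb{Q}(\zeta_{|Z(G)|})$. Since $\lambda(z)=\chi(z)/\chi(1)$ for all $z\in Z(G)$, we get $\mathbb{Q}(\zeta_{|Z(G)|})=\mathbb{Q}(\lambda)\subseteq\mathbb{Q}(\chi)$, hence $|\Gamma(\chi)|\ge\phi(|Z(G)|)$. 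The displayed identity then gives $c(G)\ge\frac{p}{p-1}\phi(|Z(G)|)\,\chi(1)=|Z(G)|\,\chi(1)\ge p^{\alpha}|Z(G)|$.

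For the upper bound I would use only that every value of $\chi$ is a sum of roots of unity of order dividing $\exp(G)$, so $\mathbb{Q}(\chi)\subseteq\mathbb{Q}(\zeta_{\exp(G)})$ and $|\Gamma(\chi)|\le\phi(\exp(G))$; combined with $\chi(1)\le\max\cd(G)=p^{e}$, the identity gives $c(G)=\frac{p}{p-1}|\Gamma(\chi)|\,\chi(1)\le\frac{p}{p-1}\phi(\exp(G))\,p^{e}=p^{e}\exp(G)$. I do not anticipate a real obstacle: with Lemma~\ref{L1} available, the proof is essentially the chain of field inclusions $\mathbb{Q}(\zeta_{|Z(G)|})\subseteq\mathbb{Q}(\chi)\subseteq\mathbb{Q}(\zeta_{\exp(G)})$ together with $\frac{p}{p-1}\phi(p^{j})=p^{j}$. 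The one point that deserves a careful line is the left-hand inclusion, namely that a faithful irreducible character of a $p$-group with cyclic center must take a primitive $|Z(G)|$-th root of unity as a value; the rest is formal bookkeeping with character degrees and cyclotomic field degrees.
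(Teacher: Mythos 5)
The paper does not prove this lemma: it is quoted from \cite[Corollary 6]{SAcyclic}, so there is no in-paper argument to compare against. Your proof is correct and is the natural one — Lemma~\ref{L1} reduces $c(G)$ to $\frac{p}{p-1}\,|\Gamma(\chi)|\,\chi(1)$ for a single faithful $\chi \in \nl(G)$, and the chain $\mathbb{Q}(\zeta_{|Z(G)|}) \subseteq \mathbb{Q}(\chi) \subseteq \mathbb{Q}(\zeta_{\exp(G)})$ (the left inclusion coming from the faithful central character of the cyclic center), combined with the identity $\frac{p}{p-1}\phi(p^{j}) = p^{j}$ and the bounds $p^{\alpha} \leq \chi(1) \leq p^{e}$, yields both inequalities.
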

Recall that a finite group is {\it normally monomial} if all 
of its irreducible characters 
are induced from linear characters of normal subgroups. 

\begin{lemma} \textnormal{\cite[{Theorem 7}]{SAcyclic}} 
\label{lemma:normallymonomial}
Let $G$ be a normally monomial $p$-group 
 with cyclic center $Z(G)$.
If $G$ has a normal abelian subgroup $A$ of maximal
order among all abelian subgroups of $G$, then 
\[ \max \cd(G) \cdot |Z(G)| \leq c(G) \leq \max \cd(G) \cdot \exp(A). \]
\end{lemma}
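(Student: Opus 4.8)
The plan is to reduce the statement to a single structural fact about the faithful irreducible characters of $G$ and then to deduce both inequalities as refinements of Lemma~\ref{Z(G)isCyclic}. As $G$ is a $p$-group with cyclic centre it has a faithful irreducible character. Write $p^{e}=\max\cd(G)$ and $p^{\alpha}=\min\{\chi(1)\mid\chi\in\nl(G),\ \ker\chi=1\}$, so Lemma~\ref{Z(G)isCyclic} already gives $p^{\alpha}|Z(G)|\le c(G)\le p^{e}\exp(G)$. The lower bound we want is the assertion $p^{\alpha}=p^{e}$, while the upper bound amounts to replacing $\exp(G)$ by $\exp(A)$; the latter I will obtain by exhibiting a faithful character whose field of values lies in $\mathbb{Q}(\zeta_{\exp(A)})$.

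The crux is the claim that \emph{every faithful $\chi\in\Irr(G)$ satisfies $\chi(1)=\max\cd(G)=|G:A|$}. By normal monomiality, write $\chi=\lambda\ind_{N}^{G}$ with $N\lhd G$ and $\lambda$ linear. Since $\chi$ is faithful, $\bigcap_{g\in G}(\ker\lambda)^{g}=\ker\chi=1$; each quotient $N/(\ker\lambda)^{g}$ is cyclic, so the resulting diagonal map embeds $N$ into a direct product of cyclic groups and hence $N$ is abelian. As $A$ has maximal order among abelian subgroups of $G$, $|N|\le|A|$, so $\chi(1)=|G:N|\ge|G:A|$. Since also $\chi(1)\le\max\cd(G)$ and, by It\^o's theorem, $\max\cd(G)$ divides $|G:A|$ ($A$ being abelian and normal), we conclude $\chi(1)=\max\cd(G)=|G:A|$. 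In particular $p^{\alpha}=p^{e}$, and Lemma~\ref{Z(G)isCyclic} now gives the lower bound $\max\cd(G)\cdot|Z(G)|\le c(G)$.

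For the upper bound, fix a faithful $\chi\in\Irr(G)$; by the claim $\chi(1)=|G:A|$. Clifford's theorem applied to the abelian normal subgroup $A$ gives $\chi|_{A}=e\sum_{i=1}^{t}\mu^{g_{i}}$ with $\mu\in\Irr(A)$ linear, $t=|G:\inertiagroup_{G}(\mu)|$ and $et=|G:A|$; then $e=|\inertiagroup_{G}(\mu):A|$, while $e^{2}\le|\inertiagroup_{G}(\mu):A|$, forcing $e=1$, $\inertiagroup_{G}(\mu)=A$ and $\chi=\mu\ind_{A}^{G}$. Hence $\mathbb{Q}(\chi)\subseteq\mathbb{Q}(\mu)$, and $\mathbb{Q}(\mu)=\mathbb{Q}(\zeta_{m})$ for the order $m$ of $\mu$, which divides $\exp(A)$; thus $[\mathbb{Q}(\chi):\mathbb{Q}]\le\phi(\exp(A))$. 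Running the upper-bound estimate behind Lemma~\ref{Z(G)isCyclic} with this $\chi$---using $\mathbb{Q}(\chi)\subseteq\mathbb{Q}(\zeta_{\exp(A)})$ in place of the a priori containment in $\mathbb{Q}(\zeta_{\exp(G)})$---gives $c(G)\le\frac{p}{p-1}\,\phi(\exp(A))\,p^{e}=p^{e}\exp(A)=\max\cd(G)\cdot\exp(A)$.

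I expect the structural claim to be the only real obstacle, and within it the step ``$\chi$ faithful $\Rightarrow N$ abelian'', which is where normal monomiality is genuinely exploited; the rest reduces to the cited lemmas together with a routine Clifford-theoretic degree count. One should also confirm that the factor $\tfrac{p}{p-1}$ and the identity $m\bigl(\sum_{\sigma}\chi^{\sigma}\bigr)=\tfrac{1}{p-1}\sum_{\sigma}\chi^{\sigma}(1)$ used in the upper-bound estimate hold for the chosen faithful $\chi$, which is exactly (the argument of) Lemma~\ref{L1}.
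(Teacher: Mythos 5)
The paper does not prove this lemma: it is imported verbatim from \cite[Theorem 7]{SAcyclic}, so there is no in-paper argument to compare against. Your proof is correct and complete. The key structural step --- that a faithful $\chi=\lambda\ind_N^G$ forces $N$ to embed diagonally into $\prod_g N/(\ker\lambda)^g$ and hence be abelian, so that maximality of $|A|$ together with It\^o's theorem pins $\chi(1)=\max\cd(G)=|G:A|$ --- is exactly where normal monomiality and the hypothesis on $A$ are used, and it correctly upgrades the lower bound of Lemma \ref{Z(G)isCyclic} from $p^{\alpha}|Z(G)|$ to $\max\cd(G)\cdot|Z(G)|$. The Clifford-theoretic step is also sound: with $e=|\inertiagroup_G(\mu):A|$ and the standard inner-product estimate $e^2\le|\inertiagroup_G(\mu):A|$ you get $e=1$ and $\chi=\mu\ind_A^G$, whence $\mathbb{Q}(\chi)\subseteq\mathbb{Q}(\zeta_{\exp(A)})$ and $|\Gamma(\chi)|\le\phi(\exp(A))$. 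The final estimate $m(\xi_X)\le\frac{1}{p-1}\xi_X(1)$ for $X=\{\chi\}$ is the trace bound of \cite[Lemma 4.5]{HB} (valid for any irreducible character of a $p$-group, not only for the optimal $X_G$ of Lemma \ref{L1}), and $\frac{p}{p-1}\phi(\exp(A))=\exp(A)$ closes the computation. Given the companion results quoted from the same source (e.g.\ Lemma \ref{cor:normallymonomial}), your route is almost certainly the one taken there as well.
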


\begin{lemma} \textnormal{\cite[{Lemma 21}]{SAcyclic}} \label{thm:cdG3}
Let $G$ be a non-abelian $p$-group 
and let $\cd(G) = \{ 1, p, p^a \}$ for some integer $a>1$. 
If $\chi$ is a faithful irreducible character of $G$, then $\chi(1) = p^a$.
\end{lemma}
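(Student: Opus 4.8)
The plan is to argue by contradiction: assuming $\chi$ is a faithful irreducible character of $G$ with $\chi(1) \neq p^{a}$, I will produce an abelian normal subgroup of index $p$ in $G$, whereupon Lemma~\ref{thm:cdG1} forces $\cd(G) = \{1,p\}$, contradicting $p^{a} \in \cd(G)$ since $a > 1$.

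First I would note that $\chi$ cannot be linear: as $G$ is non-abelian, $G' \neq 1$, and every linear character contains $G'$ in its kernel, so no linear character of $G$ is faithful. Hence $\chi(1) \in \{p, p^{a}\}$, and it remains only to rule out $\chi(1) = p$.

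So suppose $\chi(1) = p$. As $G$ is a $p$-group, it is monomial, so $\chi = \lambda\ind_{H}^{G}$ for some linear character $\lambda$ of a subgroup $H \leq G$; comparing degrees gives $|G:H| = p$, so $H$ is a maximal subgroup and therefore normal in $G$. Since $\chi$ is irreducible, Frobenius reciprocity forces $\inertiagroup_{G}(\lambda) = H$, so $\chi\restr_{H} = \sum_{i=0}^{p-1}\lambda^{g^{i}}$ for a fixed $g \in G \setminus H$, a sum of $p$ distinct $G$-conjugates of $\lambda$. Now $H'$ is characteristic in $H$, hence normal in $G$; as $\lambda$ is linear, $H' \leq \ker\lambda$, and conjugating gives $H' = (H')^{g^{i}} \leq (\ker\lambda)^{g^{i}} = \ker(\lambda^{g^{i}})$ for every $i$. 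Therefore $H' \leq \bigcap_{i=0}^{p-1}\ker(\lambda^{g^{i}}) = \ker(\chi\restr_{H}) = \ker(\chi) \cap H = 1$, using that $\chi$ is faithful. Thus $H$ is an abelian normal subgroup of $G$ of index $p$.

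Finally, applying Lemma~\ref{thm:cdG1} to the non-abelian group $G$ with abelian normal subgroup $H$ of index $p$ yields $\cd(G) = \{1,p\}$, which contradicts $\cd(G) = \{1,p,p^{a}\}$ as $a > 1$. Hence $\chi(1) = p^{a}$. The one substantive point is to arrange the monomial expression of a degree-$p$ character so that it is induced from a \emph{normal} subgroup on which $\chi$ restricts with trivial kernel, which then forces that subgroup's derived group to vanish; once this is secured, invoking the $\cd(G) = \{1,p\}$ classification closes the argument, so I do not expect a genuine obstacle here.
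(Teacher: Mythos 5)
Your proof is correct. The paper does not prove this lemma itself --- it imports it from \cite[Lemma 21]{SAcyclic} --- so there is no in-paper argument to compare against, but your route is sound and self-contained: linearity is excluded by faithfulness and $G'\neq 1$; a faithful $\chi$ of degree $p$ is induced from a linear $\lambda$ on a normal maximal subgroup $H$ with $\inertiagroup_G(\lambda)=H$, whence $H'\leq\bigcap_i\ker(\lambda^{g^i})=\ker(\chi)\cap H=1$, and the resulting abelian normal subgroup of index $p$ contradicts $p^a\in\cd(G)$ via Lemma~\ref{thm:cdG1}. All the intermediate identities you use (normality of maximal subgroups in a $p$-group, $\ker(\chi\restr_H)=\ker(\chi)\cap H$, and $\ker\bigl(\sum_i\lambda^{g^i}\bigr)=\bigcap_i\ker(\lambda^{g^i})$ for roots of unity summing to $p$) are valid.
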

			
\begin{lemma} \textnormal{\cite[{Corollary 23}]{SAcyclic}} 
\label{cor:normallymonomial}
Let $G$ be a metabelian $p$-group 
with cyclic center $Z(G)$. 
If $G$ has a 
normal abelian subgroup $A$ of maximal order among all abelian subgroups 
of $G$ and $A$ has exponent equal to $|Z(G)|$,
then $c(G) = \max \cd(G) \cdot |Z(G)|$.
\end{lemma}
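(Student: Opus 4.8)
The claim is essentially a specialisation of Lemma~\ref{lemma:normallymonomial}, which states that a \emph{normally monomial} $p$-group with cyclic center that possesses a normal abelian subgroup of maximal order among all abelian subgroups satisfies $\max\cd(G)\cdot|Z(G)|\le c(G)\le\max\cd(G)\cdot\exp(A)$. So the plan is: (i) show that a metabelian $p$-group is normally monomial; (ii) apply Lemma~\ref{lemma:normallymonomial} with the given subgroup $A$; (iii) invoke the hypothesis $\exp(A)=|Z(G)|$, which makes the upper and lower bounds coincide and gives $c(G)=\max\cd(G)\cdot|Z(G)|$. Steps (ii) and (iii) are one-line deductions, so the real work lies in step (i).

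For (i) I would prove, by induction on $|G|$, the following relative statement: if $G$ is metabelian and $A\trianglelefteq G$ is abelian with $G'\le A$, then every $\chi\in\Irr(G)$ is of the form $\lambda\ind_{H}^{G}$ for some linear character $\lambda$ of a subgroup $H$ with $A\le H\le G$ (and such $H$ is automatically normal, since $G/A$ is abelian); the lemma's assertion then follows by taking $A=G'$. If $\chi$ is linear, take $H=G$. Otherwise let $\theta$ be an irreducible, hence linear, constituent of $\chi_{A}$. Because $A$ is abelian it fixes $\theta$, so $T:=\inertiagroup_{G}(\theta)$ contains $A$ and is therefore normal in $G$. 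If $T<G$, Clifford theory gives $\chi=\psi\ind_{T}^{G}$ with $\psi\in\Irr(T\mid\theta)$; since $T$ is metabelian with $T'\le G'\le A\le T$, the induction hypothesis applied to $T$ with the abelian normal subgroup $A$ yields $\psi=\lambda\ind_{H}^{T}$ with $A\le H\le T$, and transitivity of induction completes this case.

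The remaining, and only delicate, case is $T=G$, i.e.\ $\theta$ is $G$-invariant. Then $[A,G]\le\ker\theta$ and (comparing $\chi_{A}=e\theta$ with $\ker\chi\cap A$) $\ker\theta\le\ker\chi$, so $\chi$ is inflated from $\bar G=G/\ker\theta$, in which the image of $A$ is central and cyclic and contains $(\bar G)'$; replacing $G$ by a suitable quotient we may assume $\chi$ is faithful, $G$ has class at most $2$, $Z(G)$ is cyclic, and $A\le Z(G)$. Now $G/Z(G)$ is abelian, so any subgroup containing $Z(G)$ is normal; choosing $H$ maximal abelian with $Z(G)\le H$, the twisted commutator pairing $(xZ(G),yZ(G))\mapsto\lambda([x,y])$, for $\lambda$ a faithful linear character of $Z(G)$ lying under $\chi$, is a non-degenerate alternating bicharacter on $G/Z(G)$ having $H/Z(G)$ as a maximal isotropic subgroup; hence $\chi$ is the unique irreducible character of $G$ over $\lambda$ and equals $\mu\ind_{H}^{G}$ for any extension $\mu$ of $\lambda$ to $H$, with $A\le Z(G)\le H$ normal. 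This closes the induction. I expect this $T=G$ case to be the main obstacle; everything else is formal Clifford theory plus the squeezing argument of the first paragraph. (Alternatively, one can simply quote from the literature that metabelian $p$-groups are normally monomial and pass directly to step (ii).)
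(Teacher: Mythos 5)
This lemma is quoted in the paper from \cite[Corollary 23]{SAcyclic}; the paper itself contains no proof to compare against, so I can only assess your argument on its own merits and against the tools the paper does provide. Your overall strategy is sound and your proof is essentially correct: once you know $G$ is normally monomial, Lemma \ref{lemma:normallymonomial} gives $\max\cd(G)\cdot|Z(G)|\le c(G)\le\max\cd(G)\cdot\exp(A)$, and the hypothesis $\exp(A)=|Z(G)|$ collapses the two bounds. Your Clifford-theoretic induction for step (i) also checks out: the inertia group $T=\inertiagroup_G(\theta)$ contains $A\ge G'$ and is therefore normal, the case $T<G$ reduces correctly by induction and transitivity of induction, and in the fully ramified case $T=G$ the reduction to a faithful character of a class-$2$ quotient with cyclic center, followed by the non-degenerate alternating bicharacter on $G/Z(G)$ and a maximal isotropic (equivalently, maximal abelian containing $Z(G)$) subgroup, is the standard and correct argument.

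The one remark worth making is that step (i), which you rightly identify as the bulk of the work, is already available in the paper in a much shorter form: Lemma \ref{lemma:bakshi} states that for a metabelian group $G$ with a maximal abelian subgroup $A$ containing $G'$, every irreducible character is $\rho\ind_{K_D}^G$ for a \emph{linear} $\rho$ of a subgroup $K_D$ with $A\le K_D$; since $K_D\supseteq A\supseteq G'$, each $K_D$ is normal in $G$, so metabelian $p$-groups are normally monomial with no induction needed. Your hands-on proof is more self-contained (it does not rely on the Bakshi--Kulkarni--Passi parametrisation), but if you are writing within this paper's framework the two-line deduction from Lemma \ref{lemma:bakshi} is the economical route.
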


\subsection{VZ groups}
A group $G$ is {\it VZ} if $\chi(g)=0$ for all 
$g\in G\setminus Z(G)$ and all $\chi \in \nl(G)$.
\begin{theorem}\textnormal{\cite[Theorem 26]{SA}}  \label{thm:VZpgroupp6}
Let $G$ be a VZ group of order $p^6$ where $p$ is an odd prime. 
\begin{enumerate}
\item If $|Z(G)| = p^4$, then $c(G) = p^4+p, p^3+p^2+p, p^3+p^2, p^3+2p, 3p^2, 2p^2+2p, 2p^2+p, 3p^2+p, p^2+3p, p^5, p^4+p^2, 2p^3$ or  $p^3+2p^2$.
\item  If $|Z(G)| = p^2$, then $c(G) = 2p^3, p^3+p, p^4$ or $p^3+p^2$.
\end{enumerate}
\end{theorem}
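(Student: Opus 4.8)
\emph{Proof strategy.}
As the statement concerns only $|Z(G)|\in\{p^4,p^2\}$, assume $G$ is non-abelian. Any $\chi\in\nl(G)$ vanishes off $Z(G)$, so $\chi(1)^2=|G:Z(G)|$; thus $|G:Z(G)|$ is a perfect square, forcing $|Z(G)|\in\{p^4,p^2\}$, and $\cd(G)=\{1,p\}$ resp.\ $\{1,p^2\}$. Write $|G:Z(G)|=p^{2s}$, so $s\in\{1,2\}$ and every nonlinear irreducible character has degree $p^s$. Being VZ, $G$ has $G'\le Z(G)$ with $G/Z(G)$ elementary abelian, and since $G$ has class $2$ each commutator satisfies $[x,y]^p=[x^p,y]=1$ (as $x^p\in Z(G)$), so $G'$ is elementary abelian; the nondegenerate alternating pairing $G/Z(G)\times G/Z(G)\to G'$ then gives $G'\cong C_p$ if $|Z(G)|=p^4$, and $G'\cong C_p$ or $G'=Z(G)\cong C_p\times C_p$ if $|Z(G)|=p^2$.

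\emph{Reduction to an optimization.}
Put $m=d(Z(G))$ and let $X_G=\{\chi_1,\dots,\chi_m\}$ be as in \eqref{eq:X_G}; by Lemma \ref{L1}, $|X_G|=m$. Write $\chi_i|_{Z(G)}=\chi_i(1)\mu_i$ with $\mu_i\in\Irr(Z(G))$. Since every nontrivial normal subgroup of $G$ meets $Z(G)$, the conditions \eqref{eq:X_G} are equivalent to $\bigcap_i\ker\mu_i=1$ with no proper subfamily having this property, i.e.\ to $Z(G)\hookrightarrow\prod_{i=1}^mC_{o(\mu_i)}$. By Ford's theorem (Lemma \ref{thm:ford}(i), applicable as $p$ is odd), $\mathbb{Q}(\chi_i)=\mathbb{Q}(\zeta_{p^{k_i}})$ for some $k_i\ge0$; for nonlinear $\chi_i$ the values of $\chi_i$ are $0$ and $\chi_i(1)\mu_i(z)$, so $p^{k_i}=o(\mu_i)$, while for linear $\chi_i$ we have $p^{k_i}=o(\chi_i)\ge o(\mu_i)$, with equality attainable by choosing a good extension unless a generator of the image of $\mu_i$ lies in $(G/G')^{p}$. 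Combining Lemma \ref{lemma:c(G)Algorithm}, Lemma \ref{L1}(ii) and $\tfrac{p}{p-1}\phi(p^{k})=p^{k}$ yields
\[ c(G)=\xi_{X_G}(1)+m(\xi_{X_G})=\tfrac{p}{p-1}\sum_{i=1}^m|\Gamma(\chi_i)|\,\chi_i(1)=\sum_{i=1}^m p^{k_i}\chi_i(1), \]
a sum of $m$ powers of $p$ with $i$-th term $o(\chi_i)$ when $\chi_i$ is linear and $o(\mu_i)\,p^{s}$ when $\chi_i$ is nonlinear.

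\emph{The combinatorial problem.}
Now $\chi_i$ is linear iff $\mu_i|_{G'}=1$; and by the VZ correspondence $\nl(G)\leftrightarrow\{\mu\in\Irr(Z(G)):\mu|_{G'}\ne1\}$ (a bijection because $|\nl(G)|=(|G|-|G:G'|)/p^{2s}=|Z(G)|-|Z(G):G'|$), every $\mu_i$ with $\mu_i|_{G'}\ne1$ lifts to a unique nonlinear $\chi_i$. In particular $X_G$ contains at least $d(G')$ nonlinear characters (cf.\ Lemma \ref{lemmalinearcharacter}). Hence $c(G)=\min\sum_{i=1}^m\mathrm{cost}(\mu_i)$ over $\mu_1,\dots,\mu_m\in\Irr(Z(G))$ with $\bigcap_i\ker\mu_i=1$, where $\mathrm{cost}(\mu_i)$ is the least order of a linear extension of $\mu_i$ to $G$ if $\mu_i|_{G'}=1$, and $o(\mu_i)p^{s}$ otherwise. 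This is a finite optimization once one fixes the isomorphism type of $Z(G)$ (which determines $m$ and its invariant factors) and the position of $G'$ inside $Z(G)$ — equivalently $\exp(Z(G)/G')$ and whether $G'\le Z(G)^p$, which controls whether a character nontrivial on $G'$ can have order $p$.

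\emph{Running the cases.}
For $|Z(G)|=p^2$ ($s=2$, $m\le2$): if $m=1$ then $Z(G)\cong C_{p^2}$ and the unique $\chi$ is faithful of degree $p^2$ with $o(\mu)=p^2$, so $c(G)=p^4$; if $m=2$ then $Z(G)\cong C_p\times C_p$, and either $G'=Z(G)$, forcing two nonlinear characters and $c(G)=2p^3$, or $G'\cong C_p$, forcing one linear and one nonlinear character with $c(G)=p^3+p$ when the linear one extends to order $p$ and $c(G)=p^3+p^2$ when $Z(G)/G'\le(G/G')^p$ forces it up to order $p^2$ (note $\exp(G)\le p^2$, so no larger). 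These are the four values in (2). For $|Z(G)|=p^4$ ($s=1$, $G'\cong C_p$, $m\in\{1,2,3,4\}$) one repeats the optimization for each type $Z(G)\in\{C_{p^4},C_{p^3}\times C_p,C_{p^2}\times C_{p^2},C_{p^2}\times C_p^2,C_p^4\}$ paired with each admissible position of $G'$ and the resulting exponent constraints, obtaining exactly the thirteen values in (1). The step I expect to be the real obstacle is making (1) tight: one must verify that each family $\{\mu_i\}$ used in the lower bounds is actually realized by characters of an \emph{existing} VZ group of order $p^6$ (so the needed linear extensions and nonlinear lifts exist with the claimed fields of values), and that the triples (type of $Z(G)$, position of $G'$, exponent) that genuinely occur produce precisely the listed values and nothing more; this is most safely done by invoking the classification of VZ $p$-groups of order $p^6$ — a short list contained in a few isoclinism families — and checking the formula on each group.
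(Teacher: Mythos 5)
First, note that the paper does not prove this statement: Theorem \ref{thm:VZpgroupp6} is imported verbatim from \cite[Theorem 26]{SA}, and the per-group values are simply recorded in Table \ref{t:VZ}. So the comparison can only be against the method that produces those table entries, which your reduction does in fact mirror. Your framework is sound and well chosen: using that nonlinear characters of a VZ group vanish off $Z(G)$, you get $\ker\chi=\ker\mu$ for the central character $\mu$, $\mathbb{Q}(\chi)=\mathbb{Q}(\mu)$, and hence via Lemmas \ref{lemma:c(G)Algorithm} and \ref{L1} the clean identity $c(G)=\sum_i p^{k_i}\chi_i(1)$, turning the problem into minimizing $\sum_i \mathrm{cost}(\mu_i)$ over minimal separating families in $\Irr(Z(G))$, with $\mathrm{cost}(\mu)=o(\mu)p^{s}$ in the fully ramified case and the least order of a linear extension otherwise. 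Your treatment of part (2) is complete and agrees with Table \ref{t:VZ} ($\Phi_5$, $\Phi_{15}$).

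The genuine gap is that part (1) — which is where almost all of the content of the theorem lives — is not actually carried out. You list the five possible isomorphism types of $Z(G)\cong C_{p^4},\dots,C_p^4$ and assert that pairing each with the admissible positions of $G'$ and the exponent constraints "obtains exactly the thirteen values", but you never perform that enumeration, and you yourself flag that tightness requires verifying which configurations of $(Z(G),G',\exp)$ are realized by actual groups in $\Phi_2$. Without that realization step the list of thirteen values is neither shown to be attained nor shown to be exhaustive; for instance, distinguishing $3p^2+p$ from $2p^2+2p$ for $Z(G)\cong C_{p^2}\times C_p^2$ versus $C_p^4$ hinges precisely on whether the relevant linear characters can be taken of order $p$ rather than $p^2$, which is a group-by-group fact. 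Two smaller points: the structural facts you lean on — $G'\le Z(G)$ and $\exp(G/Z(G))=p$ for VZ groups — are correct but are asserted rather than proved or cited (they follow from \cite{FAM, L1}, or from the observation that every non-central class equals a full coset of $G'$); and your derivation of "$G'$ elementary abelian" from "$G/Z(G)$ elementary abelian" inherits whatever justification the latter gets. As written, the proposal is a correct strategy with part (2) proved, but part (1) is a programme rather than a proof.
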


A $p$-group $G$ is VZ if and only 
if $\cd(G) = \{1, |G:Z(G)|^{1/2} \}$
(see \cite[Theorem A]{FAM}). 
Hence the VZ groups of order $p^6$ are in 
$\Phi_2, \Phi_{5}$ and $\Phi_{15}$ (see \cite{RJ}).
For completeness, we record in Table \ref{t:VZ}
the value of $\mu(G)$ for each such group $G$ where $p \geq 5$. 
The column labelled ``Group $G$" identifies 
$G$ by its identifier in \cite{Arxiv} and that 
labelled ``${\mathcal{H}}_G$" identifies a minimal 
degree faithful permutation representation of~$G$.

\begin{tiny}
	
\begin{longtable}[c]{|l|l|l|}
\caption{$\mu(G)$ 
for VZ groups 
of order $p^6$ where $p \geq 5$ \label{t:VZ}} \\
		
		\hline
		Group $G$ & $\mathcal{H}_{G}$ &  $\mu(G)$\\
		\hline
		\endfirsthead
		
		\hline
		\multicolumn{3}{|c|}{Continuation of Table \ref{t:VZ}}\\
		\hline
		Group $G$ & $\mathcal{H}_{G}$ &  $\mu(G)$\\
		\hline
		\endhead
		
		

		\vtop{\hbox{\strut $G_{(2,1)}$, $G_{(2,2)}$ } } &  \vtop{\hbox{\strut $\{ \langle \alpha_{3} \rangle \}$ }}  & $p^{5}$\\
		\hline
			\vtop{\hbox{\strut $G_{(2,3)}$, $G_{(2,5)}$ } } &  \vtop{\hbox{\strut $\{ \langle \alpha_{3}, \beta_{2} \rangle, \langle \alpha_{1}, \alpha_{2}, \alpha_{3}, \beta_{1} \rangle \}$ }}  & $p^{4}+p$\\
		\hline
			\vtop{\hbox{\strut $G_{(2,4)}$, $G_{(2,10)}$ } } &  \vtop{\hbox{\strut $\{ \langle \alpha_{2} \rangle, \langle \alpha_{3}, \beta_{1} \rangle \}$ }}  & $p^{4}+p^{2}$\\
		\hline
			\vtop{\hbox{\strut $G_{(2,6)}$ } } &  \vtop{\hbox{\strut $\{ \langle \alpha_{2} \rangle, \langle \alpha_{3} \rangle \}$ }}  & $p^{4}+p^{2}$\\
		\hline
		\vtop{\hbox{\strut $G_{(2,7)}$, $G_{(2,8)}$, $G_{(2,11)}$, $G_{(2,12)}$ } } &  \vtop{\hbox{\strut $\{ \langle \alpha_{2}, \alpha_{3}, \beta_{2} \rangle, \langle \alpha_{3}, \beta_{1} \rangle \}$ }}  & $p^{3}+p^{2}$\\
		\hline
			\vtop{\hbox{\strut $G_{(2,9)}$ } } &  \vtop{\hbox{\strut $\{ \langle \alpha_{2} \rangle, \langle \alpha_{3}, \beta_{2} \rangle \}$ }}  & $p^{4}+p^{2}$\\
		\hline
			\vtop{\hbox{\strut $G_{(2,13)}$, $G_{(2,14)}$ } } &  \vtop{\hbox{\strut $\{ \langle \alpha_{2},  \beta_{1} \rangle, \langle \alpha_{3}, \beta_{2} \rangle \}$ }}  & $2p^{3}$\\
		\hline
		\vtop{\hbox{\strut $G_{(2,15)}$, $G_{(2,16)}$ } } &   \vtop{\hbox{\strut $ \{ \langle \alpha_{2}, \alpha_{3}, \beta_{1}, \beta_{2} \rangle, \langle \alpha_{2}, \alpha_{3}, \beta_{1}, \beta_{3} \rangle, \langle \alpha_{3}, \beta_{2}, \beta_{3} \rangle \}$ } }  & $p^{3}+2p$\\
		\hline
		\vtop{\hbox{\strut $G_{(2,17)}$, $G_{(2,18)}$ } } &   \vtop{\hbox{\strut $ \{ \langle  \alpha_{2}, \alpha_{3}, \beta_{1}, \beta_{2} \rangle, \langle \alpha_{3}, \beta_{1}, \beta_{3} \rangle, \langle \alpha_{2}, \beta_{2}, \beta_{3} \rangle \}$ } }  & $p^{3}+p^2 +p$\\
		\hline
		\vtop{\hbox{\strut $G_{(2,19)}$ } } &   \vtop{\hbox{\strut $ \{ \langle  \alpha_{2}, \beta_{1}, \beta_{3} \rangle, \langle \alpha_{3}, \beta_{1}, \beta_{2} \rangle, \langle \alpha_{3}, \beta_{2}, \beta_{3} \rangle \}$ } }  & $p^{3}+2p^2$\\
		\hline
		\vtop{\hbox{\strut $G_{(2,20)}$, $G_{(2,22)}$ } } &   \vtop{\hbox{\strut $ \{ \langle  \alpha_{2}, \alpha_{3}, \beta_{1}, \beta_{2} \rangle, \langle \alpha_{2}, \alpha_{3}, \beta_{2}, \beta_{3} \rangle, \langle \alpha_{3}, \beta_{1}, \beta_{3} \rangle \}$ } }  & $2p^2 + p$\\
		\hline
			\vtop{\hbox{\strut $G_{(2,21)}$, $G_{(2,24)}$ } } &   \vtop{\hbox{\strut $ \{ \langle  \alpha_{2}, \alpha_{3}, \beta_{1}, \beta_{2} \rangle, \langle \alpha_{2}, \beta_{1}, \beta_{3} \rangle, \langle \alpha_{3}, \beta_{2}, \beta_{3} \rangle \}$ } }  & $p^{3}+p^2 +p$\\
		\hline
			\vtop{\hbox{\strut $G_{(2,23)}$ } } &   \vtop{\hbox{\strut $ \{ \langle  \alpha_{2}, \alpha_{3}, \beta_{2}, \beta_{3} \rangle, \langle \alpha_{3}, \beta_{1}, \beta_{2} \rangle, \langle \alpha_{2}, \beta_{1}, \beta_{3} \rangle \}$ } } & $3p^2$\\
		\hline
		\vtop{\hbox{\strut $G_{(2,25)}$ } } &   \vtop{\hbox{\strut $ \{ \langle  \alpha_{2}, \beta_{1}, \beta_{2} \rangle, \langle \alpha_{3}, \beta_{1}, \beta_{3} \rangle, \langle \alpha_{3}, \beta_{2} \rangle \}$ } }  & $p^{3}+2p^2$\\
		\hline
		\vtop{\hbox{\strut $G_{(2,26)}$ } } &   \vtop{\hbox{\strut $ \{ \langle  \alpha_{2}, \alpha_{3}, \beta_{2}, \beta_{3} \rangle, \langle \alpha_{2}, \beta_{1}, \beta_{2} \rangle, \langle \alpha_{3}, \beta_{1}, \beta_{3} \rangle \}$ } }  & $3p^2$\\
		\hline
		\vtop{\hbox{\strut $G_{(2,27)}$, $G_{(2,28)}$ } } &   \vtop{\hbox{\strut $ \{ \langle  \alpha_{2}, \alpha_{3}, \beta_{1}, \beta_{2}, \beta_{3} \rangle, \langle \alpha_{2}, \alpha_{3}, \beta_{1}, \beta_{2}, \beta_{4} \rangle,$ } \hbox{\strut $ \langle \alpha_{2}, \alpha_{3}, \beta_{1}, \beta_{3}, \beta_{4} \rangle, \langle  \alpha_{3}, \beta_{2}, \beta_{3}, \beta_{4} \rangle \}$ } }  & $p^2 + 3p$\\
		\hline
			\vtop{\hbox{\strut $G_{(2,29)}$, $G_{(2,30)}$ } } &   \vtop{\hbox{\strut $ \{ \langle  \alpha_{2}, \alpha_{3}, \beta_{1}, \beta_{2}, \beta_{3} \rangle, \langle \alpha_{2}, \alpha_{3}, \beta_{1}, \beta_{2}, \beta_{4} \rangle,$ } \hbox{\strut $ \langle \alpha_{3}, \beta_{1}, \beta_{3}, \beta_{4} \rangle, \langle  \alpha_{2}, \beta_{2}, \beta_{3}, \beta_{4} \rangle \}$ } }  & $2p^2 + 2p$\\
		\hline
			\vtop{\hbox{\strut $G_{(2,31)}$ } } &   \vtop{\hbox{\strut $ \{ \langle  \alpha_{2}, \alpha_{3}, \beta_{1}, \beta_{2}, \beta_{3} \rangle, \langle \alpha_{3}, \beta_{1}, \beta_{2}, \beta_{4} \rangle,$ } \hbox{\strut $ \langle \alpha_{2}, \beta_{1}, \beta_{3}, \beta_{4} \rangle, \langle  \alpha_{2}, \beta_{2}, \beta_{3}, \beta_{4} \rangle \}$ } }  & $3p^2 + p$\\
		\hline
			\vtop{\hbox{\strut $G_{(5,1)}$, $G_{(5,2)}$ } } &  \vtop{\hbox{\strut $\{ \langle \alpha_{2}, \alpha_{3} \rangle \}$ }}  & $p^{4}$\\
		\hline
		\vtop{\hbox{\strut $G_{(5,3)}$, $G_{(5,4)}$ } } &  \vtop{\hbox{\strut $\{ \langle \alpha_{2}, \alpha_{3}, \alpha_{4}, \alpha_{5}, \beta_{1} \rangle, \langle \alpha_{2}, \alpha_{3}, \beta_{2} \rangle \}$ }}  & $p^{3}+p$\\
		\hline
		\vtop{\hbox{\strut $G_{(5,5)}$ } } &  \vtop{\hbox{\strut $\{ \langle \alpha_{2}, \alpha_{3}, \beta_{1} \rangle, \langle \alpha_{2}, \alpha_{3}, \alpha_{5} \rangle \}$ }}  & $p^{3}+p^2$\\
		\hline
		\vtop{\hbox{\strut $G_{(5,6)}$ } } &  \vtop{\hbox{\strut $\{ \langle \alpha_{2}, \alpha_{4} \rangle, \langle \alpha_{2}, \alpha_{3}, \alpha_{5} \rangle \}$ }}  & $p^{3}+p^2$\\
		\hline
			\vtop{\hbox{\strut $G_{(5,7)}$ } } &  \vtop{\hbox{\strut $\{ \langle \alpha_{2}, \alpha_{3} \rangle, \langle \alpha_{3}, \alpha_{4}, \alpha_{5} \rangle \}$ }}  & $p^{3}+p^2$\\
		\hline
			\vtop{\hbox{\strut $G_{(15,1)}$, $G_{(15,2)}$, $G_{(15,3)}$, $G_{(15,4)}$ } } &  \vtop{\hbox{\strut $\{ \langle \alpha_{1}, \alpha_{5}, \alpha_{6} \rangle, \langle \alpha_{2}, \alpha_{5}, \alpha_{6} \rangle \}$ }}  & $2p^{3}$\\
		\hline
		\vtop{\hbox{\strut $G_{(15,5k)}$, for $k = 2,3,\ldots, p-1$ } } &  \vtop{\hbox{\strut $\{ \langle \alpha_{1}, \alpha_{5}, \alpha_{6} \rangle, \langle \alpha_{2}, \alpha_{5}, \alpha_{6} \rangle \}$ }}  & $2p^{3}$\\
		\hline
			\vtop{\hbox{\strut $G_{(15,6)}$ } } &  \vtop{\hbox{\strut $\{ \langle \alpha_{1}, \alpha_{3}, \alpha_{5} \rangle, \langle \alpha_{2}, \alpha_{3}, \alpha_{6} \rangle \}$ }}  & $2p^{3}$\\
		\hline
\end{longtable}
\end{tiny}

\section{Compute $\mu(G)$ for some groups of 
order $p^{6}$ and center of order dividing $p^{3}$} \label{Slessthanp4}
For the remainder of the paper, 
unless otherwise stated, $G$ is a finite non-abelian $p$-group where
$p \geq 5$ is prime. 
{We use the following strategy to compute $\mu(G)$. 
\begin{itemize}
	\item We produce a set 
	$\mathcal{H} := \{ H_1, \ldots, H_m \}$ of
	subgroups of $G$ which satisfy 
	$ \bigcap_{\substack{i=1}}^{m} \Core_G(H_{i})= 1.$ 
	Hence $\mu(G) \leq \sum_{i=1}^{m}|G:H_{i}|$.
	\item We exhibit $S \leq G$ which, by exploiting 
	the inequality $\mu(S)\leq \mu(G)$, implies
	that
	\[  {\mu(G)~\geq~\sum_{i=1}^{m}|G:H_{i}|.} \]
	
\end{itemize}
Hence we deduce that $\mu(G)=\sum_{i=1}^{m}|G:H_{i}|$.}

We partition the groups into two types. 
\begin{enumerate}
\item Type 1: a group that is a direct product of an 
abelian and a non-abelian subgroup; these are listed in Table \ref{t:2}.
\item Type 2: all others; these are listed in Table \ref{t:3}.
\end{enumerate}

The column labelled ``Group $G$" identifies the 
group $G$ of order $p^6$ by its identifier in \cite{Arxiv}.
In Table \ref{t:2}, the column labelled ``Group $\NAB$" lists 
a non-abelian direct factor $\NAB$ of $G$. 
The column labelled ``Argument for lower bound ($\NAB$)" 
gives (information about) a lower bound
for $\mu(\NAB)$; that labelled ``${\mathcal{H}}_\NAB$" identifies a 
minimal degree faithful permutation representation 
of $\NAB$; and that labelled ``$\mu(\NAB)$" lists the value of $\mu(\NAB)$. 
In Table \ref{t:3}, the column labelled ``Argument for lower bound" 
gives a lower bound for $\mu(G)$ and that 
labelled ``${\mathcal{H}}_G$" identifies a minimal degree faithful permutation 
representation of $G$. In both cases, the column labelled 
``$\mu(G)$" lists the value of $\mu(G)$.

If $G = \NAB \times \AB$ where $\NAB$ 
is non-abelian and $\AB$ is abelian, then $\AB$ is generated
 by those generators of $G$ which are omitted from the 
 list for $\NAB$ in Table \ref{t:2}.
If  $\{ \NAB_1, \NAB_2, \ldots, \NAB_n \}$ and 
 $\{ \AB_1, \AB_2, \ldots, \AB_m \}$ are minimal degree faithful permutation 
representations of $\NAB$ and $\AB$ respectively, then 
\[ \{  \NAB_1 \times \AB, \ldots, \NAB_n \times \AB,  \NAB \times \AB_1,\ldots, \NAB \times\AB_m \} \]
determines one for $G$ 
(see \cite[Proposition 2]{DLJ}). 

\begin{example} \label{example1}
{\rm
Consider the group of Type 1 listed in \cite{Arxiv}:
\begin{displaymath}
\begin{array}{cclll}
G_{(3,23)} & = & \langle 
\alpha_{1},\alpha_{2},\alpha_{3}, \alpha_{4}, \beta_{1}, \beta_{2}, \beta_{3} 
& | &  [\alpha_{3}, \alpha_{4}] = \alpha_{2}, [\alpha_{2}, \alpha_{4}] = 
\alpha_{1} = \beta_{1}, \alpha_{3}^{p} = \beta_{2}, 
\alpha_{4}^{p} = \beta_{1}, \\
& & & & \alpha_{2}^{p} =  
\beta_{1}^{p} = \beta_{2}^{p} = \beta_{3}^{p} = 1 \rangle \\
& = & \langle \alpha_{1}, \alpha_{2}, \alpha_{3}, \alpha_{4}, 
\beta_{1}, \beta_{2} \rangle  \times  \langle \beta_{3} \rangle & & \\
& = & \NAB \times C_p.
\end{array}
\end{displaymath}
(All trivial commutators between generators are omitted.)
Observe 
$Z(\NAB) = \langle \alpha_{4}^{p}, \alpha_{3}^p \rangle 
\cong C_{p} \times C_p$. Lemma \ref{L2} implies that we search 
for subgroups $\NAB_1$ and $\NAB_2$ of $\NAB$ whose cores in $\NAB$
intersect trivially.
Since $\NAB_1=\langle \alpha_{4}, \alpha_2 \rangle$ 
and $\NAB_2=\langle \alpha_{3}, \alpha_2\rangle$ have this property,
we deduce that $\mu(\NAB) \leq |\NAB:\NAB_1| + |\NAB:\NAB_{2}| \leq 2p^2$. 
On the other hand,  
$\NAB \geq S = \langle \alpha_{3}, \alpha_{2} \rangle \cong 
C_{p^{2}}\times C_{p}$, so $\mu(S) = p^2+p \leq \mu(\NAB)$. 
Therefore, $p^2 + p \leq \mu(\NAB) \leq 2p^2$.
We deduce from its structure that $\NAB$ is not 
a direct product of an abelian and a non-abelian subgroup, and so, 
by Lemmas \ref{thm:pgroup} and \ref{remark:newrange}, $\mu(\NAB) = 2p^2$.
Lemma \ref{thm:nilpotent} now implies that $ \mu(G)=2p^2 + p$. 
Since $\{ \langle \alpha_{4}, \alpha_{2} \rangle,
\langle \alpha_{3}, \alpha_{2} \rangle \}$ is a minimal degree
faithful permutation representation of $\NAB$, the set $\{ \langle 
\alpha_{4}, \alpha_{2}, \beta_{3} \rangle, \langle \alpha_{3}, 
\alpha_{2}, \beta_{3} \rangle, \NAB \}$ 
determines one for $G_{(3,23)}$.
}
\end{example}

\begin{example} \label{example2}
\rm {
Consider the group of Type 2 listed in \cite{Arxiv}:
\begin{equation*}
\begin{split}
G_{(3, 3)} = &	\; \langle \alpha_{1}, \alpha_{2}, \alpha_{3}, \alpha_{4}, \beta_{1} ~|~ [\alpha_{3}, \alpha_{4}] = \alpha_{2}, [\alpha_{2}, \alpha_{4}] = \alpha_{1} = \beta_{1}^{p^2}, \alpha_{4}^{p} = \beta_{1}, \beta_{1}^{p^3} = \alpha_{2}^{p} =  \alpha_{3}^{p} = 1 \rangle.
\end{split}
\end{equation*}
Observe $Z(G)=\langle \alpha_{4}^{p} \rangle \cong C_{p^{3}}$, so
Lemma \ref{L2} implies that $|{\mathcal{H}}_G| = 1$.
Let $S = \langle \alpha_{3}, \alpha_{2} \rangle$.
Since $\Core_{G}(S) = 1$ we deduce that $\mu(G)\leq p^{4}$. }
On the other hand, $G \geq \langle \alpha_{4} \rangle \cong C_{p^{4}} $ 
so $\mu(G)\geq p^{4}$. Therefore, $\mu(G)= p^{4}$.
\end{example}

\begin{remark}
While the presentations recorded in \cite{Arxiv} 
apply generally to $p \geq 7$, they also hold for $p = 5$ except for 
some groups in $\Phi_{35}, \Phi_{36}, \Phi_{38}$ and $\Phi_{39}$.
For convenience we record the few changes
needed to obtain valid presentations for the groups 
of order $5^6$ in these families.

\begin{itemize}
\item 
$G_{(35, 3)}$: If $p = 5$ then $\alpha^5 = \alpha^{-1}$.

\item 
$G_{(36,3r)}$:
If $p=5$, then $\alpha_{5}^{5} = \alpha_{1}^{-1}$  and 
$\alpha_{6}^{5} = \alpha_{1}^r$ for $r=1,2$.

\item 
$G_{(39,4r)}$:
If $p=5$, then $\alpha_{5}^{5} = \alpha_{1}$ and 
$\alpha_{6}^{5} = \alpha_{1}^r$ for $r=1,2,3,4$.

\item 
Here we list presentations for the groups of order $5^6$ in $\Phi_{38}$. 

\begin{scriptsize}

	
\end{tiny}

\section{Compute $c(G)$ for some groups of order 
$p^6$ and center of order dividing $p^3$} \label{S1}

We now compute $c(G)$ for some groups $G$  
of order $p^6$ with center of order dividing $p^3$. 
In Sections \ref{SS3}--\ref{SS5}
we first determine the values that $c(G)$ can take 
for each of the relevant isoclinism families; 
the approach is dictated by the order of their centers.
In Section \ref{subsection:tables} 
we exploit these values to list $X_{G}$, and so compute $c(G)$, 
for each group.  We present our results in Tables \ref{t:4}--\ref{t:7}. 

We first present results which assist our computations.
Let $A$ be a fixed maximal abelian subgroup of $G$ containing $G'$. 
Consider the set $T$ of all subgroups $D$ of $G$ with 
$D\leq A$ and $A/D$ cyclic.  Let $T_{G}$ be a set of representatives 
of the equivalence classes of $T$ under conjugacy in $G$. 
For $D \in T$, let $K_{D}$ be a 
fixed element of maximal order in $\{ \MTG \leq G ~|~ A\leq \MTG \text{ and } \MTG' \leq D \}$.
Clearly, if $D$ is core-free, then $K_{D} = A$. 
Define $R(D)$ to be the set of those complex linear representations 
of $K_{D}$ whose restriction to $A$ has kernel $D$. 
Let $R_{C}(D)$ be a complete set of those representations 
in $R(D)$ which are not mutually $G$-conjugate.

\begin{lemma} \textnormal{\cite[Theorem 2]{BKP}} \label{lemma:bakshi}
Let $G$ be a finite metabelian group with 
$A$ and $T_{G}$ as defined above. Then 
\[ \rho\ind_{K_{D}}^{G} {\rm for\ } \rho\in R_{C}(D){\rm\ and\ } D\in T_{G} \]
are precisely the inequivalent irreducible complex representations 
of $G$. 
Each of these representations 
is faithful if and only if $D$ is core-free.
\end{lemma}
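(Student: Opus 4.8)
The plan is to carry out a Clifford-theoretic analysis relative to the normal abelian subgroup $A$. The conceptual point that makes everything go through cleanly is that $A$, every $K_{D}$, and every inertia group $\inertiagroup_{G}(\lambda)$ contain $G'$, hence are normal in $G$; this lets me invoke Clifford's theorem directly instead of the heavier Shoda--Mackey irreducibility criterion. First I would record the preliminaries: $A\trianglelefteq G$ and $C_{G}(A)=A$ (by maximality of $A$, since an element centralising $A$ generates an abelian overgroup of $A$); for $D\in T$ the quotient $K_{D}/A$ is abelian (because $K_{D}'\le D\le A$), so any linear character $\lambda$ of $A$ with kernel $D$ is $K_{D}$-invariant ($[K_{D},A]\le K_{D}'\le D=\ker\lambda$) and extends to a linear character of $K_{D}$ (a character of a subgroup of a finite abelian group always extends), whence $R(D)\neq\emptyset$; and, by Mackey, $(\rho\ind_{K_{D}}^{G})|_{A}=\sum_{x\in K_{D}\backslash G}\lambda^{x}$ for $\rho\in R(D)$ with $\lambda=\rho|_{A}$.

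Next I would prove irreducibility of $\rho\ind_{K_{D}}^{G}$. Since $K_{D}\trianglelefteq G$, Clifford's theorem reduces this to showing $\inertiagroup_{G}(\rho)=K_{D}$. If $\inertiagroup_{G}(\rho)>K_{D}$, choose $L$ with $K_{D}<L\le\inertiagroup_{G}(\rho)$ and $L/K_{D}$ cyclic (possible since $\inertiagroup_{G}(\rho)/K_{D}$ is a nontrivial subgroup of the abelian group $G/K_{D}$); being $L$-invariant with $L/K_{D}$ cyclic, $\rho$ extends to a linear character $\widetilde{\rho}$ of $L$, and then $L'\le G'\le A$ together with $L'\le\ker\widetilde{\rho}$ force $L'\le\ker\widetilde{\rho}\cap A=\ker\lambda=D$, so $L$ is a subgroup with $A\le L$, $L'\le D$ and $|L|>|K_{D}|$---contradicting the maximality of $K_{D}$. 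The same argument run inside any intermediate group $K_{D}\le U\le G$ shows $\rho\ind_{K_{D}}^{U}$ is irreducible; I will use this for $U=\inertiagroup_{G}(\lambda)$.

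For pairwise inequivalence, if $\rho\ind_{K_{D}}^{G}=\rho'\ind_{K_{D'}}^{G}$ with $\rho\in R_{C}(D)$, $\rho'\in R_{C}(D')$, then restricting to $A$ and using the Mackey formula shows $\rho'|_{A}$ is $G$-conjugate to $\rho|_{A}$, so $D'$ is $G$-conjugate to $D$, hence $D'=D$ (as $T_{G}$ is a transversal of $G$-orbits). After a $G$-conjugation we may assume $\rho'|_{A}=\rho|_{A}=\lambda$; then $\rho,\rho'\in\Irr(K_{D}\mid\lambda)$ are linear, so Gallagher's theorem gives $\rho'=\rho\beta$ for some $\beta\in\Irr(K_{D}/A)$, and evaluating $\langle\rho\ind_{K_{D}}^{G},\rho'\ind_{K_{D}}^{G}\rangle=1$ by Mackey (all double cosets are single cosets since $K_{D}\trianglelefteq G$) forces $\rho'=\rho^{x}$ for some $x\in G$, so $\rho=\rho'$ in $R_{C}(D)$. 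For exhaustion, let $\chi\in\Irr(G)$ lie over $\lambda\in\Irr(A)$, put $D:=\ker\lambda\in T$ (as $A/D$ is cyclic) and $I:=\inertiagroup_{G}(\lambda)\trianglelefteq G$, and write $\chi=\widehat{\chi}\ind_{I}^{G}$ with $\widehat{\chi}\in\Irr(I\mid\lambda)$ the Clifford correspondent; then $\widehat{\chi}|_{A}$ is a multiple of $\lambda$, so $\widehat{\chi}|_{K_{D}}$ has an irreducible constituent over $\lambda$, which by Gallagher's theorem in $K_{D}$ is a linear character $\nu\in R(D)$. By the previous paragraph $\nu\ind_{K_{D}}^{I}$ is irreducible, and it contains $\widehat{\chi}$ (Frobenius reciprocity), hence equals it, so $\chi=\nu\ind_{K_{D}}^{I}\ind_{I}^{G}=\nu\ind_{K_{D}}^{G}$, which (after conjugating $\nu$ into $R_{C}(D)$ and $D$ into $T_{G}$) exhibits $\chi$ on the list.

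Finally, faithfulness: the kernel of $\rho\ind_{K_{D}}^{G}$ is $K:=\Core_{G}(\ker\rho)\trianglelefteq G$, and intersecting with the normal subgroup $A$ and using $\ker\rho\cap A=D$ gives $K\cap A=\Core_{G}(D)$. Since $[K,A]\le[G,A]\cap K\le A\cap K=\Core_{G}(D)$: if $D$ is core-free then $[K,A]=1$, so $K\le C_{G}(A)=A$ and thus $K=\Core_{G}(D)=1$, i.e.\ the representation is faithful; if $D$ is not core-free then $1\neq\Core_{G}(D)\le K$, so it is not faithful. I expect the heaviest bookkeeping to lie in the exhaustion step---coordinating the Clifford correspondence over $I$, Gallagher's theorem over $K_{D}$, and the re-irreducibility of the induced linear character in the right order---but no single step looks genuinely hard once one observes that $A$, the $K_{D}$, and the inertia groups are all normal in $G$.
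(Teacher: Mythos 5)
Your proof is correct. For comparison: the paper does not prove this lemma at all --- it is quoted from Bakshi--Kulkarni--Passi \cite[Theorem 2]{BKP}, where it arises in the computation of the Wedderburn decomposition of $\mathbb{Q}G$ and is established via the formalism of (strong) Shoda pairs and Shoda's irreducibility criterion for monomially induced characters. Your argument is instead a self-contained Clifford-theoretic derivation whose single organising observation is that $A$, every $K_{D}$, and every inertia group contain $G'$, hence are normal in $G$ with abelian quotient. That observation delivers each piece: irreducibility of $\rho\ind_{K_{D}}^{G}$ reduces to $\inertiagroup_{G}(\rho)=K_{D}$, which you get from the cyclic-quotient extension theorem and the maximality of $|K_{D}|$; inequivalence and exhaustion come from Mackey restriction to $A$ together with the Clifford correspondent over $\inertiagroup_{G}(\lambda)$ and Gallagher over $K_{D}$; and faithfulness follows from $\ker(\rho\ind_{K_{D}}^{G})\cap A=\Core_{G}(D)$ combined with $C_{G}(A)=A$. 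Every step checks out. One minor reordering I would make in the exhaustion step: choose the constituent $\lambda$ of the restriction of $\chi$ to $A$ whose kernel already lies in $T_{G}$ \emph{before} invoking the fixed subgroup $K_{D}$. Conjugating at the end, as you phrase it, lands you in $K_{D}^{g}$, which need not coincide with the chosen representative $K_{D_{0}}$, because a subgroup of maximal order in $\{U\le G : A\le U,\ U'\le D_{0}\}$ need not be unique. Since the constituents of the restriction form a full $G$-orbit, this costs nothing; it is a presentational fix, not a gap.
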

	
\begin{lemma}\label{lemma:abelian}
Let $A \cong C_{p^{r_{1}}} \times C_{p^{r_{2}}} \times 
\cdots \times C_{p^{r_{t}}}$ 
where $p$ is a prime and 
$r_{1} > r_{2} \geq r_{3} \geq \cdots \geq r_{t}$. 
Suppose $\lambda \in \lin(A)$ satisfies 
$A/\ker(\lambda) \cong C_{p^{r_{1}}}$. If $a\in A$ 
has order $p^{r_{1}}$, 
then $\langle a \rangle \cap \ker(\lambda) = 1$.
\end{lemma}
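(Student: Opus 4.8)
The plan is to reduce everything to a single order computation in the cyclic quotient $\bar A := A/\ker(\lambda)$. Since $A$ is abelian and $\lambda$ is linear, $\lambda$ induces an embedding of $\bar A$ into $\mathbb{C}^{\times}$ whose image is a cyclic group of order $|\bar A| = p^{r_1}$; thus $\bar A \cong C_{p^{r_1}}$, which is also the exponent of $A$. If I can show that the image $\bar a$ of $a$ in $\bar A$ still has order $p^{r_1}$, then the natural map $\langle a \rangle \to \bar A$ (restriction of the quotient map) is a homomorphism between cyclic groups of equal order $p^{r_1}$ sending the generator $a$ to the generator $\bar a$, hence it is surjective and therefore injective; its kernel is $\langle a \rangle \cap \ker(\lambda)$, and the lemma follows.

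To control $\bar a$, I would first fix a decomposition $A = \langle x_1 \rangle \times \cdots \times \langle x_t \rangle$ with $o(x_i) = p^{r_i}$, and write $a = x_1^{c_1} x_2^{c_2} \cdots x_t^{c_t}$. The order of an element of a direct product is the least common multiple of the orders of its coordinates; since $o(x_i^{c_i})$ divides $p^{r_i} \le p^{r_2} < p^{r_1}$ for every $i \ge 2$, the hypothesis $o(a) = p^{r_1}$ forces $o(x_1^{c_1}) = p^{r_1}$, i.e.\ $p \nmid c_1$. This is the one place where the strict inequality $r_1 > r_2$ enters, and it is genuinely needed: if $r_1 = r_2$ the statement fails (take $A = C_p \times C_p$ with $\ker(\lambda)$ a coordinate subgroup).

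Then I would work in $\bar A$, regarded additively as $\mathbb{Z}/p^{r_1}\mathbb{Z}$. For $i \ge 2$ the image $\bar x_i$ has order dividing $p^{r_i} < p^{r_1}$, so it lies in the unique maximal subgroup $p\bar A$. Because $\bar x_1, \dots, \bar x_t$ generate $\bar A$ while $p\bar A$ is proper, we must have $\bar x_1 \notin p\bar A$; hence $\bar x_1$ generates $\bar A$, i.e.\ corresponds to a unit $u \in (\mathbb{Z}/p^{r_1}\mathbb{Z})^{\times}$. Writing $\bar a = c_1 u + p y$ for some $y \in \bar A$, we get $u^{-1}\bar a \equiv c_1 \pmod{p}$, which is again a unit since $p \nmid c_1$; so $\bar a$ generates $\bar A$ and thus has order $p^{r_1}$. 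Together with the reduction in the first paragraph this proves the lemma. I do not anticipate any real obstacle here: the argument is just bookkeeping of element orders, the only delicate point being the use of $r_1 > r_2$ to put the tail $\bar x_2,\dots,\bar x_t$ into $p\bar A$ and to force $p \nmid c_1$.
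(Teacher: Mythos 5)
Your proof is correct. It differs from the paper's in one small but genuine way: the paper argues by contradiction, assuming $a^{p^{r_1-1}} \in \ker(\lambda)$, and \emph{extends $a$ to a basis} of $A$ (writing $A = \langle a, a_2, \ldots, a_t\rangle \cong C_{p^{r_1}} \times \cdots \times C_{p^{r_t}}$) to conclude that every generator of $A/\ker(\lambda)$ would then have order at most $p^{r_1-1}$, contradicting $\exp(A/\ker(\lambda)) = p^{r_1}$. That step silently uses the fact that an element of order equal to $\exp(A)$ generates a direct summand of a finite abelian $p$-group. You instead fix an \emph{arbitrary} decomposition $A = \langle x_1\rangle \times \cdots \times \langle x_t\rangle$, use $r_1 > r_2$ to force $p \nmid c_1$ in $a = x_1^{c_1}\cdots x_t^{c_t}$, and then check directly that $\bar a$ generates the cyclic quotient because the tail $\bar x_2, \ldots, \bar x_t$ is trapped in the Frattini subgroup $p\bar A$ while $\bar x_1$ is not. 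The two arguments are the same order bookkeeping at heart, but yours avoids the direct-summand lemma entirely, which makes it marginally more self-contained; the paper's is shorter on the page because it delegates that fact to the reader. Your observation that the conclusion fails when $r_1 = r_2$ (e.g.\ $A = C_p \times C_p$ with $\ker(\lambda)$ a coordinate factor) is a worthwhile sanity check that the paper does not record.
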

\begin{proof}
Suppose, to the contrary, that $a^{p^{r_{1}-1}} \in \ker(\lambda)$. 
We choose $a_{2}, \ldots, a_{t} \in A$ such that 
$A = \langle a, a_{2}, \ldots, a_{t} \rangle 
\cong C_{p^{r_{1}}} \times C_{p^{r_{2}}} \times \cdots \times C_{p^{r_{t}}}$. 
Then $A/\ker(\lambda) = \langle a \ker(\lambda), a_{2} \ker(\lambda), 
\ldots, a_{t} \ker(\lambda) \rangle$. 
Since $a^{p^{r_{1}-1}} \in \ker(\lambda)$, 
$o(a \ker(\lambda)) \leq p^{r_{1}-1}$. 
Moreover, $o(a_{j} \ker(\lambda)) \leq p^{r_{2}} \leq p^{r_{1}-1}$
for $2\leq j \leq t$. Hence 
$\exp(A/ \ker(\lambda)) < p^{r_{1}}$, a contradiction. 
Therefore, $\langle a \rangle \cap \ker(\lambda) = 1$.
\end{proof}

\begin{remark} \label{remark:abeliangroups}
{\rm Let $G$ be a metabelian group of order 
$p^n$ where $p$ is a prime and $d(Z(G)) = 2$. 
Suppose $G$ has a maximal abelian subgroup 
$A \cong C_{p^{r_{1}}} \times C_{p^{r_{2}}}$ (where $r_{1} > r_{2}$) 
containing $G'$. 
Suppose $X_{G} = \{ \chi_{1}, \chi_{2} \} \subset \Irr(G)$. 
Since $G$ is metabelian, from Lemma \ref{lemma:bakshi}, 
$\chi_{i} = \rho_{i}\ind_{A}^{G}$ for some $\rho_{i} \in \lin(A)$ 
and for $i=1,2$. Suppose $A / \ker(\rho_{i}) \cong C_{p^{r_{2}}}$
for some $i$; without loss of generality, assume  $i = 1$.  
Now $|\ker(\rho_{1})| = p^{r_{1}}$. 
On the other hand, $\ker(\rho_{1}) \leq A$, 
so $d(\ker(\rho_{1})) \leq 2$. 
If $d(\ker(\rho_{1})) = 2$, then 
$\ker(\rho_{1}) \cong C_{p^{m_{1}}} \times C_{p^{m_{2}}}$
for some $m_{1}, m_{2} \geq 1$. Hence 
$\ker(\rho_{1})$ 
contains all the elements of order $p$ in $A$.
But $A$ is a maximal abelian subgroup of $G$, so $Z(G) \subseteq A$. 
Since $d(Z(G)) = 2$, $Z(G)$ contains all the elements 
of order $p$ in $A$. 
Hence $\ker(\rho_{1}\ind_{A}^{G}) \cap \ker(\chi_{2}) \neq 1$, 
a contradiction. Therefore, $\ker(\rho_{1}) \cong C_{p^{r_{1}}}$.
We claim that if $G$ has an element $b$ of order $p^{r_{2}}$, 
then $\langle b \rangle \cap \ker(\rho_{i}) = 1$.
Suppose, to the contrary, that $b^{p^{r_{2}-1}} \in \ker(\rho_{1})$. 
Choose $x\in A$ such that $o(x) = p^{r_{1}}$ and 
$A = \langle x \rangle \times \langle b \rangle 
\cong C_{p^{r_{1}}} \times C_{p^{r_{2}}}$. 
Now $A/\ker(\rho_{1}) = \langle x \ker(\rho_{1}), 
b \ker(\rho_{2}) \rangle \cong C_{p^{r_{2}}}$. 
Thus $o(x \ker(\rho_{1})) \leq p^{r_{2}}$. 
But $r_{1} > r_{2}$, so $x^{p^{r_{1}-1}} \in \ker(\rho_{1})$. 
Thus $\langle x^{p^{r_{1}-1}}, b^{p^{r_{2}-1}} 
\rangle \cong C_{p} \times C_{p} \subseteq \ker(\rho_{1})$. 
So $\ker(\rho_{1})$ contains all the elements of 
order $p$ in $A$. Hence 
$\ker(\rho_{1}\ind_{A}^{G}) \cap \ker(\chi_{2}) \neq 1$, 
a contradiction. This establishes the claim. 
}
\end{remark}

We now revert to our convention that $p \geq 5$.
\begin{lemma} \label{LemmaNotInX_G}
Let $G$ be a non-abelian group satisfying one of the following:
\begin{enumerate}
\item[(i).] $|G|=p^{n}$ and $d(Z(G)) \leq 2$.
\item[(ii).] $|G| = p^{6}$, 
$G$ is not a direct product of an abelian and a non-abelian subgroup,
$Z(G)\cong C_{p}^{3}$, and $G' \subset Z(G)$ has order $p^2$. 
\end{enumerate}
Suppose $X \subset \Irr(G)$ has cardinality $|X_{G}|$. 
If $\psi\in X$ has degree one
and $Z(G) \subseteq \ker(\psi)$, 
then $X$ does not satisfy Equation \eqref{eq:X_G}. 
\end{lemma}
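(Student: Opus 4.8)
The plan is to derive a contradiction from the assumption that some $\psi \in X$ is linear with $Z(G) \subseteq \ker(\psi)$. The key structural fact I would exploit is Lemma \ref{L1}(i): since $X$ has cardinality $|X_G| = d(Z(G))$, the intersection $\bigcap_{\chi \in X} \ker(\chi)$ being trivial is a tight condition — removing any single character must leave a nontrivial intersection. So write $X = \{\psi\} \cup Y$ where $Y = X \setminus \{\psi\}$; I would aim to show $\bigcap_{\chi \in Y}\ker(\chi) \neq 1$, which already violates Equation \eqref{eq:X_G} (it would mean $X$ fails the minimality clause, or more directly that the trivial-intersection clause forces $\psi$ to be essential in a way we can contradict). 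The crucial observation is that $\bigcap_{\chi \in X}\ker(\chi) = \ker(\psi) \cap \bigl(\bigcap_{\chi \in Y}\ker(\chi)\bigr) = 1$, and since $Z(G) \subseteq \ker(\psi)$, restricting to the center gives $Z(G) \cap \bigcap_{\chi \in Y}\ker(\chi) \subseteq \ker(\psi) \cap \bigcap_{\chi \in Y}\ker(\chi)\cap Z(G)$ — I need instead to look at how the kernels of $Y$ meet $Z(G)$.

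More carefully: in case (i), $d(Z(G)) \leq 2$, so $|Y| = |X_G| - 1 \leq 1$. If $|Y| = 0$ then $\ker(\psi) = 1$, impossible since $Z(G) \subseteq \ker(\psi)$ and $Z(G) \neq 1$. If $|Y| = 1$, say $Y = \{\chi\}$, then $\ker(\psi) \cap \ker(\chi) = 1$; but $Z(G) \subseteq \ker(\psi)$, so $Z(G) \cap \ker(\chi) = 1$. Since $\chi$ is a character of a $p$-group and $\ker(\chi) \trianglelefteq G$, if $\ker(\chi) \neq 1$ it meets $Z(G)$ nontrivially (every nontrivial normal subgroup of a $p$-group meets the center), forcing $\ker(\chi) = 1$, i.e. $\chi$ is faithful; but then $G$ embeds in a group with a faithful irreducible character whose center is cyclic, so $Z(G)$ is cyclic — fine, that's allowed when $d(Z(G)) = 1$, but then $|X_G| = 1$ and $Y = \emptyset$, contradiction. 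When $d(Z(G)) = 2$, a faithful $\chi$ forces $Z(G)$ cyclic, contradicting $d(Z(G)) = 2$. So $\ker(\chi) \neq 1$, hence $\ker(\chi) \cap Z(G) \neq 1$, contradicting $Z(G) \cap \ker(\chi) = 1$. This closes case (i).

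For case (ii), $Z(G) \cong C_p^3$, so $|X_G| = 3$ and $|Y| = 2$, say $Y = \{\chi_1, \chi_2\}$. Here $G' \subseteq Z(G)$ has order $p^2$, and the extra hypothesis that $G$ is not a direct product of an abelian and a non-abelian subgroup is meant to pin down the character theory. Since $Z(G) \subseteq \ker(\psi)$ and $\ker(\psi) \cap \ker(\chi_1) \cap \ker(\chi_2) = 1$, we get $Z(G) \cap \ker(\chi_1) \cap \ker(\chi_2) = 1$. Now $\ker(\chi_i) \cap Z(G)$ is a subgroup of the elementary abelian $Z(G) \cong C_p^3$. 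If both $\chi_i$ are nonlinear, their kernels are proper; the subtlety is bounding $\dim(\ker(\chi_i) \cap Z(G))$ as a vector space over $\mathbb{F}_p$. I would argue that since $\psi$ is linear with $G' \subseteq Z(G) \subseteq \ker(\psi)$, and $\ker(\psi) \supseteq Z(G)$ has index dividing $p^?$... the point is $\psi$ contributes nothing to separating points of $Z(G)$, so $\chi_1$ and $\chi_2$ alone must separate $Z(G) \cong C_p^3$ — but two characters' kernels can cut out at most a codimension-$2$ subspace intersection argument: $\dim(Z(G)) - \dim(\ker\chi_1 \cap \ker\chi_2 \cap Z(G)) \leq (\dim Z(G) - \dim(\ker\chi_1 \cap Z(G))) + (\dim Z(G) - \dim(\ker\chi_2\cap Z(G)))$, and each $\chi_i$ can "see" at most $\dim(Z(\chi_i))$-many independent central characters... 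The hard part — and the main obstacle — will be showing that for a nonlinear $\chi_i$ of a group of order $p^6$ with the stated constraints, $Z(G)/(\ker(\chi_i)\cap Z(G))$ is forced to be small (cyclic, i.e. the quotient has dimension $\leq 1$), so that two such characters cannot jointly separate a rank-$3$ elementary abelian center, yielding $\ker(\chi_1) \cap \ker(\chi_2) \cap Z(G) \neq 1$, the desired contradiction. This likely uses that $\chi_i(1)^2 \leq |G:Z(G)| = p^3$ forces $\chi_i(1) = p$, hence $\chi_i$ restricted to $Z(G)$ is $\chi_i(1)\lambda_i$ for a linear $\lambda_i$, so $\ker(\chi_i) \cap Z(G) = \ker(\lambda_i|_{Z(G)})$ has index $p$ in $Z(G)$ — then two such kernels intersect in rank $\geq 1$, done.
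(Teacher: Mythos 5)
Your proposal is correct and follows essentially the same route as the paper: in case (i) the dichotomy ``either $\ker(\chi)=1$, forcing $Z(G)$ cyclic, or $\ker(\chi)$ is a nontrivial normal subgroup and so meets $Z(G)$'' is exactly the content of the paper's terse ``since $G$ is a $p$-group, $\ker(\psi)\cap\ker(\chi)\neq 1$''; in case (ii) your endgame --- each remaining $\chi_i$ satisfies $\chi_i|_{Z(G)}=\chi_i(1)\lambda_i$, so $\ker(\chi_i)\cap Z(G)=\ker(\lambda_i)$ has index at most $p$ in $Z(G)\cong C_p^3$, and two such subgroups must intersect nontrivially --- is the same dimension count the paper performs, reached via the equivalent observation that $Z(G/\ker(\chi_i))$ is cyclic. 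One small omission: you only run the case-(ii) argument ``if both $\chi_i$ are nonlinear,'' whereas a priori one of the two remaining characters could lie in $\lin(G)$; but your scalar-restriction argument applies verbatim to linear characters (or, as the paper does, one notes $G'\subseteq\ker$ with $|G'|=p^2$), so the gap is filled with no new ideas. The digression about $\chi_i(1)^2\le|G:Z(G)|$ forcing $\chi_i(1)=p$ is unnecessary for your conclusion, since the restriction of any irreducible character to the center is a multiple of a linear character regardless of degree.
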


\begin{proof}
\mbox{ }
\begin{enumerate}
\item[(i).]
If $d(Z(G)) = 1$, then $X\cap \lin(G) = \emptyset$, and the result follows. 
Let $d(Z(G)) = 2$. Lemma \ref{L1} implies that $|X_G|=2$. 
Let $X = \{ \chi, \psi \}\subset \Irr(G)$ be such that $\psi(1) = 1$ 
and $Z(G) \subseteq \ker(\psi)$. Since $G$ is a $p$-group, 
$\ker(\psi) \cap \ker(\chi)\neq 1$, so 
$X$ does not satisfy Equation \eqref{eq:X_G}. 

\item[(ii).]
Since $|G/Z(G)|=p^3$, $\cd(G)= \{ 1, p \}$. 
We deduce from \cite{RJ, Arxiv} that $G \in \Phi_{4}$, 
so $\exp(G) = p^2$ and $\exp(G/Z(G)) = p$. 
Lemma \ref{L1} implies that $|X_G|=3$. 
Let 
$X = \{ \psi_{i} \, : \, 1 \leq i \leq 3\} \subset \Irr(G)$
satisfy Equation \eqref{eq:X_G}, where $\psi_{1}\in \lin(G)$ is such 
that $Z(G) \subseteq \ker(\psi_{1})$. 
Then 
\begin{equation} \label{eq:3}
\bigcap_{i=1}^{3}\ker(\psi_{i}) = 1 \Rightarrow 
\left(\bigcap_{i=1}^{3}\ker(\psi_{i})\right)\cap Z(G) = 1 
\Rightarrow Z(G) \cap 
\left( \bigcap_{i=2}^{3}\ker(\psi_{i})\cap Z(G) \right) = 1.
\end{equation}

\noindent 
\medskip
We claim that $Z(G) \nsubseteq \ker(\psi_{i})$ for $i=2,3$.
Clearly, $Z(G) \nsubseteq \bigcap_{i=1}^{3}\ker(\psi_{i})$. 
If $Z(G)$ is contained in the kernel of 
say $\psi_{2}$, then Equation \eqref{eq:3} implies that 
$\ker(\psi_{3})\cap Z(G)=1$, a contradiction since 
$\ker(\psi_{3})$ is a normal subgroup of $G$. 

\noindent 
\medskip
We now show that $|\ker(\psi_{i})\cap Z(G)|=p^2$ for $i=2,3$.
If either $\psi_{i}$ is linear, 
then $G'$, which has order $p^2$, is contained in $\ker(\psi_{i})$, and 
thus our claim holds for $\psi_{i}$. 
Suppose $\psi_{2}, \psi_{3}\in \nl(G)$. 
Then $Z(G/\ker(\psi_{2}))$ is cyclic.
This implies that $\theta(Z(G))$ is cyclic,
where $\theta: G \mapsto G/\ker(\psi_{2})$ is the natural homomorphism.
Since $Z(G)$ is elementary abelian, $I := Z(G) / (Z(G) 
   	\cap \ker(\psi_{2}))$ is cyclic, so it has order 1 or $p$.
If $|I| = 1$, then $G' \subset Z(G) \subseteq \ker(\psi_{2})$; since 
      $\psi_{2} \in \nl(G)$, this is a contradiction.
       Hence $|I|=p$
        which implies that $|\ker(\psi_{2}) \cap Z(G)| = p^{2}$. 
        A similar argument applies to $\psi_{3}$.  
\end{enumerate}

Hence 
$Z(G) \cap \left( \bigcap_{i=2}^{3}\ker(\psi_{i})\cap Z(G) \right) \neq 1$, 
which contradicts Equation \eqref{eq:3}. 
\end{proof}

\begin{remark} \label{remark:NotInX_G}
\textnormal{In Lemma \ref{LemmaNotInX_G} (i), if $d(Z(G)) = 2$ 
and $\psi\in \lin(G)$ is such that  $Z(G) \nsubseteq \ker(\psi)$, 
then $\psi$ may belong to $X_{G}$. 
For example, let $G = G_{(7,14)}  \in  
\Phi_{7}$.
Consider 
$\psi =\psi_{\langle \alpha_{5} G' \rangle} \cdot 1_{\langle \alpha_{4}G' 
\rangle} \cdot 1_{\langle \alpha_{3} G' \rangle } $
where $\psi_{\langle \alpha_{5} G' \rangle}$ is a faithful linear 
character of $\langle \alpha_{5} G' \rangle$: 
observe that $\psi\in\lin(G/G') \cap X_{G}$ (see Table \ref{t:6}).}
\end{remark}
		
\begin{remark}\label{remark:linearcharacter}
{\rm If $G \in \Phi_i$ for $11 \leq i \leq 43$,
then $d(Z(G) \cap G') = d(Z(G))$ (see \cite{Arxiv}).
 Lemma \ref{lemmalinearcharacter} implies that 
  $X_{G} \cap \lin(G) = \emptyset$.}
\end{remark}

\subsection{Groups with center of order $p^{3}$} \label{SS3}
We now consider those groups of order $p^6$ 
which have center of order $p^3$. 
These groups are in $\Phi_{i}$
where $i\in \{ 3,4,6,11 \}$. From \cite{RJ}, we deduce that
their set of character degrees is $\{ 1, p \}$; 
all are metabelian, as their commutator subgroups are abelian. 
			
\begin{lemma}
\label{prop:centercyclicp3}
If $G$ is a group of order $p^6$ with cyclic
center of order $p^3$, then $c(G) = p^4$ or $p^5$.
\end{lemma}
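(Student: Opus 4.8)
The plan is to exploit the two-sided bounds available for $p$-groups with cyclic center, combined with divisibility constraints. Since $G$ has order $p^6$ and $Z(G)$ is cyclic of order $p^3$, Lemma \ref{L1}(i) gives $|X_G| = d(Z(G)) = 1$; thus a minimal degree faithful quasi-permutation representation consists of a single Galois orbit, and $c(G) = \xi_{X_G}(1) + m(\xi_{X_G})$ where $\xi_{X_G} = \sum_{\sigma \in \Gamma(\chi)} \chi^\sigma$ for a single faithful $\chi \in \Irr(G)$. From \cite{RJ}, $G$ lies in one of the families $\Phi_3, \Phi_4, \Phi_6, \Phi_{11}$, so $\cd(G) = \{1, p\}$ and $\chi(1) = p$ (the faithful character cannot be linear, as $G$ is non-abelian with cyclic center, so $G' \leq Z(G)$ is nontrivial and $G' \not\leq \ker\chi = 1$ forces $\chi$ nonlinear). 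Hence $\xi_{X_G}(1) = |\Gamma(\chi)| \cdot p$.

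Next I would pin down $|\Gamma(\chi)| = |\mathbb{Q}(\chi):\mathbb{Q}|$. Since $Z(G)$ is cyclic of order $p^3$, the restriction $\chi|_{Z(G)}$ is a multiple of a faithful linear character of $Z(G)$, so $\mathbb{Q}(\chi) \supseteq \mathbb{Q}(\zeta_{p^3})$ would give degree $\phi(p^3) = p^2(p-1)$; but one must check whether $\chi$ is actually faithful on $Z(G)$ and whether $\mathbb{Q}(\chi)$ can be smaller. Here I would use Lemma \ref{Z(G)isCyclic}: with $p^e = \max\cd(G) = p$ and $p^\alpha = \min\{\chi(1) : \chi \in \nl(G), \ker\chi = 1\}$, which is $p$ since every faithful irreducible character has degree $p$, we get
\[
p \cdot |Z(G)| \leq c(G) \leq p \cdot \exp(G),
\]
that is, $p^4 \leq c(G) \leq p \cdot \exp(G)$. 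Now $\exp(G)$ is $p^3$ or $p^4$ (it cannot be $p^5$ or $p^6$: an element of order $p^5$ would generate a cyclic subgroup of index $p$, forcing $G$ to have an abelian maximal subgroup, but then $|G:Z(G)| \leq p^2$ contradicting $Z(G)$ cyclic of order $p^3$ in a nonabelian group where $|G/Z(G)|$ is not cyclic; and $\exp(G) = p^6$ is impossible for a nonabelian group). Checking the four families in \cite{RJ, Arxiv}, one finds $\exp(G) \in \{p^3, p^4\}$. This yields $p^4 \leq c(G) \leq p^5$.

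Finally I would show $c(G)$ is exactly $p^4$ or $p^5$, ruling out intermediate values. By Lemma \ref{L1}(ii), $m(\xi_{X_G}) = \frac{1}{p-1}\sum_{\sigma \in \Gamma(\chi)} \chi^\sigma(1) = \frac{|\Gamma(\chi)| \cdot p}{p-1}$, so $c(G) = |\Gamma(\chi)| p \cdot \frac{p}{p-1}$; for this to be an integer we need $(p-1) \mid |\Gamma(\chi)|$. Combined with $p^4 \leq c(G) \leq p^5$ and $c(G) = \frac{p^2 |\Gamma(\chi)|}{p-1}$, the factor $|\Gamma(\chi)|$ must satisfy $p^2(p-1) \leq |\Gamma(\chi)| \leq p^3(p-1)$; since $\chi$ faithful with $\chi|_{Z(G)}$ involving a primitive $p^3$-th root of unity (as $\chi$ must be faithful on the cyclic group $Z(G)$, otherwise $\ker\chi \cap Z(G) \neq 1$) forces $\mathbb{Q}(\zeta_{p^3}) \subseteq \mathbb{Q}(\chi)$, we get $|\Gamma(\chi)|$ is a multiple of $\phi(p^3) = p^2(p-1)$, hence $|\Gamma(\chi)| \in \{p^2(p-1), p^3(p-1)\}$, giving $c(G) = p^4$ or $p^5$ respectively. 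The main obstacle is verifying that the minimizing faithful $\chi$ indeed has $\mathbb{Q}(\chi) = \mathbb{Q}(\zeta_{p^3})$ or $\mathbb{Q}(\zeta_{p^4})$ and that no other faithful irreducible character gives a smaller value — this requires using the metabelian structure (Lemma \ref{lemma:bakshi}) to describe all faithful $\chi$ as induced from linear characters $\rho$ of a maximal abelian $A \supseteq G'$ with core-free kernel, and checking that $\mathbb{Q}(\rho) = \mathbb{Q}(\chi)$ (via Lemma \ref{thm:ford}(i), valid since $p$ is odd) is governed by $o(\text{image of } \rho)$, which is $\exp(G)$ for the optimal choice; the two possible exponents then produce exactly the two listed values.
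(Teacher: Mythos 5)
Your argument is correct and follows essentially the same route as the paper: the key step in both is Lemma \ref{Z(G)isCyclic} with $\alpha=e=1$ giving $p^4\le c(G)\le p\cdot\exp(G)\le p^5$, and you additionally make explicit (via Lemma \ref{L1} and the fact that $\mathbb{Q}(\zeta_{p^3})\subseteq\mathbb{Q}(\chi)\subseteq\mathbb{Q}(\zeta_{p^4})$ with no intermediate fields) why $c(G)$ must be exactly $p^4$ or $p^5$, a step the paper leaves implicit. The only blemish is the parenthetical claim that $G'\le Z(G)$, which is false for these groups (e.g.\ in $\Phi_3$ one has $Z(G)\cap G'\cong C_p$ while $G'\cong C_p\times C_p$), but it is not needed — $G'\ne 1$ alone rules out a faithful linear character.
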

\begin{proof}
As $\cd(G) = \{ 1, p \}$, the result follows from 
Lemma \ref{Z(G)isCyclic}. \end{proof}
			
\begin{lemma}
\label{prop:phi3}
Let $G$ be a group of order $p^6$ 
in $\Phi_{3}$ with non-cyclic center $Z(G)$.
\begin{enumerate}
\item[(A.)] If $Z(G) \cong C_{p^2} \times C_{p}$, then  
$c(G) = 2p^2$, $p^3+p$, $ p^3+p^2$, $2p^3$, $p^4+p$, or $p^4+p^2$.
\item[(B.)] If $Z(G) \cong C_{p}^3$, then  
$c(G) = p^2+2p$, $2p^2+p$, $3p^2$, $p^3+2p$, or $p^3+p^2+p$.
\end{enumerate}
\end{lemma}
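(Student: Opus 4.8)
The plan is to exploit the structure of the family $\Phi_3$ together with the general machinery of Lemmas~\ref{lemma:c(G)Algorithm}, \ref{L1}, \ref{P2}, \ref{remark:newrange}, and, crucially, the metabelian-induction result Lemma~\ref{lemma:bakshi} and the abelian bookkeeping of Remark~\ref{remark:abeliangroups}. Since $G \in \Phi_3$ has $\cd(G) = \{1,p\}$ and $G$ is metabelian, every $\chi \in \Irr(G)$ is either linear or of the form $\rho\ind_A^G$ for a fixed maximal abelian $A \supseteq G'$ of index $p$ and $\rho \in \lin(A)$. The invariants of $\Phi_3$ recorded in \cite{RJ,Arxiv} give $G' \cong C_p \times C_p$, $|Z(G)| = p^3$, and $\exp(G) \le p^4$; in case (A) we have $Z(G) \cong C_{p^2}\times C_p$ so $d(Z(G)) = 2$, and in case (B) $Z(G) \cong C_p^3$ so $d(Z(G)) = 3$. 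By Lemma~\ref{L1}(i) this fixes $|X_G| = 2$ in case (A) and $|X_G| = 3$ in case (B), which already tells us $c(G) = \xi_{X_G}(1) + m(\xi_{X_G})$ is a sum of two (resp. three) terms of the shape $|\Gamma(\chi_i)|\chi_i(1)\bigl(1 + \tfrac{1}{p-1}\bigr)$ by Lemma~\ref{L1}(ii). Since $\chi_i(1) \in \{1,p\}$ and $|\Gamma(\chi_i)| = |\mathbb{Q}(\chi_i):\mathbb{Q}|$ is a power of $p$ dividing $\exp(G)/p^{?}$, each summand is of the form $p^{k}\cdot\frac{p}{p-1}$, i.e. $p^{k-1}(p-1)\cdot\frac{p}{p-1}=p^{k}$ times an integer — more precisely each contributes $p^{a}$ where $a$ ranges over a bounded set determined by $\exp(G)$ and by whether the constituent is linear.

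The key steps, in order: (1) read off from \cite{RJ,Arxiv} the precise isomorphism types of $G'$, $Z(G)$, and the exponent, and split $\Phi_3$ into the sub-cases (A) $Z(G)\cong C_{p^2}\times C_p$ and (B) $Z(G)\cong C_p^3$; (2) in case (A), apply Remark~\ref{remark:abeliangroups} with $d(Z(G)) = 2$ to constrain the kernels of the two linear characters $\rho_1,\rho_2$ of $A$ that induce $\chi_1,\chi_2$: the remark forces $\ker(\rho_i) \cong C_{p^{r_1}}$ when $A \cong C_{p^{r_1}}\times C_{p^{r_2}}$ with $r_1 > r_2$, and shows any element of order $p^{r_2}$ meets the kernel trivially; (3) enumerate the possible $(r_1,r_2)$ for maximal abelian $A$ in the groups of $\Phi_3$ — these are $C_{p^4}\times C_p$, $C_{p^3}\times C_{p^2}$, $C_{p^3}\times C_{p^2}$-type, $C_{p^3}\times C_p^2$ is excluded since $A$ is $2$-generated once $G\in\Phi_3$, etc. — and for each compute the field $\mathbb{Q}(\chi_i) = \mathbb{Q}(\zeta_{p^{e_i}})$ where $p^{e_i} = |A:\ker\rho_i|$ adjusted by whether $\chi_i$ is faithful, hence $|\Gamma(\chi_i)| = \phi(p^{e_i})= p^{e_i-1}(p-1)$ or $1$; (4) assemble $c(G) = \sum_i \chi_i(1)\,|\Gamma(\chi_i)|\bigl(1 + \tfrac1{p-1}\bigr)$ and simplify: a linear faithful $\chi$ on a $C_{p^e}$-image contributes $p^{e-1}(p-1)\cdot\frac{p}{p-1}=p^{e}$, and a degree-$p$ character similarly contributes $p^{e}$ with $e$ shifted by $1$; (5) run through which combinations $(\text{constituent}_1,\text{constituent}_2)$ actually satisfy Equation~\eqref{eq:X_G} — here we need the kernels to intersect trivially but every proper subset to have nontrivial kernel intersection — and, using Lemmas~\ref{thm:mu(G)=c(G)}, \ref{remark:newrange}, \ref{P2} and whether $G$ is a direct product of an abelian and a non-abelian subgroup, rule out the spurious combinations; the surviving list in case (A) should be exactly $\{2p^2, p^3+p, p^3+p^2, 2p^3, p^4+p, p^4+p^2\}$; (6) repeat the analogous but slightly longer three-constituent bookkeeping in case (B), where $Z(G)\cong C_p^3$ forces all three constituents to "cover" the three central directions, and the fact that $\exp(G)$ in the $C_p^3$-center groups of $\Phi_3$ is lower limits the admissible field degrees, yielding $\{p^2+2p, 2p^2+p, 3p^2, p^3+2p, p^3+p^2+p\}$.

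I expect the main obstacle to be step~(5)/(6): proving that each value in the claimed list is actually attained (the "easy" direction — exhibit a valid $X$) and, harder, that no value outside the list can occur. The upper-bound side requires constructing, for each target value, an explicit family of subgroups or linear characters realizing it, which is a finite but fiddly case analysis over the groups $G_{(3,s)}$ in $\Phi_3$ with non-cyclic center; the lower-bound side requires showing the algorithm of Lemma~\ref{lemma:c(G)Algorithm} cannot do better, which is where Lemma~\ref{P2}'s two-sided bound $\sum a_k p^k \le c(G) \le \sum c_\ell p^\ell$ with the constraint $\sum a_k = \sum c_\ell = m = d(Z(G))$ does most of the heavy lifting — it pins down the "shape" $\sum c_\ell p^\ell$ of $c(G)$ to a sum of $m$ prime powers, and then Remark~\ref{remark:abeliangroups} plus the exponent bound eliminates all but the listed shapes. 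The genuinely delicate point will be handling the groups in $\Phi_3$ whose maximal abelian subgroup has exponent equal to $|Z(G)|$, where Lemma~\ref{cor:normallymonomial} or \ref{lemma:normallymonomial} may pin $c(G)$ down to a single value, versus those where a genuine range of values survives; I would treat the "$A$ has exponent $= |Z(G)|$" groups separately via Lemma~\ref{cor:normallymonomial} and the rest via the direct $X_G$ enumeration.
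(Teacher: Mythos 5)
Your overall framework (fix $|X_G|=d(Z(G))$ by Lemma~\ref{L1}, write each $\chi_i$ as $\rho_i\ind_A^G$ via Lemma~\ref{lemma:bakshi}, and read off $c(G)$ as a sum of $p$-powers constrained by Lemma~\ref{P2}) overlaps with what the paper does for a few hard groups, but as written it has two concrete gaps. First, your step~(2)--(3) leans on Remark~\ref{remark:abeliangroups}, which applies only when the maximal abelian subgroup $A$ is $2$-generated, and your claim that ``$A$ is $2$-generated once $G\in\Phi_3$'' is false: the relevant groups have abelian normal subgroups of index $p$ isomorphic to $C_{p^3}\times C_p\times C_p$, $C_{p^2}\times C_{p^2}\times C_p$, or $C_{p^2}\times C_p^3$ (rank $3$ or $4$), so the kernel bookkeeping you describe does not go through in the form stated. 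Second, your step~(4) tacitly assumes $\mathbb{Q}(\rho_i\ind_A^G)=\mathbb{Q}(\rho_i)$, so that a character induced from $\rho_i$ with $A/\ker(\rho_i)\cong C_{p^e}$ contributes exactly $p^{e+1}$. Induction can a priori shrink the field, and the lower bounds (e.g.\ $c(G_{(3,10r)})\ge p^4+p^2$, which rules out $p^3+p^2$) genuinely require verifying this equality; the paper does so by evaluating $\rho_1\ind_A^G(\alpha_3)=c\omega$ with $c\ne 0$ using the specific commutator relations. Without that computation the exclusion of smaller values is not established.

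There is also a structural difference worth noting: the paper's primary case split is whether $G$ is a direct product of an abelian and a non-abelian subgroup. All the values not divisible by $p^2$ (namely $p^3+p$, $p^4+p$ in (A) and $p^2+2p$, $2p^2+p$, $p^3+2p$, $p^3+p^2+p$ in (B)) occur only for such direct products, and are obtained cheaply from $c(H\times K)=c(H)+c(K)$ (Lemma~\ref{thm:nilpotent}) together with Lemma~\ref{Z(G)isCyclic} applied to the non-abelian factor of order at most $p^5$; the non-direct-product groups are then dispatched by Lemma~\ref{remark:newrange}, Remark~\ref{remark:CpXCp}, and explicit core-free subgroups from the tables for the upper bounds. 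You mention the direct-product condition only as a filter in step~(5), and your plan supplies no mechanism for actually producing these particular values or for certifying the attained upper bounds (the paper does the latter by exhibiting subgroups with trivially intersecting cores and using $c(G)=\mu(G)$). To repair the proposal you would need to make the direct-product dichotomy the organizing principle, restrict the character-theoretic enumeration to the genuinely non-split groups, and carry out the field computation for the induced characters explicitly.
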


\begin{proof}
Note that $\cd(G) = \{ 1, p \}$, $\exp(G/Z(G)) = p$, 
$G' \cong C_{p}\times C_{p}$, and $Z(G) \cap G' \cong C_{p}$
(see \cite{RJ, Arxiv}).

\begin{enumerate}
\item[(A.)]
If $G \in \Phi_{3}$ and $Z(G) \cong C_{p^2}\times C_{p}$, then 
$\exp(G) = p^{r}$ where $r\leq 3$ (see \cite{Arxiv}).
Lemma \ref{L1} implies that $|X_{G}|=2$. 

\medskip
\noindent 
{\bf Sub-case 1}: $G$ is a direct product of an abelian and a 
non-abelian subgroup.
\begin{itemize}
\item $G = H \times K$ where $H$ is non-abelian, $Z(H) \cong C_{p^2}$, 
and $K\cong C_{p}$. 
Then $c(G) = c(H)+p$. Lemma \ref{Z(G)isCyclic} implies that 
$c(H) = p^3$, or $p^4$. Hence $c(G) = p^3+p$, or $p^4+p$.

\item $G = H \times K$ where $H$ is 
non-abelian, $Z(H) \cong C_{p}$, 
and $K\cong C_{p^2}$. Then $c(G) = c(H)+p^2$. 
Lemma \ref{Z(G)isCyclic} implies that $c(H) = p^2$, or $p^3$. 
Hence $c(G) = 2p^2$, or $p^3+p^2$.
\end{itemize}
  
\medskip
\noindent 
{\bf Sub-case 2}: $G$ is not a direct product of an abelian and 
a non-abelian subgroup.

\noindent 
From \cite{Arxiv}, we 
deduce that if $G \in \Phi_{3}$, then $G$ has an abelian normal subgroup, 
say $A$, of index $p$. Since $\exp(G) = p^r$ where $r\leq 3$
and $A$ contains $Z(G)$, we deduce that
$A$ is isomorphic to $C_{p^3} \times C_{p^2}$,  
or $C_{p^3} \times C_{p} \times C_{p}$, 
or $C_{p^2} \times C_{p^2} \times C_{p}$, or $C_{p^2} \times C_{p}^3$. 
If $A \cong C_{p^3} \times C_{p^2}$, then 
$G' \subseteq Z(G)$, since $G' \cong C_{p}\times C_{p}$ 
and $Z(G) \subseteq A$. This is a contradiction, since 
$Z(G) \cap G' \cong C_{p}$ for 
all $G\in \Phi_{3}$. 
Let $X = \{ \chi_{1}, \chi_{2} \} \subset \Irr(G)$ 
satisfy Equation \eqref{eq:X_G}. 
From Lemma \ref{lemma:bakshi}, for $i = 1,2$, there 
exists $\rho_{i} \in \lin(K_{D_{i}})$, where $D_{i} \leq A$ 
with $A/D_{i}$  cyclic and $\ker(\rho_{i}\restr_{A}) = D_{i}$, such that 
\begin{equation} \label{eq:phi3}
\chi_{i} = \rho_{i}\ind_{K_{D_{i}}}^{G} \in \Irr(G)
\text{ and } \bigcap_{i=1}^{2} \ker\left(\rho_{i}\ind_{K_{D_{i}}}^{G}\right)  = 1.
\end{equation}
If $A$ is isomorphic to $C_{p^2} \times C_{p^2} \times C_{p}$ 
or $C_{p^2} \times C_{p}^3$, then, from Remark \ref{remark:CpXCp}, 
$c(G) = 2p^2$, $p^3+p^2$, or $2p^3$.
			
From \cite{Arxiv}, observe that 
every relevant group in $\Phi_{3} \setminus 
\{ G_{(3,10r)}, G_{(3, 12)}, 
G_{(3, 17)} \}$ has an abelian normal subgroup 
isomorphic to either $C_{p^2}\times C_{p^2} \times C_{p}$, 
or $C_{p^2}\times C_{p}^{3}$. 
From Table \ref{t:3}, {$c(G_{(3, 12)}) = c(G_{(3, 17)})  = p^3+p^2$.}
Consider 
\begin{align*}
G := G_{3,10r} = \langle \alpha_{1}, \alpha_{2}, \alpha_{3}, \alpha_{4}, 
\beta_{1}, \beta_{2} ~|~ & [\alpha_{3}, \alpha_{4}] = \alpha_{2}, 
[\alpha_{2}, \alpha_{4}] = 
\alpha_{1} = \beta_{1}^{p}, \alpha_{3}^{p} = \beta_{1}^{r},\\
& \alpha_{4}^{p} = \beta_{2}, \alpha_{2}^{p} = \beta_{1}^{p^2} = \beta_{2}^{p}  = 1 \rangle \quad (r =1 \text{ or } \nu).
\end{align*}
Here $Z(G) = \langle \alpha_{4}^{p}, \alpha_{3}^{p} \rangle \cong C_{p} 
\times C_{p^2}$, $G' = \langle \alpha_{3}^{p^2}, \alpha_{2} \rangle 
\cong C_{p} \times C_{p}$, and $G/G' = \langle \alpha_{4} G', 
\alpha_{3} G' \rangle \cong C_{p^2} \times C_{p^2}$. 
Observe that $A = \langle \alpha_{4}^{p}, \alpha_{3}, \alpha_{2} \rangle 
\cong C_{p} \times C_{p^3} \times C_{p}$ is a maximal abelian subgroup of 
$G$ containing $G'$. For $i=1,2$, let $\chi_{i}$ and $D_{i}$ be 
defined as in Equation \eqref{eq:phi3}. 
Here $K_{D_{i}} = G$ or $A$. Since both 
irreducible characters in $X$ cannot be linear, 
$K_{D_{1}}$ and $K_{D_{2}}$ cannot 
both be $G$. 

\medskip
\noindent 
{\bf Claim 1:} If $K_{D_{i}} = A$ for some $i \in \{1, 2\}$,
then $A/\ker(\rho_{i}) \ncong C_{p^2}$. 

\noindent 
Suppose, to the contrary, that 
$A/\ker(\rho_{1}) = \langle x \ker(\rho_{1}) \rangle \cong C_{p^2}$
for some $x\in A$. 
Then $$(\alpha_{3}\ker(\rho_{1}))^{p^2} = \ker(\rho_{1}) 
\Rightarrow \alpha_{3}^{p^2}\in \ker(\rho_{1}).$$ 
Since $A/\ker(\rho_{1})$ is cyclic, elementary 
computations show that $\alpha_{4}^{p}$ and 
$\alpha_{2}$ are in $\ker(\rho_{1})$. 
Thus $\langle \alpha_{4}^{p}, \alpha_{3}^{p^2}, 
\alpha_{2} \rangle \subset \ker(\rho_{1})$. 
Hence $Z(G) \cap \ker(\rho_{1}) \cong C_{p}\times C_{p}$. 
Since $Z(G) \cap \ker(\rho_{1}) \subseteq \ker(\rho_{1}\ind_{A}^{G})$, 
we deduce that $\ker(\rho_{1}\ind_{A}^{G}) \cap \ker(\chi) \neq 1$
for every $\chi\in \Irr(G)$. This is a contradiction since 
$\rho_{1}\ind_{K_{D_{1}}}^{G} \in X$. This establishes Claim 1. 

\medskip
\noindent 
Consider first the situation where $K_{D_{i}} = A$ for $i=1,2$. 
Claim 1 shows that $A/\ker(\rho_{i}) \cong C_{p}$, or $C_{p^3}$, 
for $i=1,2$, since $\exp(G) = p^3$. 

\medskip
\noindent 
{\bf Claim 2:}  $A/\ker(\rho_{i}) \cong C_{p}$
for at most one $i$ where $1\leq i \leq 2$.

\noindent 
Suppose, to the contrary, 
that $A/\ker(\rho_{i}) \cong C_{p}$ for $i=1,2$. 
Then $|\ker(\rho_{i})|=p^4$. 
But $\ker(\rho_{1}) \neq \ker(\rho_{2})$, so 
$\ker(\rho_{1}) \cdot \ker(\rho_{2}) = A$ and 
$|\ker(\rho_{1}) \cap \ker(\rho_{2})|=p^3$. 
Since $Z(G) \subset A$, 
$$\left| \left( \bigcap_{i=1}^{2}\ker(\rho_{i}) \right) 
\cap Z(G) \right| \geq p.$$ 
Thus 
$$\bigcap_{i=1}^{2} \left( \ker(\rho_{i}) \cap Z(G) \right) \neq 1 
\Rightarrow  \bigcap_{i=1}^{2} \left( \ker(\rho_{i}\ind_{A}^{G}) 
\cap Z(G) \right) \neq 1 
\Rightarrow \bigcap_{i=1}^{2} 
\left( \ker(\rho_{i}\ind_{A}^{G}) \right) \neq 1.$$ 
This contradicts Equation \eqref{eq:phi3}, and so establishes Claim 2.

\medskip
\noindent 
Without loss of generality, assume that 
$A/\ker(\rho_{1}) \cong C_{p^3}$, and 
$A/\ker(\rho_{2}) \cong C_{p}$, or $C_{p^3}$.
Lemma \ref{lemma:abelian} implies that 
$$\frac{A}{\ker(\rho_{1})} = \langle \alpha_{3} \ker(\rho_{1}) \rangle 
\cong C_{p^3}.$$ 
Thus $\rho_{1}(\alpha_{3}) = \omega$ where 
$\omega$ is a primitive $p^3$-th root of unity. 
The structure of $\ker(\rho_{1})$ shows that 
$\rho_{1}(\alpha_{2}) = \omega^{p^2 m}$ for some $m\in \mathbb{Z}$. 
Since $G/A = \langle \alpha_{4} A\rangle \cong C_{p}$, 
\[ \rho_{1}\ind_{A}^{G}(\alpha_{3}) = \sum_{k=0}^{p-1} \rho(\alpha_{4}^{-k} 
\alpha_{3} \alpha_{4}^{k}) 
= \sum_{k=0}^{p-1} \rho( \alpha_{3}^{1 + p^2k(k-1)/2} \alpha_{2}^{k}) 
= \omega \sum_{k=0}^{p-1} \omega^{p^2[mk + \frac{k(k-1)}{2}]} = c\omega, \]
where $0 \neq c \in \mathbb{Q}(\omega^{p^2}) \subset 
\mathbb{Q}(\rho_{1}\ind_{A}^{G})$. 
Thus $\mathbb{Q}(\rho_{1}) = \mathbb{Q}(\omega) 
\subset \mathbb{Q}(\rho_{1}\ind_{A}^{G})$. 
Therefore, $$d(\rho_{1}\ind_{A}^{G}) = p\phi(p^3) = p^3(p-1).$$ 
On the other hand, $d(\rho_{2}\ind_{A}^{G}) \geq p(p-1)$.
Hence 
$\sum_{i=1}^{2} d(\rho_{i}\ind_{K_{D_{i}}}^{G}) \geq p^3(p-1) + p(p-1)$.

\medskip
\noindent 
Now consider the situation where just one of $K_{D_{1}}$ and $K_{D_{2}}$ is $A$.
Without loss of generality, let $K_{D_{1}} = A$, and $K_{D_{2}} = G$. 
If $A/\ker(\rho_{1}) \cong C_{p}$, 
then $\alpha_{3}^{p} \in \ker(\rho_{1})$. 
But $\rho_{2} \in \lin(G)$, so 
$\alpha_{3}^{p^2} \in G' \subseteq \ker(\rho_{2})$. 
Therefore, $X$ does not satisfy Equation \eqref{eq:X_G}, 
a contradiction. 
Hence $A/\ker(\rho_{1}) \cong C_{p^3}$. 
As calculated above, $d(\rho_{1}\ind_{A}^{G}) = p^3(p-1)$. 
But $\chi_{2} = \rho_{2} \in X$, 
so $\frac{G/G'}{\ker(\rho_{2})/G'} \cong C_{p^2}$. 
Therefore, 
$\sum_{i=1}^{2} d(\rho_{i}\ind_{K_{D_{i}}}^{G}) \geq p^3(p-1) + p(p-1).$

\medskip
\noindent 
Lemma \ref{L1} implies that $c(G) \geq p^4+p^2$. 
But 
$\Core_{G}(\langle \alpha_{4} \rangle) \cap \Core_{G} (\langle \alpha_{3}, 
\alpha_{2} \rangle) = 1$, so $c(G) = \mu(G) \leq p^4+p^2$. 
Therefore, $c(G) = p^4+p^2$. 
			
\medskip
\item[(B.)]
If $G \in \Phi_{3}$ and $Z(G) \cong C_{p}^3$, then 
$\exp(G) = p^{r}$ where $r\leq 2$ (see \cite{Arxiv}).

\noindent 
{\bf Sub-case 1}: $G$ is a direct product of an abelian and 
non-abelian subgroup.

\begin{itemize}
\item 
$G = H \times K$ where $H$ is non-abelian with center of order $p$, 
and $K\cong C_{p}\times C_{p}$. Now $c(G) = c(H)+2p$. 
Lemma \ref{Z(G)isCyclic} implies that $c(H) = p^2$, or $p^3$. 
Hence $c(G) = p^2+2p$, or $p^3+2p$.

\item 
$G = H \times K$ where $H$ is non-abelian 
and not a direct product of an abelian 
and a non-abelian subgroup, 
$Z(H) \cong C_{p}\times C_{p}$, and $K\cong C_{p}$.
These groups are 
$G_{(3, 20)}$, $G_{(3, 22)}$, 
$G_{(3, 23)}$, and 
$G_{(3, 24r)}$ ($r= 1$ or $\nu$). 
From Table \ref{t:2}, {$c(G_{(3, 20)}) = c(G_{(3, 22)}) = c(G_{(3, 23)})  = 2p^2 +p$.}
Consider
\begin{align*}
{G_{(3, 24r)}} = \langle \alpha_{1}, \alpha_{2}, \alpha_{3}, \alpha_{4}, 
\beta_{1}, \beta_{2}, \beta_{3} ~|~ &[\alpha_{3}, 
\alpha_{4}] = \alpha_{2}, [\alpha_{2}, 
\alpha_{4}] = \alpha_{1} = \beta_{1}, 
\alpha_{3}^{p} = \beta_{1}^{r}, \\
& 
\alpha_{4}^{p} = \beta_{2},
\alpha_{2}^{p} =  \beta_{1}^{p} = \beta_{2}^{p} = \beta_{3}^{p} = 1 \rangle.
\end{align*}
Let $H=  \langle \alpha_{1}, \alpha_{2}, \alpha_{3}, 
\alpha_{4}, \beta_{1}, \beta_{2} \rangle$. Now 
$$\Core_{H} (\langle \alpha_{4} \rangle) \cap \Core_{H} (\langle 
\alpha_{3}, \alpha_{2} \rangle) = 1 \Rightarrow c(H) = \mu(H) \leq p^3+p^2.$$ 
Lemmas \ref{P2} and \ref{remark:newrange} imply that 
$c(H) = 2p^2$, or $p^3+p^2$. Therefore, ${c(G_{(3, 24r)})} = 2p^2+p$, or $p^3+p^2+p$.
\end{itemize}

\medskip
\noindent
{\bf Sub-case 2}: $G$ is not a direct product of an abelian and 
a non-abelian subgroup.

\noindent 
The only such group is $G_{(3, 25)}$. From Table \ref{t:3}, 
$\mu(G_{(3, 25)}) = c(G_{(3, 25)}) = 3p^2$.  \qedhere
\end{enumerate}
\end{proof}

\begin{remark} \label{remark:notadirectproduct}
\textnormal{
Let $G$ be a non-abelian $p$-group. If $d(Z(G)\cap G') = d(Z(G))$, 
then $G$ is not a direct product of an abelian and a non-abelian subgroup.
Suppose, to the contrary, that $G = H\times K$
where $H'\neq 1$ and $K'=1$. Then $Z(G) = Z(H) \times Z(K)$, 
and $G' = H'$. But $Z(G) \cap H' \subseteq Z(H) \cap H'$, so 
$d(Z(G) \cap G') = d(Z(G) \cap H') \leq d(Z(H) \cap H') 
\leq d(Z(H)) < d(Z(G))$, a contradiction.
}
\end{remark}

\begin{lemma}
\label{prop:phi4}
Let $G$ be a group of order $p^6$ in $\Phi_{4}$. 
\begin{enumerate}
\item[(A.)] If $Z(G) \cong C_{p^2} \times C_{p}$, then  
$c(G) = p^3+p^2$, $2p^3$, $p^4+p^2$, or $p^4+p^3$.
\item[(B.)] If $Z(G) \cong C_{p}^3$, then 
$c(G) = 2p^2+p$, $3p^2$, $p^3+p^2+p$, $2p^3+p$, $p^3+2p^2$, or $2p^3+p^2$.
\end{enumerate}
\end{lemma}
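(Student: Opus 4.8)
The plan is to adapt the argument of Lemma~\ref{prop:phi3}, using the following facts about a group $G$ of order $p^{6}$ in $\Phi_{4}$, read off from \cite{RJ,Arxiv}: $G$ is metabelian of nilpotency class $2$, so $G'\cong C_{p}\times C_{p}\subseteq Z(G)$; $\cd(G)=\{1,p\}$ (also a consequence of $|G:Z(G)|=p^{3}$ and Lemma~\ref{thm:cdG1}); and $\exp(G)=p^{r}$ with $r\le 3$. Since $G'\subseteq Z(G)$ we have $d(Z(G)\cap G')=d(G')=2$. In case~(A) ($d(Z(G))=2$), Lemma~\ref{L1}(i) gives $|X_{G}|=2$, Lemma~\ref{lemmalinearcharacter} gives $X_{G}\cap\lin(G)=\emptyset$ and $p^{2}\mid c(G)$, and Remark~\ref{remark:notadirectproduct} shows $G$ is not a direct product of an abelian and a non-abelian subgroup. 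In case~(B) ($d(Z(G))=3$), $|X_{G}|=3$, and from the proof of Lemma~\ref{LemmaNotInX_G}(ii) we have $\exp(G)\le p^{2}$ and $\exp(G/Z(G))=p$; here $G$ may be a direct product of an abelian and a non-abelian subgroup.

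\emph{Case (A).} Write $X_{G}=\{\chi_{1},\chi_{2}\}$; both $\chi_{i}$ are non-linear with $\chi_{i}(1)=p$. By Lemma~\ref{thm:ford}, $\mathbb{Q}(\chi_{i})=\mathbb{Q}(\lambda_{i})$ for a linear character $\lambda_{i}$ of a subgroup of $G$, hence $\mathbb{Q}(\chi_{i})=\mathbb{Q}(\zeta_{p^{t_{i}}})$ for some integer $t_{i}$ with $1\le t_{i}\le r$ (the lower bound holds because the image of $Z(G)$ in $G/\ker(\chi_{i})$ is a non-trivial central subgroup: if $Z(G)\subseteq\ker(\chi_{i})$ then $G'\subseteq\ker(\chi_{i})$ and $\chi_{i}$ would be linear). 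Thus $|\Gamma(\chi_{i})|=\phi(p^{t_{i}})=p^{t_{i}-1}(p-1)$, and Lemma~\ref{L1}(ii) gives
\[ c(G)=\xi_{X_{G}}(1)+m(\xi_{X_{G}})=\frac{p}{p-1}\bigl(d(\chi_{1})+d(\chi_{2})\bigr)=p^{t_{1}+1}+p^{t_{2}+1}, \qquad t_{i}\in\{1,\dots,r\}. \]
I next exclude $t_{1}=t_{2}=1$ (the value $2p^{2}$): then each $G/\ker(\chi_{i})$ has cyclic centre (it affords the faithful irreducible $\chi_{i}$), of order at most $p$ because $\mathbb{Q}(\zeta_{|Z(G/\ker(\chi_{i}))|})\subseteq\mathbb{Q}(\chi_{i})=\mathbb{Q}(\zeta_{p})$ and at least $p$ by the remark above, so the embedding $G\hookrightarrow\prod_{i}G/\ker(\chi_{i})$ (available since $\ker(\chi_{1})\cap\ker(\chi_{2})=1$) carries $Z(G)$ into a group of order $p^{2}$, contradicting $|Z(G)|=p^{3}$. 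When $r=2$ this already leaves $c(G)\in\{p^{3}+p^{2},2p^{3}\}$.

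When $r=3$, the remaining task is to exclude $t_{1}=t_{2}=3$ (the value $2p^{4}$), and this is the crux. I would use that, for subgroups $H_{1},H_{2}\le G$, $\bigcap_{i}\Core_{G}(H_{i})=1$ holds if and only if $(H_{1}\cap H_{2})\cap Z(G)=1$ (since $\Core_{G}(H_{i})\cap Z(G)=H_{i}\cap Z(G)$, because $H_{i}\cap Z(G)$ is normal and lies in $H_{i}$, and a non-trivial normal subgroup of a $p$-group meets its centre), and then exhibit — from the presentations in \cite{Arxiv} — subgroups $H_{1}$ of order $\ge p^{2}$ and $H_{2}$ of order $\ge p^{3}$ with $(H_{1}\cap H_{2})\cap Z(G)=1$, whence $\mu(G)=c(G)\le p^{4}+p^{3}<2p^{4}$. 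This yields $c(G)\in\{p^{3}+p^{2},2p^{3},p^{4}+p^{2},p^{4}+p^{3}\}$, each value being realized (cross-reference Table~\ref{t:3} and the database). \emph{Case (B).} If $G=H\times K$ with $H$ non-abelian and $K$ abelian, a short argument (using that $Z(G)\cong C_{p}^{3}$ is elementary abelian and $G'=H'\cong C_{p}\times C_{p}$) forces $|H|=p^{5}$, $Z(H)=H'\cong C_{p}\times C_{p}$ and $K\cong C_{p}$; then $c(G)=c(H)+p$ by Lemmas~\ref{thm:nilpotent} and~\ref{thm:pgroup}, and $c(H)=\mu(H)\in\{2p^{2},p^{3}+p^{2},2p^{3}\}$ is known from \cite{BD,S} (equivalently, by the case~(A) analysis applied to $H$, where $2p^{2}$ is now admissible since $|Z(H)|=p^{2}$), so $c(G)\in\{2p^{2}+p,p^{3}+p^{2}+p,2p^{3}+p\}$. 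If $G$ is not such a direct product, the finitely many relevant groups are handled individually, as $G_{(3,25)}$ was in Lemma~\ref{prop:phi3}(B): Lemma~\ref{LemmaNotInX_G}(ii) restricts the linear characters that can occur in $X_{G}$, Lemmas~\ref{L1} and~\ref{P2} bound $c(G)$, and explicit subgroups (several already appearing in Table~\ref{t:3}) pin down the outstanding values $3p^{2}$, $p^{3}+2p^{2}$, $2p^{3}+p^{2}$ and exclude the formal candidate $3p^{3}$.

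The main obstacle is precisely the exclusion of the largest formal candidates — $2p^{4}$ in case~(A) when $\exp(G)=p^{3}$, and $3p^{3}$ in case~(B) — since the character-theoretic bounds (Lemmas~\ref{L1}, \ref{P2}) together with the divisibility constraints are not by themselves decisive; I would dispatch these by the uniform $\Core_{G}(H_{i})$-versus-$Z(G)$ construction above (checked against the presentations of \cite{Arxiv}), or, failing a uniform construction, by the structural alternative that in the metabelian picture of Lemma~\ref{lemma:bakshi} the kernels $\ker(\rho_{i})$ cannot all be minimal (of the form making $t_{i}$ maximal) while remaining jointly core-free with all $\chi_{i}$ non-linear — an argument I would attempt first by tracking $\ker(\rho_{i})\cap Z(G)$ inside the small group $Z(G)$.
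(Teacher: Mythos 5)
Your proposal is correct and reaches the same list of values, but case (A) is organized along a genuinely different route from the paper's. The paper first proves the lower bound $c(G)\geq p^3+p^2$ by a kernel-counting argument, then splits on the isomorphism type of a maximal abelian normal subgroup $A$ of index $p$ and, for each subfamily ($G_{(4,9r)}$ with $A\cong C_{p^3}\times C_{p^2}$; $G_{(4,4)},G_{(4,12)}$ with $A\cong C_{p^3}\times C_p\times C_p$; the rest via Remark \ref{remark:CpXCp}), computes $c(G)$ exactly using the parametrization of Lemma \ref{lemma:bakshi} together with explicit core-free pairs. You instead derive the clean identity $c(G)=p^{t_1+1}+p^{t_2+1}$ from Lemma \ref{L1} and Ford's theorem, kill $2p^2$ by embedding $Z(G)$ into $Z(G/\ker\chi_1)\times Z(G/\ker\chi_2)$ (each cyclic of order at most $p$ when $\mathbb{Q}(\chi_i)=\mathbb{Q}(\zeta_p)$ — this step is sound), and kill $2p^4$ by an explicit bound $\mu(G)\leq p^4+p^3$. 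This is shorter and bypasses Lemma \ref{lemma:bakshi} entirely, but it only pins down the set of possible values, whereas the paper's version determines $c(G)$ for each individual group, which is what feeds Tables \ref{t:3} and \ref{t:5} and ultimately Theorem \ref{main-theorem}. Your case (B) (forcing $K\cong C_p$ in the direct-product case, the lower bound $3p^2$ via Lemma \ref{LemmaNotInX_G}(ii), and excluding $3p^3$ by explicit cores for the three exceptional families) coincides with the paper's. The one point to flag: both your exclusions of $2p^4$ and $3p^3$ rest on exhibiting suitable core-free subgroups for every relevant presentation in \cite{Arxiv}; this is a finite check that the paper's data confirms (e.g.\ $\Core_G(\langle\alpha_3\rangle)\cap\Core_G(\langle\alpha_4\rangle)=1$ for $G_{(4,9r)}$), but your proof is not complete until those witnesses are written down group by group.
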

			
\begin{proof}
Note that $\cd(G) = \{ 1, p \}$, 
$\exp(G/Z(G)) = p$, $G' \cong C_{p}\times C_{p}$, and 
$G' \subset Z(G)$ (see \cite{RJ, Arxiv}).

\begin{enumerate}
\item[(A.)]
If $G \in \Phi_{4}$ and $Z(G) \cong C_{p^2}\times C_{p}$, then 
$\exp(G) = p^{r}$ where $r\leq 3$ (see \cite{Arxiv}).
Since $d(Z(G)\cap G') = d(Z(G))$, 
Remark \ref{remark:notadirectproduct} implies that 
$G$ is not a direct product of an abelian and a non-abelian subgroup. 

\medskip
\noindent 
{\bf Claim 1:} $c(G) \geq p^3+p^2$.

\noindent 
Lemma \ref{L1} implies that $|X_{G}| = 2$. 
Since $Z(G) \cap G'= G' \cong C_{p}\times C_{p}$, 
observe that $\lin(G) \cap X_{G} = \emptyset$. 
Let $X = \{ \chi_{1}, \chi_{2} \}\subset \nl(G)$
satisfy Equation \eqref{eq:X_G}. 
But $\chi_{i}(1) = p$ for $i = 1,2$, so 
$G$ has subgroups $H_i$ of index $p$
such that $\chi_{i} = \lambda_{i}\ind_{H_{i}}^{G}$ for some 
$\lambda_{i} \in \lin(H_{i})$, and 
$\mathbb{Q}(\chi_{i}) = \mathbb{Q}(\lambda_{i})$ (see Lemma \ref{thm:ford}).

We first prove that $|\ker(\lambda_{i})|= p^4$ for at 
most one $i \in \{1, 2\}$.
Suppose, to the contrary, that $|\ker(\lambda_{i})|=p^4$ for $i=1,2$. 
Since $Z(G) \subseteq H_{i}$ and $Z(G) \nsubseteq \ker(\lambda_{i})$, 
we deduce that $|Z(G) \cap \ker(\lambda_{i})| = p^2$. 
But $|Z(G)| = p^3$, so
$\bigcap_{i=1}^{2} Z(G) \cap \ker(\lambda_{i}) \neq 1$. 
Hence 
$\bigcap_{i=1}^{2} \ker(\lambda_{i}\ind_{H_{i}}^{G}) \neq 1$, 
a contradiction.

Without loss of generality, 
assume $|\ker(\lambda_{2})| \leq p^3$. 
Then $\frac{H_{1}/H_{1}'}{\ker(\lambda_{2})/ H_{1}'}$ 
is isomorphic to $C_{p^2}$, 
or $C_{p^3}$, so $d(\chi_{2}) \geq p \phi(p^2) = p^2(p-1)$. 
But $Z(G) \cong C_{p^2} \times C_{p}$, so 
$d(\chi_{1}) \geq p \phi(p) = p(p-1)$. 
Hence $$\sum_{i=1}^{2} d(\chi_{i}) \geq p(p-1) + p^2(p-1).$$ 
Lemma \ref{L1} implies that $c(G) \geq p^3+p^2$. This establishes Claim 1.

From \cite{Arxiv}, observe that if 
$G \in \Phi_{4}$, then it has an abelian normal subgroup 
$A$ of index $p$. 
But $Z(G) \subset A$, so $A$ must be isomorphic to 
$C_{p^3} \times C_{p^2}$, or $C_{p^3} \times C_{p} \times C_{p}$, 
or $C_{p^2} \times C_{p^2} \times C_{p}$, 
or $C_{p^2} \times C_{p}^3$. 
If $A\cong C_{p^2} \times C_{p^2} \times C_{p}$, 
or $C_{p^2} \times C_{p}^3$, then, from the above discussion, 
$c(G) \geq p^3+p^2$. Remark \ref{remark:CpXCp} now implies
$c(G) = p^3+p^2$, or $2p^3$.

Suppose $A  \cong C_{p^3} \times C_{p^2}$. 
From \cite{Arxiv}, 
\begin{align*}
G := G_{(4, 9r)} =	
\langle \alpha_{1}, \alpha_{2}, \alpha_{3}, 
\alpha_{4}, \alpha_{5}, \beta_{1}, 
\beta_{2} ~|~ & [\alpha_{4}, \alpha_{5}] 
= \alpha_{2} = \beta_{2}, [\alpha_{3}, \alpha_{5}] 
= \alpha_{1} = \beta_{1}^{p}, \alpha_{3}^{p} = \beta_{2}^{r}, \\ 
& \alpha_{4}^{p} = \beta_{1},
  \alpha_{5}^{p}= \beta_{1}^{p^2} = \beta_{2}^{p} =1 \rangle, 
\text{ for } r =1 \text{ or } \nu.
\end{align*}
Here $Z(G) = \langle \alpha_{3}^{p}, \alpha_{4}^{p} \rangle 
\cong C_{p} \times C_{p^2}$, 
and 
$G' = \langle \alpha_{3}^{p}, \alpha_{4}^{p^2} \rangle \cong C_{p} \times C_{p}$. 
Let $A = \langle \alpha_{3}, 
\alpha_{4} \rangle \cong C_{p^2} \times C_{p^3}$. 
From Lemma \ref{lemma:bakshi}, for $i = 1,2$, 
there exists $\rho_{i} \in \lin(K_{D_{i}})$, 
where $D_{i} \leq A$ with $A/D_{i}$  cyclic and 
$\ker(\rho_{i}\restr_{A}) = D_{i}$, such that 
\begin{equation} \label{eq:phi4}
\chi_{i} = \rho_{i}\ind_{K_{D_{i}}}^{G} \in \Irr(G),
\end{equation}
and $\bigcap_{i=1}^{2} \ker\left(\rho_{i}\ind_{K_{D_{i}}}^{G}\right)  = 1$.
Since $\chi_{i}(1) = p$, $K_{D_{i}} = A$. 
Suppose $A/\ker(\rho_{i})\cong C_{p}$ for some $i$. 
Hence $|\ker(\rho_{i})|=p^4$; since it is abelian,
$\ker(\rho_{i}) \cong C_{p^3} \times C_{p}$, 
or $C_{p^2} \times C_{p^2}$ and so 
contains 
all the elements of order $p$ in $Z(G)$. But 
$X$ does not satisfy Equation \eqref{eq:X_G}, 
a contradiction. Hence 
$A / \ker(\rho_{i})\ncong C_{p}$ for any $i$. 
But $\exp(A) = p^3$, so 
$A/\ker(\rho_{i})\cong C_{p^2}$, or $C_{p^3}$, for $i=1,2$. 

\medskip
\noindent
We claim that $A/\ker(\rho_{i})\cong C_{p^2}$ for at 
most one $i$ where $1\leq i \leq 2$.
Suppose, to the contrary, that 
$A/\ker(\rho_{i})\cong C_{p^2}$ for $i=1,2$. 
Then $|\ker(\rho_{i})| = p^3$ for $i=1,2$. 
Observe that $|\bigcap_{i=1}^{2} \ker(\rho_{i})| \geq p$. 
Since $Z(G)$ contains all the elements of order $p$ in $A$, 
$$Z(G) \bigcap \left( \bigcap_{i=1}^{2} \ker(\rho_{i}) \right) \neq 1,$$ 
a contradiction. This establishes the claim.

\medskip
\noindent
Without loss of generality, suppose that 
$A / \ker(\rho_{1})\cong C_{p^3}$.
Lemma \ref{lemma:abelian} implies that 
$$\frac{A}{\ker(\rho_{1})} = 
\langle \alpha_{4} \ker(\rho_{1}) \rangle \cong C_{p^3}$$ where 
$A = \langle \alpha_{3}, \alpha_{4} \rangle \cong C_{p^2} \times C_{p^3}$. 
The structure of $G$ shows 
$\rho_{1}\ind_{A}^{G}(\alpha_{3}^{r}\alpha_{4}^{s}) = c\omega^{s}$
for some $0 \neq c \in \mathbb{C}$, 
$r\in \mathbb{Z}$ and $s\in \mathbb{Z}_{p^3}^{\times}$, 
where $\omega$ is a primitive $p^3$-th root of unity. 
Hence $\mathbb{Q}(\rho_{1}\ind_{A}^{G}) = 
\mathbb{Q}(\omega)$, so 
$$d(\rho_{1}\ind_{A}^{G}) = p \phi(p^3) = p^3(p-1).$$

\medskip
\noindent
Now suppose $A/\ker(\rho_{2}) \cong C_{p^2}$. 
From Remark \ref{remark:abeliangroups}, 
$$\frac{A}{\ker(\rho_{2})} = 
\langle \alpha_{3} \ker(\rho_{2}) \rangle \cong C_{p^2}.$$ 
By a procedure similar to above, 
$d(\rho_{2}\ind_{A}^{G}) = p^2(p-1)$.
Therefore, $\sum_{i=1}^{2} d(\chi_{i}) \geq p^3(p-1) + p^2(p-1)$. 
Lemma \ref{L1} implies that $c(G) \geq p^4+p^3$. 
On the other hand, 
$\Core_{G} (\langle \alpha_{3} \rangle) \cap 
\Core_{G} (\langle \alpha_{4} \rangle) = 1$, 
so $c(G) = \mu(G) \leq p^4+p^3$. Therefore, $c(G) = p^4+p^3$.

From \cite{Arxiv}, 
observe that every relevant groups
 in $\Phi_{4} \setminus 
\{ G_{(4, 2)}, G_{(4, 4)}, G_{(4, 10)}, 
G_{(4, 12)} \}$ has an abelian normal subgroup 
isomorphic to $C_{p^3}\times C_{p^2}$, or $C_{p^2}\times C_{p^2} \times C_{p}$, 
or $C_{p^2}\times C_{p}^{3}$. 
From Table \ref{t:3}, {$c(G_{(4,2)}) =c(G_{(4, 10)}) = p^3+p^2$.}
From \cite{Arxiv}, 
$G \in \{ G_{(4,4)}, G_{(4,12)} \}$ has the following form:
\begin{align*}
\langle \alpha_{1}, \ldots, \alpha_{5}, \beta_{1}, \beta_{2} 
~|~ & [\alpha_{4}, \alpha_{5}] 
= \alpha_{2} = \beta_{2}, [\alpha_{3}, \alpha_{5}] 
= \alpha_{1} = \beta_{1}^{p}, \alpha_{4}^{p} = \beta_{1},\\
&\alpha_{3}^{p}= \beta_{1}^{p^2} = \beta_{2}^{p} =1,  
\text{ some other relations between the generators} \rangle.
\end{align*}
Here $Z(G) = \langle  \alpha_{2}, \alpha_{4}^{p} \rangle 
\cong C_{p^2}\times C_{p}$, and 
$G' = \langle \alpha_{1}, \alpha_{2} \rangle \cong C_{p} \times C_{p}$. 
Now $A = \langle \alpha_{2}, \alpha_{3}, \alpha_{4} 
\rangle \cong C_{p} \times C_{p} \times C_{p^3}$
is a maximal abelian subgroup of $G$. 
Since $\exp(G) = p^3$, $G' \subset Z(G)$, 
and $\exp(G/Z(G)) = p$, we deduce that 
$\exp(G/G') = p^2$. For $i=1,2$, 
let $\chi_{i}$ and $D_{i} \leq A$ be as defined in Equation \eqref{eq:phi4}. 
Then $\chi_{i} = \rho_{i}\ind_{K_{D_{i}}}^{G} \in \nl(G)$. 
But $\chi_{i}(1) = p$, so $K_{D_{i}} = A$.
As in Claims 1 and 2 of 
the proof of Lemma \ref{prop:phi3} (A), 
we deduce that $A/\ker(\rho_{1}) \cong C_{p^3}$, and 
$A/\ker(\rho_{2}) \cong C_{p}$, or $C_{p^3}$. 
The group structure implies that 
$\mathbb{Q}(\chi_{1}) = \mathbb{Q}(\rho_{1})$. 
Hence $d(\chi_{1}) = p\phi(p^3) = p^3(p-1)$. 
Since $d(\chi_{2}) \geq p\phi(p) = p(p-1)$, 
$$\sum_{i=1}^{2} d(\chi_{i}) \geq p^3(p-1) + p(p-1).$$ 
Lemma \ref{L1} implies that $c(G) \geq p^4+p^2$. 
On the other hand, 
$\Core_{G}(\langle \alpha_{3}, \alpha_{4}) \rangle \cap 
\Core_{G}(\langle \alpha_{3}, \alpha_{2} \rangle) = 1.$
Hence $c(G) = \mu(G) \leq p^4+p^2$. 
Therefore, $c(G) = p^4+p^2$.

\medskip 
\item[(B.)]
If $G\in \Phi_{4}$ and $Z(G) \cong C_{p}^3$, 
then $\exp(G) = p^r$ for $r\leq 2$ (see \cite{Arxiv}). 
Lemma \ref{L1} implies that $|X_{G}|=3$. 
Observe $G' \cong C_{p} \times C_{p}$ and $G' \subset Z(G)$.

\begin{itemize}
\item 
$G$ is a direct product of an abelian and a non-abelian group.

\noindent 
Let $G = H \times K$ where $H$ is non-abelian and 
not a direct product of an abelian and a non-abelian subgroup, 
$Z(H) \cong C_{p}\times C_{p}$, and $K \cong C_{p}$. 
From Lemmas \ref{P2} and \ref{remark:newrange}, 
$c(H) = 2p^2$, $p^3+p^2$, or $2p^3$. 
Hence $c(G) = 2p^2+p$, $p^3+p^2+p$, or $2p^3+p$.

\item 
$G$ is not a direct product of an abelian and a non-abelian subgroup. 

\noindent 
Let $X = \{ \chi_{i} \; | \; 1 \leq i \leq 3\} \subset \Irr(G)$ 
satisfy Equation \eqref{eq:X_G}. 
Suppose $\chi_{i}(1) = 1$ for some fixed $i$ where $1\leq i \leq 3$. 
Then $\frac{G/G'}{\ker(\chi_{i})/G'}\cong C_{p}$, or $C_{p^2}$. 
If the former holds,
then $|\ker(\chi_{i})| = p^5$. Since $G$ is not a direct product 
of an abelian and a non-abelian subgroup, 
$Z(G) \subset \ker(\chi_{i})$. By Lemma \ref{LemmaNotInX_G} (ii), 
$X$ does not satisfy Equation \eqref{eq:X_G}, a contradiction. 
Hence $\frac{G/G'}{\ker(\chi_{i})/G'}\cong C_{p^2}$
for $\chi_{i} \in X \cap \lin(G)$. On the other hand, 
if $\chi_{i}(1) > 1$, 
then $d(\chi_{i}) \geq p\phi(p) = p(p-1)$. 
Thus $d(\chi_{i}) \geq p(p-1)$ for $1\leq i \leq 3$. 
Lemma \ref{L1} implies that $c(G) \geq 3p^2$.
 
If $G$ is a relevant group in 
$\Phi_{4} \setminus \{ G_{(4,28)}, G_{(4,32)},
G_{(4,34r)}\, (r = \omega, \omega^{3}, \ldots, \omega^{{p-2}}) \}$,
then, from Table \ref{t:3}, $c(G) = \mu(G) = 3p^2$.
Consider 
\begin{eqnarray*}
\hspace*{0.8cm} G_{(4,32)} = \langle \alpha_{1}, \ldots, \alpha_{5},  \beta_{1}, \beta_{2}, \beta_{3} & | & [\alpha_{4}, \alpha_{5}] = \alpha_{2} = \beta_{2}, 
[\alpha_{3}, \alpha_{5}] = \alpha_{1} = \beta_{1}, \\
& & \alpha_{3}^{p} = \beta_{2}, 
 \alpha_{4}^{p}= \beta_{1}^{\nu},  
 \alpha_{5}^{p} = \beta_{3}, \beta_{1}^{p} = \beta_{2}^{p} = \beta_{3}^{p} = 1 \rangle.
\end{eqnarray*}
Observe 
$Z(G) = \langle  \alpha_{3}^{p}, 
\alpha_{4}^{p}, \alpha_{5}^{p} \rangle \cong C_{p}^{3}$, and 
$G' = \langle \alpha_{3}^{p}, \alpha_{4}^{p} \rangle 
\cong C_{p} \times C_{p}$, and 
$$\Core_G (\langle \alpha_{5}, \alpha_{3}^{p}) \rangle \cap 
\Core_{G} (\langle \alpha_{5}, \alpha_{4}^{p}) \rangle \cap 
\Core_{G} (\langle \alpha_{3}, \alpha_{4} \rangle ) = 1.$$ 
Hence $c(G) = \mu(G) \leq 2p^3 + p^2$.

Similarly, $c(G_{(4,28)}) \leq p^3+2p^2$, and $c(G_{(4,34r)}) \leq 2p^3+p^2$.

Hence, if $G=G_{(4,28)}$, $G_{(4,32)}$, or 
$G_{(4,34r)}$, then 
$c(G) = 3p^2$, $p^3+2p^2$, or $2p^3+p^2$. \qedhere
\end{itemize} 
\end{enumerate} 
\end{proof}

\begin{lemma}
\label{prop:phi6}
Let $G$ be a group of order $p^6$ in $\Phi_{6}$. 
\begin{enumerate}
\item[(A.)] If $Z(G) \cong C_{p^2} \times C_{p}$, then  
$c(G) = p^3+p^2$, $2p^3$, $p^4+p^2$, or $p^4+p^3$.
\item[(B.)] If $Z(G) \cong C_{p}^3$, then 
$c(G) = 2p^2+p$, $3p^2$, $p^3+p^2+p$, or $2p^3+p$.
\end{enumerate}
\end{lemma}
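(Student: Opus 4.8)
The plan is to mirror closely the structure of the proof of Lemma \ref{prop:phi4}, exploiting the fact that groups in $\Phi_6$ share the key structural features with those in $\Phi_4$: by \cite{RJ, Arxiv} we have $\cd(G) = \{1,p\}$, $\exp(G/Z(G)) = p$, $G' \cong C_p \times C_p$, and crucially $G' \subset Z(G)$, so $d(Z(G)\cap G') = d(Z(G))$. Remark \ref{remark:notadirectproduct} then shows that no group in $\Phi_6$ with $Z(G) \cong C_{p^2}\times C_p$ is a direct product of an abelian and a non-abelian subgroup, and by \cite{Arxiv} every such $G$ has an abelian normal subgroup $A$ of index $p$ containing $Z(G)$, hence $A$ is one of $C_{p^3}\times C_{p^2}$, $C_{p^3}\times C_p^2$, $C_{p^2}\times C_{p^2}\times C_p$, or $C_{p^2}\times C_p^3$.

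For part (A), I would first establish the lower bound $c(G) \geq p^3+p^2$ exactly as in Claim 1 of Lemma \ref{prop:phi4}(A): with $|X_G| = 2$ and $\lin(G)\cap X_G = \emptyset$ (since $G' = Z(G)\cap G'$ has rank $2$, invoke Lemma \ref{lemmalinearcharacter}), write $X = \{\chi_1,\chi_2\}\subset\nl(G)$, apply Lemma \ref{thm:ford} to get $\chi_i = \lambda_i\ind_{H_i}^G$ with $\mathbb{Q}(\chi_i) = \mathbb{Q}(\lambda_i)$ and $|H_i:G| = p$, then argue $|\ker\lambda_i| = p^4$ for at most one $i$ (otherwise $|Z(G)\cap\ker\lambda_i| = p^2$ for both forces a nontrivial common kernel in $Z(G)\cong C_{p^2}\times C_p$), so $\exp(H_j/\ker\lambda_j) \geq p^2$ and $d(\chi_j) \geq p\phi(p^2) = p^2(p-1)$ while $d(\chi_i)\geq p(p-1)$, giving $c(G)\geq p^3+p^2$ via Lemma \ref{L1}. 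When $A\cong C_{p^2}\times C_{p^2}\times C_p$ or $C_{p^2}\times C_p^3$, Remark \ref{remark:CpXCp} combined with this bound yields $c(G) = p^3+p^2$ or $2p^3$. When $A\cong C_{p^3}\times C_{p^2}$, I would identify the relevant group(s) of $\Phi_6$ explicitly from \cite{Arxiv}, apply Lemmas \ref{lemma:bakshi}, \ref{lemma:abelian}, and \ref{remark:abeliangroups} to show $A/\ker\rho_1 \cong C_{p^3}$ and $A/\ker\rho_2\cong C_{p^2}$ or $C_{p^3}$, compute $d(\chi_1) = p\phi(p^3) = p^3(p-1)$ and $d(\chi_2)\geq p^2(p-1)$ so $c(G)\geq p^4+p^3$, then exhibit two subgroups with trivial core-intersection and use Lemma \ref{thm:pgroup} to get the matching upper bound $c(G) = p^4+p^3$; any residual groups (analogues of $G_{(4,2)}, G_{(4,10)}$ not covered by the abelian-subgroup analysis) I would handle by quoting the values $p^3+p^2$ or $p^4+p^2$ from Table \ref{t:3}.

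For part (B) with $Z(G)\cong C_p^3$, here $\exp(G) = p^r$ with $r\leq 2$ (by \cite{Arxiv}) and $|X_G| = 3$. If $G = H\times K$ with $H$ non-abelian (not itself such a direct product), $Z(H)\cong C_p\times C_p$, $K\cong C_p$, then Lemmas \ref{P2} and \ref{remark:newrange} give $c(H) = 2p^2$, $p^3+p^2$, or $2p^3$, so $c(G) = 2p^2+p$, $p^3+p^2+p$, or $2p^3+p$. If $G$ is not such a direct product, I would prove $c(G)\geq 3p^2$ exactly as in Lemma \ref{prop:phi4}(B): for $X = \{\chi_1,\chi_2,\chi_3\}$ satisfying \eqref{eq:X_G}, any linear $\chi_i$ must have $\exp((G/G')/(\ker\chi_i/G')) = p^2$ (else $Z(G)\subseteq\ker\chi_i$, contradicting Lemma \ref{LemmaNotInX_G}(ii)), and any non-linear $\chi_i$ has $d(\chi_i)\geq p(p-1)$, so $d(\chi_i)\geq p(p-1)$ throughout and Lemma \ref{L1} gives $c(G)\geq 3p^2$. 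The list of values in (B) for $\Phi_6$ (namely $2p^2+p$, $3p^2$, $p^3+p^2+p$, $2p^3+p$) is strictly shorter than for $\Phi_4$, so either every relevant $\Phi_6$ group has $c(G) = \mu(G) = 3p^2$ from Table \ref{t:3}, or it falls into the direct-product case above; I would check \cite{Arxiv} to confirm no $\Phi_6$ group is an analogue of $G_{(4,28)}$ or $G_{(4,32)}$, $G_{(4,34r)}$ (which would add $p^3+2p^2$ or $2p^3+p^2$).

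The main obstacle will be the bookkeeping in part (A) when $A\cong C_{p^3}\times C_{p^2}$: one must extract from the parameterized presentations in \cite{Arxiv} exactly which groups of $\Phi_6$ realize this abelian normal subgroup, verify the cyclic-quotient and kernel-structure claims via the metabelian induction machinery of Lemma \ref{lemma:bakshi} together with Lemmas \ref{lemma:abelian} and \ref{remark:abeliangroups}, and then correctly evaluate the field of character values $\mathbb{Q}(\rho_1\ind_A^G)$ to pin down $d(\chi_1) = p^3(p-1)$. This is the step most prone to error, since the precise commutator relations determine whether the Gauss-sum-type expression $\rho_1\ind_A^G(\alpha_i^r\alpha_j^s)$ is a nonzero scalar multiple of a primitive $p^3$-th root of unity, and hence whether the field-generation argument goes through.
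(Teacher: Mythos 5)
Your proposal is built on a false premise about the invariants of $\Phi_6$: you assert $G' \cong C_p\times C_p$ with $G'\subset Z(G)$, and that every relevant $G$ has an abelian normal subgroup of index $p$. In fact, for $G\in\Phi_6$ one has $G'\cong C_p^3$ with $G'\cap Z(G)\cong C_p\times C_p$ (so $G'\not\subseteq Z(G)$), and — as the paper notes from \cite{Arxiv} — no group in $\Phi_6$ has an abelian subgroup of index $p$. This breaks the core of your part (A): the whole case analysis on the isomorphism type of an index-$p$ abelian normal subgroup $A$ (in particular the $A\cong C_{p^3}\times C_{p^2}$ branch, where you would compute $d(\chi_1)=p^3(p-1)$ and conclude $c(G)=p^4+p^3$ by the Bakshi--Kulkarni--Passi machinery with $K_{D_i}=A$) simply cannot be set up. The paper instead works with the maximal abelian subgroup $A=Z(G)\cdot G'\cong C_{p^2}\times C_p\times C_p$ of index $p^2$ (so $|K_{D_i}|=p^5$), applies Lemma \ref{prop:CpXCp} when $\exp(G)\le p^2$, and for the exponent-$p^3$ groups ($G_{(6,2r)}$, $G_{(6,4)}$, $G_{(6,6rs)}$, $G_{(6,7r)}$) either quotes Table \ref{t:3} or exhibits two subgroups with trivially intersecting cores to get $\mu(G)\le p^4+p^3$, finishing with Lemmas \ref{P2} and \ref{remark:newrange}. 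Your lower bound $c(G)\ge p^3+p^2$ in (A) does survive, since it only needs $\lin(G)\cap X_G=\emptyset$, which follows because $Z(G)\cap G'$ contains all elements of order $p$ in $Z(G)$.

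In part (B) there are two further problems. First, your appeal to Lemma \ref{LemmaNotInX_G}(ii) is inapplicable: that statement hypothesizes $G'\subset Z(G)$ of order $p^2$ (and its proof pins $G$ down to $\Phi_4$), neither of which holds in $\Phi_6$. The paper does not attempt a general $3p^2$ lower bound for the non-direct-product groups of $\Phi_6$; it simply identifies the four such groups ($G_{(6,9)}$, $G_{(6,12r)}$, $G_{(6,13)}$) and reads $c(G)=\mu(G)=3p^2$ from Table \ref{t:3}. Second, in the direct-product sub-case you should justify why $K\cong C_p$ is forced (ruling out $Z(H)\cong C_p$ with $K\cong C_p\times C_p$); the paper does this using $Z(G)\cap G'\cong C_p\times C_p$, which would force $Z(H)\cap H'\cong C_p\times C_p$ inside a cyclic $Z(H)$, a contradiction. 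Your instinct to mirror Lemma \ref{prop:phi4} is understandable, but the two families differ in exactly the structural features your argument leans on.
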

			
\begin{proof}
Note that $\cd(G) = \{ 1, p \}$, $\exp(G/Z(G)) = p$, 
$G' \cong C_{p}^{3}$, and 
$G' \cap Z(G) \cong C_{p} \times C_{p}$ (see \cite{RJ, Arxiv}). 

\begin{enumerate}
\item[(A.)] 
If $Z(G) \cong C_{p^2}\times C_{p}$, then 
$\exp(G) = p^{r}$ where $r\leq 3$ (see \cite{Arxiv}).
Since $d(Z(G)) = d(G' \cap Z(G))$, 
Remark \ref{remark:notadirectproduct} shows that 
$G$ is not a direct product of an abelian and a non-abelian subgroup.
Lemma \ref{L1} implies that $|X_{G}| = 2$. 
Since $Z(G) \cap G'$ contains all the elements of order $p$ in $Z(G)$, 
$\lin(G) \cap X_{G} = \emptyset$. 
As in Claim 1 of the proof of Lemma 
\ref{prop:phi4} (A), we deduce that $c(G) \geq p^3 + p^2$.

From \cite{Arxiv}, observe that 
if $G\in \Phi_{6}$, then $G$ has no abelian subgroup of index $p$.
Now $$A = Z(G)\cdot G' \cong C_{p^2} \times C_{p} \times C_{p}$$ 
is a maximal abelian subgroup of $G$ containing $G'$. 
Let $X = \{ \chi_{1}, \chi_{2} \} \subset \nl(G)$ 
satisfy Equation \eqref{eq:X_G}. From Lemma \ref{lemma:bakshi}, 
for $i = 1,2$ there exists $\rho_{i} \in \lin(K_{D_{i}})$, 
where $D_{i} \leq A$ with $A/D_{i}$  cyclic and 
$\ker(\rho_{i}\restr_{A}) = D_{i}$, such that 
\begin{equation} \label{eq:phi6}
\chi_{i} = \rho_{i}\ind_{K_{D_{i}}}^{G} \in \Irr(G),
\end{equation}
and $\bigcap_{i=1}^{2} \ker\left(\rho_{i}\ind_{K_{D_{i}}}^{G}\right)  = 1$.
Observe that $|K_{D_{i}}|=p^5$ for $i=1,2$. 

If $\exp(G) \leq p^r$ where $r\leq 2$, then, from the above 
discussion and Lemma \ref{prop:CpXCp}, we conclude that 
$c(G) = p^3+p^2$, or $2p^3$. 

If $\exp(G) = p^3$, then $G$ is one of 
{$G_{(6, 2r)}~ (r=1 \text{ or } \nu), 
G_{(6, 4)}, G_{(6, 6rs)}~ (r,s=1 \text{ or } \nu)$,   
or $G_{(6, 7r)}~ (r=1,\ldots,p-1)$}
(see \cite{Arxiv}).
From Table \ref{t:3}, {$c(G_{(6,4)}) = 
c(G_{(6, 7r)}) = p^3+p^2$}. 
If $G$ is $G_{(6, 2r)}$, or $G_{(6, 6rs)}$,
then $G$ has the following form:
\begin{eqnarray*}
\langle \alpha_{1}, \ldots, 
\alpha_{5}, \beta_{1}, \beta_{2} 
& | & [\alpha_{4}, \alpha_{5}] = \alpha_{3}, [\alpha_{3}, \alpha_{5}] = \alpha_{2} = \beta_{2}, 
 [\alpha_{3}, \alpha_{4}] = \alpha_{1} = \beta_{1}^{p}, 
\alpha_{5}^{p} = \beta_{1}^t,\\
& & \alpha_{3}^{p} =   \beta_{1}^{p^2} = \beta_{2}^{p} = 1, \text{ some other relations between the generators} \rangle
\end{eqnarray*}
where $t=1$, or $\nu$.
Either $o(\alpha_{4}) = p$, or 
$\langle \alpha_{4}^{p} \rangle = 
\langle \alpha_{2} \rangle \cong C_{p}$ (see \cite{Arxiv}).
Observe $\Core_{G} (\langle \alpha_{4}, \alpha_{2} \rangle) 
\cap \Core_{G} (\langle \alpha_{5} \rangle) = 1$. 
Hence $c(G) = \mu(G) \leq p^4+p^3$. 
From Lemmas \ref{P2} and \ref{remark:newrange}, 
for every $G \in \Phi_{6}$ with $Z(G) \cong C_{p^2}\times C_{p}$, 
we deduce that $c(G) = p^3+p^2$, $2p^3$, $p^4+p^2$, or $p^4+p^3$.

\item[(B.)] 
Here $\exp(G) = p^2$ (see \cite{Arxiv}).
\begin{itemize}
\item Sub-case 1: 
$G=H\times K$ where $H$ is non-abelian and $K$ abelian.

\noindent 
We claim that $K \cong C_{p}$.
Suppose, to the contrary, that $Z(H) \cong C_{p}$, and 
$K\cong C_{p} \times C_{p}$. 
Now $G' = H'$ and 
$Z(G) \cap G' \cong C_{p} \times C_{p}$, 
and $Z(G) \cap H' \cong C_{p} \times C_{p}$, so 
$Z(H) \cap H' \cong C_{p} \times C_{p}$. 
This is a contradiction since $Z(H) \cong C_{p}$. 
Therefore, if $G=H \times K$, then $Z(H) \cong C_{p}\times C_{p}$ 
and $K \cong C_{p}$. Without loss of generality, assume $H$ is not 
a direct product of an abelian and a non-abelian subgroup. 
From Lemmas \ref{P2} and \ref{remark:newrange}, 
we conclude that $c(H) = 2p^2$, $p^3+p^2$, or $2p^3$. 
Hence $c(G) = 2p^2+p$, $p^3+p^2+p$, or $2p^3+p$.

\item Sub-case 2: $G$ is not a direct product of an 
abelian and a non-abelian subgroup.

\noindent 
There are four such groups $G$ in $\Phi_{6}$: 
namely $G_{(6,9)}$, $G_{(6,12r)}$ where $r=1$ or $\nu$, 
and $G_{(6,13)}$. 
From Table \ref{t:3}, $c(G) = \mu(G) = 3p^2$. \qedhere
\end{itemize}
\end{enumerate}
\end{proof}

\begin{lemma} \label{lemma:phi11}
	Let $G$ be a group of order $p^6$ in $\Phi_{11}$.
	Suppose $\SS$ and $\TT$ are maximal subgroups of $G$,
	with $\SS' \neq \TT'$ and $\SS/\SS' \ncong \TT/\TT'$. 
	If $\TT/\TT'$ is isomorphic to $C_{p^2} \times C_{p^2}$, 
	then $c(G) \leq 2p^3+p^2$.
\end{lemma}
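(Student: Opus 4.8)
The plan is to establish the bound by exhibiting, directly from the subgroup structure, a faithful permutation representation of $G$ of degree $p^{3}+p^{3}+p^{2}$; since $c(G)\le\mu(G)$ this suffices. Concretely I will produce subgroups $H_{1},H_{2},H_{3}\le G$ with $|H_{1}|=|H_{2}|=p^{3}$, $|H_{3}|=p^{4}$, and $\bigcap_{i=1}^{3}\Core_{G}(H_{i})=1$, so that $\mu(G)\le|G:H_{1}|+|G:H_{2}|+|G:H_{3}|=2p^{3}+p^{2}$.

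First I record the structural input. Since $P'\ne Q'$, the maximal subgroups $P,Q$ are distinct, so $PQ=G$ and $P\cap Q$ has index $p$ in both; hence $Q'\le P\cap Q\le P$ and $P'\le Q$. From $|Q|=p^{5}$ and $Q/Q'\cong C_{p^{2}}\times C_{p^{2}}$ we get $|Q'|=p$, and $Q'$, being characteristic in $Q\trianglelefteq G$, is a normal -- hence central -- subgroup of $G$ of order $p$. Writing $Q/Q'=\langle\bar u\rangle\times\langle\bar v\rangle$ with $o(\bar u)=o(\bar v)=p^{2}$, let $H_{1}$ and $H_{2}$ be the full preimages in $Q$ of $\langle\bar u\rangle$ and $\langle\bar v\rangle$. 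Then $|H_{1}|=|H_{2}|=p^{3}$ and $H_{1}\cap H_{2}=Q'$ (the preimage of $\langle\bar u\rangle\cap\langle\bar v\rangle=1$); moreover, since $Q'\trianglelefteq G$ and $Q'\le H_{i}$, we have $Q'\le\Core_{G}(H_{i})\le H_{i}$, so $\Core_{G}(H_{1})\cap\Core_{G}(H_{2})\subseteq H_{1}\cap H_{2}=Q'$.

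It now suffices to find $H_{3}\le G$ with $|H_{3}|=p^{4}$ and $Q'\not\le H_{3}$: then $\Core_{G}(H_{3})\le H_{3}$ and $|Q'|=p$ force $Q'\cap\Core_{G}(H_{3})=1$, whence $\bigcap_{i=1}^{3}\Core_{G}(H_{i})\subseteq Q'\cap H_{3}=1$ and the bound follows. For $H_{3}$ I will use $P$. The key claim is that $Q'\not\le\Phi(P)=P'P^{p}$: granting it, the image of $Q'$ in the elementary abelian group $P/\Phi(P)$ is a subgroup of order $p$, which has a complement, and the preimage $H_{3}\le P$ of that complement satisfies $\Phi(P)\le H_{3}$, $|H_{3}|=|P|/p=p^{4}$, and $Q'\not\le H_{3}$. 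To prove the claim I would combine $P'\ne Q'$ -- which, when $|P'|=p$, gives $P'\cap Q'=1$ and so $Q'\not\le P'$ -- with the hypothesis $P/P'\ncong Q/Q'\cong C_{p^{2}}\times C_{p^{2}}$, which rules out the one shape of $P/P'$ in which the image of $Q'$ could land inside $(P/P')^{p}$, together with the explicit presentations of the $\Phi_{11}$ groups in \cite{RJ,Arxiv}.

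The step I expect to be the main obstacle is exactly this claim $Q'\not\le\Phi(P)$ -- equivalently, that $P$ carries a linear character that is nontrivial on the central subgroup $Q'$ and whose kernel has index $p^{2}$ in $G$. It does not follow formally from $P'\ne Q'$ alone, and its verification will require inspecting the $\Phi_{11}$ family in \cite{RJ,Arxiv}; in the intended applications $P$ is presumably taken with $|P'|=p$, or even as a maximal abelian subgroup containing $G'$, where the claim is transparent. For completeness I note the parallel character-theoretic route: put $\chi_{1}=\lambda_{1}\!\ind_{Q}^{G}$, $\chi_{2}=\lambda_{2}\!\ind_{Q}^{G}$ with $Q/\ker\lambda_{i}\cong C_{p^{2}}$ (irreducible of degree $p$ once one checks $\lambda_{i}$ is not $G$-invariant) and $\chi_{3}=\mu\!\ind_{P}^{G}$ with $\mu\in\lin(P)$ of order $p$ and $\mu|_{Q'}\ne1$; Lemma~\ref{thm:ford}(i) (applicable as $p$ is odd) gives $d(\chi_{1})=d(\chi_{2})=p\phi(p^{2})=p^{2}(p-1)$ and $d(\chi_{3})=p(p-1)$, while each Galois-orbit sum $\sum_{\sigma}\chi_{i}^{\sigma}$ is a difference of two induced permutation characters, forcing $m(\xi_{X})\le\sum_{i}m\!\left(\sum_{\sigma}\chi_{i}^{\sigma}\right)\le p^{2}+p^{2}+p$; since $\bigcap_{i}\ker\chi_{i}\subseteq Q'\cap\ker\chi_{3}=1$, Lemma~\ref{lemma:c(G)Algorithm} yields $c(G)\le\bigl(2p^{3}-p^{2}-p\bigr)+\bigl(2p^{2}+p\bigr)=2p^{3}+p^{2}$.
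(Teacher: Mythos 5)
Your reduction of the bound to exhibiting $H_{1},H_{2},H_{3}$ with $\bigcap_{i}\Core_{G}(H_{i})=1$ and $\sum_i|G:H_{i}|=2p^{3}+p^{2}$ is sound, and the construction of $H_{1},H_{2}$ inside $Q$ is fine. But the step you flag as the "main obstacle" is a genuine gap, not a routine verification, and your proposed justification for it is wrong. For $G\in\Phi_{11}$ one has $G'=Z(G)\cong C_p^3$ and $\exp(G/Z(G))=p$, so $\Phi(P)=P'P^{p}\leq Z(G)$; when $P/P'\cong C_{p^{2}}\times C_{p}\times C_{p}$ — one of the two shapes permitted by $P/P'\ncong C_{p^{2}}\times C_{p^{2}}$, so it cannot be "ruled out" by that hypothesis — $\Phi(P)$ has order $p^{2}$, and nothing in the hypotheses prevents it from containing $Q'$. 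Schematically, with the standard presentation $[\alpha_5,\alpha_6]=\alpha_3$, $[\alpha_4,\alpha_6]=\alpha_2$, $[\alpha_4,\alpha_5]=\alpha_1$ and power maps $\alpha_4^{p}=\alpha_2$, $\alpha_5^{p}=\alpha_3$, $\alpha_6^{p}=\alpha_1$ (groups with all of $\alpha_4,\alpha_5,\alpha_6$ of order $p^2$ do occur in $\Phi_{11}$), take $Q=\langle\alpha_4,\alpha_5\rangle G'$ and $P=\langle\alpha_4,\alpha_6\rangle G'$: then $Q'=\langle\alpha_1\rangle$, $Q/Q'\cong C_{p^{2}}\times C_{p^{2}}$, $P'=\langle\alpha_2\rangle\neq Q'$, $P/P'\cong C_{p^{2}}\times C_{p}\times C_{p}$, yet $\Phi(P)=\langle\alpha_1,\alpha_2\rangle\supseteq Q'$, so every maximal subgroup of $P$ contains $Q'$ and your $H_{3}$ does not exist inside $P$. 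Your fallback character-theoretic route founders on the same rock: if $Q'\leq\Phi(P)$ then every $\mu\in\lin(P)$ with $\mu|_{Q'}\neq 1$ has order $p^{2}$, so $d(\chi_{3})=p^{2}(p-1)$ and the bound degrades to $3p^{3}>2p^{3}+p^{2}$.

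The paper's proof is organised precisely to dodge this difficulty. It induces only one character from $Q$ (contributing $p^{2}(p-1)$) and two from $P$, and it does \emph{not} require the cheap character from $P$ to be nontrivial on $Q'$. Instead it fixes a maximal abelian subgroup $A\leq P$ containing $Z(G)$ and chooses $\lambda_{2},\lambda_{3}\in\lin(P)$ by prescribing the kernels $D_{2},D_{3}$ of their restrictions to $A$, running a case analysis on $\exp(P)$ and on where the $p$-th power of an element of order $p^{2}$ of $P$ sits inside $Z(G)$; faithfulness is then a joint property of the triple (verified by the core computation with $x,y,z$), while at least one of $\lambda_{2},\lambda_{3}$ is guaranteed to have order $p$. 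To salvage your more elementary argument you would have to allow $H_{3}$ to be an arbitrary index-$p^{2}$ subgroup of $G$ avoiding $Q'$ and prove its existence in the case $Q'\leq\Phi(P)$, which in effect reproduces the paper's case analysis.
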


\begin{proof}
	From \cite{Arxiv}, $G$ has the following form:
	\begin{align*}
\langle \alpha_{1}, \ldots, \alpha_{5}, \alpha_{6} ~|~ & [\alpha_{5}, \alpha_{6}] = \alpha_{3}, [\alpha_{4}, \alpha_{6}] = \alpha_{2}, [\alpha_{4}, \alpha_{5}] = \alpha_{1},\\
& \alpha_{1}^{p} = \alpha_{2}^{p} = \alpha_{3}^{p} = 1, 
\text{ some other relations between the generators} \rangle.
	\end{align*}
	Note that $\cd(G) = \{ 1, p \}$, $\exp(G) = p^r$ where $r\leq 2$, 
	$Z(G) = G' \cong C_{p}^3$, and $G/Z(G) \cong C_{p}^3$ (see \cite{RJ, Arxiv}).
	Since $Z(G) = G'$, by Remark \ref{remark:notadirectproduct}, 
	$G$ is not a direct product of an abelian and a non-abelian subgroup. 
	From the structure of $G$, observe that if $\SS\leq G$ has index $p$,
	then $\SS' \cong C_{p}$. Lemma \ref{L1} implies that 
	$|X_{G}|=3$, and, 
	by Lemma \ref{lemmalinearcharacter},
	$X_{G} \cap \lin(G) = \emptyset$. 
	Suppose there exist maximal subgroups $\SS$ and $\TT$ of $G$
	with $\SS' \neq \TT'$ and $\SS/\SS' \ncong \TT/\TT'$. 
	If $\TT/\TT'$ is isomorphic to $C_{p^2} \times C_{p^2}$,
	then $\SS/\SS' \cong C_{p^2} \times C_{p} \times C_{p}$, or $C_{p}^4$.
	Now $\exp(G) = p^2$, and $Z(G)$ is contained in both $\SS$ and $\TT$. 

Let $\lambda_{1} \in \lin(\TT/\TT')$ be such that 
$Z(G) \nsubseteq \ker(\lambda_{1})$. 
Hence $\frac{\TT/\TT'}{\ker(\lambda_{1})/ \TT'}\cong C_{p}$, or $C_{p^2}$. 
If the former holds,
then $|\ker(\lambda_{1})/\TT'|=p^3$, so 
$\ker(\lambda_{1})/\TT' \cong C_{p^2} \times C_{p}$. 
Then $\ker(\lambda_{1})/\TT'$ contains all the elements of order
$p$ in $\TT/\TT'$. 
But $\TT/\TT' \geq Z(G)/\TT' \cong C_{p}\times C_{p}$, so 
$Z(G)/ \TT' \subseteq \ker(\lambda_{1})/\TT'$, a contradiction.
Hence $\frac{\TT/\TT'}{\ker(\lambda_{1})/\TT'} \cong C_{p^2}$.  So 
$$\frac{\ker(\lambda_{1})}{\TT'} \cong C_{p^2} 
\Rightarrow \frac{\ker(\lambda_{1})}{\TT'} 
= \langle t \TT' \rangle \cong C_{p^2}$$ 
for some $t\in \ker(\lambda_{1})$. 
But $\exp(G) = p^2$, so $t^{p^2} = 1$. 
Thus $\langle t \rangle \cap \TT' = 1$, 
and $\ker(\lambda_{1}) = \langle t \rangle \cdot \TT'$. 
Since $\langle t \rangle$ and $\TT'$ 
are both normal, 
$$\ker(\lambda_{1}) = \langle t \rangle \times \TT' \cong 
C_{p^2} \times C_{p}.$$
Note that $t^{p} \in Z(G)$, as $\exp(G/Z(G)) = p$. 
Choose $y,z \in G$ such that 
$\TT' = \langle y \rangle \cong C_{p}$ and 
$\SS' = \langle z \rangle \cong C_{p}$. 
Observe that $\langle t^{p}, y \rangle \subset Z(G) \cap \ker(\lambda_{1})$. 
Choose $x\in \TT$ such that 
$\ker(\lambda_{1}) = \langle x, y \rangle \cong C_{p^2} \times C_{p}$, 
and $Z(G) = \langle x^{p}, y, z \rangle \cong C_{p}^3$.
Define $B= \langle x, y, z \rangle \cong 
C_{p^2} \times C_{p} \times C_{p}$, and 
$D = \langle x, y \rangle$. Then $B$ is a maximal 
abelian subgroup of $G$ containing $G'$, and 
$B/D$ is cyclic. {Since $\TT$ is a maximal element of 
$\{ \MTG \leq G ~|~ B \leq \MTG \text{ and } \MTG' \leq D \}$, we take $K_{D} = \TT$.} 
As $\ker(\lambda_{1}\restr_{B}) = D$, 
$\lambda_{1}\ind_{\TT}^{G} \in \nl(G)$ (see Lemma \ref{lemma:bakshi}). 
We consider two cases according to the exponent of $\SS$.

\medskip
\noindent {\bf Case 1: $\exp(\SS) = p^2$}.

\noindent 
There exists $a\in \SS$ with $o(a) = p^2$. 
Set $A = \langle a, x^{p}, y, z \rangle$. 
Since $\exp(G/Z(G)) = p$, $a^{p} \in Z(G)$. 
Then $A \cong C_{p^2} \times C_{p} \times C_{p}$ with $Z(G) \subset A$.
Now $a^{p} = x^{pi'}y^{j'}z^{k'}$ for some $0\leq i',j',k' \leq p-1$, 
where at least one of $i'$, $j'$ and $k'$ belongs to $\mathbb{Z}_{p}^{\times}$. 
Then $A$ is one of the following.
\begin{enumerate}
	\item[Form 1:]
	if $i' \in \mathbb{Z}_{p}^{\times}$, then 
	$A = \langle a, y, z \rangle \cong C_{p^2} \times C_{p} \times C_{p}$. 
	\item[Form 2:] 
	if $j' \in \mathbb{Z}_{p}^{\times}$, 
	then $A = \langle x^p, a, z \rangle \cong C_{p} \times C_{p^2} \times C_{p}$.
	\item[Form 3:] 
	If $k' \in \mathbb{Z}_{p}^{\times}$, then 
	$A = \langle x^p, y, a \rangle \cong C_{p} \times C_{p} \times C_{p^2}$. 
\end{enumerate}

Consider the first of these: 
$A = \langle a, y, z \rangle \cong C_{p^2} \times C_{p} \times C_{p}$ 
is a maximal abelian subgroup of $G$ containing $G'$. 
Let $D_{2} = \langle y, z \rangle$ and 
$D_{3} = \langle a, z \rangle$.
{For $i=2,3$, $A/D_{i}$ is cyclic and $\SS$ is a maximal element 
of $\{ \MTG \leq G ~|~ A \leq \MTG \text{ and } \MTG' \leq D_{i} \}$.}
Thus we can take $K_{D_{i}} = \SS$ for both $i$. 
Choose $\lambda_{i} \in \lin(\SS)$ such that 
$\ker(\lambda_{i}\restr_{A}) = D_{i}$.
From Lemma \ref{lemma:bakshi}, we conclude that 
$\lambda_{i}\ind_{\SS}^{G} \in \nl(G)$.

We claim that 
$$ \ker(\lambda_{1}) \bigcap 
\left( \bigcap_{i=2}^{3} \Core_{G}(\ker(\lambda_{i})) \right) = 1.$$
Take $c = x^{i_{1}}y^{j_{1}} \in \ker(\lambda_{1})$
for some $0\leq i_{1} \leq p^2-1$, $0 \leq j_{1} \leq p-1$. 
If $x^{i_{1}}y^{j_{1}} \in 
\bigcap_{i=2}^{3}\Core_{G}(\ker(\lambda_{i}))$, 
then $x^{i_{1}}y^{j_{1}} \in \Core_{G}(\ker(\lambda_{2}))$. 
Then $x^{i_{1}} \in \Core_{G}(\ker(\lambda_{2}))$, so 
$x^{pi_{1}} \in \Core_{G}(\ker(\lambda_{2}))$. 
But this implies that $Z(G) \subset \Core_{G}(\ker(\lambda_{2}))$, 
a contradiction unless $i_{1}=0$. Hence $c = y^{j_{1}}$. 
But $y^{j_{1}} \in \bigcap_{i=2}^{3}\Core_{G}(\ker(\lambda_{i}))$ 
so $Z(G) = \langle a^{p}, y^{j_{1}}, z \rangle 
\subset \Core_{G}(\ker(\lambda_{3}))$, a contradiction. 
We conclude that $j_{1}=0$. This establishes the claim. 

If $\SS/\SS' \cong C_{p}^{4}$, then 
$d(\lambda_{i})= \phi(p) = p-1$, for $i=2,3$. 
On the other hand, $d(\lambda_{1}) = \phi(p^2) = p(p-1)$.
Thus $$d(\lambda_{1}\ind_{\TT}^{G}) + 
\sum_{i=2}^{3} d(\lambda_{i}\ind_{\SS}^{G}) \leq  pd(\lambda_{1}) 
+  \sum_{i=2}^{3} pd(\lambda_{i}) = p^2(p-1) + 2p(p-1),$$ 
which implies that $c(G) \leq p^3+2p^2$.

If $\SS/\SS' \cong C_{p^2} \times C_{p} \times C_{p}$,
then $o(a\SS') = p^2$. If $\frac{\SS/\SS'}{\ker(\eta)/\SS'} \cong C_{p^2}$
for some $\eta\in \lin(\SS/\SS')$, then, from Lemma \ref{lemma:abelian}, 
$$\frac{\SS/\SS'}{\ker(\eta)/\SS'} = \left\langle a\SS' 
\frac{\ker(\eta)}{\SS'} \right\rangle \cong C_{p^2}.$$ 
But $\ker(\lambda_{3}) \cap Z(G) = \langle a^p, z \rangle$, 
so $\frac{\SS/\SS'}{\ker(\lambda_{3})/\SS'} \cong C_{p}$. 
Hence $d(\lambda_{3}) = \phi(p) = p-1$, which implies 
that $d(\lambda_{3}\ind_{\SS}^{G}) \leq p(p-1)$. 
On the other hand,  $d(\lambda_{2}\ind_{\SS}^{G}) \leq p\phi(p^2)= p^2(p-1)$ 
and $d(\lambda_{1}\ind_{\TT}^{G}) \leq p\phi(p^2)= p^2(p-1)$.
Thus $$d(\lambda_{1}\ind_{\TT}^{G}) + \sum_{i=2}^{3} d(\lambda_{i}\ind_{\SS}^{G}) 
\leq  pd(\lambda_{1}) +  \sum_{i=2}^{3} pd(\lambda_{i})
= p^2(p-1)+ p^2(p-1) + p(p-1),$$ which implies that $c(G) \leq 2p^3+p^2$. 

If $A$ has form 2, then, by a procedure similar to above, 
$c(G) \leq 2p^3+p^2$. 

Now let $A$ have form 3. Suppose there exists $b\in \SS$ 
of order $p^2$ such that $A_{1} = \langle b, x^p, y, z \rangle$ has 
either form 1 or form 2. Note that $A_{1}$ is a maximal abelian subgroup 
of $G$ containing $G'$. Proceeding as in the proof for 
form 1, we deduce that $c(G) \leq 2p^3+p^2$. If no such $b$ exists, 
then $\exp(\SS/\SS')=p$, as $\SS' = \langle z \rangle\cong C_{p}$. 
Again, 
we deduce that $c(G) \leq p^3+2p^2$.

\medskip
\noindent {\bf Case 2: $\exp(\SS) = p$}. 

\noindent 
{Now $\SS/\SS' \cong C_{p}^{4}$.}
Suppose $a\in \SS$ but $a \notin Z(G)$. 
Set $A = \langle a, x^p, y, z \rangle \cong C_{p}^{4}$. 
Let $D_{2} = \langle a, y, z \rangle$ and $D_{3} = \langle a, x^p, z \rangle$. 
For $i=2,3$, $D_{i} \leq A$ with $A/D_{i} \cong C_{p}$, 
{and 
$\SS$ is a maximal element of 
$\{ \MTG \leq G ~|~ A \leq \MTG \text{ and } \MTG' \leq D_{i} \}$.} Thus we can take 
$K_{D_{i}} = \SS$ for both $i$. 
Choose $\lambda_{i} \in \lin(\SS)$ such that 
$\ker(\lambda_{i}\restr_{A}) = D_{i}$. 
From Lemma \ref{lemma:bakshi}, $\lambda_{i}\ind_{\SS}^{G} \in \nl(G)$. 
Proceeding as above, we deduce that $c(G) \leq p^3+2p^2$. 
\end{proof}
				
\begin{lemma}
\label{prop:phi11}
If $G$ is a group of order $p^6$ in $\Phi_{11}$, 
then $c(G) = 3p^2$, $p^3+2p^2$, or $2p^3+p^2$.
\end{lemma}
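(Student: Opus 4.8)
The plan is to combine the general lower bound for $c(G)$ when $G\in\Phi_{11}$ with the upper bounds supplied by Lemma \ref{lemma:phi11} and by the explicit representations recorded in Table \ref{t:3}. First I would fix $G\in\Phi_{11}$ of order $p^6$ and recall the structural facts: $\cd(G)=\{1,p\}$, $\exp(G)=p^r$ with $r\le 2$, $Z(G)=G'\cong C_p^3$, and $G/Z(G)\cong C_p^3$ (see \cite{RJ,Arxiv}). Since $Z(G)=G'$, Remark \ref{remark:notadirectproduct} shows $G$ is not a direct product of an abelian and a non-abelian subgroup, and Remark \ref{remark:linearcharacter} (or Lemma \ref{lemmalinearcharacter}) gives $X_G\cap\lin(G)=\emptyset$. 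By Lemma \ref{L1}(i), $|X_G|=3$. Every non-linear irreducible character of $G$ has degree $p$, so for each $\chi_i\in X_G$ we have $\chi_i(1)=p$ and hence $d(\chi_i)=|\Gamma(\chi_i)|\cdot p\ge p\phi(p)=p(p-1)$; Lemma \ref{L1}(ii) then yields $c(G)=\sum_{i=1}^3 d(\chi_i)/(p-1)\cdot(p-1)\ \ge\ 3p^2$ after dividing out — more precisely, $c(G)=\xi_{X_G}(1)+m(\xi_{X_G})$ with $\xi_{X_G}(1)=\sum d(\chi_i)$ and, by Lemma \ref{L1}(ii), $m(\xi_{X_G})=\frac{1}{p-1}\sum d(\chi_i)$, so $c(G)=\frac{p}{p-1}\sum d(\chi_i)\ge \frac{p}{p-1}\cdot 3p(p-1)=3p^2$. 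This establishes the universal lower bound $c(G)\ge 3p^2$ for all $G\in\Phi_{11}$.

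Next I would show that the only possible values are $3p^2$, $p^3+2p^2$, and $2p^3+p^2$. Since each $d(\chi_i)=p\cdot|\Gamma(\chi_i)|$ and $\mathbb{Q}(\chi_i)$ is a subfield of a cyclotomic field $\mathbb{Q}(\zeta_{p^{k}})$ with $k\le r\le 2$ (as $\exp(G)\le p^2$), we have $|\Gamma(\chi_i)|\in\{1,\,p-1,\,p(p-1)\}$, i.e. $d(\chi_i)\in\{p(p-1),\,p^2(p-1),\,p^3(p-1)\}$. Hence $c(G)=\frac{p}{p-1}\sum_{i=1}^3 d(\chi_i)$ is $p^2(a+bp+cp^2)$ where $a+b+c=3$ and $a,b,c\ge 0$ count how many of the three characters have the respective degrees. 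A priori this allows $3p^2$, $2p^2+p^3$, $p^2+2p^3$, $3p^3$, $2p^2+p^4$, $p^2+p^3+p^4$, $2p^3+p^4$, and so on. I would rule out the values involving $p^4$: if some $d(\chi_i)=p^3(p-1)$ then $\mathbb{Q}(\chi_i)=\mathbb{Q}(\zeta_{p^3})$, which forces $\exp(G)\ge p^3$, contradicting $\exp(G)\le p^2$. So each $d(\chi_i)\in\{p(p-1),p^2(p-1)\}$, giving exactly the four candidates $3p^2$, $p^3+2p^2$, $2p^3+p^2$, and $3p^3$. To eliminate $3p^3$ one must show the three characters cannot all have field $\mathbb{Q}(\zeta_{p^2})$ with compatible kernels; this follows from a counting argument on the kernels inside $Z(G)\cong C_p^3$, exactly as in the kernel-intersection arguments of Lemmas \ref{prop:phi3}(A) and \ref{prop:phi4}(A): three characters each of which is induced from a maximal abelian subgroup and whose restriction to $Z(G)$ has kernel of order $p$ cannot intersect trivially across $Z(G)$ unless at least one has a larger kernel, forcing a degree-$p(p-1)$ character into $X_G$. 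Hence the value $3p^3$ is impossible, and $c(G)\in\{3p^2,\ p^3+2p^2,\ 2p^3+p^2\}$.

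Finally I would exhibit that each of the three values actually occurs, by pointing to the explicit data already in the paper. For the groups $G\in\Phi_{11}$ listed in Table \ref{t:3} (those which are not direct products and for which the explicit $\mathcal{H}_G$ is recorded), the table gives $\mu(G)=3p^2$, and since $c(G)=\mu(G)$ by Lemma \ref{thm:pgroup}, the value $3p^2$ is attained. For the remaining groups — those having maximal subgroups $\SS$ and $\TT$ with $\SS'\ne\TT'$ and $\SS/\SS'\ncong\TT/\TT'$, with $\TT/\TT'\cong C_{p^2}\times C_{p^2}$ — Lemma \ref{lemma:phi11} gives $c(G)\le 2p^3+p^2$, and its proof actually produces, according to the structure of $\SS$, either the bound $p^3+2p^2$ or the bound $2p^3+p^2$; combined with the lower bound $c(G)\ge 3p^2$ and the fact that the only permissible intermediate value is $p^3+2p^2$, one deduces $c(G)\in\{3p^2,\ p^3+2p^2,\ 2p^3+p^2\}$ in these cases too. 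I expect the main obstacle to be the elimination of $3p^3$: one must rule out the configuration in which all three members of $X_G$ have field $\mathbb{Q}(\zeta_{p^2})$, which requires a careful analysis — analogous to Claims 1 and 2 in the proof of Lemma \ref{prop:phi3}(A) — of how the kernels of the inducing linear characters meet $Z(G)=G'\cong C_p^3$, using that the maximal abelian subgroup containing $G'$ has the form $C_{p^2}\times C_p\times C_p$ and that $\exp(G/G')\le p^2$.
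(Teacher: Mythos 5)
Your overall strategy --- the lower bound $c(G)\ge 3p^2$ from $|X_G|=3$ and $X_G\cap\lin(G)=\emptyset$, the arithmetic showing $c(G)$ is a sum of three $p$-powers each at least $p^2$ and at most $p^3$ (since $\exp(G)\le p^2$), and the upper bound $c(G)\le 2p^3+p^2$ from Lemma \ref{lemma:phi11} --- is essentially the paper's. But there are two genuine problems. First, your case division is not exhaustive. The paper applies Lemma \ref{lemma:phi11} only to $G\in\Phi_{11}\setminus\{G_{(11,2)},G_{(11,5)},G_{(11,8)},G_{(11,16r)},G_{(11,17r)}\}$; of the excluded groups, all but $G_{(11,2)}$ appear in Table \ref{t:3} with $\mu(G)=3p^2$, while $G_{(11,2)}$ belongs to neither class and must be handled by a separate explicit construction (the paper exhibits $H_1=\langle\alpha_5,\alpha_4,\alpha_3,\alpha_1\rangle$, $H_2=\langle\alpha_6,\alpha_4,\alpha_1\rangle$, $H_3=\langle\alpha_5,\alpha_3,\alpha_2\rangle$ with $\bigcap_{i=1}^{3}\Core_{G}(H_i)=1$, giving $\mu(G_{(11,2)})\le p^3+2p^2$). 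Your proposal supplies no upper bound for this group, so the value $3p^3$ is not excluded for it.

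Second, the kernel-counting argument you propose for eliminating $3p^3$ does not work. A linear character of $Z(G)\cong C_p^3$ (elementary abelian) has kernel of order $p^2$ or $p^3$, never $p$; and three subgroups of order $p^2$ in $C_p^3$ can perfectly well intersect trivially (three independent hyperplanes of $\mathbb{F}_p^{3}$), so no contradiction arises from the kernels alone. The analogy with the arguments in Lemmas \ref{prop:phi3}(A) and \ref{prop:phi4}(A) fails because there $|X_G|=2$ and two subgroups of order $p^2$ in a group of order $p^3$ must meet nontrivially. In the paper, $3p^3$ is ruled out only because an explicit faithful quasi-permutation (or permutation) representation of degree at most $2p^3+p^2<3p^3$ is produced for every group in the family; once the missing case $G_{(11,2)}$ is repaired, your kernel argument is unnecessary and should simply be dropped.
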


\begin{proof}
Note that $\exp(G) = p^r$ where $r\leq 2$ (see \cite{Arxiv}). 
Lemma \ref{L1} implies that $|X_{G}| = 3$. But $X_{G} \cap \lin(G) = \emptyset$ 
(from Lemma \ref{lemmalinearcharacter}), so 
$d(\chi) \geq p\phi(p) = p(p-1)$ for all $\chi \in X_{G}$. 
Lemma \ref{L1} implies that $c(G)\geq 3p^2$.

From Lemma \ref{P2}, if $\exp(G) = p$, then $c(G) = 3p^2$. 
We now consider groups of exponent $p^2$.
Let $$G \in \Phi_{11}\setminus {\{ G_{(11, 2)}, G_{(11, 5)}, 
G_{(11, 8)}, G_{(11, 16r)}\ (r=2,3,\ldots,(p-1)/2), 
G_{(11, 17r)}\ (r=1,2,\ldots,(p-1)/2) \}}.$$ 
From \cite{Arxiv}, observe that 
$\SS = \langle \alpha_{5}, \alpha_{4}, \alpha_{3}, \alpha_{2}, 
\alpha_{1} \rangle$ and $\TT = \langle \alpha_{6}, \alpha_{5}, 
\alpha_{3}, \alpha_{2}, \alpha_{1} \rangle$
are maximal subgroups of $G$, and
exactly one of $\SS/\SS'$ and $\TT/\TT'$ is 
isomorphic to $C_{p^2} \times C_{p^2}$.
Lemma \ref{lemma:phi11} implies that $c(G) \leq 2p^3+p^2$. 
From Table \ref{t:3}, 
if {$G\in \{  G_{(11, 5)}, G_{(11, 8)}, G_{(11, 16r)}, G_{(11, 17r)} \}$},
then $c(G) = \mu(G) = 3p^2$. 
Consider
\begin{align*}
	{G_{(11, 2)}} = \langle \alpha_{1}, \ldots, \alpha_{5}, \alpha_{6} ~|~ & [\alpha_{5}, \alpha_{6}] = \alpha_{3}, [\alpha_{4}, \alpha_{6}] = \alpha_{2}, [\alpha_{4}, \alpha_{5}] = \alpha_{1}, \alpha_{6}^{p} = \alpha_{1}, \alpha_{1}^{p} = \alpha_{2}^{p} =\alpha_{3}^{p} =  \alpha_{4}^{p} = \alpha_{5}^{p}=  1 \rangle.
\end{align*}
{If 
$H_{1} = \langle \alpha_{5}, \alpha_{4}, \alpha_{3}, \alpha_{1} \rangle$, 
$H_{2} = \langle \alpha_{6}, \alpha_{4}, \alpha_{1} \rangle$, 
and $H_{3} = \langle \alpha_{5}, \alpha_{3}, \alpha_{2} \rangle$,} 
then {$\bigcap_{i=1}^{3}\Core_{G_{(11, 2)}}(H_{i}) = 1$.} 
Hence {$c(G_{(11, 2)}) = \mu(G_{(11, 2)}) \leq 2p^2+p^3$.}
From Lemma \ref{P2}, 
we conclude that $c(G) = 3p^2$, $p^3+2p^2$, or $2p^3+p^2$ for every $G\in \Phi_{11}$. 
\end{proof}
					
We summarize the outcome of this section. 
\begin{theorem} \label{thm:p^3}
If $G$ is a group of order $p^{6}$ with a center
of order $p^3$, then $c(G)$ is one of the following: 
$p^5$, $p^4 + p^3$, $p^4+p^2$, $p^4+p$, $p^4$, $2p^3+p^2$, 
$2p^3+p$, $2p^3$, $p^3+2p^2$, $p^3+p^2+p$, $p^3 +p^2$, $p^3+2p$, 
$p^3+p$, $3p^2$, $2p^2+p$, $2p^2$ or $p^2+2p$.
\end{theorem}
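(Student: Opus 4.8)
The plan is to deduce Theorem \ref{thm:p^3} by collating the lemmas already proved in this section, since all the substantive work has been carried out there. First I would invoke the observation recorded at the start of Section \ref{SS3}: every group $G$ of order $p^{6}$ with $|Z(G)| = p^{3}$ lies in $\Phi_{i}$ for some $i \in \{3,4,6,11\}$. Hence it suffices to list the values $c(G)$ can take within each of these four families and take the union.

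Next I would split on the isomorphism type of $Z(G)$. If $Z(G)$ is cyclic, Lemma \ref{prop:centercyclicp3} immediately yields $c(G) \in \{p^{4}, p^{5}\}$. Otherwise $Z(G) \cong C_{p^{2}} \times C_{p}$ or $Z(G) \cong C_{p}^{3}$. For $G \in \Phi_{3}$ I would quote Lemma \ref{prop:phi3}; for $G \in \Phi_{4}$, Lemma \ref{prop:phi4}; for $G \in \Phi_{6}$, Lemma \ref{prop:phi6}. For $G \in \Phi_{11}$ I would first note, from \cite{RJ, Arxiv}, that $Z(G) = G' \cong C_{p}^{3}$, so only the elementary-abelian branch arises, and then apply Lemma \ref{prop:phi11} to obtain $c(G) \in \{3p^{2}, p^{3}+2p^{2}, 2p^{3}+p^{2}\}$.

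Finally I would form the union of these finitely many explicit lists. Concretely: $\{p^{4}, p^{5}\}$ from the cyclic case; $\{2p^{2}, p^{3}+p, p^{3}+p^{2}, 2p^{3}, p^{4}+p, p^{4}+p^{2}, p^{2}+2p, 2p^{2}+p, 3p^{2}, p^{3}+2p, p^{3}+p^{2}+p\}$ from $\Phi_{3}$; $\{p^{3}+p^{2}, 2p^{3}, p^{4}+p^{2}, p^{4}+p^{3}, 2p^{2}+p, 3p^{2}, p^{3}+p^{2}+p, 2p^{3}+p, p^{3}+2p^{2}, 2p^{3}+p^{2}\}$ from $\Phi_{4}$; $\{p^{3}+p^{2}, 2p^{3}, p^{4}+p^{2}, p^{4}+p^{3}, 2p^{2}+p, 3p^{2}, p^{3}+p^{2}+p, 2p^{3}+p\}$ from $\Phi_{6}$; and $\{3p^{2}, p^{3}+2p^{2}, 2p^{3}+p^{2}\}$ from $\Phi_{11}$. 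The union of these five sets is exactly the seventeen-element list displayed in the statement, which completes the argument.

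The step requiring the most care is purely bookkeeping: checking that the union above coincides with the stated set, that the $\Phi_{11}$ case contributes nothing outside the elementary-abelian-center branch, and that no value is accidentally omitted or duplicated. There is no genuine mathematical obstacle left, since the upper bounds (via explicit faithful quasi-permutation representations, or via Lemma \ref{thm:pgroup} together with explicit cores of subgroups) and the matching lower bounds (via Lemmas \ref{L1}, \ref{P2}, \ref{remark:newrange}, and the metabelian structure supplied by Lemma \ref{lemma:bakshi}) were all established in Lemmas \ref{prop:centercyclicp3}--\ref{prop:phi11}.
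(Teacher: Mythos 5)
Your proposal is correct and is essentially the paper's own argument: the theorem is stated immediately after the sentence ``We summarize the outcome of this section,'' i.e.\ it is obtained precisely by taking the union of the value sets in Lemmas \ref{prop:centercyclicp3}, \ref{prop:phi3}, \ref{prop:phi4}, \ref{prop:phi6} and \ref{prop:phi11} over the four families $\Phi_{3}$, $\Phi_{4}$, $\Phi_{6}$, $\Phi_{11}$. Your bookkeeping of the union (seventeen values) and your observation that the cyclic-center case is absorbed by Lemma \ref{prop:centercyclicp3} while $\Phi_{11}$ only contributes the $C_{p}^{3}$ branch are both accurate.
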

					
\subsection{Groups with center of order $p^{2}$} \label{SS4}
We now consider those groups of order $p^6$ which have
center of order $p^2$. These groups are in $\Phi_{i}$ where 
$i \in \{ 5, 7,8, 9, 10, 12, 13,\ldots, 21, 23 \}$.
All are metabelian, as their 
commutator subgroups are abelian (see \cite{RJ}).
{
If $G\in \{\Phi_{5}, \Phi_{15}\}$, 
then $G$ is a VZ group; see Table \ref{t:VZ} for $\mu(G)$. 
}
If $G \in \{\Phi_{9}, \Phi_{16}\}$, then $\cd(G) = \{ 1, p \}$; 
if $G \in \Phi_{i}$
where {$i \in \{ 7, 8, 10, 12, 13, 14, 17, \ldots, 21, 23 \}$}, 
then $\cd(G) = \{ 1, p, p^2 \}$ (see \cite{RJ}). 
					
\begin{lemma}
\label{prop:directproduct}
Let $G$ be a group of order $p^6$ 
with center of order $p^2$
and let $G$ be a direct product of an 
abelian and a non-abelian subgroup. Then $c(G) = p^2 + p$ or $p^3 + p$.
\end{lemma}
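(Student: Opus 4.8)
The plan is to peel off the abelian direct factor and reduce to a group of order $p^5$. Write $G = H \times K$ with $H$ non-abelian and $K$ abelian and non-trivial. Since $K$ is abelian, $Z(G) = Z(H) \times K$; as $H$ is non-abelian, $|Z(H)| \geq p$, and then $|Z(G)| = p^2$ forces $|Z(H)| = |K| = p$. Thus $K \cong C_p$ and $H$ is a non-abelian group of order $p^5$ with $Z(H) \cong C_p$ cyclic. Because $G$ has odd order, Lemma~\ref{thm:pgroup} gives $c(G) = \mu(G)$ and $c(H) = \mu(H)$, and Lemma~\ref{thm:nilpotent} gives $\mu(G) = \mu(H) + \mu(K) = \mu(H) + p$. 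So it remains to show $\mu(H) \in \{p^2, p^3\}$.

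For the lower bound, Lemma~\ref{L2} shows that a minimal degree faithful permutation representation of the $p$-group $H$ with cyclic centre is transitive, so $\mu(H) = |H:M|$ for a subgroup $M$ with $\Core_H(M) = 1$; since a non-trivial normal subgroup of a $p$-group meets the centre, this means exactly $M \cap Z(H) = 1$, and in particular $\mu(H) = p^k$. A subgroup of index $p$ is normal and hence not core-free, so $|M| \leq p^3$ and $\mu(H) \geq p^2$. (Equivalently, a faithful transitive action of $H$ on $p$ points is impossible, a $p$-subgroup of $S_p$ having order $p$.)

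For the upper bound one must produce a core-free subgroup of order $p^2$. The cleanest route is to invoke the determination of $\mu$ for all groups of order $p^5$ in \cite{BD, S}: inspecting those tables, every non-abelian group of order $p^5$ with centre of order $p$ has $\mu$ equal to $p^2$ or $p^3$. A self-contained argument would instead start from a maximal abelian subgroup $A$ of $H$: then $C_H(A) = A$, so $Z(H) \subseteq A$, and a short computation (ruling out $|A| \in \{p, p^2\}$ via the $p$-part of $\mathrm{Aut}(A)$, and ruling out $|A| = p^4$ since an order-$p$ automorphism of $C_{p^4}$ has centralizer of order $p^3$, contradicting $|Z(H)| = p$) shows $|A| = p^3$. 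When $A$ is non-cyclic one finds inside $A$ a subgroup of order $p^2$ meeting $Z(H)$ trivially, except in the single configuration $A \cong C_{p^2} \times C_p$ with $Z(H)$ the socle of the $C_{p^2}$-summand; that case, together with $A \cong C_{p^3}$, requires a core-free subgroup of order $p^2$ lying outside $A$.

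That last point is where the work lies: ruling out $\mu(H) \in \{p^4, p^5\}$ — equivalently, exhibiting a core-free subgroup of order $p^2$ — is routine except for the handful of groups of order $p^5$ whose abelian subgroups are essentially cyclic, and it is exactly there that the hypothesis $|Z(H)| = p$ is decisive. Granting $\mu(H) \in \{p^2, p^3\}$, we conclude $c(G) = \mu(G) = \mu(H) + p = p^2 + p$ or $p^3 + p$.
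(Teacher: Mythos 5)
Your main argument is correct and takes essentially the same route as the paper: decompose $G = H \times K$ with $|H| = p^5$, $Z(H) \cong C_p$, $K \cong C_p$, use $c(G)=\mu(G)$ and $\mu(G)=\mu(H)+p$, and appeal to the known values of $\mu$ for non-abelian groups of order $p^5$ with centre of order $p$ (the paper reads these off from its own Table \ref{t:2} and \cite{SA} family by family, whereas you cite \cite{BD, S} wholesale; your added lower-bound argument via Lemma \ref{L2} is a nice touch but not needed once the tables are invoked). One caution about your optional ``self-contained'' sketch, which you rightly do not rely on: the claim that a maximal abelian subgroup $A$ of $H$ must have order $p^3$ is false --- your argument ruling out $|A|=p^4$ only treats $A$ cyclic, and for instance the maximal-class groups of order $p^5$ with $|Z|=p$ contain a maximal abelian subgroup isomorphic to $C_{p}^{4}$ (there one finds a core-free complement to $Z(H)$ of order $p^3$ and gets $\mu(H)=p^2$), so that alternative route would need to be reworked.
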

\begin{proof}
From Remark \ref{remark:notadirectproduct}, $Z(G) \cong C_{p} \times C_{p}$.
Suppose $G = H \times K$ where $H$ is non-abelian
and $K$ is abelian. Since $Z(H)$ is a proper subgroup of $Z(G)$,
we deduce that $|H| = p^5$, $Z(H) \cong C_{p}$, and $K\cong C_{p}$. 
From \cite{RJ}, observe that 
$H \in  \Phi_{i}$ for $i \in \{ 5,7,8,9,10 \}$. 
If $H\in \Phi_{5}$, then, from \cite[Table 2]{SA}, $c(H) = p^3$. 
If $H\in \Phi_{i}$, for $7\leq i \leq 10$, has order $p^5$, 
then it is tabulated in Table \ref{t:2} and 
from there we deduce that $c(H) = \mu(H) = p^2$, or $p^3$.
Hence $c(G) = p^2+p$, or $p^3+p$. 
\end{proof}
					
\begin{lemma}
\label{prop:cdG=1,p}
Let $G$ be a group of order $p^6$ 
such that $\cd(G) = \{ 1, p \}$, $|Z(G)| = p^2$, 
and $G$ is not a direct product of an abelian and a non-abelian subgroup. 
Then $c(G) = 2p^2$, $p^3$, $p^3 + p^2$, $2p^3$, or $p^4$.
\end{lemma}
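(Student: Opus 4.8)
The plan is to combine the cited classification of the degree one and two pieces with the Galois-theoretic formula for $c(G)$. Since $|G:Z(G)|=p^{4}\neq p^{3}$, Lemma \ref{thm:cdG1} forces $G$ to have an abelian normal subgroup $A$ of index $p$; hence $G$ is metabelian and, as $G/A$ is abelian, $A$ is a maximal abelian subgroup of $G$ containing $G'$. By \cite{RJ} a group of order $p^{6}$ with $\cd(G)=\{1,p\}$ and $|Z(G)|=p^{2}$ lies in $\Phi_{9}$ or $\Phi_{16}$. I would split according to $d(Z(G))\in\{1,2\}$, using throughout that $c(G)=\mu(G)$ by Lemma \ref{thm:pgroup}.

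First take $d(Z(G))=1$, so $Z(G)\cong C_{p^{2}}$ and $G$ has a faithful irreducible character. By Lemma \ref{L1}, $|X_{G}|=1$; write $X_{G}=\{\chi\}$, where $\chi$ is faithful and therefore nonlinear, so $\chi(1)=p$. Ford's theorem (Lemma \ref{thm:ford}) produces a linear character $\lambda$ of a subgroup inducing $\chi$ with $\mathbb{Q}(\chi)=\mathbb{Q}(\lambda)=\mathbb{Q}(\zeta_{p^{k}})$ for some $k\geq 1$, so $|\Gamma(\chi)|=\phi(p^{k})$. Substituting into Lemma \ref{L1}(ii) gives
\[ c(G)=\xi_{X_{G}}(1)+m(\xi_{X_{G}})=\phi(p^{k})p+\tfrac{1}{p-1}\phi(p^{k})p=p^{k}(p-1)+p^{k}=p^{k+1}. \]
On the other hand Lemma \ref{Z(G)isCyclic}, applied with $\max\cd(G)=p$ and minimal faithful nonlinear degree $p$, gives $p^{3}=p\cdot|Z(G)|\leq c(G)\leq p\cdot\exp(G)$; since $\exp(G)\leq p^{3}$ for the groups of $\Phi_{9}\cup\Phi_{16}$ (see \cite{Arxiv}), we get $p^{3}\leq c(G)\leq p^{4}$, hence $k\in\{2,3\}$ and $c(G)\in\{p^{3},p^{4}\}$.

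Now take $d(Z(G))=2$, so $Z(G)\cong C_{p}\times C_{p}$ and $|X_{G}|=2$ by Lemma \ref{L1}. Since $G$ is not a direct product of an abelian and a non-abelian subgroup, Lemma \ref{remark:newrange} gives $p^{2}\mid c(G)$. If $G$ has a normal abelian subgroup of index $p$ and exponent at most $p^{2}$, then Remark \ref{remark:CpXCp} applies verbatim and yields $c(G)\in\{2p^{2},\,p^{3}+p^{2},\,2p^{3}\}$; for every relevant $G$ in $\Phi_{9}\cup\Phi_{16}$ such a subgroup exists (for the exponent-$p^{3}$ groups of $\Phi_{16}$ the cyclic direction of order $p^{3}$ can be taken outside a maximal abelian subgroup, as seen from the presentations in \cite{Arxiv}). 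Together with the previous paragraph this gives $c(G)\in\{2p^{2},\,p^{3},\,p^{3}+p^{2},\,2p^{3},\,p^{4}\}$, as claimed.

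The hard part will be the residual subcase $d(Z(G))=2$ with $\exp(A)=p^{3}$, where Remark \ref{remark:CpXCp} does not apply to $A$ itself and one must either exhibit a different index-$p$ normal abelian subgroup of exponent $p^{2}$, or argue directly. A self-contained alternative, paralleling the proofs of Lemmas \ref{prop:phi3}--\ref{prop:phi6}, is to write each $\chi_{i}\in X_{G}$ as $\rho_{i}\ind_{K_{D_{i}}}^{G}$ via Lemma \ref{lemma:bakshi}, use $A\cong C_{p^{3}}\times C_{p^{2}}$ or $A\cong C_{p^{3}}\times C_{p}\times C_{p}$ together with the fact that $Z(G)$ contains every element of order $p$ in $A$ to rule out $A/\ker\rho_{i}\cong C_{p^{3}}$ for both $i$ simultaneously (exactly as in Claims 1 and 2 of the proof of Lemma \ref{prop:phi3}(A)), and then read the admissible values of $\sum_{i}d(\chi_{i})$ off Lemma \ref{L1}, matching them against explicit cores $\bigcap\Core_{G}(H_{i})=1$; this again returns $c(G)\in\{2p^{2},\,p^{3}+p^{2},\,2p^{3}\}$ and closes the argument.
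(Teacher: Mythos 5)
Your cyclic-centre case is correct and follows the paper's route: the paper simply cites Lemma \ref{Z(G)isCyclic} to get $c(G)=p^3$ or $p^4$, and your Ford--Galois computation showing $c(G)=p^{k+1}$ is exactly the (implicit) reason why $c(G)$ must be a single power of $p$ when $d(Z(G))=1$, so that part is a faithful expansion of the paper's one-line argument.

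The gap is in the $Z(G)\cong C_p\times C_p$ case. Your argument hinges on producing a normal abelian subgroup of index $p$ with exponent at most $p^2$ so that Remark \ref{remark:CpXCp} applies, but you never establish this: the parenthetical claim that for exponent-$p^3$ groups of $\Phi_{16}$ ``the cyclic direction of order $p^3$ can be taken outside a maximal abelian subgroup'' proves nothing (an index-$p$ abelian normal subgroup avoiding one element of order $p^3$ may still contain others), and your final paragraph concedes that the subcase $\exp(A)=p^3$ is ``the hard part'' and only sketches a plan for it. In fact that subcase is vacuous: for the groups of $\Phi_{9}\cup\Phi_{16}$ with non-cyclic centre one has $\exp(G)=p^r$ with $r\le 2$ --- this is precisely the fact the paper reads off from \cite{Arxiv} --- so every abelian subgroup automatically has exponent at most $p^2$ and Remark \ref{remark:CpXCp} applies with no residual case. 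The paper's own treatment is shorter still: it bypasses Remark \ref{remark:CpXCp} and combines $\exp(G)\le p^2$ with Lemma \ref{P2} (forcing $c(G)$ to be a sum of two $p$-powers, each at most $p^{3}$) and Lemma \ref{remark:newrange} (giving $p^2\mid c(G)$) to conclude $c(G)\in\{2p^2,\,p^3+p^2,\,2p^3\}$ directly. So your proof is repairable by inserting the exponent bound, but as written the decisive fact is missing and the fallback argument is only an outline.
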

\begin{proof}
Groups satisfying the above hypothesis 
belong to $\Phi_{9}$, or $\Phi_{16}$. So $\exp(G) = p^r$ where $r\leq 3$
(see \cite{Arxiv}). If $Z(G) \cong C_{p^2}$, then, from 
Lemma \ref{Z(G)isCyclic}, $c(G) = p^3$ or $p^4$. 
Let $Z(G) \cong C_{p} \times C_{p}$. 
Now $\exp(G) = p^r$ where $r\leq 2$ (see \cite{Arxiv}).
From Lemmas \ref{P2} and \ref{remark:newrange}, 
we conclude that $c(G) = 2p^2$, $p^3 + p^2$, or $2p^3$. 
\end{proof}
				
\begin{lemma}
\label{prop:phi7}
Let $G$ be a group of order $p^6$ in $\Phi_{7}$ such that 
$G$ is not a direct product of an abelian 
and a non-abelian subgroup. Then $c(G) = p^3+p^2$, or $p^4$.
\end{lemma}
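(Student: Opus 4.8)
The plan is to combine the structural information about $\Phi_7$ recorded in \cite{RJ, Arxiv} with the general bounds of Section \ref{section:BR}. For a group $G$ in $\Phi_7$ we have $\cd(G) = \{1, p, p^2\}$, $G' \cong C_p \times C_p$, $|Z(G)| = p^2$, and $G$ is metabelian. Since $G$ is not a direct product of an abelian and a non-abelian subgroup, Remark \ref{remark:notadirectproduct} forces $Z(G) \cong C_p \times C_p$; hence by Remark \ref{remark:linearcharacter} (or directly, since $d(Z(G)\cap G') = d(Z(G))$ for groups in $\Phi_i$ with $i \geq 11$ — one must instead check this holds in $\Phi_7$ from \cite{Arxiv}) we expect $X_G \cap \lin(G) = \emptyset$; otherwise we invoke Lemma \ref{lemmalinearcharacter} after verifying $d(Z(G)\cap G') = d(Z(G))$. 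By Lemma \ref{L1}, $|X_G| = 2$, so $X_G = \{\chi_1, \chi_2\}$ with each $\chi_i$ nonlinear.

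First I would establish the \textbf{lower bound} $c(G) \geq p^3 + p^2$. Since $X_G \cap \lin(G) = \emptyset$, by Lemma \ref{thm:cdG3} a faithful irreducible character (if one exists) has degree $p^2$; in general each $\chi_i$ has degree $p$ or $p^2$ with $\bigcap_i \ker(\chi_i) = 1$. Using Lemma \ref{thm:ford}, write $\chi_i = \lambda_i \ind_{H_i}^G$ with $\lambda_i$ linear on $H_i$ and $\mathbb{Q}(\chi_i) = \mathbb{Q}(\lambda_i)$. The key point is to show that the product of the two kernels is too small unless one of the characters has $d(\chi_i)$ large: arguing as in Claim 1 of Lemma \ref{prop:phi4}(A), one shows $|\ker(\lambda_i)|$ cannot be maximal (i.e. $= p^4$ when $\chi_i(1) = p$, resp. $= p^3$ when $\chi_i(1) = p^2$) for both $i$ simultaneously, since $Z(G) \cong C_p \times C_p$ has its two independent order-$p$ subgroups forced into $\bigcap_i (\ker(\chi_i)\cap Z(G))$. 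Combining this with the exponent data from \cite{Arxiv} (which group-theoretic datum dictates whether $\exp(G) = p^2$ or $p^3$, and which abelian normal subgroup of index $p$ or $p^2$ is available) yields $d(\chi_1) + d(\chi_2) \geq p^3(p-1) + p^2(p-1)$ in the worst case, and Lemma \ref{L1}(ii) converts this into $c(G) \geq p^3 + p^2$. One also needs the complementary lower bound $c(G) \geq p^4$ when $\exp(G) = p^3$ and the center is cyclic — but here $Z(G) \cong C_p \times C_p$ is noncyclic, so this branch does not occur in the non-direct-product case; the $p^4$ value of the conclusion must therefore come from groups whose relevant abelian subgroup structure forces $d(\chi_1) = p^3(p-1)$ and $d(\chi_2) = p^2(p-1)$ simultaneously.

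For the \textbf{upper bound}, I would exhibit explicit subgroups realizing each value: for each isomorphism type of $G$ in $\Phi_7$ not a direct product, find $H_1, H_2 \leq G$ with $\bigcap_i \Core_G(H_i) = 1$ and $|G:H_1| + |G:H_2| \in \{p^3 + p^2, p^4\}$; then $c(G) = \mu(G) \leq |G:H_1| + |G:H_2|$ by Lemma \ref{thm:mu(G)=c(G)}. The relevant groups and witness subgroups are those $G_{(7,s)}$ of Type 2 already appearing in Table \ref{t:3} (e.g. $G_{(7,1)}, G_{(7,2)}, G_{(7,3r)}, G_{(7,4)}$ with value $p^4$, and $G_{(7,7)}, G_{(7,9)}, G_{(7,11)}, \ldots, G_{(7,17r)}$ with value $p^3+p^2$), plus any remaining $\Phi_7$ groups with center $C_p \times C_p$ not a direct product. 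Matching the lower bound $p^3 + p^2$ (generic) against the sharper lower bound $p^4$ (for the groups where $\exp(G) = p^3$ forces a degree-$p^3(p-1)$ constituent) against these upper bounds gives $c(G) \in \{p^3 + p^2, p^4\}$ in all cases.

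The \textbf{main obstacle} I anticipate is the lower-bound argument: precisely controlling, for each of the several isomorphism types in $\Phi_7$ with noncyclic center, whether the better estimate $d(\chi_1) \geq p^3(p-1)$ is actually forced (giving $c(G) = p^4$) or whether $c(G) = p^3 + p^2$ is attainable. This requires a careful case analysis on the abelian normal subgroups of index $p$ or $p^2$ and on $\exp(G/G')$, exactly parallel to the Claim 1 / Claim 2 bookkeeping in the proofs of Lemmas \ref{prop:phi3} and \ref{prop:phi4}, but now with $\cd(G) = \{1, p, p^2\}$ so that induction can be from subgroups of index $p^2$ as well. One convenient simplification: since $G$ is metabelian, Lemma \ref{lemma:bakshi} lets us parametrize all irreducible characters as $\rho \ind_{K_D}^G$ for $D$ in the transversal $T_G$, and Lemma \ref{lemma:abelian} pins down which cyclic quotient of the maximal abelian subgroup $A \supseteq G'$ each $\rho$ realizes — this is the tool that turns the structural data into the field-degree computation $d(\chi_i) = |G:K_D|\cdot\phi(|A/\ker\rho|)$.
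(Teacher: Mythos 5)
Your overall architecture (read most groups off Table \ref{t:3}, then run a character-theoretic lower bound for what remains) is the same as the paper's, but two of your structural claims about $\Phi_7$ are false, and they sit exactly where the two values $p^3+p^2$ and $p^4$ get separated. First, indecomposability does \emph{not} force $Z(G)\cong C_p\times C_p$: Remark \ref{remark:notadirectproduct} runs in the opposite direction (a nontrivial direct product must have non-cyclic centre), and in fact $G_{(7,1)}$, $G_{(7,2)}$, $G_{(7,3r)}$, $G_{(7,4)}$ are indecomposable with $Z(G)\cong C_{p^2}$; these are precisely the groups realizing $c(G)=p^4$, via $|X_G|=1$ and a single faithful character of degree $p^2$ with $|\Gamma(\chi)|=\phi(p^2)$. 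By excluding cyclic centre you lose the entire $p^4$ branch, which is why your accounting ``$d(\chi_1)=p^3(p-1)$ and $d(\chi_2)=p^2(p-1)$ simultaneously'' would yield $p^4+p^3$ rather than $p^4$ (and your intermediate claim that $\sum_i d(\chi_i)\geq p^3(p-1)+p^2(p-1)$ converts to $c(G)\geq p^3+p^2$ is off by a factor of $p$). Second, $X_G\cap\lin(G)=\emptyset$ fails in $\Phi_7$: here $Z(G)\cap G'\cong C_p$, so $d(Z(G)\cap G')=1<2=d(Z(G))$ when the centre is non-cyclic, Lemma \ref{lemmalinearcharacter} does not apply, and Remark \ref{remark:NotInX_G} exhibits $G_{(7,14)}$ --- the one group the paper must treat by hand --- as a group whose $X_G$ \emph{does} contain a linear character (cf.\ Table \ref{t:6}).

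The missing idea for $G_{(7,14)}$ (every other $\Phi_7$ group is already settled by the core computations in Table \ref{t:3}) is the ``at most one degree-$p$ constituent'' step: since $\cd(G)=\{1,p,p^2\}$, Lemma \ref{thm:cdG1} gives no abelian subgroup of index $p$, so every maximal subgroup $M$ has $1\neq M'\cap Z(G)\subseteq G'\cap Z(G)=\langle\alpha_3^p\rangle$, whence every degree-$p$ irreducible character contains $\langle\alpha_3^p\rangle$ in its kernel and $X_G$ contains at most one of them. A check over the degree patterns $(1,p^2)$, $(p,p^2)$, $(p^2,p^2)$, using $G/\ker(\eta)\cong C_{p^2}$ for any admissible $\eta\in\lin(G)$, then gives $\sum_i d(\chi_i)\geq p(p-1)+p^2(p-1)$, hence $c(G)\geq p^3+p^2$, matched by an explicit pair of subgroups with trivially intersecting cores. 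Without that step your lower bound for the $p^3+p^2$ case does not go through.
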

\begin{proof}
Note that $\cd(G) = \{ 1,p,p^2 \}$, $G' \cong C_{p} \times C_{p}$, 
$Z(G) \cong C_{p} \times C_{p}$ or $C_{p^2}$, and  
$Z(G) \cap G' \cong C_{p}$
(see \cite{RJ, Arxiv}).
For every group in $\Phi_{7}$, except $G_{(7, 14)}$, 
$c(G) = \mu(G) = p^3+p^2$, or $p^4$ (see Table \ref{t:3}). Consider 
\begin{align*}
G := G_{(7, 14)} = \langle \alpha_{1}, \ldots, \alpha_{5}, \beta_{1}, \beta_{2} ~|~ &[\alpha_{4}, \alpha_{5}] = \alpha_{2}, [\alpha_{2}, \alpha_{5}] =  [\alpha_{3}, \alpha_{4}] = \alpha_{1} = \beta_{1}, \alpha_{3}^{p}= \beta_{1}, \alpha_{5}^{p} = \beta_{2},\\
& \alpha_{2}^{p} = \alpha_{4}^{p} = 
\beta_{1}^{p} = \beta_{2}^{p} = 1 \rangle.
\end{align*}
Here $Z(G) = \langle \alpha_{5}^{p}, \alpha_{3}^{p} \rangle 
\cong C_{p} \times C_{p}$, 
$G' = \langle \alpha_{2}, \alpha_{3}^{p} \rangle \cong C_p \times C_p$, 
and $G/G' = \langle \alpha_{5} G', \alpha_{4}G', 
\alpha_{3} G' \rangle \cong C_{p^2} \times C_{p} \times C_{p}$. 
Lemma \ref{L1} implies that $|X_{G}|=2$. 
Let $X = \{ \chi_{1}, \chi_{2} \} \subset \Irr(G)$ 
satisfy Equation \eqref{eq:X_G}. Let $\chi_{1}(1) = p$. 
Then $G$ has a subgroup $\SS$ of index $p$ such that 
$\chi_{1} = \lambda\ind_{\SS}^{G}$ for some $\lambda \in \lin(\SS)$, 
and $\SS' \lhd G'$. 
Thus either $\SS' = 1$ or $\SS' \cap Z(G) \neq 1$. 
From Lemma \ref{thm:cdG1}, $G$ has no abelian subgroup of index $p$, so 
$\SS'\neq 1$. Thus 
$\langle \alpha_{3}^p \rangle \subseteq \SS' \subseteq \ker(\lambda)$, 
so $\langle \alpha_{3}^p \rangle \subseteq \ker(\lambda\ind_{\SS}^{G})$. 
Hence $|X \cap \{ \chi\in \nl(G)~|~ \chi(1) = p \}|\leq 1$. 
Since $\alpha_{3}^{p} \in G'$, 
one of the following holds:
$\chi_{1}(1)=1$ and $\chi_{2}(1) = p^2$; 
or $\chi_{1}(1)=p$ and $\chi_{2}(1) = p^2$; 
or $\chi_{i}(1)=p^2$ for $i=1,2$. 
As $G$ is not a direct product of an abelian and a non-abelian subgroup, 
$G / \ker(\eta) \cong C_{p^2}$
for every $\eta \in \lin(G)\cap X$, so $d(\eta) = p(p-1)$. 
Moreover, $d(\chi) \geq \chi(1)(p-1)$ for $\chi \in \nl(G)$.
If $\chi_{1}(1)=1$ and $\chi_{2}(1) = p^2$, then 
$$\sum_{i=1}^{2}d(\chi_{i}) \geq \phi(p^2)+p^2\phi(p) 
= p(p-1) + p^2(p-1).$$ If $\chi_{1}(1)=p$ and $\chi_{2}(1) = p^2$, 
then 
$$\sum_{i=1}^{2}d(\chi_{i}) \geq p\phi(p)+p^2\phi(p) = p(p-1) + p^2(p-1).$$ 
If $\chi_{i}(1)=p^2$ for $i=1,2$, then 
$\sum_{i=1}^{2}d(\chi_{i}) \geq 2p^2\phi(p) = 2p^2(p-1)$.

Therefore, in all three cases, 
$\sum_{i=1}^{2}d(\chi_{i}) \geq p(p-1) + p^2(p-1)$. 
Lemma \ref{L1} implies that $c(G) \geq p^3 + p^2$.
On the other hand, 
$\Core_{G}(\langle \alpha_{2}, \alpha_{3}, \alpha_{4} \rangle) \cap 
\Core_{G} (\langle  \alpha_{2}, \alpha_{4}, \alpha_{5}^{p} \rangle) = 1$, 
so $c(G) = \mu(G) \leq p^3+p^2$. Therefore, $c(G) = p^3+p^2$. 
\end{proof}
					
\begin{lemma}
\label{prop:phi8}
Let $G$ be a group of order $p^6$ in $\Phi_{8}$
such that $G$ is not a direct product of an abelian and a 
non-abelian subgroup. Then $c(G) = p^3 + p^2$, $2p^3$, or $p^4$.
\end{lemma}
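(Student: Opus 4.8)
The plan is to follow the same template used for $\Phi_7$ in Lemma~\ref{prop:phi7}: first extract the structural data for groups in $\Phi_8$ from \cite{RJ, Arxiv} -- namely $\cd(G) = \{1,p,p^2\}$, $G' \cong C_{p^2} \times C_p$ or $C_p \times C_p \times C_p$ (I would check which), $Z(G) \cong C_{p^2}$ or $C_p \times C_p$, $d(Z(G)\cap G')$, and $\exp(G) = p^r$ with $r \le 3$. Since $G$ is assumed not to be a direct product of an abelian and a non-abelian subgroup, Remark~\ref{remark:notadirectproduct} gives no immediate constraint here (it runs the other way), so I would instead proceed group-by-group using Table~\ref{t:3}: for every group in $\Phi_8$ appearing there, we already have $c(G) = \mu(G) = p^3+p^2$, $2p^3$, or $p^4$, which exhausts the claimed list except possibly for finitely many outstanding groups not in the table.

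Next I would isolate those outstanding groups -- by inspection of \cite{Arxiv}, the groups in $\Phi_8$ that are neither direct products nor already handled in Table~\ref{t:3} -- and treat each by the $c(G)$-machinery. For each such $G$, Lemma~\ref{L1} fixes $|X_G| = d(Z(G))$ (either $1$ or $2$). If $Z(G)$ is cyclic of order $p^2$, then Lemma~\ref{Z(G)isCyclic} with $\max\cd(G) = p^2$ and $\exp(G) \le p^3$ already pins $c(G)$ into $\{p^4, p^5, p^3 \cdot p, \dots\}$ intersected with the structural bounds of Lemma~\ref{P2}; combined with an explicit set $\mathcal{H}_G$ of subgroups with trivially intersecting cores, this yields the exact value. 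If $Z(G) \cong C_p \times C_p$, I would mimic the $G_{(7,14)}$ argument: write $X_G = \{\chi_1,\chi_2\}$, use Lemma~\ref{thm:ford} to realize any degree-$p$ constituent as $\lambda\ind_{S}^G$ with $S' \ne 1$ (Lemma~\ref{thm:cdG1} forbids an abelian subgroup of index $p$), deduce that at most one member of $X_G$ can be a degree-$p$ character whose kernel contains the relevant piece of $G'$, then case-split on $(\chi_1(1),\chi_2(1)) \in \{(1,p^2),(p,p^2),(p^2,p^2),(1,p),(p,p)\}$ -- using that $G$ not being a direct product forces $G/\ker(\eta) \cong C_{p^2}$ for linear $\eta \in X_G$, hence $d(\eta) = p(p-1)$ -- to get a lower bound $\sum d(\chi_i) \ge$ something, and match it with an upper bound from an explicit $\mathcal{H}_G$ via $c(G) = \mu(G)$ (Lemma~\ref{thm:pgroup}).

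The main obstacle I anticipate is the bookkeeping for the $Z(G) \cong C_p \times C_p$ outstanding groups: getting the degree inequalities tight enough to distinguish, say, $p^3+p^2$ from $2p^3$, and then producing an explicit family of subgroups $\{H_1, H_2\}$ (or $\{H_1\}$) whose cores intersect trivially and whose index sum realizes the lower bound. The subtlety is that $G'$ in $\Phi_8$ may be $C_{p^2} \times C_p$ rather than elementary abelian, so the analogue of "$\langle \alpha_3^p \rangle \subseteq S' \subseteq \ker(\lambda)$" needs care about which cyclic subgroup of $G'$ is forced into the kernel, and about whether a faithful irreducible of degree $p^2$ exists (Lemma~\ref{thm:cdG3} would apply once we know $\cd(G) = \{1,p,p^2\}$). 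A secondary nuisance is that $\Phi_8$ contains parameterized families $G_{(8,2r)}$, so the argument must be uniform in the parameter $r$; I expect the presentations are uniform enough that a single choice of $\mathcal{H}_G$ works for all $r$, exactly as in Table~\ref{t:3}.

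Concretely, the write-up would read: state the structural facts with a citation to \cite{RJ, Arxiv}; say "for every group in $\Phi_8$ except [list], $c(G) = \mu(G) \in \{p^3+p^2, 2p^3, p^4\}$ by Table~\ref{t:3}"; then handle each exceptional group with a short paragraph -- present its presentation, identify $Z(G)$ and $G'$, invoke Lemma~\ref{L1} and either Lemma~\ref{Z(G)isCyclic} or the $\Phi_7$-style degree argument for the lower bound, exhibit $\mathcal{H}_G$ for the upper bound, and conclude equality via Lemma~\ref{thm:pgroup}. I would close with "\qedhere" inside the last itemized/displayed block as the surrounding lemmas do.
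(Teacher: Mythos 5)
Your overall strategy matches the paper's template (structural facts, Table~\ref{t:3} for most groups, a $G_{(7,14)}$-style degree argument for the rest), but two concrete points would derail the write-up. First, the structural fact you leave open is the one the whole argument hinges on: for $G \in \Phi_{8}$ the derived subgroup is \emph{cyclic}, $G' \cong C_{p^2}$, not $C_{p^2}\times C_p$ or $C_p^3$. Cyclicity is exactly what makes the Case $Z(G)\cong C_p\times C_p$ lower bound work: since $G$ has no abelian maximal subgroup (Lemma~\ref{thm:cdG1}) and every maximal subgroup $\SS$ has $1\neq \SS'\leq G'$ with $G'$ cyclic, every such $\SS'$ contains the unique order-$p$ subgroup $Z(G)\cap G'$, so every degree-$p$ character (and every linear character) kills $Z(G)\cap G'$, forcing at least one member of $X_G$ to have degree $p^2$. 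With a non-cyclic $G'$ this inclusion fails and your ``analogue of $\langle\alpha_3^p\rangle\subseteq \SS'$'' would not be forced.

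Second, and more seriously, your machinery cannot finish the group $G_{(8,7)}$, for which the correct value is $2p^3$. The $G_{(7,14)}$-style argument only yields $c(G)\geq p^3+p^2$, and an explicit $\mathcal{H}_G$ only yields an upper bound; nothing in your plan separates $p^3+p^2$ from $2p^3$ here, which is precisely the obstacle you name without resolving. The paper closes this case by observing that $G_{(8,7)}$ is metacyclic with non-cyclic center and invoking \cite[Lemma 3.10]{BehraveshMetacyclic} to get $c(G)=2p^3$ outright; some such additional input (or a sharper lower-bound argument showing both characters in $X_G$ must have degree $p^2$) is indispensable. A smaller divergence: for the cyclic-center groups $G_{(8,1)}$, $G_{(8,2r)}$, $G_{(8,3)}$ the paper does not use Lemma~\ref{Z(G)isCyclic} (whose upper bound $p^2\cdot\exp(G)$ can be as large as $p^5$) but Lemma~\ref{cor:normallymonomial} applied to the normal abelian subgroup $A=\langle\alpha_4,\alpha_2\rangle$ of order $p^4$ and exponent $p^2=|Z(G)|$, which gives $c(G)=p^4$ in one step; your route would instead need an explicit core-free subgroup of index $p^4$ for each of these groups, including the two not listed in Table~\ref{t:3}.
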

\begin{proof}
Note that $\cd(G) = \{ 1, p, p^2 \}$, $G' \cong C_{p^2}$, 
$Z(G) 
\cong C_{p} \times C_{p}$ or $C_{p^2}$,
and $Z(G) \cap G' \cong C_{p}$ 
(see \cite{RJ, Arxiv}).

\medskip
\noindent 
{\bf Case I}: $ Z(G) \cong C_{p}\times C_{p}$. 

\noindent 
Lemma \ref{L1} implies that $|X_{G}|=2$. 
From Lemma \ref{thm:cdG1}, $G$ has no abelian subgroup of index $p$.
But $Z(G)\cap G'\cong C_{p}$ where $G'$ is cyclic, 
so $\SS'\cap Z(G) \neq 1$ for every subgroup $\SS$ of index $p$ in $G$. 
As in the case of $G_{(7,14)}$ in the proof 
of Lemma \ref{prop:phi7}, we deduce  that $c(G) \geq p^3+p^2$. 
The groups in $\Phi_{8}$ which 
satisfy the above hypothesis are $G_{(8,6)}$ and $G_{(8, 7)}$ 
(see \cite{Arxiv}). Table \ref{t:3} shows
$c(G_{(8,6)}) = p^3+p^2$.
Consider 
\begin{align*}
G := G_{(8, 7)} = \langle \alpha_{1}, \ldots, \alpha_{5}, \beta_{1}, \beta_{2} ~|~ & [\alpha_{4}, \alpha_{5}] = \alpha_{2}, [\alpha_{2}, \alpha_{5}] =  [\alpha_{3}, \alpha_{4}] = \alpha_{1} = \beta_{1}, \alpha_{4}^{p} = \alpha_{2},\\
 &\alpha_{5}^{p} = \alpha_{3}^{-1}, \alpha_{5}^{p^2} = \beta_{2},   \beta_{1}^{p} = \beta_{2}^{p} = 1 \rangle
\end{align*}
Here $Z(G) = \langle \alpha_{5}^{p^2}, \alpha_{2}^{p} \rangle 
\cong C_{p} \times C_{p}$, 
$G' = \langle \alpha_{2} \rangle \cong C_{p^2}$, and 
$Z(G) \cap G' = \langle \alpha_{2}^{p} \rangle \cong C_{p}$. 
Observe that $G$ is metacyclic with non-cyclic center: 
from \cite[Lemma 3.10]{BehraveshMetacyclic}, we conclude that $c(G) = 2p^3$. 

\medskip
\noindent 
{\bf Case II}: $Z(G) \cong C_{p^2}$.

\noindent 
Lemma \ref{L1} implies that $|X_{G}|=1$. There are $p+2$ groups 
in $\Phi_{8}$ which satisfy the above hypothesis: 
$G_{(8,1)}$, $G_{(8,2r)}$ 
($r=0, \ldots, p-2$), $G_{(8,3)}$, and $G_{(8,4)}$.
Table \ref{t:3} shows $c(G_{(8, 4)}) = p^4$.
Let {$G \in \{ G_{(8,1)}, G_{(8,2r)}, G_{(8,3)} \}$} and take  
$A = \langle \alpha_{4}, \alpha_{2} \rangle$. 
From \cite{Arxiv}, observe 
that $A$ is an abelian subgroup of order $p^4$,
$\exp(A) = p^2= |Z(G)|$,  and 
$G' \subseteq A$. Since $\cd(G) = \{ 1, p, p^2 \}$, $A$ is a 
normal abelian subgroup of maximal order among all abelian subgroups 
of $G$ (from Lemma \ref{thm:cdG1}). 
Lemma \ref{cor:normallymonomial} implies that $c(G) = p^4$. 
\end{proof}
						
\begin{lemma}
\label{prop:phi10}
Let $G$ be a group of order $p^6$ in $\Phi_{10}$
such that $G$ is not a direct product of an abelian and a non-abelian subgroup. 
Then $c(G) = p^3+p^2$, or $p^4$.
\end{lemma}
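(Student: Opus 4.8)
The plan is to mirror closely the structure of the proofs of Lemmas \ref{prop:phi7} and \ref{prop:phi8}, since $\Phi_{10}$ behaves very much like $\Phi_7$ and $\Phi_8$: we have $\cd(G) = \{1, p, p^2\}$, $G' \cong C_{p} \times C_{p}$, and $Z(G) \cap G' \cong C_p$, with $Z(G) \cong C_p \times C_p$ or $C_{p^2}$ (these facts come from \cite{RJ, Arxiv}). First I would split into the two cases according to $|Z(G)|$. When $Z(G) \cong C_{p^2}$, Lemma \ref{L1} gives $|X_G| = 1$, and $G$ is normally monomial (or metabelian) with cyclic center, so I would identify a normal abelian subgroup $A$ of maximal order among all abelian subgroups of $G$ with $\exp(A) = p^2 = |Z(G)|$ (the presentations in \cite{Arxiv} exhibit such an $A$, typically generated by two of the $\alpha_i$); then Lemma \ref{cor:normallymonomial} yields $c(G) = \max\cd(G) \cdot |Z(G)| = p^2 \cdot p^2 = p^4$.

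For the case $Z(G) \cong C_p \times C_p$, Lemma \ref{L1} gives $|X_G| = 2$. By Lemma \ref{thm:cdG1}, $G$ has no abelian subgroup of index $p$ (since $|G:Z(G)| = p^4$ and no abelian normal subgroup of index $p$ exists here). Because $Z(G) \cap G' \cong C_p$ sits inside $G' \cong C_p \times C_p$, I would argue, exactly as in the $G_{(7,14)}$ portion of the proof of Lemma \ref{prop:phi7}, that at most one character in $X_G$ can have degree $p$: any subgroup $S$ of index $p$ has $S' \neq 1$ with $S' \cap Z(G) \neq 1$, forcing a fixed central subgroup of order $p$ into the kernel of any monomially-induced degree-$p$ character. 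This leaves three possibilities for $(\chi_1(1), \chi_2(1))$: $(1, p^2)$, $(p, p^2)$, or $(p^2, p^2)$. Since $G$ is not a direct product of an abelian and a non-abelian subgroup, any linear character in $X_G$ has $G/\ker \cong C_{p^2}$, so $d(\eta) = p(p-1)$ for such $\eta$; and $d(\chi) \geq \chi(1)(p-1)$ for nonlinear $\chi$. In all three cases this gives $\sum_i d(\chi_i) \geq p(p-1) + p^2(p-1)$, so Lemma \ref{L1} gives $c(G) \geq p^3 + p^2$.

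For the matching upper bound, I would handle the groups in $\Phi_{10}$ with $Z(G) \cong C_p \times C_p$ and not a direct product: consulting \cite{Arxiv}, these are covered in Table \ref{t:3} (the entries $G_{(10,8)}$ and $G_{(10,10r)}$), where $\mu(G) = p^3 + p^2$ is established by exhibiting an explicit pair of subgroups whose cores intersect trivially; together with $c(G) = \mu(G)$ from Lemma \ref{thm:pgroup} this finishes the $Z(G) \cong C_p \times C_p$ case with $c(G) = p^3 + p^2$. Combining the two cases, $c(G) = p^3 + p^2$ or $p^4$. The main obstacle I anticipate is the bookkeeping: verifying from the presentations in \cite{Arxiv} that \emph{every} non-direct-product group in $\Phi_{10}$ with center $C_p \times C_p$ actually appears in Table \ref{t:3} with value $p^3 + p^2$ (so that no value between the lower bound $p^3 + p^2$ and $p^4$ is missed), and dually that every group with $Z(G) \cong C_{p^2}$ genuinely satisfies the hypotheses of Lemma \ref{cor:normallymonomial} — i.e. really does contain a normal abelian subgroup of maximal order with exponent exactly $p^2$. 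Everything else is a routine adaptation of the $\Phi_7$ and $\Phi_8$ arguments.
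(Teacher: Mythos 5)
Your overall strategy matches the paper's proof almost exactly: split on $|Z(G)|$, use Lemma \ref{cor:normallymonomial} with $A = Z(G)\cdot G'$ when $Z(G)\cong C_{p^2}$ to get $c(G)=p^4$, and when $Z(G)\cong C_p\times C_p$ run the $G_{(7,14)}$-style lower bound $c(G)\geq p^3+p^2$ (every maximal subgroup $\SS$ has $Z(G)\cap G'\subseteq \SS'$, so at most one member of $X_G$ has degree $p$). One small factual slip: for $\Phi_{10}$ one has $G'\cong C_p^3$, not $C_p\times C_p$ (see \cite{RJ, Arxiv}); this does not break your argument, since what you actually use is $Z(G)\cap G'\cong C_p$ together with the fact that this \emph{fixed} subgroup lies in $\SS'$ for every maximal $\SS$, but the latter now has to be read off from the presentations rather than from $S'\lhd G'$ with $G'$ of rank $2$.

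The genuine gap is in your upper bound for the $Z(G)\cong C_p\times C_p$ case. The non-direct-product groups here are $G_{(10,6)}$, $G_{(10,8)}$, $G_{(10,9r)}$ and $G_{(10,10r)}$, and only $G_{(10,8)}$ and $G_{(10,10r)}$ appear in Table \ref{t:3}; your claim that all of them are "covered in Table \ref{t:3}" fails for $G_{(10,6)}$ and $G_{(10,9r)}$. For those two families you still need to exhibit an explicit faithful permutation representation of degree $p^3+p^2$, which is what the paper does: using $\langle[\alpha_4,\alpha_3]\rangle=\langle[\alpha_5,\alpha_2]\rangle=\langle\alpha_1\rangle$ one checks $\Core_G(\langle\alpha_5^p,\alpha_3,\alpha_2\rangle)=\langle\alpha_5^p\rangle$ and hence $\Core_G(\langle\alpha_5^p,\alpha_3,\alpha_2\rangle)\cap\Core_G(\langle\alpha_4,\alpha_3,\alpha_2,\alpha_1\rangle)=1$, giving $\mu(G)\leq p^3+p^2$. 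You flagged this verification as "bookkeeping," but it is a missing construction rather than a check, so the proof as written does not close for these two families; everything else is sound and coincides with the paper's argument.
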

\begin{proof}
Note that $\cd(G) = \{ 1, p, p^2 \}$, 
$\exp(G/Z(G))=p$, $G' \cong C_{p}^3$, 
$Z(G) \cap G' \cong C_{p}$, and 
$Z(G) \cong C_{p} \times C_{p}$ or $C_{p^2}$ (see \cite{RJ, Arxiv}).

\medskip
\noindent 
{\bf Case I}: $Z(G) \cong C_{p} \times C_{p}$.

\noindent 
Lemma \ref{L1} implies that $|X_{G}|=2$. The groups 
in $\Phi_{10}$ which satisfy the above hypothesis are
{$G_{(10, 6)}$, $G_{(10, 8)}$, $G_{(10, 9r)}$ 
(where $r=1,\omega,\omega^2$ when $p\equiv 1\bmod 3$, and $r=1$ when 
$p\equiv 2\bmod 3$) and $G_{(10, 10r)}$} 
(where $r=1,\omega,\omega^2, \omega^3$ when $p\equiv 1\bmod 4$, and 
$r=1, \omega$ when $p\equiv 3\bmod 4$). 
From Table \ref{t:3}, we conclude that  
$c(G_{(10, 8)}) = c(G_{(10, 10r)}) = p^3+p^2$.

If {$G \in \{ G_{(10,6)}, G_{(10, 9r)} \}$,} then $G$ has the following form: 
\begin{align*}
\langle \alpha_{1}, \ldots,  \alpha_{5}, \beta_{1}, \beta_{2} ~|~ & [\alpha_{4}, \alpha_{5}] = \alpha_{3}, [\alpha_{3}, \alpha_{5}] = \alpha_{2}, [\alpha_{2}, \alpha_{5}] = [\alpha_{3}, \alpha_{4}] = \alpha_{1} = \beta_{1}, {\alpha_{5}^{p}= \beta_{2}},\\
& \alpha_{2}^{p} = \alpha_{3}^{p} = \beta_{1}^{p} = \beta_{2}^{p} =  1, \text{ some other relations between the generators} \rangle.
\end{align*}
Moreover, {either $\alpha_{4}^{p} = 1$, or 
$\langle \alpha_{4}^{p} \rangle = \langle \alpha_{1}\rangle$}. 
Here $Z(G) = \langle \alpha_{5}^{p}, \alpha_{1} \rangle 
\cong C_{p} \times C_{p}$, and 
$G' = \langle \alpha_{3}, \alpha_{2}, \alpha_{1} \rangle \cong C_p^3$. 
From Lemma \ref{thm:cdG1}, $G$ has no abelian subgroup of index $p$.
Hence $1 \neq Z(G) \cap G' \subseteq \SS'$ for all maximal
subgroups $\SS$.
As in the case of $G_{(7, 14)}$ in the proof 
of Lemma \ref{prop:phi7}, we deduce that $c(G) \geq p^3+p^2$. 
On the other hand, 
$\langle[\alpha_{4}, \alpha_{3}]\rangle = 
\langle[\alpha_{5}, \alpha_{2}]\rangle = 
\langle \alpha_{1} \rangle$, 
so $\Core_{G}\langle \alpha_{5}^{p}, \alpha_{3}, \alpha_{2} \rangle 
= \langle \alpha_{5}^{p} \rangle$.
Thus $$\Core_{G}(\langle \alpha_{5}^{p}, \alpha_{3}, \alpha_{2} \rangle) 
\cap \Core_{G}(\langle \alpha_{4}, \alpha_{3}, \alpha_{2}, 
\alpha_{1} \rangle) = 1,$$ where 
$|\langle \alpha_{4}, \alpha_{3}, \alpha_{2}, \alpha_{1}\rangle| = p^4$. 
Hence $c(G) = \mu(G) \leq p^3+p^2$. 
We conclude that $c(G) =p^3+p^2$.

\medskip 
\noindent 
{\bf Case II}: $Z(G) \cong C_{p^2}$. 

\noindent Lemma \ref{L1} implies that $|X_{G}|=1$. 
Let $\chi \in \Irr(G)$ be such that $\ker(\chi) = 1$. 
Lemma \ref{thm:cdG3} implies that $\chi(1) = p^2$. 
Let $A = Z(G) \cdot G' \cong C_{p^2}\times C_{p} \times C_{p}$. 
From Lemma \ref{thm:cdG1}, we deduce that
$A$ is a normal abelian subgroup of maximal 
order among all abelian subgroups of $G$. 
But $\exp(A) = p^2 = |Z(G)|$, so, from 
Lemma \ref{cor:normallymonomial}, we conclude that $c(G) = p^4$. 
\end{proof}
						
\begin{remark}
{\rm If $G \in  \Phi_{i}$ where 
$i \in \{ 12, 13, 17, 18, 19, 20, 21, 23 \}$, 
then $G'$ is elementary abelian and $Z(G) \leq G'$ (see \cite{RJ, Arxiv}), 
so $Z(G) \cong C_{p}\times C_{p}$. 
If $G\in \Phi_{14}$, then $G' =Z(G) \cong C_{p^2}$. 
From Remark \ref{remark:notadirectproduct}, 
none of these groups is a direct product of an 
abelian and a non-abelian subgroup.
}
\end{remark}

\begin{lemma}
\label{prop:phi12}
If $G$ is a group of order $p^6$ in $\Phi_{12}$, 
then $c(G) = 2p^2$, $p^3+p^2$, or $2p^3$.
\end{lemma}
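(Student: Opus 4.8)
The plan is to follow the template already established for the families $\Phi_3$, $\Phi_4$, $\Phi_6$ and $\Phi_{11}$: produce a lower bound $c(G) \geq 3p^2$ by a character-degree argument, and then match it (or refine it upward to $p^3+p^2$ or $2p^3$) by exhibiting explicit subgroup sets in $\mathcal{H}_G$ realizing permutation degrees in $\{3p^2, p^3+p^2, 2p^3\}$. First I would record the invariants of $\Phi_{12}$ from \cite{RJ, Arxiv}: here $\cd(G) = \{1,p\}$ (so every non-linear irreducible has degree $p$), $G'$ is elementary abelian with $Z(G) \leq G'$, hence $Z(G) \cong C_p \times C_p$, $G/Z(G) \cong C_p^3$, and $\exp(G) = p^r$ with $r \leq 2$ (to be confirmed from \cite{Arxiv}). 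By Remark \ref{remark:notadirectproduct}, since $Z(G) \leq G'$ gives $d(Z(G) \cap G') = d(Z(G))$, $G$ is not a direct product of an abelian and a non-abelian subgroup, and by Lemma \ref{lemmalinearcharacter} (or Remark \ref{remark:linearcharacter}) we get $X_G \cap \lin(G) = \emptyset$. Lemma \ref{L1} gives $|X_G| = 3$.

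Next, the lower bound. Since $X_G \cap \lin(G) = \emptyset$, every $\chi \in X_G$ has $\chi(1) = p$, so $d(\chi) \geq p\,\phi(p) = p(p-1)$; summing over the three characters and invoking Lemma \ref{L1}(ii) yields $m(\xi_{X_G}) \geq \frac{1}{p-1}\cdot 3p(p-1) = 3p$, hence $c(G) = \xi_{X_G}(1) + m(\xi_{X_G}) \geq 3p^2$. If $\exp(G) = p$, Lemma \ref{P2} forces $c(G) = 3p^2$ and we are done for those groups. For groups of exponent $p^2$ the upper bound comes from explicit computation: for each $G \in \Phi_{12}$ (referencing the identifiers $G_{(12,s)}$ in \cite{Arxiv}) I would either cite Table \ref{t:3} directly — which already records $\mu(G) = c(G) \in \{2p^2, p^3+p^2\}$ for $G_{(12,1)},\dots,G_{(12,5)}, G_{(12,7)}, G_{(12,8)}, G_{(12,11)}$ and the remaining entries — or, for any group not covered there, exhibit a triple of subgroups $\{H_1,H_2,H_3\}$ with $\bigcap \Core_G(H_i) = 1$ of total index $p^3+p^2$ or $2p^3$, using Lemma \ref{thm:mu(G)=c(G)} to pass between $\mu$ and $c$. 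A uniform way to get the $\leq 2p^3$ style bounds, if Table \ref{t:3} is not exhaustive, is to imitate Lemma \ref{lemma:phi11}: take two maximal subgroups $P, Q$ with $P' \neq Q'$, and build induced characters from linear characters of abelian subgroups via Lemma \ref{lemma:bakshi}, controlling the fields $\mathbb{Q}(\chi)$ and hence $d(\chi)$.

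The main obstacle I anticipate is the case analysis for the groups of exponent $p^2$ that are \emph{not} already in Table \ref{t:3} — determining precisely which of $2p^2$, $p^3+p^2$, $2p^3$ occurs requires knowing the abelian-subgroup structure of each $\Phi_{12}$ group (in particular whether it has an abelian normal subgroup of index $p^2$, so that Lemma \ref{prop:CpXCp} or Remark \ref{remark:CpXCp} applies and pins $c(G) \in \{2p^2, p^3+p^2, 2p^3\}$ immediately). Once that structural input is in hand, Lemma \ref{remark:newrange} (giving $p^2 \mid c(G)$) together with the lower bound $c(G) \geq 3p^2$ and the realized upper bounds leaves exactly the three stated values. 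I would therefore structure the proof as: (1) record invariants and deduce $|X_G| = 3$, $X_G \cap \lin(G) = \emptyset$, $c(G) \geq 3p^2$; (2) dispose of $\exp(G) = p$ via Lemma \ref{P2}; (3) for $\exp(G) = p^2$, identify an abelian normal subgroup of index $p^2$ in each group and apply Remark \ref{remark:CpXCp}/Lemma \ref{prop:CpXCp} to get $c(G) \in \{2p^2, p^3+p^2, 2p^3\}$, cross-checking the exact value against Table \ref{t:3}; (4) conclude. The one subtlety to watch is that the lower bound $3p^2$ rules out nothing on its own among $\{2p^2, p^3+p^2, 2p^3\}$ except the spurious value $2p^2$ would need $p^2 \geq 3p^2$ — wait, $2p^2 < 3p^2$, so in fact the value $2p^2$ is only attained when $G$ \emph{is} (contrary to our standing assumption?) — here I would double-check: $2p^2$ appears in the statement, so for $\Phi_{12}$ some groups must be direct products after all, or the lower bound argument must be localized to the non-product groups; I would reconcile this by noting that $G_{(12,1)}, G_{(12,2)}, G_{(12,8)}$ are direct products $\phi_2(\cdot)\times\phi_2(\cdot)$ (per the Table \ref{t:3} annotations) and handle them separately via Lemma \ref{thm:nilpotent}, applying the $3p^2$ bound only to the genuinely indecomposable groups.
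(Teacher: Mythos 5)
Your overall architecture (invoke Lemma \ref{prop:CpXCp} for groups possessing an elementary abelian normal subgroup of index $p^2$, and handle the stragglers by explicit core computations plus Lemmas \ref{P2} and \ref{remark:newrange}) matches the paper's, but the proposal rests on incorrect invariants for $\Phi_{12}$, and this breaks the lower-bound half of the argument. For $\Phi_{12}$ one has $\cd(G)=\{1,p,p^2\}$ (not $\{1,p\}$) and $Z(G)=G'\cong C_p\times C_p$, so $d(Z(G))=2$ and Lemma \ref{L1} gives $|X_G|=2$, not $3$. Consequently your bound $c(G)\geq 3p^2$ is false; the correct generic bound from $X_G\cap\lin(G)=\emptyset$ and two characters is only $c(G)\geq 2p^2$, which is exactly why $2p^2$ legitimately appears among the stated values. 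The contradiction you notice at the end is a symptom of this error, and your proposed resolution is also wrong: $G_{(12,1)}$, $G_{(12,2)}$, $G_{(12,8)}$ are direct products of two \emph{non-abelian} groups $\phi_2(\cdot)\times\phi_2(\cdot)$, so Remark \ref{remark:notadirectproduct} still applies to them (since $Z(G)=G'$), and they cannot be ``handled separately via Lemma \ref{thm:nilpotent}'' as products of an abelian with a non-abelian factor.

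A second gap: for the exceptional groups $G_{(12,13)},\ldots,G_{(12,16)}$, which have no elementary abelian normal subgroup of order $p^4$, you need a lower bound $c(G)\geq p^3+p^2$ to pin the value down to $\{p^3+p^2,\,2p^3\}$ after exhibiting cores of total index $2p^3$. The paper obtains this by a delicate analysis: if both $\chi_1,\chi_2\in X_G$ had degree $p$, they would be induced from linear characters of maximal subgroups $H_i$ with $H_i'\cap Z(G)=1$ intersecting trivially; classifying which maximal subgroups have $Z(G)\not\subseteq M'$ forces one $H_i$ to satisfy $H_i/H_i'\cong C_{p^2}\times C_p\times C_p$ with the relevant quotient cyclic of order $p^2$, whence $d(\chi_i)\geq p^2(p-1)$. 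Your sketch does not supply any mechanism for this refinement; the imitation of Lemma \ref{lemma:phi11} you suggest only produces upper bounds. Without correcting the invariants and adding this lower-bound argument, the proof does not go through.
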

\begin{proof}
From \cite{Arxiv}, $G$ has the following form:
\begin{align*}
\langle \alpha_{1}, \ldots, \alpha_{6} ~|~ &[\alpha_{3}, \alpha_{4}] = \alpha_{1}, 
{[\alpha_{5}, \alpha_{6}] = \alpha_{2}}, \alpha_{1}^{p}= \alpha_{2}^{p} =  1, \text{ some other relations between the generators} \rangle.
\end{align*}
Note that $\cd(G) = \{ 1, p, p^2 \}$, $\exp(G) = p^r$ where $r\leq 2$, and 
$Z(G) = G' = \langle \alpha_{1}, \alpha_{2} \rangle \cong 
C_{p} \times C_{p}$ (see \cite{RJ, Arxiv}).
Let $G\in 
\Phi_{12} 
\setminus \{  G_{(12, 7)}, 
 G_{(12, 13)}, G_{(12, 14)}, 
G_{(12, 15)},  G_{(12, 16)} \}$. 
From \cite{Arxiv}, observe that $G$ has 
an elementary abelian normal subgroup of order $p^4$. 
Lemma \ref{prop:CpXCp} implies that $c(G) = 2p^2$, $p^3+p^2$, or $2p^3$. 
From Table \ref{t:3}, $c(G_{(12,7)}) = \mu(G_{(12,7)}) = p^3+p^2$.
Consider
\begin{align*}
G := G_{(12, 14)} = \langle \alpha_{1}, \ldots, \alpha_{6} ~|~ & [\alpha_{3}, \alpha_{4}] = \alpha_{1}, [\alpha_{5}, \alpha_{6}] = \alpha_{2}, \alpha_{4}^{p} = \alpha_{1}, \alpha_{3}^{p} = \alpha_{5}^{p}= \alpha_{2},   \alpha_{1}^{p} =   \alpha_{2}^{p} =   \alpha_{6}^{p} =  1 \rangle.
\end{align*}
Observe $Z(G) = G' = \langle \alpha_{3}^{p}, \alpha_{4}^{p} \rangle 
\cong C_{p}\times C_{p}$. 
Lemma \ref{L1} implies that $|X_{G}|=2$.  
Let $X = \{ \chi_{1}, \chi_{2} \} \subset \Irr(G)$ 
satisfy Equation \eqref{eq:X_G}.
We claim that $\sum_{i=1}^{2}d(\chi_{i}) \geq p^2(p-1) + p(p-1)$.
From Remark \ref{remark:linearcharacter}, $X\cap \lin(G) = \emptyset$. 
Suppose $\chi_{i}(1) = p^2$ for at least one $i$ where $1\leq i \leq 2$. 
But $Z(G) \cong C_{p} \times C_{p}$, so 
$\sum_{i=1}^{2}d(\chi_{i}) \geq p^2\phi(p) + p\phi(p)$.
Suppose $\chi_{i}(1) = p$, for $i=1,2$. Then 
$G$ has a subgroup $H_{i}$ of index $p$ such that 
$\chi_{i} = \lambda_{i}\ind_{H_{i}}^{G}$ for some 
$\lambda_{i} \in \lin(H_{i})$. Since 
$H_{i}' \cap Z(G) \subseteq \ker(\chi_{i})$, 
\begin{equation} \label{eq:phi12}
\bigcap_{i=1}^{2}(H_{i}'\cap Z(G)) = 1.
\end{equation}
Consider the following sets of maximal subgroups of $G$:
\begin{eqnarray*}
V & := & \{ 
 \langle 
\alpha_{5}\alpha_{6}^{j}, 
\alpha_{4}, 
\alpha_{3}, \alpha_{2}, \alpha_{1} \rangle 
\; |\;  0\leq j\leq p-1 \} \cup 
\{\langle 
\alpha_{6}, 
\alpha_{4}, 
\alpha_{3}, \alpha_{2}, \alpha_{1} \rangle \}
\\
W & := & 
\{ 
 \langle 
\alpha_{6}, \alpha_{5}, 
\alpha_{4} \alpha_{3}^{l}, 
\alpha_{2}, \alpha_{1} \rangle \; | \; 0\leq l\leq p-1 \} \cup \{
\langle \alpha_{6}, \alpha_{5}, \alpha_{3}, \alpha_{2}, \alpha_{1} \rangle
 \}. 
\end{eqnarray*}
If $M \in V $ then $M' = \langle \alpha_{1} \rangle$; 
if $M \in W $ then $M' = \langle \alpha_{2} \rangle$. 
In neither case is $Z(G)$ contained in $M'$.
If $M$ is any other maximal subgroup of $G$, then $M'$ contains $Z(G)$.

By Equation \eqref{eq:phi12}, we can choose $H_{1}$ from $V$
and $H_{2}$ from $W$ without loss of generality. 
Since $H_{1}' = \langle \alpha_{1} \rangle$, $\alpha_{4}^{p} = \alpha_{1}$ 
and $o(\alpha_{3}H_{1}') = p^2$, we deduce that 
$H_1 / H_{1}' \cong C_{p^2} \times C_{p} \times C_{p}$.
Then $\frac{H_{1}/H_{1}'}{\ker(\lambda_{1})/H_{1}'} \cong C_{p}$, 
or $C_{p^2}$. 
If the former holds, 
then $\alpha_{3}^{p} \in \ker(\lambda_{1})$. 
But $\alpha_{3}^{p} = \alpha_{2}$, so 
$Z(G) \subset \ker(\lambda_{1})$. 
Now $X$ does not 
satisfy Equation \eqref{eq:X_G}, since $\lambda_{1}\ind_{H_{1}}^{G} \in X$,
a contradiction. Therefore, 
$\frac{H_{1}/H_{1}'}{\ker(\lambda_{1})/H_{1}'} \cong C_{p^2}$, 
so $d(\lambda_{1}) = p^2(p-1)$.
Lemma \ref{lemma:abelian} implies that 
$H_{1}/H_{1}' = \langle \alpha_{3} H_{1}' \rangle \times 
\frac{\ker(\lambda_{1})}{H_{1}'}$. 
Since $G/H_{1} = \langle \alpha_{5} H_{1} \rangle\cong C_{p}$, 
$$\lambda_{1}\ind_{H_{1}}^{G}(\alpha_{3}) 
= \sum_{k=0}^{p-1} \lambda_{1}(\alpha_{5}^{-k} \alpha_{3} \alpha_{5}^{k}) 
= \sum_{k=0}^{p-1} \lambda_{1}(\alpha_{3}) = p \omega,$$ 
where $\omega$ is a primitive $p^2$-th root of unity. 
Hence $\mathbb{Q}(\lambda_{1}\ind_{H_{1}}^{G}) = \mathbb{Q}(\omega)$, 
so $d(\lambda_{1}\ind_{H_{1}}^{G}) = p^2(p-1)$.
Since $d(\lambda_{2}\ind_{H_{2}}^{G}) \geq p(p-1)$, 
$$\sum_{i=1}^{2} d(\lambda_{i}\ind_{H_{i}}^{G}) \geq p^2(p-1)+p(p-1).$$ 
This establishes the claim. 
Lemma \ref{L1} now implies that $c(G) \geq p^3+p^2$. 
Similar arguments show that 
$c(G) \geq p^3+p^2$ for the remaining cases.

From \cite{Arxiv}, observe that:
\begin{eqnarray*}
\Core_{G_{(12,i)}} (\langle \alpha_{4}, \alpha_{6} \rangle)
& \cap & \Core_{G_{(12,i)}} (\langle \alpha_{3}, \alpha_{5} \rangle)  \ =  
1 \text{\ for\ } 13 \leq i \leq 15; \\
 \Core_{G_{(12,16)}} (\langle \alpha_{3}, \alpha_{5} \rangle)
& \cap & \Core_{G_{(12,16)}} (\langle \alpha_{4}, \alpha_{6} \rangle)  =  1. 
\end{eqnarray*}
Therefore, $c(G_{(12,i)}) = \mu(G_{(12,i)}) \leq 2p^3$ for $13\leq i \leq 16$.

From Lemmas \ref{P2} and \ref{remark:newrange}, 
{we conclude that $c(G) = 2p^2$, $p^3+p^2$, or $2p^3$ for every $G\in \Phi_{12}$.}
\end{proof}

\begin{lemma}
\label{prop:phi13}
If $G$ is a group of order $p^6$ in $\Phi_{13}$,
then $c(G) = p^3+p^2$, or $2p^3$.
\end{lemma}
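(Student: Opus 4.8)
The plan is to follow the template used for $\Phi_{7}$, $\Phi_{10}$ and $\Phi_{12}$ (Lemmas \ref{prop:phi7}, \ref{prop:phi10}, \ref{prop:phi12}). From \cite{RJ, Arxiv} I would first record that for $G \in \Phi_{13}$ we have $\cd(G) = \{1, p, p^2\}$, $\exp(G) = p^r$ with $r \leq 2$, $G'$ elementary abelian of order $p^3$, $Z(G) \cong C_p \times C_p$ and $Z(G) \leq G'$. Since $d(Z(G) \cap G') = d(Z(G)) = 2$, Remark \ref{remark:notadirectproduct} shows $G$ is not a direct product of an abelian and a non-abelian subgroup, and Remark \ref{remark:linearcharacter} gives $X_G \cap \lin(G) = \emptyset$; by Lemma \ref{L1}, $|X_G| = 2$. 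Write $X_G = \{\chi_1, \chi_2\} \subseteq \nl(G)$, so $\chi_i(1) \in \{p, p^2\}$, and by Lemma \ref{L1}(ii),
\[
c(G) = \xi_{X_G}(1) + m(\xi_{X_G}) = \tfrac{p}{p-1}\bigl(d(\chi_1) + d(\chi_2)\bigr).
\]
By Ford's Lemma \ref{thm:ford} each $\chi_i = \lambda_i \ind_{S_i}^{G}$ with $\mathbb{Q}(\lambda_i) = \mathbb{Q}(\chi_i)$, where $\lambda_i \neq 1$ and $S_i/\ker(\lambda_i)$ is cyclic of order $p$ or $p^2$ (as $\exp(G) \leq p^2$); hence $\mathbb{Q}(\chi_i) = \mathbb{Q}(\zeta_{p^k})$ with $k \in \{1,2\}$, so $d(\chi_i) = p^{a_i}(p-1)$ for some $a_i \in \{1,2,3\}$ and $c(G) = p^{a_1+1} + p^{a_2+1}$. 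Thus $c(G) \in \{2p^2,\ p^3+p^2,\ 2p^3,\ p^4+p^2,\ p^4+p^3,\ 2p^4\}$, and it suffices to prove $p^3+p^2 \leq c(G) \leq 2p^3$.

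For the lower bound I would show $d(\chi_1) + d(\chi_2) \geq p^2(p-1) + p(p-1) = p(p^2-1)$. By Lemma \ref{thm:cdG1}, since $|G:Z(G)| = p^4$, $G$ has no abelian subgroup of index $p$, so $S' \neq 1$ for every maximal subgroup $S$; as $S' \leq G' \cong C_p^3$ and $Z(G) \cong C_p^2 \leq G'$, a dimension count in $\mathbb{F}_p^3$ forces $S' \cap Z(G) \neq 1$. If some $\chi_i(1) = p^2$ the inequality is immediate. If $\chi_1(1) = \chi_2(1) = p$, then $Z(G) \cap \ker(\chi_i) = Z(G) \cap \ker(\lambda_i) \supseteq S_i' \cap Z(G)$ is a nontrivial subgroup of $Z(G)$, proper in $Z(G)$ (otherwise $Z(G) \subseteq \ker(\chi_i)$, impossible with $|X_G| = 2$), and the two intersect trivially because $\ker(\chi_1) \cap \ker(\chi_2) = 1$. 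Using the presentations in \cite{Arxiv} I would then argue — exactly as for $G_{(7,14)}$ in Lemma \ref{prop:phi7} and $G_{(12,14)}$ in Lemma \ref{prop:phi12} — that at least one $S_i$, say $S_1$, has $S_1/S_1'$ containing an element of order $p^2$ whose image survives in $S_1/\ker(\lambda_1)$, so that $S_1/\ker(\lambda_1) \cong C_{p^2}$, whence $\mathbb{Q}(\chi_1) = \mathbb{Q}(\zeta_{p^2})$ and $d(\chi_1) = p^2(p-1)$; when $\exp(G) = p$ I would instead verify from the presentation that a fixed order-$p$ central subgroup lies in $S'$ for every maximal $S$, so that $\chi_1$ and $\chi_2$ cannot both have degree $p$. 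In either case $d(\chi_1) + d(\chi_2) \geq p(p^2-1)$, so $c(G) \geq p^3 + p^2$, which discards the value $2p^2$.

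For the upper bound $c(G) \leq 2p^3$ — which eliminates $p^4+p^2$, $p^4+p^3$ and $2p^4$ — I would, for each group in $\Phi_{13}$ not already handled in Table \ref{t:3} (e.g.\ $G_{(13,4)}$, where $\mu(G) = c(G) = p^3+p^2$), exhibit subgroups $H_1, H_2 \leq G$ with $\Core_G(H_1) \cap \Core_G(H_2) = 1$ and $|G:H_1| + |G:H_2| \leq 2p^3$, so that $c(G) = \mu(G) \leq 2p^3$ by Lemma \ref{thm:mu(G)=c(G)}. Combining this with the lower bound and the list of admissible values forces $c(G) = p^3+p^2$ or $2p^3$.

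The hard part is the lower-bound step: the value $2p^2$ cannot be ruled out from the isoclinism invariants of $\Phi_{13}$ alone (it genuinely occurs for $\Phi_{12}$), so excluding it requires a case-by-case inspection of the explicit presentations of the groups of $\Phi_{13}$, tracking for each potential pair of degree-$p$ constituents $\lambda_i \ind_{S_i}^G$ either a $C_{p^2}$-quotient of some $S_i/S_i'$ compatible with $\lambda_i$ (forcing $\mathbb{Q}(\chi_i) = \mathbb{Q}(\zeta_{p^2})$) or a common order-$p$ central subgroup forced into every $S_i'$ — the same bookkeeping carried out for $G_{(7,14)}$ and $G_{(12,14)}$.
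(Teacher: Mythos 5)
Your overall strategy coincides with the paper's: force $X_G \subset \nl(G)$, bound $\sum_i d(\chi_i)$ below by $p(p-1)+p^2(p-1)$, and obtain the upper bound $2p^3$ from pairs of subgroups with trivially intersecting cores (the paper gets this for most groups of $\Phi_{13}$ from an elementary abelian normal subgroup of order $p^4$ via Lemma \ref{prop:CpXCp}, exhibiting explicit pairs only for $G_{(13,9)}$, $G_{(13,10)}$, $G_{(13,11)}$). However, two points need repair. First, your structural data for $\Phi_{13}$ are wrong: here $G' = Z(G) \cong C_p \times C_p$, not $C_p^3$ (you appear to have imported the invariants of $\Phi_{17}$, $\Phi_{18}$ or $\Phi_{20}$). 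Consequently your ``dimension count in $\mathbb{F}_p^3$'' rests on a false premise and is also invalid on its own terms: a line and a plane in $\mathbb{F}_p^3$ can meet trivially, so $\dim S' \geq 1$ and $\dim Z(G) = 2$ do not force $S' \cap Z(G) \neq 1$. (The conclusion happens to be true for the trivial reason that $S' \leq G' = Z(G)$.)

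Second --- and this is the genuine gap --- the decisive step, excluding the possibility that both $\chi_1$ and $\chi_2$ contribute only $p(p-1)$, is never actually carried out; you offer a menu of tactics and defer to ``the same bookkeeping'' as for $G_{(7,14)}$ and $G_{(12,14)}$. The paper closes this with a single uniform observation valid for every group in $\Phi_{13}$, with no case analysis: the defining relations $[\alpha_3,\alpha_6]=[\alpha_4,\alpha_5]=\alpha_2$ and $[\alpha_3,\alpha_5]=\alpha_1$ imply that $\alpha_2 \in S'$ for \emph{every} maximal subgroup $S$ (the $\alpha_2$-component of the commutator pairing on $G/\Phi(G)$ is a nondegenerate alternating form on a $4$-dimensional space, so it cannot vanish on a hyperplane, and one checks the commutator subgroup of any hyperplane must contain $\alpha_2$). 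Hence two degree-$p$ constituents induced from maximal subgroups would both contain $\alpha_2$ in their kernels, contradicting faithfulness, so at most one $\chi_i$ has degree $p$ and the lower bound follows. Your worry that excluding $2p^2$ ``requires a case-by-case inspection of the explicit presentations'' is therefore unfounded: the relevant relations are shared by the whole isoclinism family, and your fallback option (``a fixed order-$p$ central subgroup lies in $S'$ for every maximal $S$'') is exactly the argument that works --- it needs to be stated and proved rather than listed as one of several possibilities.
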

\begin{proof}
From \cite{Arxiv}, $G$ has the following form:
\begin{align*}
\langle \alpha_{1}, \ldots, \alpha_{6}  ~|~ [\alpha_{3}, & \, \alpha_{6}] = [\alpha_{4}, \alpha_{5}] = \alpha_{2},  [\alpha_{3}, \alpha_{5}] = \alpha_{1}, \alpha_{1}^{p} = \alpha_{2}^{p} = 1, \\
&  \text{ some other relations between the generators} \rangle.
\end{align*}
Note that $\cd(G) = \{ 1, p, p^2 \}$, $\exp(G) = p^r$
where $r\leq 2$, $\exp(G/Z(G)) = p$, and 
$Z(G) = G' = \langle \beta_{1}, \beta_{2} \rangle \cong C_{p} \times C_{p}$
 (see \cite{RJ, Arxiv}). 
Lemma \ref{L1} implies that $|X_{G}|=2$. 
From Remark \ref{remark:linearcharacter}, $X_{G}\cap \lin(G) = \emptyset$.  
Let $X = \{ \chi_{1}, \chi_{2} \} \subset \nl(G)$ 
satisfy Equation \eqref{eq:X_G}.
Suppose $\chi_{i}(1) = p$ for $i=1,2$. Then 
$G$ has a subgroup $H_{i}$ of index $p$ such that
$\chi_{i}= \lambda_{i}\ind_{H_{i}}^{G}$ for some 
$\lambda_{i}\in \lin(H_{i})$. From the structure of $G$, 
observe that if $\SS$ has index $p$ in $G$, 
then $\alpha_{2} \in \SS'$. Consequently, 
$\bigcap_{i=1}^{2} \ker(\lambda_{i}\ind_{H_{i}}^{G})\neq 1$, 
a contradiction. Thus $\chi_{i}(1) = p$ for at most 
one $i$ where $1\leq i \leq 2$. 
But $Z(G) \cong C_{p} \times C_{p}$, 
so $d(\chi_{i}) \geq \chi_{i}(1)\phi(p)$ for $i=1,2$. Therefore:
\begin{enumerate}
\item if $\chi_{1}(1) = p$ and $\chi_{2}(1) = p^2$, then 
$\sum_{i=1}^{2}d(\chi_{i}) \geq p(p-1) + p^2(p-1)$;
\item if $\chi_{i}(1) = p^2$ for $i= 1, 2$, 
then $\sum_{i=1}^{2}d(\chi_{i}) \geq 2p^2(p-1)$.
\end{enumerate}
Hence $\sum_{i=1}^{2}d(\chi_{i}) \geq p(p-1) + p^2(p-1)$.
Lemma \ref{L1} implies that $c(G)\geq p^3+p^2$.

Let $G\in 
\Phi_{13}
\setminus \{ G_{(13, 9)}, 
G_{(13, 10)}, G_{(13, 11)} \}$. 
From \cite{Arxiv}, observe that 
$G$ has an elementary abelian normal subgroup of order $p^4$. 
From Lemma \ref{prop:CpXCp}, {$c(G) = p^3+p^2$, or $2p^3$.} 

{From \cite{Arxiv}, observe that: 
\begin{eqnarray*}
	\Core_{G_{(13,9)}} (\langle \alpha_{4}, \alpha_{5}, \alpha_{6} \rangle)
	& \cap & \Core_{G_{(13,9)}} (\langle \alpha_{3}, \alpha_{4} \rangle)  \ =  
	1; \\
	\Core_{G_{(13,10)}} (\langle \alpha_{4}, \alpha_{5} \rangle)
	& \cap & \Core_{G_{(13,10)}} (\langle \alpha_{4}, \alpha_{6} \rangle)  =  1; \\
		\Core_{G_{(13,11)}} (\langle \alpha_{3}, \alpha_{4} \rangle)
	& \cap & \Core_{G_{(13,10)}} (\langle \alpha_{5}, \alpha_{6} \rangle)  =  1. 
\end{eqnarray*}
Therefore, $c(G_{(13,i)}) = \mu(G_{(13,i)}) \leq 2p^3$ for $9\leq i \leq 11$.}

From Lemmas \ref{P2} and \ref{remark:newrange}, 
$c(G) = p^3+p^2$, or $2p^3$ for every $G\in \Phi_{13}$. 
\end{proof}

\begin{lemma}
\label{prop:phi14}
If $G$ is a group of order $p^6$ in $\Phi_{14}$, then 
$c(G) = p^4$.
\end{lemma}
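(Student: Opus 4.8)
The plan is to pin down the three structural invariants of $G\in\Phi_{14}$ recorded in \cite{RJ,Arxiv} — namely $Z(G)=G'\cong C_{p^2}$, $\cd(G)=\{1,p^2\}$, and $\exp(G)=p^2$ — and then to read off $c(G)$ by squeezing it between the two bounds furnished by Lemma~\ref{Z(G)isCyclic}. Since $Z(G)$ is cyclic, $d(Z(G))=1$, so $|X_G|=1$ by Lemma~\ref{L1} and $G$ has a faithful irreducible character; in particular (as in the Case~II arguments for $\Phi_{8}$ and $\Phi_{10}$) the quantity $p^{\alpha}=\min\{\chi(1)\mid\chi\in\nl(G),\ \ker\chi=1\}$ appearing in Lemma~\ref{Z(G)isCyclic} is well defined. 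Because $\cd(G)\setminus\{1\}=\{p^2\}$, every nonlinear — and in particular every faithful — irreducible character of $G$ has degree $p^2$, so $p^{\alpha}=p^2$ and also $\max\cd(G)=p^2$. Lemma~\ref{Z(G)isCyclic} then gives
\[
p^4=p^2\cdot|Z(G)|\le c(G)\le p^2\cdot\exp(G)=p^4,
\]
so $c(G)=p^4$.

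There is essentially no obstacle here: the argument is uniform over all of $\Phi_{14}$, and the only point requiring care is the observation that $p^{\alpha}$ is defined at all, which is automatic from cyclicity of $Z(G)$. I would include two cross-checks. First, the upper bound $c(G)\le p^4$ can be obtained directly: by Remark~\ref{remark:notadirectproduct} no group in $\Phi_{14}$ is a direct product of an abelian and a non-abelian subgroup, so these groups are recorded in Table~\ref{t:3} with $\mathcal H_G=\{\langle\alpha_6\rangle\}$ of index $p^4$ and $\Core_G(\langle\alpha_6\rangle)=1$, whence $\mu(G)\le p^4$ and $c(G)=\mu(G)$ by Lemma~\ref{thm:mu(G)=c(G)}. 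Second, the result also drops out of Lemma~\ref{cor:normallymonomial}: fix a maximal abelian subgroup $A$ of $G$ containing $G'$; then $A\trianglelefteq G$ since $[G,A]\le G'\le A$; by Lemma~\ref{lemma:bakshi} the faithful irreducible characters of the metabelian group $G$ have degree $|G:A|$, which equals $p^2$ because $\cd(G)=\{1,p^2\}$, so $|A|=p^4$; and any abelian $B\le G$ satisfies $p^2=\chi(1)\le|G:B|$ for the faithful $\chi$, so $|B|\le p^4$ and $A$ has maximal order among all abelian subgroups of $G$. Finally $Z(G)\le A$ (as $A$ is maximal abelian) and $p^2=\exp(Z(G))\le\exp(A)\le\exp(G)=p^2$, so $\exp(A)=|Z(G)|$, and Lemma~\ref{cor:normallymonomial} yields $c(G)=\max\cd(G)\cdot|Z(G)|=p^4$.
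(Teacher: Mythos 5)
Your argument rests on two structural claims that are both false for $\Phi_{14}$. First, $\cd(G)=\{1,p^2\}$ cannot hold: since $|G:Z(G)|=p^4$, that equality would make $G$ a VZ group, whereas the VZ groups of order $p^6$ lie only in $\Phi_2,\Phi_5,\Phi_{15}$; the paper records $\cd(G)=\{1,p,p^2\}$ for $\Phi_{14}$. This error is repairable — every \emph{faithful} irreducible character still has degree $p^2$ by Lemma~\ref{thm:cdG3} — so your lower bound $c(G)\ge p^{\alpha}\cdot|Z(G)|=p^4$ survives. Second, and fatally for your main squeeze, $\exp(G)=p^2$ is not true throughout $\Phi_{14}$. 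The group $\langle a,b \mid [a,b]=a^{p^2},\ a^{p^4}=b^{p^2}=1\rangle$ has order $p^6$, class $2$, $G'=Z(G)=\langle a^{p^2}\rangle\cong C_{p^2}$, hence lies in $\Phi_{14}$ (the only family of order $p^6$ with $G'=Z(G)\cong C_{p^2}$), yet has exponent $p^4$. For such a group Lemma~\ref{Z(G)isCyclic} gives only $c(G)\le p^2\cdot p^4=p^6$, and the pincer collapses. The same defect infects your second cross-check: the step $\exp(A)\le\exp(G)=p^2$ fails, and indeed in the example above $\langle a\rangle\cong C_{p^4}$ is a normal abelian subgroup of maximal order containing $G'$ whose exponent is not $|Z(G)|$; to invoke Lemma~\ref{cor:normallymonomial} you must exhibit a specific maximal-order normal abelian subgroup of exponent $p^2$ (here $C_G(b)=\langle a^{p^2},b\rangle\cong C_{p^2}\times C_{p^2}$ would do), which requires examining the presentations case by case.

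What does survive is your first cross-check, which is essentially a complete proof on its own once the lower bound is justified via Lemma~\ref{thm:cdG3}: the core-free subgroup $\langle\alpha_6\rangle$ of index $p^4$ recorded in Table~\ref{t:3} gives $c(G)=\mu(G)\le p^4$, and combined with $c(G)\ge p^4$ this yields the claim. Note that the paper itself takes a different and shorter route, simply quoting Behravesh's theorem on class-$2$ $p$-groups with cyclic center \cite[Theorem 4.12]{HB}, which delivers $c(G)=p^4$ directly from the two facts that $G$ has class $2$ and $Z(G)\cong C_{p^2}$.
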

\begin{proof}
Note that $G$ is nilpotent of class 2 and $Z(G) \cong C_{p^2}$ (see \cite{RJ, Arxiv}). 
From \cite[Theorem 4.12]{HB}, $c(G) = p^4$. 
\end{proof}
						
\begin{lemma}
\label{prop:phi17}
If $G$ is a group of order $p^6$ in $\Phi_{17}$,
then $c(G) = 2p^2$, $p^3+p^2$, or $2p^3$.
\end{lemma}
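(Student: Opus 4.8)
\textbf{Proof proposal for Lemma \ref{prop:phi17}.}

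The plan is to follow the same template used for $\Phi_{12}$ and $\Phi_{13}$: first establish the lower bound $c(G) \geq p^3 + p^2$ for the groups that are not direct products of an abelian and a non-abelian subgroup, then exhibit explicit collections of subgroups with trivial core-intersection to get the matching upper bounds, and finally invoke Lemmas \ref{P2} and \ref{remark:newrange} to conclude that $c(G)$ takes exactly one of the three listed values. From \cite{RJ, Arxiv}, a group $G \in \Phi_{17}$ has $\cd(G) = \{1, p, p^2\}$, $\exp(G) = p^r$ with $r \leq 2$, $G'$ elementary abelian, and $Z(G) \leq G'$, so $Z(G) \cong C_p \times C_p$; by Remark \ref{remark:notadirectproduct}, $G$ is not a direct product of an abelian and a non-abelian subgroup. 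Lemma \ref{L1} gives $|X_G| = 2$, and Remark \ref{remark:linearcharacter} gives $X_G \cap \lin(G) = \emptyset$, so every $\chi \in X_G$ has $\chi(1) \in \{p, p^2\}$ and $d(\chi) \geq \chi(1)(p-1)$.

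For the lower bound, I would take $X = \{\chi_1, \chi_2\} \subset \nl(G)$ satisfying Equation \eqref{eq:X_G} and argue as in the $\Phi_{13}$ case. The key structural input, to be read off from \cite{Arxiv}, is that every maximal subgroup $S$ of $G$ has $S' \cap Z(G) \neq 1$ — more precisely, one identifies a distinguished element of $G'$ (analogous to $\alpha_2$ in the $\Phi_{13}$ presentation) lying in $S'$ for every maximal $S$. Then if both $\chi_i$ had degree $p$, writing $\chi_i = \lambda_i\ind_{H_i}^G$ with $H_i$ of index $p$ (Lemma \ref{thm:ford}), we would get $\bigcap_i \ker(\lambda_i\ind_{H_i}^G) \neq 1$, contradicting Equation \eqref{eq:X_G}; so at least one $\chi_i$ has degree $p^2$. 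Combining with $Z(G) \cong C_p \times C_p$ gives $\sum_i d(\chi_i) \geq p(p-1) + p^2(p-1)$, whence $c(G) \geq p^3 + p^2$ by Lemma \ref{L1}.

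For the upper bounds I would split $\Phi_{17}$ according to whether $G$ already appears in Table \ref{t:3} (which records $\mu(G) = c(G) \in \{2p^2, p^3+p^2\}$ for most of the $\Phi_{17}$ groups listed there) or not. For the remaining groups — those with $\exp(G) = p$, which by Lemma \ref{P2} have $c(G) = 3p^2$... wait, $3p^2 \notin \{2p^2, p^3+p^2, 2p^3\}$, so in fact the exponent-$p$ subtlety must be handled differently; more carefully, I expect that all groups in $\Phi_{17}$ have exponent $p^2$ (to be confirmed from \cite{Arxiv}), so Lemma \ref{P2} with $b = 2$, $e = 2$, $m = 2$ forces $c(G) \in \{2p^2, p^3+p^2, 2p^3, p^4, p^2+p^3, p^3+p^4, \ldots\}$ of the form $a_1 p + a_2 p^2$ with $a_1 + a_2 = 2$ on the lower side; intersecting with Remark \ref{remark:newrange} ($p^2 \mid c(G)$) and the lower bound $c(G) \geq p^3+p^2$ where applicable narrows this. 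For groups not reducible to Table \ref{t:3}, I would exhibit, for each relevant presentation from \cite{Arxiv}, either a pair of subgroups of index $p^2$ and $p^2$ (giving core-intersection $1$ and hence $c(G) = \mu(G) \leq 2p^2$) or a pair of index $p^2$ and $p$ (giving $c(G) \leq p^3 + p^2$), mirroring the $\mathcal{H}_G$ entries already in Table \ref{t:3} for $\Phi_{17}$. The main obstacle is the sheer number of groups in $\Phi_{17}$ (the table lists many parameterized families $G_{(17,\cdot)}$), so the real work is a careful case analysis verifying, for each family not handled by earlier results, that the claimed generating set of subgroups has trivial core-intersection and that the lower-bound argument of the previous paragraph applies; I would organize this by the isomorphism type of a chosen maximal abelian normal subgroup $A \supseteq G'$ and the structure of $G/Z(G)$, exactly as in the proofs of Lemmas \ref{prop:phi12} and \ref{prop:phi13}.
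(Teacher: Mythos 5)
Your overall template --- split off the groups already handled by Table \ref{t:3}, use a maximal abelian normal subgroup together with Lemma \ref{prop:CpXCp}, and finish with Lemmas \ref{P2} and \ref{remark:newrange} --- is the right shape, and it is essentially how the paper proceeds: every $G\in\Phi_{17}$ outside the short list $\{G_{(17,23)},G_{(17,24r)},G_{(17,25r)},G_{(17,26r)},G_{(17,27r)},G_{(17,28r)}\}$ has an elementary abelian normal subgroup of order $p^4$, so Lemma \ref{prop:CpXCp} immediately gives $c(G)\in\{2p^2,\,p^3+p^2,\,2p^3\}$ with no lower-bound work at all; the exceptional families are read off Table \ref{t:3}, except for $G_{(17,26r)}$ where a single core computation gives $\mu(G)\leq 2p^3$ and Lemmas \ref{P2} and \ref{remark:newrange} finish.

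The genuine gap is your lower-bound step. You propose to prove $c(G)\geq p^3+p^2$ for all $G\in\Phi_{17}$ by locating a distinguished central element lying in $\SS'$ for every maximal subgroup $\SS$, as in the $\Phi_{13}$ argument. No such element exists in $\Phi_{17}$: from the defining relations $[\alpha_5,\alpha_6]=\alpha_3$, $[\alpha_4,\alpha_5]=\alpha_2$, $[\alpha_3,\alpha_6]=\alpha_1$ with $Z(G)=\langle\alpha_1,\alpha_2\rangle$, the maximal subgroup generated by $\alpha_1,\ldots,\alpha_5$ has derived subgroup meeting $Z(G)$ only in $\langle\alpha_2\rangle$, while the one generated by $\alpha_1,\alpha_2,\alpha_3,\alpha_5,\alpha_6$ meets $Z(G)$ only in $\langle\alpha_1\rangle$, so two degree-$p$ characters induced from these subgroups can have trivial joint kernel. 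Indeed the bound $c(G)\geq p^3+p^2$ is simply false here: Table \ref{t:3} records $\mu(G)=c(G)=2p^2$ for $G_{(17,1)}$, $G_{(17,2)}$, $G_{(17,6)}$, $G_{(17,12)}$, $G_{(17,16)}$ and $G_{(17,21)}$, and the statement of the lemma itself allows the value $2p^2$. (Your aside that exponent-$p$ groups would force $c(G)=3p^2$ also misfires: that value arises in $\Phi_{11}$ because $d(Z(G))=3$ there, whereas every $G\in\Phi_{17}$ has $d(Z(G))=2$, so Lemma \ref{L1} forces $c(G)$ to be a sum of two, not three, terms.) Dropping the lower-bound paragraph entirely and letting Lemma \ref{prop:CpXCp} carry the generic case is what makes the proof work.
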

\begin{proof}
From \cite{Arxiv}, $G$ has the following form:
\begin{align*}
\langle \alpha_{1}, \ldots, \alpha_{6} ~|~ [\alpha_{5}, \alpha_{6}] &= \alpha_{3}, [\alpha_{4}, \alpha_{5}] = \alpha_{2},  [\alpha_{3}, \alpha_{6}] =  \alpha_{1}, \alpha_{1}^{p} = \alpha_{2}^{p} = \alpha_{3}^{p}  =  1 \rangle\\
&	 \text{ some other relations between the generators} \rangle.
\end{align*}
Note that 
$\cd(G) = \{ 1, p, p^2 \}$, $\exp(G) = p^r$ where $r\leq 2$, 
$Z(G) = \langle \alpha_{1}, \alpha_{2} \rangle \cong C_{p} \times C_{p}$, 
and $G' = \langle \alpha_{3}, \alpha_{1}, \alpha_{2} \rangle \cong C_{p}^{3}$
(see \cite{RJ, Arxiv}). 


Let $G\in \Phi_{17} 
 \setminus \{ G_{(17,23)}, G_{(17,24r)},
 G_{(17,25r)}, G_{(17,26r)}, G_{(17,27r)}, G_{(17,28r)} \}$.
From \cite{Arxiv}, observe that $G$ has 
an elementary abelian normal subgroup of order $p^4$. 
From Lemma \ref{prop:CpXCp}, $c(G) = 2p^2$, $p^3+p^2$, or $2p^3$. 
\noindent From Table \ref{t:3}, {$c(G_{(17,23)}) = p^3+p^2$ 
and $c(G_{(17,jr)}) = p^3+p^2$ for $j=24,25,27,28$.} 
Consider
\begin{align*}
G_{(17, 26r)} = \langle \alpha_{1}, \ldots, \alpha_{6} ~|~ &[\alpha_{5}, \alpha_{6}] = \alpha_{3}, [\alpha_{4}, \alpha_{5}] = \alpha_{2},  [\alpha_{3}, \alpha_{6}] =  \alpha_{1}, \alpha_{4}^{p} = \alpha_{1}^{r}, \\
& \alpha_{5}^{p}= \alpha_{6}^{p} = \alpha_{2}, \alpha_{1}^{p} = \alpha_{2}^{p} = \alpha_{3}^{p} = 1 \rangle.
\end{align*}
Observe ${\Core_{G_{(17, 26r)}} (\langle \alpha_{5}, \alpha_{3} \rangle) \cap 
\Core_{G_{(17, 26r)}} (\langle \alpha_{4}, \alpha_{3} \rangle) = 1}$,
so {$c(G_{(17, 26r)}) = \mu(G_{(17, 26r)}) \leq 2p^3$.} 
From Lemmas \ref{P2} and  \ref{remark:newrange}, 
$c(G) = 2p^2$, $p^3+p^2$, or $2p^3$ 
for all $G\in \Phi_{17}$.
\end{proof}
						
\begin{lemma}
\label{prop:phi18}
If $G$ is a group of order $p^6$ in $\Phi_{18}$,
then $c(G) = p^3+p^2$, or $2p^3$.
\end{lemma}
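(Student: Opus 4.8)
The plan is to follow the template used for $\Phi_{13}$ (Lemma \ref{prop:phi13}) and $\Phi_{17}$ (Lemma \ref{prop:phi17}). First I would record from \cite{Arxiv} the general presentation of a group $G$ in $\Phi_{18}$, together with the invariants from \cite{RJ,Arxiv}: $\cd(G)=\{1,p,p^2\}$, $\exp(G)=p^r$ with $r\le 2$, $\exp(G/Z(G))=p$, and $G'\cong C_p^3$ is elementary abelian with $Z(G)\cong C_p\times C_p$ contained in $G'$. By the remark preceding Lemma \ref{prop:phi12}, $G$ is not a direct product of an abelian and a non-abelian subgroup. Lemma \ref{L1} gives $|X_G|=2$, and since $d(Z(G)\cap G')=d(Z(G))$ for every $G\in\Phi_i$ with $11\le i\le 43$, Remark \ref{remark:linearcharacter} yields $X_G\cap\lin(G)=\emptyset$; thus $X_G=\{\chi_1,\chi_2\}\subset\nl(G)$ with $\chi_i(1)\in\{p,p^2\}$.

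For the lower bound $c(G)\ge p^3+p^2$, I would show that $\chi_1$ and $\chi_2$ cannot both have degree $p$. If $\chi_i(1)=p$, then $\chi_i=\lambda_i\ind_{H_i}^{G}$ for a maximal subgroup $H_i$ of $G$ and some $\lambda_i\in\lin(H_i)$, so $H_i'\cap Z(G)\subseteq\ker(\chi_i)$. Inspecting the commutator relations in the presentation shows (exactly as for $\Phi_{13}$) that a fixed nontrivial element of $Z(G)$ lies in $M'$ for every maximal subgroup $M$ of $G$; hence $\bigcap_{i=1}^{2}\ker(\lambda_i\ind_{H_i}^{G})\ne 1$, a contradiction. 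Since $Z(G)\cong C_p\times C_p$, each $\chi_i\in\nl(G)$ satisfies $d(\chi_i)\ge\chi_i(1)\,\phi(p)=\chi_i(1)(p-1)$, so in all remaining cases $\sum_{i=1}^{2}d(\chi_i)\ge p(p-1)+p^2(p-1)$, and Lemma \ref{L1} gives $c(G)\ge p^3+p^2$.

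For the upper bound, I would use \cite{Arxiv} to observe that every group $G\in\Phi_{18}$ outside a short explicit list (the groups appearing in Table \ref{t:3}, namely $G_{(18,7)}$, $G_{(18,11)}$, and $G_{(18,13r)}$) possesses an elementary abelian normal subgroup of order $p^4$, hence an abelian normal subgroup of index $p^2$. Since $\cd(G)=\{1,p,p^2\}$, $d(Z(G))=2$, $\exp(G)\le p^2$, and $G$ is not a direct product of an abelian and a non-abelian subgroup, Lemma \ref{prop:CpXCp} gives $c(G)=ap^2+bp^3$ with $a+b=2$, i.e.\ $c(G)\in\{2p^2,p^3+p^2,2p^3\}$; combined with the lower bound this forces $c(G)=p^3+p^2$ or $2p^3$. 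For each of the exceptional groups I would either read $c(G)=\mu(G)=p^3+p^2$ directly from Table \ref{t:3}, or exhibit two subgroups whose cores in $G$ intersect trivially to get $c(G)=\mu(G)\le 2p^3$, and then conclude, as in the proofs of Lemmas \ref{prop:phi13} and \ref{prop:phi17}, via Lemmas \ref{P2} and \ref{remark:newrange} together with the lower bound, that $c(G)=p^3+p^2$ or $2p^3$.

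The step I expect to be the main obstacle is the lower bound: verifying from the $\Phi_{18}$ presentation that no admissible $X_G$ can consist of two characters of degree $p$ — equivalently, pinning down precisely how the derived subgroups of the maximal subgroups of $G$ sit relative to $Z(G)$ (ideally exhibiting a single nonzero element of $Z(G)$ common to all of them). A secondary difficulty is handling the few groups in $\Phi_{18}$ lacking an elementary abelian normal subgroup of order $p^4$, where the upper bound $c(G)\le 2p^3$ has to be produced by an explicit choice of subgroups rather than by Lemma \ref{prop:CpXCp}.
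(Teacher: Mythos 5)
Your proposal is correct and follows essentially the same route as the paper: the lower bound $c(G)\ge p^3+p^2$ comes from observing that $\alpha_1\in\SS'$ for every maximal subgroup $\SS$ (so $X_G$ cannot consist of two degree-$p$ characters), and the upper bound comes from Lemma \ref{prop:CpXCp} for the groups with an elementary abelian normal subgroup of order $p^4$, with the remaining groups handled via Table \ref{t:3} or an explicit pair of core-free-intersecting subgroups together with Lemmas \ref{P2} and \ref{remark:newrange}. The only discrepancy is in which groups form the exceptional list (in the paper it is $G_{(18,12r)}$ and $G_{(18,13r)}$, not $G_{(18,7)}$ and $G_{(18,11)}$), but your fallback of checking Table \ref{t:3} or exhibiting explicit subgroups covers exactly what the paper does for these cases.
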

\begin{proof}
From \cite{Arxiv}, $G$ has the following form:
\begin{align*}
\langle \alpha_{1}, \ldots, \alpha_{6} ~|~ & [\alpha_{5}, \alpha_{6}] = \alpha_{3}, [\alpha_{4}, \alpha_{6}] = \alpha_{2},  [\alpha_{3}, \alpha_{6}] = [\alpha_{4}, \alpha_{5}] =  \alpha_{1}, \alpha_{1}^{p} = \alpha_{2}^{p} = \alpha_{3}^{p} = 1, \\
&\text{ some other relations between the generators} \rangle.
\end{align*}
Note that $\cd(G) = \{ 1, p, p^2 \}$, $\exp(G) = p^r$ where 
$r\leq 2$, $Z(G) = \langle \alpha_{1}, \alpha_{2} \rangle 
\cong C_{p} \times C_{p}$, and 
$G' = \langle \alpha_{3}, \alpha_{1}, \alpha_{2} \rangle 
\cong C_{p}^{3}$ (see \cite{RJ, Arxiv}). 
The structure of $G$ shows that 
if it has a subgroup $\SS$ of index $p$, then $\alpha_{1} \in \SS'$.
Lemma \ref{L1} implies that $|X_{G}|=2$, and from 
Remark \ref{remark:linearcharacter} $X_{G}\cap \lin(G) = \emptyset$. 
Let $X = \{ \chi_{1}, \chi_{2} \} \subset \nl(G)$ satisfy 
Equation \eqref{eq:X_G}. Using similar arguments as in the 
proof of Lemma \ref{prop:phi13}, we deduce that $c(G)\geq p^3+p^2$.
Let $$G\in \Phi_{18}
\setminus \{G_{(18,12r)} ~ (\text{} 
r=1,\omega,\omega^2 \text{ when } p\equiv 1\bmod 3, \text{ and } 
r=1 \text{ otherwise}),
G_{(18,13r)} ~(r= 1, \ldots, p-1)\}.$$
From \cite{Arxiv}, observe that $G$ has 
an elementary abelian normal subgroup of order $p^4$. 
From Lemma \ref{prop:CpXCp}, $c(G) = p^3+p^2$, or $2p^3$. 
From Table \ref{t:3}, {$c(G_{(18, 13r)}) = p^3+p^2$.}
Consider
\begin{align*}
{G_{(18,12r)}} = \langle \alpha_{1}, \ldots, \alpha_{6} ~|~ &[\alpha_{5}, \alpha_{6}] = \alpha_{3}, [\alpha_{4}, \alpha_{6}] = \alpha_{2},  [\alpha_{3}, \alpha_{6}] = [\alpha_{4}, \alpha_{5}] =  \alpha_{1}, \alpha_{4}^{p} = \alpha_{1}, \\
& \alpha_{5}^{p}= \alpha_{2}^{r},  \alpha_{1}^{p} = \alpha_{2}^{p} = \alpha_{3}^{p} = \alpha_{6}^{p} =  1 \rangle.
\end{align*}
Observe ${\Core_{G_{(18,12r)}} (\langle \alpha_{6}, \alpha_{3} \rangle) \cap 
\Core_{G_{(18,12r)}} (\langle \alpha_{5}, \alpha_{3} \rangle) = 1}$,
so {$c(G_{(18,12r)}) = \mu(G_{(18,12r)}) \leq 2p^3$}. 
From Lemmas \ref{P2} and \ref{remark:newrange}, 
$c(G) = p^3+p^2$, or $2p^3$ for all $G\in \Phi_{18}$. 
\end{proof}
					
\begin{lemma}
\label{prop:phi19}
If $G$ is a group of order $p^6$ in $\Phi_{19}$, 
then $c(G) = 2p^2$, $p^3+p^2$, or $2p^3$.
\end{lemma}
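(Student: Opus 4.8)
The plan is to mirror the arguments for $\Phi_{17}$ and $\Phi_{18}$ in Lemmas~\ref{prop:phi17} and~\ref{prop:phi18}. First, from \cite{RJ, Arxiv} I would record the structural data common to all groups $G$ in $\Phi_{19}$: $\cd(G) = \{1, p, p^2\}$, $\exp(G) = p^r$ with $r \le 2$, $G'$ is elementary abelian with $Z(G) \subseteq G'$, and hence $Z(G) \cong C_p \times C_p$ (as in the remark preceding Lemma~\ref{prop:phi12}). Since $Z(G) = Z(G) \cap G'$ is minimally generated by the same number of elements as $Z(G)$, Remark~\ref{remark:notadirectproduct} shows $G$ is not a direct product of an abelian and a non-abelian subgroup; then Lemma~\ref{L1} gives $|X_G| = 2$ and Remark~\ref{remark:linearcharacter} gives $X_G \cap \lin(G) = \emptyset$. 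Thus every $\chi \in X_G$ has $d(\chi) \ge p\phi(p) = p(p-1)$, and Lemma~\ref{L1} forces $c(G) \ge 2p^2$.

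For the remaining content I would split the groups of $\Phi_{19}$ according to whether they contain an elementary abelian normal subgroup $A$ of order $p^4$. Consulting \cite{Arxiv}, I expect all but finitely many families do contain such an $A$; for those, $A$ is a normal abelian subgroup of index $p^2$, so the hypotheses of Lemma~\ref{prop:CpXCp} are met and $c(G) = ap^2 + bp^3$ with $a + b = 2$, i.e.\ $c(G) \in \{2p^2,\, p^3+p^2,\, 2p^3\}$. The exceptional groups should be exactly those recorded for $\Phi_{19}$ in Table~\ref{t:3} (the families $\phi_{19}(2211)a$, $\phi_{19}(2211)b_r$, and $\phi_{19}(21^{4})a$), where the table gives $c(G) = \mu(G) = 2p^2$ via an explicit generating set ${\mathcal H}_G$ with trivial core intersection together with a matching lower bound $\mu(S) \le \mu(G)$ for a suitable section $S$. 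For any further exceptional family not covered by Table~\ref{t:3}, I would instead exhibit by direct computation maximal-type subgroups $H_1, H_2$ with $\Core_G(H_1) \cap \Core_G(H_2) = 1$ and $|G:H_1| + |G:H_2| \le 2p^3$, so that $c(G) = \mu(G) \le 2p^3$, and then invoke Lemmas~\ref{P2} and~\ref{remark:newrange}: since $p^2 \mid c(G)$ and $c(G)$ lies in the range dictated by $\exp(G) = p^r$ with $r \le 2$, $\max \cd(G) = p^2$, and $d(Z(G)) = 2$, the only values between $2p^2$ and $2p^3$ that survive are $2p^2$, $p^3+p^2$, and $2p^3$.

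The main obstacle I foresee is the bookkeeping over the database \cite{Arxiv}: correctly identifying which $\Phi_{19}$ groups fail to contain an elementary abelian normal subgroup of order $p^4$, and, for each of those, verifying that the displayed ${\mathcal H}_G$ in Table~\ref{t:3} really does have trivial core intersection and that the chosen section $S$ realizes the stated lower bound. Once that case analysis is complete, the value of $c(G)$ follows at once from Lemmas~\ref{prop:CpXCp}, \ref{P2}, \ref{remark:newrange}, and the equality $c(G) = \mu(G)$ of Lemma~\ref{thm:pgroup}; no idea beyond those already used for $\Phi_{12}$--$\Phi_{18}$ is needed.
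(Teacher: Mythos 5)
Your overall strategy coincides with the paper's: establish the general setup ($G$ is not a direct product of an abelian and a non-abelian subgroup, $|X_G|=2$, $X_G\cap\lin(G)=\emptyset$, hence $c(G)\ge 2p^2$), apply Lemma \ref{prop:CpXCp} to the normal abelian subgroup $A=\langle\alpha,\beta,\beta_1,\beta_2\rangle$ of index $p^2$ when it is elementary abelian, and for the remaining families exhibit two subgroups with trivial core intersection and total index at most $2p^3$, then finish with Lemmas \ref{P2} and \ref{remark:newrange}. The one concrete misstep is your identification of the exceptional families. The paper's dichotomy is $o(\alpha)=p$ versus $o(\alpha)=p^2$, and the families for which $A$ fails to be elementary abelian are $\phi_{19}(2211)h_{r}$, $\phi_{19}(2211)i$, $\phi_{19}(2211)j_{r}$, $\phi_{19}(2211)k_{r,s}$, $\phi_{19}(2211)l_{r}$, $\phi_{19}(2211)m_{r}$, $\phi_{19}(21^{4})g$ and $\phi_{19}(21^{4})h$ --- not the entries $\phi_{19}(2211)a$, $\phi_{19}(2211)b_{r}$, $\phi_{19}(21^{4})a$ of Table \ref{t:3}, which have $c(G)=2p^2$ and are handled by the elementary abelian case; the genuinely exceptional families all end up with $c(G)=p^3+p^2$ or $2p^3$ (Table \ref{t:6}). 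Because your stated fallback for any uncovered family is exactly what the paper does (e.g.\ $\Core_{G}(\langle\alpha,\beta\rangle)\cap\Core_{G}(\langle\alpha_2,\beta,\beta_2\rangle)=1$ with both subgroups of order at least $p^3$), this misidentification does not break the argument for the lemma as stated. Note only that the paper also proves the sharper lower bound $c(G)\ge p^3+p^2$ for the exceptional families, by analysing which maximal subgroups $\SS$ satisfy $Z(G)\subseteq\SS'$ and arguing as in Lemma \ref{prop:phi12}; that refinement is not needed here but is what later pins down the exact values recorded in Table \ref{t:6}.
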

\begin{proof}
From \cite{Arxiv}, $G$ has the following form:
\begin{align*}
\langle  \alpha, \alpha_{1}, \alpha_{2}, \beta, \beta_{1}, \beta_{2} &~|~ [\alpha_{1}, \alpha_{2}] = \beta, [ \beta, \alpha_{1}] = \beta_{1}, [\beta, \alpha_{2}] = \beta_{2}, [\alpha, \alpha_{1}] = \beta_{1},  \\
& \beta^{p} = \beta_{1}^{p} = \beta_{2}^{p} = 1, \text{ some other relations between the generators} \rangle.
\end{align*}
Note that 
$\cd(G) = \{ 1, p, p^2 \}$, $\exp(G) = p^r$ where $r\leq 2$, 
$\exp(G/Z(G)) = p$, $Z(G) = \langle\beta_{1}, \beta_{2} \rangle 
\cong C_{p} \times C_{p}$, and 
$G' = \langle \beta, \beta_{1}, \beta_{2} \rangle 
\cong C_{p}^{3}$ (see \cite{RJ, Arxiv}). 
Since $\exp(G/Z(G)) = p$, 
the abelian subgroup 
$A = \langle \alpha,  \beta, \beta_{1}, \beta_{2} \rangle$ 
has order $p^4$. 
From Lemma \ref{thm:cdG1}, $A$ is a maximal abelian subgroup 
of $G$ containing $G'$. 

If $o(\alpha) = p$ then $A$ is elementary abelian. 
Lemma \ref{prop:CpXCp} now implies that $c(G) = 2p^2$, $p^3+p^2$, or $2p^3$.

Otherwise $o(\alpha) = p^2$.
From \cite{Arxiv}, the relevant groups are:
\begin{eqnarray*}
 &\phi_{19}(2211)h_{r} & \mbox{\rm\ for } r=1, \ldots, p-1 \\
 &\phi_{19}(2211)i & \\ 
 &\phi_{19}(2211)j_{r} & \; \mbox{\rm for\ } r=1, \ldots, (p-1)/2 \\
 &\phi_{19}(2211)k_{r,s} & {\rm\ for\ } r=1, \ldots, p-1 {\rm\ and\ }  
s=0,1,\ldots,(p-1)/2 \mbox{\rm\ where\ } s-r \mbox{\rm\ and\ } 2r-s 
\mbox{\rm\ are not divisible by\ } p \\
&\phi_{19}(2211)l_{r} & \;  \mbox{\rm for\ } r=1, \ldots, p-1 \\
&\phi_{19}(2211)m_{r} & \; \mbox{\rm for\ } r=1 \mbox{\rm\ or\ } \nu  \\
&\phi_{19}(21^{4})g & \\
&\phi_{19}(21^{4})h. & 
\end{eqnarray*}
Let {$G \in \{ \phi_{19}(2211)h_{r},\phi_{19}(2211)i, 
\phi_{19}(2211)m_{r}, \phi_{19}(21^{4})g  \}$.} 
Observe $\alpha^{p} = \beta_{1}$ (see \cite{Arxiv}).
Lemma \ref{L1} implies that $|X_{G}|=2$, and, from 
Remark \ref{remark:linearcharacter}, $X_{G} \cap \lin(G) = \emptyset$. 
Let $X = \{ \chi_{1}, \chi_{2} \} \subset \nl(G)$ 
satisfy Equation \eqref{eq:X_G}. 
Let $\SS$ be a subgroup of index $p$ in $G$. From the structure of $G$, 
observe that $Z(G) \subseteq \SS'$, except when 
$\SS$ is 
$H_{1} = \langle \alpha, \alpha_{1}, \beta, \beta_{1}, \beta_{2} \rangle$, 
or $H_{2} = \langle \alpha, \alpha_{2}, \beta, \beta_{1}, \beta_{2} \rangle$. 
Here $H_{1}' = \langle \beta_{1} \rangle$, and 
$H_{2}' = \langle \beta_{2} \rangle$. Hence 
$o(\alpha H_{2}') = p^2$ in $H_{2}/ H_{2}'$, 
so $\exp(H_{2}/ H_{2}') = p^2$. 
Proceeding as in the proof 
of Lemma \ref{prop:phi12}, we deduce that 
$\sum_{i=1}^{2}d(\chi_{i}) \geq p^2(p-1) + p(p-1)$. 
Lemma \ref{L1} implies that $c(G) \geq p^3+p^2$. 
Now $\Core_{G} (\langle \alpha, \beta \rangle)
 \subseteq \langle \alpha \rangle$ 
since $[\beta, \alpha_{i}] = \beta_{i}$ for $i=1,2$.  
Hence $\Core_{G} (\langle \alpha, \beta \rangle) \cap 
\Core_{G}\langle  (\alpha_{2}, \beta, \beta_{2} \rangle) = 1$. 
But $|\langle \alpha_{2}, \beta, \beta_{2} \rangle| \geq p^3$, 
so $c(G) = \mu(G) \leq 2p^3$. Therefore, $c(G) = p^3+p^2$, or $2p^3$.

Let {$G \in \{ \phi_{19}(2211)j_{r}, \phi_{19}(2211)k_{r,s},
 \phi_{19}(2211)l_{r}, \phi_{19}(21^{4})h \}$.} 
Observe $\alpha^{p} = \beta_{1}\beta_{2}$ (see \cite{Arxiv}).
By similar arguments to above, we deduce that $c(G) = p^3+p^2$, or $2p^3$. 

From Lemmas \ref{P2} and \ref{remark:newrange}, 
$c(G) = 2p^2$, $p^3+p^2$, or $2p^3$ for all $G\in \Phi_{19}$. 
\end{proof}

\begin{lemma}
\label{prop:phi20}
If $G$ is a group  of order $p^6$ in $\Phi_{20}$, 
then $c(G) = p^3+p^2$, or $2p^3$.
\end{lemma}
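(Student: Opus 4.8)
The plan is to follow the template established in the preceding lemmas for $\Phi_{12}, \Phi_{13}, \Phi_{17}$, and $\Phi_{18}$, since $\Phi_{20}$ shares the key structural features: $\cd(G) = \{1, p, p^2\}$, $\exp(G) = p^r$ with $r \le 2$, $Z(G) = G' \cong C_p^3$ or $Z(G)\le G'$ (I would first extract the precise invariants from \cite{RJ, Arxiv}), and $G$ is not a direct product of an abelian and a non-abelian subgroup (by Remark \ref{remark:notadirectproduct}, since $d(Z(G)\cap G') = d(Z(G))$). First I would record the generic presentation of $G \in \Phi_{20}$ from \cite{Arxiv} and note that $d(Z(G)) = 2$, so Lemma \ref{L1} gives $|X_G| = 2$, while Remark \ref{remark:linearcharacter} gives $X_G \cap \lin(G) = \emptyset$. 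Hence every $\chi \in X_G$ has $\chi(1) \in \{p, p^2\}$ and $d(\chi) \ge \chi(1)\phi(p) = \chi(1)(p-1)$ since $Z(G) \cong C_p \times C_p$.

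For the lower bound $c(G) \ge p^3 + p^2$: I would write $X = \{\chi_1, \chi_2\} \subset \nl(G)$ satisfying Equation \eqref{eq:X_G} and argue, exactly as in Lemma \ref{prop:phi13}, that the case $\chi_1(1) = \chi_2(1) = p$ is impossible. The point is that if $\chi_i = \lambda_i\!\uparrow_{H_i}^G$ for index-$p$ subgroups $H_i$, then $H_i' \cap Z(G) \subseteq \ker(\chi_i)$, and from the structure of $G$ one checks that every maximal subgroup $\SS$ of $G$ satisfies $\SS' \cap Z(G) \ne 1$ with these intersections failing to meet trivially in pairs — or, more precisely, that there is a distinguished generator of $G'$ lying in $\SS'$ for every maximal $\SS$ (analogous to $\alpha_2 \in \SS'$ in $\Phi_{13}$, or $\alpha_1 \in \SS'$ in $\Phi_{18}$), forcing $\bigcap \ker(\chi_i) \ne 1$. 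So at least one $\chi_i$ has degree $p^2$, giving $\sum d(\chi_i) \ge p(p-1) + p^2(p-1)$, and Lemma \ref{L1} yields $c(G) \ge p^3 + p^2$. I expect identifying the correct distinguished commutator from the $\Phi_{20}$ presentations to be the main obstacle, as $\Phi_{20}$ may have $G' \cong C_p^3$ with a subtler maximal-subgroup structure than $\Phi_{13}$ or $\Phi_{18}$; a fallback is to mimic the finer $\Phi_{12}$ analysis with explicit sets $V, W$ of maximal subgroups if no single commutator works uniformly.

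For the upper bound, I would split the groups of $\Phi_{20}$ according to whether they have an elementary abelian normal subgroup of order $p^4$. For those that do, Lemma \ref{prop:CpXCp} (with $m = d(Z(G)) = 2$) gives $c(G) = ap^2 + bp^3$ with $a + b = 2$, i.e. $c(G) \in \{2p^2, p^3+p^2, 2p^3\}$; combined with the lower bound $c(G) \ge p^3+p^2$ this forces $c(G) = p^3+p^2$ or $2p^3$. For the remaining finitely many exceptional groups, I would consult Table \ref{t:3} for those already computed there (e.g. $G_{(20,14)}$, whose value is $p^3+p^2$), and for any others exhibit an explicit pair of subgroups with trivial core-intersection — as in the $\Phi_{17}, \Phi_{18}$ proofs — to get $c(G) = \mu(G) \le 2p^3$, then invoke Lemmas \ref{P2} and \ref{remark:newrange} to conclude $c(G) = p^3+p^2$ or $2p^3$. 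Assembling these cases completes the proof; the routine part is verifying the core-triviality of the chosen subgroup pairs from the presentations in \cite{Arxiv}.
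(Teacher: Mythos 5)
Your proposal matches the paper's proof essentially step for step: the paper records that $\alpha_{1}\in \SS'$ for every index-$p$ subgroup $\SS$ (exactly the distinguished commutator you anticipated), runs the Lemma \ref{prop:phi13}-style argument to get $c(G)\geq p^3+p^2$, handles the generic groups via an elementary abelian normal subgroup of order $p^4$ and Lemma \ref{prop:CpXCp}, and disposes of the finitely many exceptions ($G_{(20,7)}$, $G_{(20,8)}$, $G_{(20,14)}$, $G_{(20,15r)}$--$G_{(20,18r)}$) via Table \ref{t:3} and explicit core-trivial subgroup pairs before closing with Lemmas \ref{P2} and \ref{remark:newrange}. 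No substantive differences.
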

\begin{proof}
From \cite{Arxiv}, $G$ has the following form:
\begin{align*}
\langle \alpha_{1}, \ldots, \alpha_{6} ~|~ &[\alpha_{5}, \alpha_{6}] = \alpha_{3}, [\alpha_{4}, \alpha_{6}] = \alpha_{1}^{-1},  [\alpha_{3}, \alpha_{6}] = \alpha_{2}, [\alpha_{3}, \alpha_{5}] =  \alpha_{1},  \\
& \alpha_{1}^{p} = \alpha_{2}^{p} = \alpha_{3}^{p} = 1, \text{ some other relations between the generators} \rangle.
\end{align*}
Note that $\cd(G) = \{ 1, p, p^2 \}$, $\exp(G) = p^r$
where $r\leq 2$, $Z(G) = \langle \alpha_{1}, \alpha_{2} \rangle 
\cong C_{p} \times C_{p}$, and 
$G' = \langle \alpha_{1}, \alpha_{2}, \alpha_{3} \rangle \cong C_{p}^{3}$ 
(see \cite{RJ, Arxiv}).
From the structure of $G$, observe 
that if $\SS$ is a subgroup of index $p$ in $G$, then 
$\alpha_{1} \in \SS'$.
Lemma \ref{L1} implies that $|X_{G}|=2$. From 
Remark \ref{remark:linearcharacter}, $X_{G}\cap \lin(G) = \emptyset$. 
Let $X = \{ \chi_{1}, \chi_{2} \} \subset \nl(G)$ 
satisfy Equation \eqref{eq:X_G}. By arguments 
as in the proof of Lemma \ref{prop:phi13}, we deduce that $c(G)\geq p^3+p^2$.


Let $G\in \Phi_{20} 
\setminus \{ G_{(20, 7)}, G_{(20, 8)}, G_{(20, 14)},
  G_{(20, 15r)}, G_{(20, 16r)}, G_{(20, 17r)}, G_{(20, 18r)} \}$.
From \cite{Arxiv}, observe that $G$ has an 
elementary abelian normal subgroup of order $p^4$. From 
Lemma \ref{prop:CpXCp}, {$c(G) = p^3+p^2$, or $2p^3$.} 
From Table \ref{t:3}, $c(G_{(20, 14)}) = p^3+p^2$. 
Let {$G \in \{ G_{(20, 8)}, G_{(20, 15r)}, G_{(20, 17r)} \}$.} 
From \cite{Arxiv}, 
$\alpha_{4}^{p} = \alpha_{2}$, and either $o(\alpha_{5}) = p$, 
or $\langle \alpha_{5}^{p} \rangle = \langle \alpha_{1} \rangle$.
Hence $\Core_{G} (\langle \alpha_{4}, \alpha_{3} \rangle) \cap 
\Core_{G} (\langle \alpha_{5}, \alpha_{3}, \alpha_{1} \rangle) = 1$.
Thus $c(G) = \mu(G) \leq 2p^3$.  
Let {$G \in \{ G_{(20, 7)} , G_{(20, 16r)} \}$.}
 Observe  $\alpha_{4}^{p} = \alpha_{1}$, and $o(\alpha_{6}) = p$ in $G$.
  Also $\Core_{G} (\langle \alpha_{4}, \alpha_{3} \rangle) \cap 
\Core_{G} (\langle \alpha_{6}, \alpha_{3}, \alpha_{2} \rangle) = 1$. 
Thus $c(G) = \mu(G) \leq 2p^3$. 
Finally, {$\Core_{G_{(20, 18r)}}
(\langle \alpha_{4}, \alpha_{3} \rangle) \cap 
\Core_{G_{(20, 18r)}} 
(\langle \alpha_{5}, \alpha_{3}, \alpha_{1} \rangle) = 1$, 
so $c(G_{(20, 18r)}) = \mu(G_{(20, 18r)}) \leq 2p^3$.} 

From Lemmas \ref{P2} and \ref{remark:newrange}, 
$c(G) = p^3+p^2$, or $2p^3$ for all $G\in \Phi_{20}$. 
\end{proof}
						
\begin{lemma}
\label{prop:phi21}
If $G$ is a group of order $p^6$ in $\Phi_{21}$, 
then $c(G) = 2p^3$.
\end{lemma}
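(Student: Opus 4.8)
The plan is to bound $c(G)$ below using the character-theoretic formula of Lemma \ref{L1} and above by an explicit faithful permutation representation, and then to conclude via Lemma \ref{thm:pgroup}. First I would collect the structural data: from \cite{RJ, Arxiv}, every $G \in \Phi_{21}$ has $\cd(G) = \{1, p, p^2\}$, $\exp(G) = p^r$ with $r \le 2$, $G'$ elementary abelian with $Z(G) \le G'$, and $|Z(G)| = p^2$, so $Z(G) \cong C_p \times C_p$; as noted in the remark preceding Lemma \ref{prop:phi12}, $G$ is not a direct product of an abelian and a non-abelian subgroup. By Lemma \ref{L1}, $|X_G| = 2$; write $X_G = \{\chi_1, \chi_2\}$. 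By Remark \ref{remark:linearcharacter}, $X_G \cap \lin(G) = \emptyset$, so each $\chi_i$ is non-linear with $\chi_i(1) \in \{p, p^2\}$. Finally, \eqref{eq:X_G} forces $\ker(\chi_1) \neq 1 \neq \ker(\chi_2)$, so each $\ker(\chi_i)$, being a non-trivial normal subgroup of the $p$-group $G$, meets $Z(G)$ non-trivially.

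For the lower bound the crucial point, to be extracted from the presentations in \cite{Arxiv}, is that \emph{every} maximal subgroup $S$ of $G$ satisfies $Z(G) \subseteq S'$. Granting this, I claim $\chi_1(1) = \chi_2(1) = p^2$. Indeed, if $\chi_1(1) = p$, then (as $p$ is odd) Lemma \ref{thm:ford} gives $\chi_1 = \lambda\ind_{S}^{G}$ with $S$ maximal and $\lambda \in \lin(S)$, whence $Z(G) \subseteq S' \subseteq \ker(\lambda)$; since $Z(G) \lhd G$, this yields $Z(G) \subseteq \ker(\lambda\ind_S^G) = \ker(\chi_1)$, and then $\ker(\chi_1) \cap \ker(\chi_2) \supseteq Z(G) \cap \ker(\chi_2) \neq 1$, contradicting \eqref{eq:X_G}. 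The same argument shows $Z(G) \nsubseteq \ker(\chi_i)$ for $i = 1, 2$, so the central character of $\chi_i$ is a non-trivial linear character of $Z(G)$; as $\exp(Z(G)) = p$, this character generates the field $\mathbb{Q}(\zeta_p)$, so $\mathbb{Q}(\zeta_p) \subseteq \mathbb{Q}(\chi_i)$, $|\Gamma(\chi_i)| \ge p-1$, and $d(\chi_i) \ge p^2(p-1)$. Hence $\xi_{X_G}(1) = \sum_{i=1}^{2} d(\chi_i) \ge 2p^2(p-1)$, and since $c(G) = \xi_{X_G}(1) + m(\xi_{X_G}) = \tfrac{p}{p-1}\xi_{X_G}(1)$ by Lemma \ref{L1}, we get $c(G) \ge 2p^3$.

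For the upper bound I would exhibit, for each group $G$ in $\Phi_{21}$ from its presentation in \cite{Arxiv} (several instances are already recorded in Table \ref{t:3}), two subgroups $H_1, H_2$ of order $p^3$ with $\Core_G(H_1) \cap \Core_G(H_2) = 1$. It is enough to arrange $H_1 \cap H_2 \cap Z(G) = 1$, since a non-trivial normal subgroup of $G$ contained in $H_1 \cap H_2$ would meet $Z(G)$ inside $H_1 \cap H_2$; the natural candidates are order-$p^3$ subgroups transversal to the (order-$p^4$) maximal abelian normal subgroup containing $G'$. This gives $\mu(G) \le |G:H_1| + |G:H_2| = 2p^3$, and Lemma \ref{thm:pgroup} yields $c(G) = \mu(G) \le 2p^3$. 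Together with the lower bound, $c(G) = 2p^3$.

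The main obstacle is verifying the structural claim that $Z(G) \subseteq S'$ for every maximal subgroup $S$ (equivalently $Z(G) \le \bigcap_S S'$), uniformly across the parameter families comprising $\Phi_{21}$ in \cite{Arxiv}. This is exactly what makes $\Phi_{21}$ rigid: in the related families $\Phi_{13}$, $\Phi_{18}$ and $\Phi_{20}$ only one generator of $Z(G)$ is forced into each $S'$, leaving room for a degree-$p$ constituent of $X_G$ and hence for the value $p^3+p^2$, whereas here no degree-$p$ constituent can occur and both constituents have degree $p^2$. Once this claim and a concrete choice of the $H_i$ are in hand, the remainder is routine, relying only on Lemmas \ref{L1}, \ref{thm:ford}, \ref{thm:pgroup} and Remark \ref{remark:linearcharacter}.
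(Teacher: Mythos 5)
Your argument is correct and follows essentially the same route as the paper: the same structural observation that $Z(G)\subseteq S'$ for every maximal subgroup $S$ rules out degree-$p$ constituents, forcing $\chi_1(1)=\chi_2(1)=p^2$ and hence $c(G)\geq 2p^3$ via Lemma \ref{L1}. The only (minor) divergence is in the upper bound, where the paper invokes Lemma \ref{prop:CpXCp} for the groups possessing an elementary abelian normal subgroup of order $p^4$ and exhibits explicit core-free pairs only for the three remaining families, whereas you propose explicit index-$p^3$ subgroups with $H_1\cap H_2\cap Z(G)=1$ throughout; both reduce to the same case-by-case inspection of the presentations in \cite{Arxiv}.
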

\begin{proof}
From \cite{Arxiv}, $G$ has the following form:
\begin{align*}
\langle \alpha_{1}, \ldots, \alpha_{6} ~|~ & [\alpha_{5}, \alpha_{6}] = \alpha_{3}, [\alpha_{4}, \alpha_{6}] = \alpha_{1}^{\nu},  [\alpha_{3}, \alpha_{6}] = [\alpha_{4}, \alpha_{5}] = \alpha_{2}, [\alpha_{3}, \alpha_{5}] =  \alpha_{1},\\
& \alpha_{1}^{p} = \alpha_{2}^{p} = \alpha_{3}^{p} = 1, \text{ some other relations between the generators} \rangle.
\end{align*}
Note that 
$\cd(G) = \{ 1, p, p^2 \}$, $\exp(G) = p^r$ where $r\leq 2$, 
$\exp(G/Z(G)) = p$, 
$Z(G) = \langle \alpha_{1}, \alpha_{2} \rangle \cong C_{p} \times C_{p}$, 
and $G' = \langle \alpha_{3}, \alpha_{1}, \alpha_{2} \rangle 
\cong C_{p}^{3}$ (see \cite{RJ, Arxiv}).
From the structure of $G$, observe that 
if $\SS$ is a subgroup of index $p$ in $G$, then
$Z(G) \subseteq \SS'$.
Lemma \ref{L1} implies that $|X_{G}|=2$. From 
Remark \ref{remark:linearcharacter}, $X_{G}\cap \lin(G) = \emptyset$.
Let $X = \{ \chi_{1}, \chi_{2} \} \subset \nl(G)$ 
satisfy Equation \eqref{eq:X_G}. 
Suppose $\chi_{i}(1) = p$ for some fixed $i$ where $1\leq i \leq 2$. 
Then $G$ has a subgroup $H_{i}$ of index $p$ such that 
$\chi_{i}= \lambda_{i}\ind_{H_{i}}^{G}$
for some $\lambda_{i}\in \lin(H_{i})$. 
Since $Z(G) \subseteq H_{i}'$, 
$\bigcap_{i=1}^{2} \ker(\lambda_{i}\ind_{H_{i}}^{G})\neq 1$, 
a contradiction. Therefore, $\chi_{i}(1) = p^2$ for $i=1,2$. 
Thus $d(\chi_{i}) \geq p^2 \phi(p) = p^2(p-1)$, so 
$\sum_{i=1}^{2}d(\chi_{i}) \geq 2p^2(p-1)$.
Lemma \ref{L1} implies that $c(G) \geq 2p^3$.

Let $G \in \Phi_{21}
\setminus \{ G_{(21, 2)}, 
G_{(21,6r)}\, (r= 1, \ldots, p-1), 
G_{(21, 7rs)}\, (r= 0, \ldots, p-1 \text{ and } 
s= 1, \ldots, (p-1)/2)\}.$
From \cite{Arxiv}, we deduce that 
$G$ has an elementary abelian normal subgroup of order $p^4$. 
Lemma \ref{prop:CpXCp} implies that $c(G) = 2p^3$. 
From \cite{Arxiv}, observe that: 
\begin{enumerate}
\item ${\Core_{G_{(21, 2)}} (\langle \alpha_{4}, \alpha_{3} \rangle) \cap 
\Core_{G_{(21, 2)}} (\langle \alpha_{6}, \alpha_{3}, \alpha_{2} \rangle) = 1 
\Rightarrow \mu(G_{(21, 2)}) \leq 2p^3}$;
\item ${\Core_{G_{(21,6r)}} (\langle \alpha_{4}, \alpha_{3} \rangle ) 
\cap \Core_{G_{(21,6r)}} (\langle \alpha_{5}, \alpha_{3}, \alpha_{1} \rangle) = 1 
\Rightarrow \mu(G_{(21,6r)}) \leq 2p^3}$;
\item {$\Core_{G_{(21, 7rs)}}\langle \alpha_{4}, \alpha_{3} \rangle \cap 
\Core_{G_{(21, 7rs)}}\langle \alpha_{5}, \alpha_{3}, \alpha_{1} \rangle = 1 
\Rightarrow \mu(G_{(21, 7rs)}) \leq 2p^3$.}
\end{enumerate}
Hence,  if $G \in \{ G_{(21, 2)}, G_{(21,6r)}, G_{(21, 7rs)} \}$, then 
$c(G) = \mu(G) = 2p^3$.

Therefore, $c(G) = 2p^3$ for all $G\in \Phi_{21}$. 
\end{proof}
						
\begin{lemma}
\label{prop:phi23}
If $G$ is a group of order $p^6$ in $\Phi_{23}$, 
then $c(G) = 2p^2$, $p^3+p^2$, or $2p^3$.
\end{lemma}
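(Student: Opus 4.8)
The plan is to follow the template established in Lemmas \ref{prop:phi17}--\ref{prop:phi21}. I would first record from \cite{RJ, Arxiv} a uniform presentation for $G \in \Phi_{23}$ together with the invariants: $\cd(G) = \{1, p, p^2\}$, $\exp(G) = p^r$ with $r \leq 2$, $G' \cong C_p^3$ is elementary abelian, and $Z(G) \leq G'$, so $Z(G) \cong C_p \times C_p$. Since $Z(G) \leq G'$ forces $d(Z(G) \cap G') = d(Z(G))$, Remark \ref{remark:notadirectproduct} shows $G$ is not a direct product of an abelian and a non-abelian subgroup. Lemma \ref{L1} gives $|X_G| = 2$, and Remark \ref{remark:linearcharacter} gives $X_G \cap \lin(G) = \emptyset$; since a non-linear irreducible character of a $p$-group is induced (Lemma \ref{thm:ford}) from a linear character of $p$-power order at least $p$, each $\chi \in X_G$ satisfies $d(\chi) \geq p\phi(p) = p(p-1)$, whence $c(G) \geq 2p^2$ by Lemma \ref{L1}.

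Next I would split on the existence of a large elementary abelian subgroup. For every $G \in \Phi_{23}$ possessing an elementary abelian normal subgroup of order $p^4$ --- the list of such groups to be read off from \cite{Arxiv} --- Lemma \ref{prop:CpXCp} applies directly (its hypotheses $\cd(G) = \{1,p,p^2\}$, $d(Z(G)) = 2$, $\exp(G) \leq p^2$, not a direct product, and an abelian normal subgroup of index $p^2$ all hold), giving $c(G) = a p^2 + b p^3$ with $a + b = 2$, i.e.\ $c(G) \in \{2p^2, p^3 + p^2, 2p^3\}$. For the remaining groups of $\Phi_{23}$ I would argue as in the corresponding parts of Lemmas \ref{prop:phi17}, \ref{prop:phi18} and \ref{prop:phi20}: for those already listed in Table \ref{t:3} simply read off $c(G) = \mu(G)$; for the others exhibit, from the presentation, subgroups $H_1, H_2 \leq G$ with $\Core_G(H_1) \cap \Core_G(H_2) = 1$ and $|G:H_1| + |G:H_2| \leq 2p^3$, so that $c(G) = \mu(G) \leq 2p^3$ by Lemma \ref{thm:pgroup}. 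Feeding the upper bound $c(G) \leq 2p^3$, the lower bound $c(G) \geq 2p^2$, the divisibility $p^2 \mid c(G)$ from Remark \ref{remark:newrange}, and the numerical range of Lemma \ref{P2} (with $m = 2$ and $\max\cd(G) = p^2$) into the same elimination used in Lemmas \ref{prop:phi17}--\ref{prop:phi21} then forces $c(G) \in \{2p^2, p^3 + p^2, 2p^3\}$; for any member of exponent $p$ the argument of Lemma \ref{prop:phi11} via Lemma \ref{P2} alone already gives $c(G) = 2p^2$.

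I expect the main obstacle to be the case analysis for the exceptional groups: identifying precisely which members of $\Phi_{23}$ lack an elementary abelian normal subgroup of order $p^4$, and then, for each, choosing a correct pair $H_1, H_2$ and verifying $\Core_G(H_1) \cap \Core_G(H_2) = 1$ from the defining relations. A secondary subtlety is to confirm, for those exceptional groups where $c(G) > 2p^2$, that a sharpened lower bound $c(G) \geq p^3 + p^2$ is available --- via the observation (as in Lemmas \ref{prop:phi12} and \ref{prop:phi13}) that every maximal subgroup $\SS$ satisfies $Z(G) \leq \SS'$, or that $\exp(H/H') = p^2$ for a suitable maximal subgroup $H$ --- while ensuring this sharpening genuinely fails for the groups with $c(G) = 2p^2$. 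The value of $c(G)$ for each individual group of $\Phi_{23}$ is then pinned down in Section \ref{subsection:tables}.
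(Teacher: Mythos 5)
Your overall strategy is workable, but it rests on a misstated invariant and, as a result, defers essentially all of the work to a case analysis that you never carry out --- and that in fact does not exist. For $G \in \Phi_{23}$ the derived subgroup is $G' \cong C_p^4$, not $C_p^3$ as you record. This is the whole point of the paper's proof: $G'$ is an elementary abelian normal subgroup of index $p^2$, and by Lemma \ref{thm:cdG1} (since $\cd(G) = \{1,p,p^2\}$, $G$ has no abelian subgroup of index $p$) it is a maximal abelian subgroup. Hence every group in $\Phi_{23}$ satisfies the hypotheses of Lemma \ref{prop:CpXCp} with $A = G'$, and the conclusion $c(G) \in \{2p^2, p^3+p^2, 2p^3\}$ follows in one step. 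Your first branch of the dichotomy therefore already covers the entire family, and the ``exceptional groups lacking an elementary abelian normal subgroup of order $p^4$'' that you identify as the main obstacle form the empty set.

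As written, though, your proposal is not yet a proof: you do not determine which groups fall into the second branch, and for those you only sketch what would need to be done (choosing $H_1, H_2$ with trivial core intersection, sharpening the lower bound to $p^3+p^2$ where needed). Had the exceptional set been nonempty, this deferred work would be the substance of the lemma. The fix is simply to replace the claim $G' \cong C_p^3$ by $G' \cong C_p^4$ and observe that $G'$ itself is the required abelian normal subgroup of index $p^2$; your preliminary lower bound $c(G) \geq 2p^2$ and the appeal to Remark \ref{remark:notadirectproduct} are correct but become unnecessary once Lemma \ref{prop:CpXCp} applies.
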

\begin{proof}
If $G\in \Phi_{23}$, then  
$\cd(G) = \{ 1, p, p^2 \}$, $\exp(G) = p^r$ where $r\leq 2$, 
$Z(G) \cong C_{p} \times C_{p}$, 
and $G' \cong C_{p}^{4}$ (see \cite{RJ, Arxiv}). 
From Lemma \ref{thm:cdG1}, $G'$ is a maximal abelian subgroup of $G$. 
From Lemma \ref{prop:CpXCp}, we conclude that
 $c(G) = 2p^2$, $p^3+p^2$, or $2p^3$. 
\end{proof}

We summarize the outcome of this section.
\begin{theorem} \label{thm:p^2}
If $G$ is a group of order $p^{6}$ with a center of order $p^2$,
then $c(G) =p^4$, $2p^3$, $p^3 +p^2$, $p^3+p$, $p^3$, $2p^2$ or $p^2+p$.
\end{theorem}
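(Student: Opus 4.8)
The plan is to derive the statement by collating the family-by-family results established earlier in this section. By the classification recorded in \cite{RJ}, a non-abelian group $G$ of order $p^6$ with $|Z(G)| = p^2$ belongs to exactly one isoclinism family $\Phi_i$ with $i \in \{5,7,8,9,10,12,13,\ldots,21,23\}$, and its set of character degrees is either $\{1,p\}$ or $\{1,p,p^2\}$, as also recorded there. So it suffices to run through these families, record the possible values of $c(G)$ in each, and take the union.

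First I would isolate the groups that are direct products of an abelian and a non-abelian subgroup: Lemma \ref{prop:directproduct} shows that for all of these $c(G) \in \{p^2+p,\, p^3+p\}$, and this disposes of the direct-product members of $\Phi_5,\Phi_7,\Phi_8,\Phi_9,\Phi_{10}$. For the rest I would proceed family by family. The VZ families $\Phi_5$ and $\Phi_{15}$ are treated by Theorem \ref{thm:VZpgroupp6}(2) (equivalently Table \ref{t:VZ}), giving $c(G) \in \{p^3+p,\, p^3+p^2,\, 2p^3,\, p^4\}$. The non-direct-product groups with $\cd(G)=\{1,p\}$, namely those in $\Phi_9$ and $\Phi_{16}$, are covered by Lemma \ref{prop:cdG=1,p}: $c(G) \in \{2p^2,\, p^3,\, p^3+p^2,\, 2p^3,\, p^4\}$. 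The family $\Phi_{14}$ is handled by Lemma \ref{prop:phi14} ($c(G)=p^4$). Finally, for the non-direct-product groups with $\cd(G)=\{1,p,p^2\}$ I would cite the dedicated lemmas: Lemma \ref{prop:phi7} for $\Phi_7$, Lemma \ref{prop:phi8} for $\Phi_8$, Lemma \ref{prop:phi10} for $\Phi_{10}$, Lemma \ref{prop:phi12} for $\Phi_{12}$, Lemma \ref{prop:phi13} for $\Phi_{13}$, Lemma \ref{prop:phi17} for $\Phi_{17}$, Lemma \ref{prop:phi18} for $\Phi_{18}$, Lemma \ref{prop:phi19} for $\Phi_{19}$, Lemma \ref{prop:phi20} for $\Phi_{20}$, Lemma \ref{prop:phi21} for $\Phi_{21}$, and Lemma \ref{prop:phi23} for $\Phi_{23}$.

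Taking the union of all the value sets produced above yields precisely $\{p^2+p,\, 2p^2,\, p^3,\, p^3+p,\, p^3+p^2,\, 2p^3,\, p^4\}$, which is the list in the statement; this completes the argument. The substantive work lies entirely in the individual lemmas invoked above, so at the level of this synthesis the only real task is careful bookkeeping: one must check that every family in the index set $\{5,7,8,9,10,12,13,\ldots,21,23\}$ is accounted for — either through the direct-product reduction of Lemma \ref{prop:directproduct} or through one of the family lemmas — and verify both that no value outside the claimed seven ever arises and that none of the seven is left unrealized.
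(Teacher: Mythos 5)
Your proposal is correct and matches the paper's own treatment: the theorem is stated there explicitly as a summary of Section \ref{SS4}, so its proof is precisely the collation you describe — Lemma \ref{prop:directproduct} for the direct products, Table \ref{t:VZ} for $\Phi_{5}$ and $\Phi_{15}$, Lemma \ref{prop:cdG=1,p} for $\Phi_{9}$ and $\Phi_{16}$, and the family-specific Lemmas \ref{prop:phi7}--\ref{prop:phi23} for the rest. Your bookkeeping of the families and the resulting union of values is accurate.
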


\subsection{Groups with center of order $p$} \label{SS5}
We now consider those groups of order 
$p^6$ which have center of order $p$. These 
groups are in 
$\Phi_{i}$ where $i \in I := \{ 22, 24, 25, \ldots, 43 \}$. 
If $G$ is  in $\Phi_{35}$,  then 
$\cd(G) = \{ 1, p \}$, otherwise 
$\cd(G) = \{ 1, p, p^2 \}$ (see \cite{RJ}). 
Moreover, all groups in $\Phi_{i}$ where 
$i \in I \setminus \{ 37, 39 \}$ are metabelian, 
since their commutator subgroups are abelian (see \cite{RJ}).

\begin{theorem} \label{T6}
If $G$ is a group of order $p^{6}$ with 
a center of order $p$, then $c(G) = p^2$, $p^3$, or $p^4$.
\end{theorem}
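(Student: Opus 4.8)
The plan is to establish a lower bound $c(G) \geq p^2$ for every such $G$ using the general machinery, and then to pin down the exact value for each isoclinism family $\Phi_i$ with $i \in I = \{22,24,25,\ldots,43\}$ by separately handling the metabelian families (all except $\Phi_{37}, \Phi_{39}$) and the two non-metabelian ones. Since $Z(G)$ has order $p$, Lemma \ref{L1} gives $|X_G| = 1$: a minimal degree faithful quasi-permutation representation consists of a single irreducible character $\chi$ with $\ker(\chi) = 1$. Hence $c(G) = d(\chi)$ for the faithful $\chi$ minimizing $d(\chi) = |\Gamma(\chi)| \cdot \chi(1)$ over faithful irreducibles, and by Lemma \ref{L1}(ii) we have $c(G) = d(\chi) = \tfrac{p}{p-1}|\Gamma(\chi)|\chi(1)$ built so that $m(\xi_X) = \tfrac{1}{p-1}d(\chi)$. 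Because $\chi(1) \in \{1, p, p^2\}$ (and $\chi(1) \neq 1$ as $G$ is non-abelian and $\chi$ faithful), and because $|\Gamma(\chi)|$ is a power of $p$ divided into $p-1$-indexed pieces, the candidate values of $c(G)$ will be forced into the list $\{p^2, p^3, p^4\}$ once we bound $d(\chi)$ from above and below.

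First I would record the lower bound. Since $Z(G) \cong C_p$, Lemma \ref{Z(G)isCyclic} applies: with $p^\alpha = \min\{\chi(1) : \chi \in \nl(G),\ \ker(\chi) = 1\}$ we get $p^\alpha \cdot |Z(G)| \leq c(G)$, so $c(G) \geq p^{\alpha+1} \geq p^2$ (as $\alpha \geq 1$ because $G$ is non-abelian). For the upper bound and the exact determination, I would exploit Tables \ref{t:2} and \ref{t:3}: essentially all groups of order $p^6$ with $|Z(G)| = p$ are listed there (or are covered by the remark on $\Phi_{35}$), and for each the value $\mu(G) = c(G)$ (Lemma \ref{thm:mu(G)=c(G)}) is computed to be $p^2$, $p^3$, or $p^4$. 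The families $\Phi_{22}, \Phi_{24}, \Phi_{27}, \Phi_{30}, \Phi_{31}, \Phi_{32}, \Phi_{33}, \Phi_{34}, \Phi_{35}, \Phi_{36}, \Phi_{37}, \Phi_{38}, \Phi_{39}, \Phi_{40}, \Phi_{41}$ have entries with $\mu(G) = p^3$ (and $\mu(G) = p^4$ for $\Phi_{14}$-type or certain $\Phi_{22}$ members, $\mu(G) = p^2$ for some $\Phi_{35}$ members); I would group these cases by family, citing the displayed subgroup tuples $\mathcal{H}_G$ for the upper bounds and the $\mu(\cdot)$ lower-bound arguments for the matching lower bounds. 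For the metabelian families I would, where the tables do not already decide it, invoke Lemma \ref{lemma:normallymonomial} or Lemma \ref{cor:normallymonomial} (with $A$ a maximal normal abelian subgroup and $Z(G)$ cyclic of order $p$, so $\exp(A) \geq p = |Z(G)|$), or Lemma \ref{thm:cdG3} to force $\chi(1) = p^2$ when $\cd(G) = \{1,p,p^2\}$ and $\chi$ is faithful, combined with a Galois-field computation bounding $|\Gamma(\chi)|$.

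The main obstacle will be the non-metabelian families $\Phi_{37}$ and $\Phi_{39}$, where Lemma \ref{lemma:bakshi} and the metabelian structure theory (Remark \ref{remark:abeliangroups}, Lemma \ref{cor:normallymonomial}) do not directly apply. For these I would argue more directly from the presentations in \cite{Arxiv}: identify a faithful irreducible character $\chi$ of degree $p^2$ (using $\cd(G) = \{1,p,p^2\}$ and Lemma \ref{thm:cdG3} to see that \emph{any} faithful irreducible has degree $p^2$), show via Lemma \ref{thm:ford} that $\chi$ is induced from a linear character $\lambda$ of a subgroup $H$ with $\mathbb{Q}(\lambda) = \mathbb{Q}(\chi)$, and control the field $\mathbb{Q}(\chi)$ to conclude $|\Gamma(\chi)| = 1$, hence $c(G) = p^2 \cdot \tfrac{p}{p-1} \cdot (p-1) = p^3$ — or, if a rational faithful character of degree $p^2$ exists and realizes the minimum, $c(G)$ stays at $p^3$; alternatively the tables already give $\mu(G) = p^3$ via an explicit $\mathcal{H}_G$, and $c(G) = \mu(G) \geq p^2$ from Lemma \ref{Z(G)isCyclic} forces $c(G) \in \{p^3\}$ once we rule out $p^2$ by noting no faithful character of degree $p$ exists (Lemma \ref{thm:cdG3}) and no rational degree-$p$ quasi-permutation character can be faithful. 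Assembling all families, the only values that occur are $p^2$, $p^3$, and $p^4$, which is the claim.
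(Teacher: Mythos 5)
Your outline assembles the right ingredients but organizes them around a case check that does not exist where you expect it, and it misses the one observation that makes the whole theorem a two-line argument. The paper's proof is uniform: since $Z(G)\cong C_p$ is cyclic, Lemma \ref{Z(G)isCyclic} applies to \emph{every} group here (no metabelian or normally monomial hypothesis needed), giving $p^{\alpha}\cdot p\le c(G)\le p^{e}\cdot\exp(G)$. All but a short explicit list of these groups have exponent at most $p^2$, so for $\cd(G)=\{1,p\}$ this yields $c(G)\in\{p^2,p^3\}$ and for $\cd(G)=\{1,p,p^2\}$ (where $\alpha=2$ by Lemma \ref{thm:cdG3}) it yields $c(G)\in\{p^3,p^4\}$, with no reference to any table and no special treatment of the non-metabelian families $\Phi_{37}$, $\Phi_{39}$. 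Only the handful of exponent-$p^3$ groups ($G_{(25,3)}$, $G_{(26,3)}$, $G_{(28,2r)}$, $G_{(29,2r)}$, $G_{(34,1)}$, $G_{(34,2s)}$) need the normally monomial machinery: there $\exp(G)=p^3$ makes the upper bound from Lemma \ref{Z(G)isCyclic} too weak, and the paper instead exhibits a normal abelian subgroup $A$ of maximal order with $\exp(A)=p^2$ and applies Lemma \ref{lemma:normallymonomial} to get $p^3\le c(G)\le p^4$. Your worry about $\Phi_{37}$ and $\Phi_{39}$ is therefore a red herring, and the ``Galois-field computation bounding $|\Gamma(\chi)|$'' is never needed.

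The concrete misstep is your claim that ``essentially all groups of order $p^6$ with $|Z(G)|=p$ are listed in Tables \ref{t:2} and \ref{t:3}.'' They are not: all of $\Phi_{25}$, $\Phi_{26}$, $\Phi_{42}$, $\Phi_{43}$ and many individual members of $\Phi_{27}$--$\Phi_{41}$ (precisely those with $c(G)=p^4$) appear only in Table \ref{t:7}, whose entries are derived \emph{after} this theorem narrows the possible values. So a table lookup cannot carry the upper bound for those groups, and your fallback via Lemma \ref{cor:normallymonomial} would also stall on the non-metabelian families. Replacing the table lookup by the uniform appeal to Lemma \ref{Z(G)isCyclic} repairs this. (One smaller point: $c(G)\ne d(\chi)$; rather $c(G)=\frac{p}{p-1}\,|\Gamma(\chi)|\chi(1)$, and it is a power of $p$ because $\mathbb{Q}(\chi)$ is a cyclotomic field $\mathbb{Q}(\zeta_{p^s})$, so $|\Gamma(\chi)|=p^{s-1}(p-1)$ -- your ``$|\Gamma(\chi)|=1$'' line conflates this with $|\Gamma(\chi)|=p-1$.)
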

\begin{proof}
From \cite{Arxiv}, $\exp(G) = p^r$ where $r\leq 3$, 
and $Z(G) \subset G'$. 

Assume ${\exp(G) = p^{r}}$ where $r\leq 2$.
If $\cd(G) = \{ 1, p\}$, then, from Lemma \ref{Z(G)isCyclic}, 
$c(G) = p^2$, or $p^3$. If $\cd(G) = \{ 1, p, p^2\}$, then,
from Lemma \ref{Z(G)isCyclic}, $c(G) = p^3$, or $p^4$. 

The (remaining) groups of exponent $p^3$ are: 
$G_{(25, 3)}$; $G_{(26, 3)}$; $G_{(28, 2r)}$ and 
$G_{(29, 2r)}$ for $r=1, \ldots, p-2$; and 
$G_{(34, 1)}$ and 
$G_{(34, 2s)}$ for $s = 1, \nu$. 
If $G$ is one of these groups, then 
$\cd(G) = \{ 1, p, p^2 \}$. Let $\chi$ be a faithful irreducible 
character of $G$. From Remark \ref{remark:linearcharacter}, 
$\chi(1) = p^2$. But $G$ is metabelian, and so normally monomial
since $G' \cong C_{p^2} \times C_{p}$ (see \cite{RJ}).
Observe 
$A = \langle \alpha_{4}, \alpha_{3} \rangle$
is abelian, has order $p^4$, and exponent $p^2$,
and is a normal abelian
subgroup of maximal order among all abelian subgroups of
$G_{(25, 3)}$ and $G_{(26, 3)}$.
Lemma \ref{lemma:normallymonomial} now implies that
$c(G) = p^3$, or $p^4$ for each of these groups.
The same arguments applied to 
$\langle \alpha_{6}^{p}, \alpha_{3}, \alpha_{2} \rangle$
as a subgroup of $G_{(28,2r)}$ and $G_{(29,2r)}$, 
and 
{$\langle \alpha_{4}, \alpha_{3}, \alpha_{2} \rangle$ 
as a subgroup of $G_{(34,1)}$ and $G_{(34,2s)}$, 
resolve the remaining cases.}
\end{proof}

\subsection{Tables} \label{subsection:tables}
Tables \ref{t:VZ}--\ref{t:3} list $\mu(G)$ for some 
groups $G$ of order $p^6$. 
We present 
$c(G)$ for the remaining groups in Tables \ref{t:4}--\ref{t:7}. 

We partition the groups into two types. 
\begin{enumerate}
\item Type 1: a group that is a direct product of an abelian and 
a non-abelian subgroup; these are listed in  Table \ref{t:4}. 
\item Type 2: all others; these are listed in Tables \ref{t:5}--\ref{t:7}. 
\end{enumerate}

{The column labelled 
``Group $G$" identifies a group $G$ of order $p^6$ in all tables.
In Table \ref{t:4} the column labelled 
``Group $\NAB$" lists a {non-abelian direct factor} $\NAB$ of $G$. 
Suppose 
$$ X_\NAB =  \{ {\eta_{i}}_{\NAB/\NAB'} \}_{i=1}^{s} 
\cup \{ \psi_{j} \}_{j=1}^{t} \text{ where } s+t = d(Z(\NAB)),
\text{ and } \{ {\eta_{i}}_{\NAB/\NAB'} \}_{i=1}^{s}
\subset \lin(\NAB/\NAB'), $$
and $\psi_{j} = 
(\chi_{\SG_{j}})\ind_{\SG_{j}}^\NAB 
\text{ where } \SG_{j} \leq \NAB \text{ and } 
\chi_{\SG_{j}} \in \lin(\SG_{j})$ for $1\leq j \leq t$,
determines $c(\NAB)$.
The column labelled ``$X_\NAB$" lists 
the chosen $\SG_{j} \leq \NAB$, $\chi_{\SG_{j}} \in \lin(\SG_{j})$, and  
${\eta_{i}}_{\NAB/\NAB'}\in \lin(\NAB)$.
If $ \langle a \rangle$ is a subgroup of $\SG_{j} \leq \NAB$, 
then $\chi_{\langle a \rangle}$ 
and $1_{\langle a \rangle}$ 
denote a faithful and trivial 
character of $\langle a \rangle$ respectively.
(For clarity of exposition in the tables, 
we use $\SG, \SGP$ and $\SGM$ 
to denote subgroups of $\NAB$ rather than $\SG_{1}, \SG_{2}$ and $\SG_{3}$.) 
We use ${\eta}_{\NAB/\NAB'}$ when $\eta$ is a linear character of $\NAB$; 
otherwise, we use $\chi_{\SG}$ for some $\SG \leq \NAB$
where $\chi_\SG \in \lin(\SG)$ and $(\chi_{\SG})\ind_\SG^\NAB  \in \nl(\NAB)$.}
Finally, the values of $c(\NAB)$ and $c(G)$ are listed.
For groups of Type 2, these values are 
recorded for $G$, not $\NAB$. 
For all groups $G$ in Table \ref{t:4} and 
Table \ref{t:5}, $\cd(G) = \{ 1, p \}$;
in Tables \ref{t:6} and \ref{t:7}, the column labelled 
``$\cd(G)$" lists the set of character degrees of $G$. 
The remaining columns are as in Table \ref{t:4}.

\begin{example}
{\rm 
{Consider the group of Type 1
\[ G := G_{(3, 24r)} = 
\langle \alpha_{1}, \alpha_{2}, \alpha_{3}, 
\alpha_{4}, \beta_{1}, \beta_{2}, \beta_{3} \rangle = \langle \alpha_{1}, 
\alpha_{2}, \alpha_{3}, \alpha_{4}, \beta_{1}, \beta_{2} \rangle 
\times \langle \beta_{3} \rangle = \NAB \times K. \]
The column labelled ``$X_\NAB$" lists 
${\SG = \langle \alpha_{4}^{p}, \alpha_{3}, \alpha_{2} \rangle}$;
and 
${\eta_{\NAB/\NAB'} = \eta_{\langle \alpha_{4} \NAB' \rangle } 
	\cdot 1_{\langle \alpha_{3} \NAB' \rangle }} \in \lin(\NAB)$
where $\eta_{\langle \alpha_{4} \NAB' \rangle }$ is a faithful 
linear character of $\langle \alpha_{4} \NAB' \rangle $; 
and 
$ {\chi_{\SG} = 1_{\langle \alpha_{4}^{p} \rangle} \cdot 
	\chi_{\langle \alpha_{3} \rangle} \cdot 1_{\langle \alpha_{2} \rangle}} 
\in \lin(\SG)$
where $\chi_{\langle \alpha_{3} \rangle}$ is a faithful linear character 
of $\langle \alpha_{3} \rangle$. 
Since $c(\NAB)= p^3 + p^2$, from Lemma \ref{thm:nilpotent} and 
\ref{thm:pgroup}, $c(G) = p^3 + p^2 + p$.} 

{We now illustrate how to obtain $X_G$ from $X_\NAB$. 
Observe $G = \NAB \times K$ where $K = \langle \beta_{3} \rangle \cong C_p$,
and $d(Z(G)) = d(Z(\NAB)) + d(K) = 3$, 
so $|X_G| = 3$. Since $K$ is abelian, $G' = \NAB'$
and $\beta_{3}G' \neq G'$. Hence $G/G' = \langle \alpha_{4}G',
\alpha_{3} G', \beta_{3}G' \rangle$. 
Let $\eta_{G/G'} = \eta_{\langle \alpha_{4} G' \rangle } 
\cdot 1_{\langle \alpha_{3} G' \rangle } 
\cdot 1_{\langle \beta_{3} G' \rangle } \in \lin(G)$. Since $G' = \NAB'$,
observe that $\eta_{G/G'}$ is an extension of $\eta_{\NAB/\NAB'}$
to $G/G'$. Take $\overline{\SG} = \langle \alpha_{4}^{p},
\alpha_{3}, \alpha_{2}, \beta_{3} \rangle \geq \SG$ and
$ \chi_{\overline{\SG}} = 1_{\langle \alpha_{4}^{p} \rangle} \cdot 
\chi_{\langle \alpha_{3} \rangle} \cdot 1_{\langle \alpha_{2}
	\rangle} \cdot 1_{\langle \beta_{3} \rangle}
\in \lin(\overline{\SG})$. 
Here 
$\chi_{\overline{\SG}}$ is an extension of $\chi_{\SG}$ to $\overline{\SG}$.
Since $\inertiagroup_{G}(\chi_\SG) = \SG$, 
we deduce that 
$\inertiagroup_{G}(\chi_{\overline{\SG}}) = \overline{\SG}$. 
Hence $\chi_{\overline{\SG}}\ind_{\overline{\SG}}^G \in \nl(G)$. Lastly, 
consider 
$\lambda_{G/G'} = 1_{\langle \alpha_{4} G' \rangle } 
\cdot 1_{\langle \alpha_{3} G' \rangle } 	\cdot
\lambda_{\langle \beta_{3} G' \rangle } 
\in \lin(G)$. 
Observe that 
$d(\eta_{G/G'}) = d(\eta_{\NAB/\NAB'}) = \phi(p^2)$, 
$d(\chi_{\SG}\ind_{\SG}^N) = d(\chi_{\overline{\SG}}\ind_{\overline{\SG}}^G)
= p \phi(p^2)$, $d(\lambda_{G/G'}) = \phi(p)$ and 
$\ker(\eta_{G/G'}) \cap \ker(\lambda_{G/G'}) 
\cap \ker(\chi_{\overline{\SG}}\ind_{\overline{\SG}}^G) = 1$.
Suppose \[ \xi = \left[ \sum_{\sigma \in \Gamma(\eta_{G/G'})}
\left( \eta_{G/G'} \right)^{\sigma}  \right] + 
\left[ \sum_{\sigma \in \Gamma(\lambda_{G/G'})}
\left(\lambda_{G/G'} \right)^{\sigma}  \right] + 
\left[ \sum_{\sigma \in \Gamma(\chi_{\overline{\SG}}\ind_{\overline{\SG}}^G)}
\left(\chi_{\overline{\SG}}\ind_{\overline{\SG}}^G \right)^{\sigma}  \right]. \]
Hence $\xi(1) =  p\phi(p^2) + \phi(p^2) + \phi(p)$, 
and \cite[Lemma 4.5]{HB} implies that $m(\xi) = p^2 + p + 1$. 
Thus $\xi(1) + m(\xi) = p^3 + p^2 + p = c(G)$ 
and $\{ \eta_{G/G'}, \lambda_{G/G'}, 
\chi_{\overline{\SG}}\ind_{\overline{\SG}}^G \}$ is a 
minimal degree faithful quasi-permutation representation of $G$.}
}
\end{example}

\begin{example}\label{example:G4,28}
{\rm	
Consider the group of Type 2
	\begin{align*}
		G := G_{(4,28)} =  \langle \alpha_{1}, \ldots, \alpha_{5}, \beta_{1}, \beta_{2}, \beta_{3} ~|~ &[\alpha_{4}, \alpha_{5}] = \alpha_{2} = \beta_{2}, [\alpha_{3}, \alpha_{5}] = \alpha_{1} = \beta_{1}, \alpha_{4}^{p}= \beta_{1},\\
		 & \alpha_{5}^{p} = \beta_{3}, \alpha_{3}^{p} =  \beta_{1}^{p} = \beta_{2}^{p} = \beta_{3}^{p} = 1 \rangle.
	\end{align*} 
	The column labelled ``$X_G$" lists 
	 $\SG = \langle \alpha_{5}^{p}, \alpha_{3}, \alpha_{4}, \alpha_{2} \rangle $; and $\eta_{\small G/G'} = \eta_{\langle \alpha_{5} G' \rangle} \cdot 1_{\langle \alpha_{3} G' \rangle } \cdot  1_{\langle \alpha_{4}G' \rangle } \in \lin(G) $ where $\eta_{\langle \alpha_{5} G' \rangle}$ is a faithful 
	 linear character of $\langle \alpha_{5} G' \rangle$; and 
	 $\lambda_\SG =  1_{\langle \alpha_{5}^{p} \rangle } \cdot 1_{\langle \alpha_{3} \rangle} \cdot 1_{\langle \alpha_{4} \rangle } \cdot \lambda_{\langle \alpha_{2} \rangle} \in \lin(\SG) $ where $\lambda_{\langle \alpha_{2} \rangle}$ is a faithful 
	 linear character of $\langle \alpha_{2} \rangle$; and 
	   $\chi_\SG =  1_{\langle \alpha_{5}^{p} \rangle } \cdot  1_{\langle \alpha_{3} \rangle} \cdot \chi_{\langle \alpha_{4} \rangle} \cdot 1_{\langle \alpha_{2} \rangle } \in \lin(\SG)$ where $\chi_{\langle \alpha_{4} \rangle}$ is a faithful 
	   linear character of $\langle \alpha_{4} \rangle$. 
{Hence $\{ \eta_{\small G/G'}, {\lambda_\SG}\ind_{\SG}^{G}, {\chi_\SG}\ind_{\SG}^{G} \}$ is a minimal degree faithful quasi-permutation representation of $G$, and so $c(G) = p^3 + 2p^2$.}}
\end{example}

\begin{remark} \label{remark:QuasiPermRepAsPermRep}
\textnormal{Let $G$ be a $p$-group of odd order with 
$d(Z(G)) = m$. Suppose $c(G) = \sum_{i=1}^{m}p^{a_{i}}$
and $X_G = \{ \chi_{i} \}_{i=1}^{m}$ is a minimal degree faithful
quasi-permutation representation of $G$. From Lemma
\ref{thm:ford}, for $1\leq i \leq m$, there exists
$H_{i}\leq G$ with $|G/H_{i}| = \chi_{i}(1)$ such
that $\chi_{i} = \lambda_{i}\ind_{H_{i}}^{G}$, for
some $\lambda_{i} \in \lin(H_{i})$, and
$\mathbb{Q}(\chi_{i}) = \mathbb{Q}(\lambda_{i})$.
Let $\xi = \sum_{\chi \in X_G}
\left[ \sum_{\sigma \in \Gamma(\chi)}
\chi^{\sigma}  \right]$. 
Observe that 
\begin{align*}
\sum_{i=1}^{m}p^{a_{i}} =& c(G) = \xi(1) + m(\xi)
= \sum_{i=1}^{m} \chi_{i}(1) |H_i:\ker(\lambda_{i})| 
= \sum_{i=1}^{m} |G:H_{i}| |H_i:\ker(\lambda_{i})|
= \sum_{i=1}^{m} |G:\ker(\lambda_{i})|.
\end{align*}
Since $1 = \bigcap_{i=1}^{m} \ker(\chi_{i}) =
\bigcap_{i=1}^{m} \Core_{G}(\ker(\lambda_{i}))$,
and $\mu(G) = c(G) = \sum_{i=1}^{m}p^{a_{i}}$, we deduce 
that $\{ \ker(\lambda_{i}) \}_{i=1}^{m}$ is a minimal 
degree faithful permutation representation of $G$.	
Hence, for each group $G$ in Tables \ref{t:4}--\ref{t:7}, 
by identifying kernels of appropriate characters, 
we also obtain a minimal degree faithful permutation representation of $G$.
}
\end{remark}

\begin{tiny}


\end{footnotesize}

\section{Access to results}\label{section:access}
{
Much of the material recorded in Tables \ref{t:VZ}--\ref{t:7} 
is publicly available in {\sc Magma} 
via a GitHub repository \cite{github}. 
}
For each group $G$ of order $p^6$ we recorded its presentation from 
\cite{Arxiv}.  For a given prime $p$, we can now use the $p$-quotient 
algorithm to construct an explicit power-conjugate presentation 
for $G$; for details, see \cite[Chapter 9]{HEO}. 
Such a presentation is useful for extensive computations with $G$.

For those $G$ listed in Tables \ref{t:VZ}--\ref{t:3}, we 
recorded either ${\mathcal{H}}_\NAB$ or ${\mathcal{H}}_G$; so  
we can readily construct an explicit faithful permutation representation for 
$G$ of degree $\mu(G)$.

For those $G$ listed in Tables \ref{t:4}--\ref{t:7} 
we recorded details which allow us to reconstruct the required characters. 
We illustrate this by reference to Example \ref{example:G4,28}. 
Recall ${\lambda_\SG}\ind_{\SG}^{G} \in X_{G}$ for $G = G_{(4,28)}$.
We did not record the chosen character $\lambda_\SG$, 
but instead listed generating sets for both $\SG$ and $\ker(\lambda_\SG)$.
Using these, we can first compute a faithful linear character
of $\SG/\ker(\lambda_\SG)$, and then lift this to $\SG$ to recover $\lambda_R$; 
finally we compute ${\lambda_\SG}\ind_{\SG}^{G} \in X_{G}$. 
As discussed in Remark \ref{remark:QuasiPermRepAsPermRep},
we can use the stored descriptions of the subgroups and kernels 
of characters to construct an explicit faithful 
permutation representation for $G$ of degree~$\mu(G)$.

Using this data and related code, we verified readily that the 
description listed
{
for one of ${\mathcal{H}}_\NAB$, ${\mathcal{H}}_G$, $X_H$ or $X_G$ 
in Tables \ref{t:VZ}--\ref{t:7} determines a faithful permutation 
representation for each group $G$ of order $p^6$ for $5 \leq p \leq 97$.
}
We checked that the elements of $X_H$ or $X_G$ are irreducible 
for $p \leq 13$.  For this range we also verified the claimed value 
of $\mu(G)$ using the following:
\begin{itemize}
\item An improved version of the {\sf GAP} 
standard function {\tt MinimalFaithfulPermutationDegree}
developed by Alexander Hulpke.
\item An implementation in {\sc Magma} 
of the algorithm of Elias {\it et al.} \cite{Elias2010}
developed by Neil Saunders.
\end{itemize} 
The latter is included in the repository, as is code illustrating
the examples presented in the paper.

\section*{Acknowledgements}
{O'Brien was supported by the Marsden Fund of New Zealand grant 20-UOA-107.
Prajapati acknowledges the Science and Engineering Research 
Board, Government of India, for financial support through 
grant MTR/2019/000118. 
This work forms part of the Ph.D thesis of Ayush Udeep who acknowledges the University Grants Commission, Government of India 
(File: Nov2017-434175). We thank Alexander Hulpke and Neil Saunders 
for sharing their code with us.
}

\end{document}